\documentclass[twoside,11pt]{article}

\usepackage{blindtext}

\usepackage{jmlr2e}

\usepackage{lastpage}

\ShortHeadings{Approximate full-conformal multi-task regression with reproducing kernels}{Razafindrakoto, Celisse and Lacaille}
\firstpageno{1}

\begin{document}

\title{Approximate full-conformal multi-task regression\\ 
with reproducing kernels}

\author{\name Davidson Lova Razafindrakoto
      \email davidson-lova.razafindrakoto@proton.me \\
      \addr Laboratoire SAMM\\
      Universit{\'e} Paris 1 Panth{\'e}on-Sorbonne\\
      90, rue de Tolbiac\\
      75634 PARIS CEDEX 13, FRANCE
      \AND
      \name Alain Celisse
      \email alain.celisse@univ-paris1.fr\\
      \addr Laboratoire SAMM\\
      Universit{\'e} Paris 1 Panth{\'e}on-Sorbonne\\
      90, rue de Tolbiac\\
      75634 PARIS CEDEX 13, FRANCE
      \AND
      \name J{\'e}r{\^o}me Lacaille
      \email jerome.lacaille@safrangroup.com \\
      \addr Safran Aircraft Engines\\
      Rond-Point Ren{\'e} Ravaud, R{\'e}au,\\
      77550 Moissy-Cramayel CEDEX, France}

\editor{}

\maketitle

\begin{abstract}
    Multi-task regression aims at jointly solving multiple regression problems, called tasks.
    Compared to solving each task separately, better performances can be achieved as long as the tasks are sufficiently related.
    Full-conformal prediction is a framework that formulates
    a data-dependent prediction-region containing the unknown output-vector at any prescribed confidence level.
    However, explicit computation of this prediction-region is intractable in general since it requires training infinitely many predictors.
    The present work focuses on multi-task regression
    in a Reproducing Kernel Hilbert Space (RKHS) of vector-valued functions.
    This computational issue is addressed by designing an approximating prediction-region containing the full-conformal one.
    This construction is carried out in two scenarios: $(i)$ when the inter-task covariance-matrix is known, and $(ii)$ when this matrix is estimated.
    In terms of volume, the tightness of this approximation
    is assessed theoretically by means of an upper-bound in the first scenario. It is also empirically proved to improve upon the split-conformal prediction on synthetic data in both scenarios.
\end{abstract}
\begin{keywords}
    Multi-task regression, Reproducing Kernel Hilbert Space, Ridge regularization,
    Confidence prediction-region, Conformal prediction
\end{keywords}
\section{Introduction}
\label{sec.intro}
%
%
Multi-task regression consists in jointly solving multiple regression problems, called tasks \citep[see Section 2]{solnonMultitaskRegressionUsing2012}.
Each coordinate of the trained vector-valued predictor
corresponds to a task.
Intuitively, this vector-valued predictor is expected to outperform upon
separately training multiple scalar-valued predictors on different task as long as the different tasks are sufficiently related.
In practice, the link between tasks can be exploited by means of a dedicated regularization term exploiting existing connections between the tasks encoded by the inter-task covariance-matrix
(see \cite{zhangSurveyMultitaskLearning2021} for a survey on multi-task regression, and \cite{solnonMultitaskRegressionUsing2012} for a data dependent choice of such regularization).
Multi-task regression with a Reproducing Kernel Hilbert Space (RKHS) \citep{aronszajnTheoryReproducingKernels1950} has been detailed in the seminal work by \cite{micchelliLearningVectorvaluedFunctions2005} where each element of the RKHS is a vector-valued function.
%
%
%
%
Remarkably, any continuous vector-valued function can be uniformly approximated by an element of such an RKHS (on every compact subset of the input space) if the underlying matrix-valued reproducing kernel is universal \citep{evgeniouLearningMultipleTasks2005}.
%
%
%
%

In this context, the present goal is to design
and compute a data-dependent prediction-region
containing the unknown output-vector
at any prescribed confidence-level.
In conformal prediction, a prediction-region enjoying this guarantee was initially
formulated by \citet{vovkAlgorithmicLearningRandom2005},
and named Full-Conformal Prediction-region (\textbf{FullCP}-region).
However, the explicit computation of the \textbf{FullCP}-region requires
training as many predictors as the cardinality of the output space, which makes it impossible to compute in general.
In the (narrow) setting where trained-predictors admit
a closed-form expression as a function of the training output-vectors,
the \textbf{FullCP}-region can still be approximately recovered.
To be more specific, \citet{johnstoneExactApproximateConformal2024}
approximately recover the \textbf{FullCP}-region by computing boundary points of the \textbf{FullCP}-region along a specified finite number of directions.
However since exact recovery requires the boundary points
along the infinite number of possible directions,
parts of the \textbf{FullCP}-region may lay outside
the recovered prediction-region, inducing a lack of coverage.
The Split Conformal Prediction \citep[\textbf{SplitCP}]{papadopoulosInductiveConformalPrediction2008} was introduced to overcome this computational bottleneck by initially splitting the data and only training a single predictor from a subset of them.
The main drawback of \textbf{SplitCP} lies in the loss of information incurred by the splitting step which results in a variance inflation of the predictor and the wider (and then less informative) prediction-region
\citep[see Figure 3]{ndiayeStableConformalPrediction2022}.

By contrast, the present work rather describes a generic approximating scheme leading to a prediction-region containing the \textbf{FullCP}-region which does not require any initial data splitting and can be fully computed.
For single-task learning, such a scheme was first instantiated
by \citet{ndiayeStableConformalPrediction2022} and then by \citet{leeLeaveOneOutStableConformal2025}
with algorithmic stability-bounds as the main tool. More recently, \citet{razafindrakotoApproximateFullConformal2026} refined this approach by using influence functions.
Let us also mention that the tightness of the resulting prediction-interval
(one-dimensional region) was proved by means of a finite-sample upper-bound on the volume of the resulting region \citep[see Theorems~12 and~34]{razafindrakotoApproximateFullConformal2026}.
However, reformulating such approximations in the multi-task context remains highly challenging due to the inter-task relationships which must be preserved.
In this spirit, let us mention that \citet{messoudiCopulabasedConformalPrediction2021} and \citet{braunMultivariateStandardizedResiduals2026} formulated ellipsoidal \textbf{SplitCP}-regions by learning inter-task covariance-matrices (see \cite{dheurUnifiedComparativeStudy2025}
and \cite{braunMinimumVolumeConformal2025} for a survey of regions with other shapes).

\medskip

The main contribution of the present work is twofold:
(1) computing a prediction-region that contains the \textbf{FullCP}-region while
integrating an inter-task covariance-matrix,
and (2) providing numerical and theoretical evidences that the output prediction-region is a tight approximation to the \textbf{FullCP}-region in terms of volume.

To be more precise, Section~\ref{sec.statistical.framework} introduces
Multi-task regression within a Reproducing Kernel Hilbert Space (RKHS) of vector-valued predictors, and the conformal prediction basic concepts and notations.
Section~\ref{sec.approximate.full.conformal.perdiction}
introduces the generic approximation scheme of the \textbf{FullCP}-region and establishes the main algorithmic stability-bound used along the subsequent derivation.
This bound holds for various loss-functions (including robust ones)
and is instantiated for a particular class of matrix-valued kernels.
Section~\ref{sec.known.inter.task.covariance} illustrates how such schemes are instantiated when the inter-task covariance-matrix is known.
It gives rise to an approximate \textbf{FullCP}-region achieving the prescribed coverage.
In terms of volume, the tightness of this prediction-region is assessed by means of a finite-sample upper-bound exhibiting informative dependencies on the ambient dimension of the output space.
Section~\ref{sec.estimated.inter.task.covariance} goes further by addressing the case where the inter-task covariance-matrix is learned from the same data as the ones used for computing the predictor (no splitting step).
The tightness of the resulting prediction-region
is empirically assessed and compared with its \textbf{SplitCP}-counterpart which seems to output wider predictive confidence regions.

\section{Statistical framework}
\label{sec.statistical.framework}
This section discusses multi-task regression in the context of reproducing kernels and corresponding Reproducing Kernel Hilbert Spaces (RKHS) as well as conformal prediction.
More precisely, Section~\ref{sec.multi.task.regression.in.an.RKHS}
considers multi-task regression when the vector-valued predictor belongs to an RKHS and the vector-valued predictor is computed by numerically soving an optimization problem (representer theorem).
After which, Section~\ref{sec.conformal.prediction} formulates
the full-conformal prediction (\textbf{FullCP}) region
and discusses the coverage probability of such a region.

\subsection{Multi-task regression in an RKHS}
\label{sec.multi.task.regression.in.an.RKHS}
%
%
%
%

Let $\paren{X_{1}, Y_{1}}$,\ldots, $\paren{X_{n}, Y_{n}}$, $\paren{X_{n+1}, Y_{n+1}}$
denote $n+1$ copies of the random variable $\paren{X, Y} \in \mathcal{X} \times \mathcal{Y}$, with $\mathcal{X} \subset \mathbb{R}^{d}$ and
$\mathcal{Y} \subset \mathbb{R}^{p}$.
The objective of Multi-task kernel-regression is to formulate a predictor
$\hat{f} \in \mathcal{F}\paren{\mathcal{X}, \mathcal{Y}}$ (where $\mathcal{F}\paren{\mathcal{X}, \mathcal{Y}}$ denotes a candidate function class) from the observed data points $\paren{X_{1}, Y_{1}}$, \ldots, $\paren{X_{n}, Y_{n}}$ such that, given the new input $X_{n+1}$,
$\hat{f}$ outputs a prediction $\hat{f}\paren{X_{n+1}} \in\mathbb{R}^p$ for
the corresponding unobserved output-vector $Y_{n+1}\in\mathbb{R}^p$.

Following \cite{micchelliLearningVectorvaluedFunctions2005},
the candidate function class $\mathcal{F}\paren{\mathcal{X}, \mathcal{Y}}$ is chosen to be a Reproducing Kernel Hilbert Space \citep[RKHS]{aronszajnTheoryReproducingKernels1950}
of vector-valued functions, named $\mathcal{H}$.
%
%
%
As a Hilbert space, $\mathcal{H}$ is endowed with a scalar product $\dotprod{\bullet}{\bullet}{\mathcal{H}} : \mathcal{H} \times \mathcal{H} \to \mathbb{R}$ (inducing a norm $\|\bullet\|_{\mathcal{H}}: \mathcal{H} \to \mathbb{R}_{+}$).
Additionally, as an RKHS of vector-valued functions,
$\mathcal{H}$ admits a matrix-valued kernel denoted by
$K\paren{\bullet, \bullet}: \mathcal{X} \times \mathcal{X} \mapsto \mathbb{R}^{p \times p}$
(see \cite{micchelliKernelsMultiTask2004}, \cite{evgeniouLearningMultipleTasks2005},
\cite{micchelliLearningVectorvaluedFunctions2005},
\cite{caponnettoUniversalMultitaskKernels2008},
\cite{alvarezKernelsVectorValuedFunctions2012},
\cite{solnonMultitaskRegressionUsing2012}, and
\cite{audiffrenStabilityMultitaskKernel2013} for examples of matrix-valued kernels).
Two main assets of this matrix-valued kernel are:
(1) it allows for non-linear links between input and output prediction-vector, and
(2) it enables encoding the inter-tasks connections.

The next definition formulates the vector-valued predictor to be considered as a minimizer of a regularized empirical-risk.
\begin{definition}{\citep[{see Eq.~4.5}]{micchelliLearningVectorvaluedFunctions2005}}
\label{def.predictor}
Let $\lambda \in  \paren{0, +\infty}$ denote a regularization parameter, and $D$ be the training data set with cardinal $\abss{D} \in \mathbb{N}$.
Then, the vector-valued predictor $\hat{f}_{\lambda; D} \in \mathcal{H}$
stands for a minimizer of the regularized empirical-risk
$\widehat{\mathbf{R}}_{\lambda; D}\paren{\bullet}$ that is,
\begin{align}
    \label{eq.regularized.empirical.risk.function}
    \hat{f}_{\lambda; D}
    \in
    \argmin_{f \in \mathcal{H}}
    \widehat{\mathbf{R}}_{\lambda; D}\paren{f},
\end{align}
with, for every $f \in \mathcal{H}$,
\begin{align*}
    \widehat{\mathbf{R}}_{\lambda; D}\paren{f}
    := \frac{1}{\abss{D}}\sum_{\paren{x, y}\in D}
    \ell\paren{y, f(x)}
    + \lambda \normh{f}^2,
\end{align*}
where $\ell \paren{\bullet, \bullet} : \mathcal{Y} \times \mathcal{Y} \to \mathbb{R}$ denotes any loss-function.
\end{definition}
The loss-function involved in the regularized empirical-risk expression is not restricted to the quadratic one.
Several examples of such loss-functions are enumerated along Section~\ref{sec.stability.bounds}.
The regularization term is classically meant to control the overfitting phenomenon (provided $\lambda$ is chosen accordingly) by enforcing smoothness constraints on any solution. In the present context of multi-task regression, it also promotes solutions which account for the inter-task links which are encoded by the matrix-valued kernel (see for instance \citet{solnonMultitaskRegressionUsing2012} for an illustration of this last claim, and also Definition~\ref{eq.regularization.erf.matrix} for the explicit encoding).

\medskip

Let us now review the assumptions under which
the aforementioned predictor $\hat{f}_{\lambda; D} \in \mathcal{H}$
(see Definition~\ref{def.predictor}) is well-defined and can be numerically computed by solving an optimisation problem in the vector space $\mathbb{R}^{\abss{D}p}$.
%
%
\begin{assumption}
    There exists a constant $c_{\ell} \in \mathbb{R}$
    such that,
    \begin{align}
        \label{asm.lower.bounded.loss}
        \forall 
        \paren{y, u} \in \mathbb{R}^{p} \times \mathbb{R}^{p},
        \quad
        \ell\paren{y, u} \geq c_{\ell}.
        \tag{$c_{\ell}$-LwL}
    \end{align}
\end{assumption}

\begin{assumption}
    For every $y \in \mathcal{Y}$,
    \begin{align}
        \label{asm.conv.loss}
        u \in \mathbb{R}^{p} \mapsto \ell\paren{y, u} \in \mathbb{R}
        \mbox{ is a convex function.}
        \tag{ConvL}
    \end{align}
\end{assumption}

\begin{assumption}
    For every $y \in \mathcal{Y}$,
    \begin{align}
        \label{asm.lsc.loss}
        u \in \mathbb{R}^{p}
        \mapsto \ell\paren{y, u} \in \mathbb{R}
        \mbox{ is lower semi-continuous}.
        \tag{LscL}
    \end{align}
\end{assumption}

These three assumptions are standard for proving
existence results in convex optimization.
As an example, \eqref{asm.lower.bounded.loss} holds true for any
non-negative loss-functions with $c_{\ell} = 0$,
and \eqref{asm.conv.loss} and \eqref{asm.lsc.loss} are fulfilled by
the loss-functions discussed in Section~\ref{sec.stability.bounds}.

\begin{notation}
By convention, every vector is a column vector (by default),
and for any integer $m>0$ and $i \in \brac{1, \ldots, m}$,
$e_i := \paren{\delta_{ij}}_{j=1}^{m} \in \mathbb{R}^{m \times 1}$
denotes the canonical basis vector in $\mathbb{R}^{m \times 1}$,
where $\delta_{i j}=1$, if $i=j$ and 0 otherwise.
Moreover, for any integer $m \in \mathbb{N}^{*}$,
let $\mathrm{Sym}_{m}^{+}\paren{\mathbb{R}}$ designate
the set of positive semi-definite square matrices
in $\mathbb{R}^{m \times m}$, and 
$\mathrm{Sym}_{m}^{++}\paren{\mathbb{R}}$ 
the subset of non-singular ones.
%
%
%
%

\notparagraph{Kronecker product}
For every matrix $A \in \mathbb{R}^{m \times n}$
and every $B\in \mathbb{R}^{p \times q}$,
let $A \otimes B \in \mathbb{R}^{mp \times nq}$ denote
the Kronecker product between $A$ and $B$ that is,
\begin{align}
    \label{eq.kronecker.product}
    A \otimes B
    = \begin{bmatrix}
        a_{1, 1} B
        &
        \ldots
        &
        a_{1, n} B
        \\
        \vdots
        &
        \ddots
        &
        \vdots
        \\
        a_{m,1} B
        &
        \ldots
        &
        a_{m, n} B
    \end{bmatrix}.
\end{align}

\notparagraph{Representer Hilbert subspace}
Let $\mathbf{X} \in \mathbb{R}^{n \times d}$ denote
the $n \times d$ design matrix
resulting from stacking
the row input vectors $X_{1}^{T}, \ldots, X_{n}^{T} \in \mathbb{R}^{1 \times d}$ that is,
\begin{align}
    \label{eq.input.matrix}
    \mathbf{X}
    := \begin{bmatrix}
        X_{1}
        &
        \ldots
        &
        X_{n}
    \end{bmatrix}^{T},
\end{align}
and $\mathcal{A}_{\mathbf{X}} \subseteq \mathcal{H}$ be
the finite dimensional Hilbert subspace of $\mathcal{H}$,
given by
\begin{align}
    \label{eq.subspace}
    \mathcal{A}_{\mathbf{X}}
    &
    := \mathrm{Span}\brac{
        K\paren{\bullet, X_i} e_l \in \mathcal{H}:
        \forall i \in \brac{1, \ldots, n},
        \forall l \in \brac{1, \ldots, p}
    }\notag
    \\
    &
    =
    \brac{
        \sum_{i=1}^{n} K\paren{\bullet, X_i} \paren{e_i^{T} \otimes I_{p}} V \in \mathcal{H}:
        \forall V \in \mathbb{R}^{np}
    },
\end{align}
where, for every $i \in \brac{1, \ldots, n}$, $e_i \in \mathbb{R}^{n \times 1}$ denotes the $ith$ canonical basis vector of $\mathbb{R}^n$.

\notparagraph{Gram matrix}
Let $\mathbf{K}_{\mathbf{X}} \in \mathbb{R}^{np \times np}$
stand for the Gram matrix given by
\begin{align}
    \label{eq.gram.matrix}
    \mathbf{K}_{\mathbf{X}}
    &
    := \begin{bmatrix}
    K\paren{X_1, X_1}
    &
    K\paren{X_1, X_2}
    &
    \ldots
    &
    K\paren{X_1, X_n}
    \\
    K\paren{X_2, X_1}
    &
    K\paren{X_2, X_2}
    &
    \ldots
    &
    K\paren{X_2, X_n}
    \\
    \vdots
    &
    \vdots
    &
    \ddots
    &
    \vdots
    \\
    K\paren{X_n, X_1}
    &
    K\paren{X_n, X_2}
    &
    \ldots
    &
    K\paren{X_n, X_n}
    \end{bmatrix}.
\end{align}
\end{notation}
%
%



\medskip

The next result proves that the predictor $\hat{f}_{\lambda; D} \in \mathcal{H}$ (see Definition~\ref{def.predictor}) is well-defined.
\begin{lemma}[Well-defined predictor]
\label{lm.predictor.well.defined}

%
%
%
Assume \eqref{asm.lsc.loss},
\eqref{asm.conv.loss} and \eqref{asm.lower.bounded.loss} hold true.
Then, the regularized empirical-risk function (see Eq.~\ref{eq.regularized.empirical.risk.function})
is a lower semi-continuous $2\lambda$-strongly convex and coercive function.
Moreover,
the predictor $\hat{f}_{\lambda; D} \in \mathcal{H}$
does exist and is unique.
\end{lemma}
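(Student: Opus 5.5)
The plan is to study $\widehat{\mathbf{R}}_{\lambda; D}$ on the Hilbert space $\mathcal{H}$ as the sum of a data-fitting term $L_D(f) := \frac{1}{\abss{D}}\sum_{(x,y)\in D}\ell\paren{y, f(x)}$ and the regularizer $\lambda\normh{f}^2$, and to check each property separately. The main tool is the reproducing property of the matrix-valued kernel: for every $x\in\mathcal{X}$ and $v\in\mathbb{R}^p$, $v^T f(x) = \dotprod{f}{K\paren{\bullet, x} v}{\mathcal{H}}$. Hence the evaluation map $f\mapsto f(x)\in\mathbb{R}^p$ is linear and continuous from $\mathcal{H}$ to $\mathbb{R}^p$, with norm bounded by $\|K(x,x)\|^{1/2}$.

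\emph{Convexity and lower semi-continuity.} For each $(x,y)\in D$, the map $f\mapsto \ell\paren{y,f(x)}$ is the composition of the continuous linear evaluation map with $u\mapsto\ell(y,u)$. By \eqref{asm.conv.loss} this composition is convex, and by \eqref{asm.lsc.loss} it is lower semi-continuous, since the preimage of a closed sublevel set under a continuous map is closed. A finite nonnegative combination of such maps keeps both properties, so $L_D$ is convex and lower semi-continuous. The map $f\mapsto\lambda\normh{f}^2$ is continuous and $2\lambda$-strongly convex, because $\lambda\normh{f}^2-\frac{2\lambda}{2}\normh{f}^2=0$ is convex; equivalently, the parallelogram identity gives $\normh{tf+(1-t)g}^2 = t\normh{f}^2+(1-t)\normh{g}^2 - t(1-t)\normh{f-g}^2$. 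Adding the convex $L_D$ preserves $2\lambda$-strong convexity, and the sum is lower semi-continuous.

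\emph{Coercivity.} By \eqref{asm.lower.bounded.loss}, $\widehat{\mathbf{R}}_{\lambda; D}(f)\ge c_\ell+\lambda\normh{f}^2$, which tends to $+\infty$ as $\normh{f}\to\infty$.

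\emph{Existence and uniqueness.} I would invoke the standard result that a proper, convex, lower semi-continuous, coercive function on a Hilbert space attains its minimum. The argument runs as follows. The function is proper because it is finite everywhere, $\ell$ being real-valued. A minimizing sequence is bounded by coercivity, so it has a weakly convergent subsequence. A convex lower semi-continuous function is weakly lower semi-continuous, because its closed convex sublevel sets are weakly closed (Mazur). The weak limit is therefore a minimizer. For uniqueness, if $f_1\neq f_2$ were both minimizers with value $m$, strong convexity applied at the midpoint would give a value at most $m-\frac{\lambda}{4}\normh{f_1-f_2}^2<m$, a contradiction.

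The only delicate step is the passage from strong to weak lower semi-continuity in the infinite-dimensional space $\mathcal{H}$, and it is handled by convexity through Mazur's lemma. An alternative route would reduce to the finite-dimensional subspace $\mathcal{A}_{\mathbf{X}}$ via a representer argument, but the direct argument avoids needing that.
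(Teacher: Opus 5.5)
Your proposal is correct and follows the paper's proof essentially step for step. It obtains lower semi-continuity and convexity of the data-fitting term through the continuous linear evaluation map, $2\lambda$-strong convexity from the squared norm, and coercivity from the bound $c_{\ell}+\lambda\|f\|_{\mathcal{H}}^{2}$. The only difference is in the final step: the paper gets existence and uniqueness by citing a ready-made result (Corollary~5.6 in the optimization notes by Alexanderian), whereas you prove it directly via weak compactness, Mazur's lemma for weak lower semi-continuity, and the strong-convexity midpoint argument, which is a valid self-contained justification.
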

\begin{proof} The proof is deferred to Appendix~\ref{proof.predictor.well.defined}.
\end{proof}

\medskip

In all coming results, $\lambda \in \paren{0, +\infty}$ is a regularization parameter, and $D$ refers to the data set $\paren{X_{1}, Y_{1}}, \ldots, \paren{X_{n}, Y_{n}}$.
The so-called Representer theorem \citep{aronszajnTheoryReproducingKernels1950} applies to the regularized empirical-risk from Eq.~\eqref{eq.regularized.empirical.risk.function}.
As a result, the minimizer is expressed as an element of $\mathcal{A}_{\mathbf{X}}$, that is, $\hat{f}_{\lambda; D} \in \mathcal{A}_{\mathbf{X}}$.
\begin{lemma}
\label{lm.representer}
Assume \eqref{asm.conv.loss}, \eqref{asm.lsc.loss}
and \eqref{asm.lower.bounded.loss} hold true.
Then, the minimizer $\hat{f}_{\lambda; D} \in \mathcal{H}$
of the regularized empirical-risk function
(see Eq.~\ref{eq.regularized.empirical.risk.function})
can be decomposed as follows,
for every $x \in \mathcal{X}$,
\begin{align}
    \label{eq.representer}
    \hat{f}_{\lambda; D}
    = \sum_{i=1}^{n}
    K\paren{\bullet, X_i} \paren{e_i^{T} \otimes I_{p}} \widehat{W}_{\lambda; D},
\end{align}
for some weight matrix
$\widehat{W}_{\lambda; D} \in \mathbb{R}^{np}$.
\end{lemma}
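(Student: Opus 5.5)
We argue by orthogonal projection onto the finite-dimensional subspace $\mathcal{A}_{\mathbf{X}}$ from Eq.~\eqref{eq.subspace}. By Lemma~\ref{lm.predictor.well.defined}, under \eqref{asm.lsc.loss}, \eqref{asm.conv.loss} and \eqref{asm.lower.bounded.loss} the minimizer $\hat{f}_{\lambda; D}$ exists and is unique, so it suffices to show that it lies in $\mathcal{A}_{\mathbf{X}}$. Since $\mathcal{A}_{\mathbf{X}}$ is finite dimensional, it is closed in $\mathcal{H}$, and every $f \in \mathcal{H}$ decomposes uniquely as $f = f_{\parallel} + f_{\perp}$ with $f_{\parallel} \in \mathcal{A}_{\mathbf{X}}$ and $f_{\perp} \in \mathcal{A}_{\mathbf{X}}^{\perp}$.

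The key step is the vector-valued reproducing property: for every $g \in \mathcal{H}$, $x \in \mathcal{X}$ and $y \in \mathbb{R}^{p}$, one has $\dotprod{g}{K\paren{\bullet, x} y}{\mathcal{H}} = y^{T} g(x)$ \citep{micchelliLearningVectorvaluedFunctions2005}. Applying it with $g = f_{\perp}$, $x = X_i$ and $y = e_l$ gives $e_l^{T} f_{\perp}(X_i) = \dotprod{f_{\perp}}{K\paren{\bullet, X_i} e_l}{\mathcal{H}} = 0$ for all $i \in \brac{1, \ldots, n}$ and $l \in \brac{1, \ldots, p}$. Hence $f_{\perp}(X_i) = 0$, and so $f(X_i) = f_{\parallel}(X_i)$ for every $i$.

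Consequently the data-fit term of $\widehat{\mathbf{R}}_{\lambda; D}$ is identical at $f$ and at $f_{\parallel}$. By Pythagoras, $\normh{f}^2 = \normh{f_{\parallel}}^2 + \normh{f_{\perp}}^2$, which yields
\[
\widehat{\mathbf{R}}_{\lambda; D}\paren{f} = \widehat{\mathbf{R}}_{\lambda; D}\paren{f_{\parallel}} + \lambda \normh{f_{\perp}}^2 .
\]
Since $\lambda > 0$, taking $f = \hat{f}_{\lambda; D}$ and using minimality forces $\normh{f_{\perp}} = 0$, that is, $\hat{f}_{\lambda; D} \in \mathcal{A}_{\mathbf{X}}$. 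Uniqueness from Lemma~\ref{lm.predictor.well.defined} is not even needed for this conclusion.

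It remains to pass to the matrix form. Any element of $\mathcal{A}_{\mathbf{X}}$ can be written as $\sum_{i,l} w_{i,l} K\paren{\bullet, X_i} e_l$. Stacking the coefficients as $V = \paren{w_{1,1}, \ldots, w_{1,p}, \ldots, w_{n,p}}^{T} \in \mathbb{R}^{np}$ gives $\paren{e_i^{T} \otimes I_p} V = \paren{w_{i,l}}_{l=1}^{p}$, which is the second description in Eq.~\eqref{eq.subspace}. This produces $\widehat{W}_{\lambda; D}$; it need not be unique when $\mathbf{K}_{\mathbf{X}}$ is singular, but the statement only asks for existence.

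The only delicate point is the reproducing property, which here must be read in its vector-valued form, tested against $e_l$. Everything else is standard Hilbert-space bookkeeping.
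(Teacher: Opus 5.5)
Your proposal is correct and follows essentially the same route as the paper: the orthogonal decomposition $\mathcal{H} = \mathcal{A}_{\mathbf{X}} \oplus \mathcal{A}_{\mathbf{X}}^{\perp}$, the vector-valued reproducing property tested against $e_l$ to show that the data-fit term depends only on the projection, Pythagoras for the penalty, and then reading off $\widehat{W}_{\lambda; D}$ from the definition of $\mathcal{A}_{\mathbf{X}}$. The only small difference is the last step: the paper gets $\hat{f}_{\lambda; D} = \mathrm{proj}_{\mathcal{A}_{\mathbf{X}}}(\hat{f}_{\lambda; D})$ from equality of the risks plus uniqueness of the minimizer, whereas you get $\|f_{\perp}\|_{\mathcal{H}} = 0$ directly from $\lambda > 0$ and finiteness of the risk, which indeed shows that every minimizer lies in $\mathcal{A}_{\mathbf{X}}$.
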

The proof is deferred to Appendix~\ref{proof.representer}.
Owing to the multi-task setting, for all $i \in \brac{1, \ldots, n}$, the feature maps $K\paren{\bullet, X_i}$ are bounded linear-operators from $\mathbb{R}^{p}$ to $\mathcal{H}$.
Moreover the dimension of the weight vector $\widehat{W}_{\lambda; D} \in \mathbb{R}^{np}$ reflects the computational burden incurred by jointly learning $p$ tasks.

\medskip

Computing the minimizer is made possible by plugging the previous expression in the regularised empirical-risk given by Eq.~\ref{eq.regularized.empirical.risk.function}, which allows for rephrasing the optimization problem as finding the vector $W$ minimizing the next function over $\mathbb{R}^{np}$.
\begin{definition}
    For any predictor $f \in \mathcal{A}_{\mathbf{X}}$ (see Eq.~\ref{eq.subspace}),
    there exists $W \in \mathbb{R}^{np}$
    such that,
    \begin{align}
        \label{eq.regularization.erf.matrix}
        \widehat{R}_{\lambda; D}\paren{W}
        :=
        \frac{1}{n}\sum_{i=1}^{n}
        \ell\paren{Y_i, 
            \paren{e_i^{T} \otimes I_{p}}
            \mathbf{K}_{\mathbf{X}}W  
        }
        + \lambda W^{T} \mathbf{K}_{\mathbf{X}} W
        = \widehat{\mathbf{R}}_{\lambda; D}\paren{f},
    \end{align}
    where $\mathbf{K}_{\mathbf{X}} \in \mathbb{R}^{np \times np}$ is the Gram matrix (see Eq.~\ref{eq.gram.matrix}).
\end{definition}
The proof is deferred to Appendix~\ref{proof.regularization.erf.matrix}.
Similarly to the single-task setting \citep{razafindrakotoApproximateFullConformal2026},
the empirical-risk is now expressed a function of the weight-vector $W \in \mathbb{R}^{np}$.
The inter-task connection (encoded by the matrix-valued kernel) is enforced by this regularization term where the Gram matrix plays a central role.
This is made clear by \citet{solnonMultitaskRegressionUsing2012}, where
the objective was to select the best matrix-valued kernel to effectively
capture the right inter-task connection.

{%
This vector-representation of any predictor in $\mathcal{A}_{\mathbf{X}}$
provides an optimization-problem over a finite-dimensional vector-space
whose minimizer (unique over the range of the Gram matrix $\mathbf{K}_{\mathbf{X}}$ by Proposition~\ref{prop.vector.predictor.well.defined})
represents the predictor $\hat{f}_{\lambda; D}$ (see Eq.~\ref{def.predictor}).
In general, an iterative optimization-scheme is used to recover
this minimizer since
it does not admit a closed-form expression
(except for some special cases such as when the loss-function is the quadratic loss-function).
In particular,
this holds true for
the loss-function described in Proposition~\ref{prop.example.lip.loss},
when the univariate cost-function $c\paren{\bullet, \bullet} : \mathbb{R} \times \mathbb{R} \to \mathbb{R}$ is for instance
the Logcosh function \citep{salehStatisticalPropertiesLogcosh2022},
the smoothed-pinball function \citep{zheng2011gradient} or
the pseudo-Huber function \citep{charbonnier1994two}.
In fact, in the subsequent numerical experiments (see Figure~\ref{fig.coverage.stableCP}),
the Scipy's Newton-CG \citep{MinimizemethodNewtonCGSciPyV1180}
is used to recover this minimizer when the loss-function is based on the Logcosh function \citep{salehStatisticalPropertiesLogcosh2022}.

}




%
\subsection{Conformal Prediction}
\label{sec.conformal.prediction}
%
%
When not stated otherwise,
$\paren{X_1, Y_1}$, \dots, $\paren{X_{n}, Y_{n}}$, and
$\paren{X_{n+1}, Y_{n+1}}$ are assumed to be exchangeable \citep{vovkAlgorithmicLearningRandom2005}.
The purpose of conformal prediction is
to design a \emph{confidence prediction-region} $\widehat{C}_{\alpha}\paren{X_{n+1}} \subseteq \mathcal{Y}$
based on a learning algorithm applied to output a trained predictor $\hat{f} \in \mathcal{F}\paren{\mathcal{X}, \mathcal{Y}}$
such that, for every $\alpha \in \paren{0, 1}$,
\begin{align}\label{eq.confidence.prediction.region}
    \mathbb{P}\croch{ Y_{n+1} \in \widehat{C}_{\alpha}\paren{X_{n+1}} } \geq 1-\alpha.
\end{align}
The full-conformal prediction (\textbf{FullCP}) region \citep{vovkAlgorithmicLearningRandom2005},
named $\widehat{C}_{\lambda; \alpha}^{\full}\paren{X_{n+1}}$,
writes as
\begin{align}
\label{eq.conformalRegion}
    \widehat{C}_{\lambda; \alpha}^{\full}\paren{X_{n+1}}
    := 
    \left\{ 
        y \in \mathcal{Y} : 
        \widehat{\pi}_{\lambda; D}^{\full}\paren{X_{n+1}, y} > \alpha
    \right\},
\end{align}
where $\widehat{\pi}_{\lambda; D}^{\full}\paren{X_{n+1}, \bullet}$ is the so-called \emph{conformal p-value function} given by the following definition from \emph{non-conformity scores}.
\begin{definition}[Non-conformity scores and Conformal p-value]
\label{def.conformal.pvalues}

Let $\widehat{s}_{D^{y}}(\bullet, \bullet):\mathcal{Y} \times \mathcal{Y} \to \mathbb{R}$
stand for a \emph{non-conformity function}, and
$\hat{f}_{\lambda; D^y} \in \mathcal{H}$
denote a predictor (see Eq.~\ref{def.predictor}),
trained over the data set $D^y$ containing
$(X_{1}, Y_{1}), \ldots, (X_{n}, Y_{n})$ and $(X_{n+1}, y)$, for every output-value $y\in\mathcal{Y}$.
Then the non-conformity score of the couple $(X_i,Y_i)$ provided $y$ and the data set $D^y$,
is given by
\begin{align}  
\label{def.scores}  
\begin{aligned}
    S_{\lambda; D^{y}} \paren{X_i, Y_i} & := \widehat{s}_{D^{y}}\paren{Y_i, \hat{f}_{\lambda; D^y}\paren{X_i}},
    &&\mbox{if }1\leq i\leq n \\
    S_{\lambda; D^{y}} \paren{X_{i}, y} & := \widehat{s}_{D^{y}}\paren{Y_i, \hat{f}_{\lambda; D^y}\paren{X_i}},
    &&\mbox{if } i=n+1.
\end{aligned}
\tag{NCScores}  
\end{align}
Furthermore, the conformal p-value function $\widehat{\pi}_{\lambda; D}^{\full}\paren{X_{n+1}, \bullet} : \mathcal{Y} \to \croch{\frac{1}{n+1}, 1}$ is
given by, for every $y \in \mathcal{Y}$,
\begin{equation}
\label{def.conformal.pvalue.function}
    \tag{Cp-value}
        \widehat{\pi}_{\lambda; D}^{\full}\paren{X_{n+1}, y}
        := \frac{1
            +
            \sum_{i = 1}^{n}
            \mathbbm{1}\left\{
                S_{\lambda; D^{y}} \paren{X_i, Y_i}
                \geq
                S_{\lambda; D^{y}} \paren{X_{n+1}, y}
            \right\}
        }{n+1}.
    \end{equation}
\end{definition}
Starting from \eqref{def.scores},
$S_{\lambda; D^{y}}\paren{X_i, Y_i}$ quantifies how strongly the point $\paren{X_i, Y_i}$
``deviates from" the data set $D^{y}$, where large values are equivalent to large deviations.
Therefore, going back to Eq.~\eqref{eq.conformalRegion}, the \textbf{FullCP}-region contains values $y \in \mathcal{Y} \subset \mathbb{R}^p$
for which the point $\paren{X_{n+1}, y}$ presents
a relatively weak deviation from the points within the set $D^{y}$.
%
%
Provided the predictor $\hat{f}_{\lambda; D^{y}}$,
if test output-value $y \in \mathcal{Y}$ is contained within the \textbf{FullCP}-region,
then output-value $y$ is expected to be relatively similar to the prediction $\hat{f}_{\lambda; D^{y}}\paren{X_{n+1}}$
w.r.t. to the non-conformity measure $\widehat{s}_{D^{y}}\paren{\bullet, \bullet}$.
In other words, $S_{\lambda; D^{y}}\paren{X_{n+1}, y}$ is expected to be small relative to the non-conformity scores
$S_{\lambda; D^{y}}\paren{X_{1}, Y_{1}}, \ldots, S_{\lambda; D^{y}}\paren{X_{n}, Y_{n}}$, that is,
when \eqref{def.conformal.pvalue.function} is large.

Let us emphasize that compared to the previous work
\cite[Definition 1]{razafindrakotoApproximateFullConformal2026}, an additional novelty of the present one owes to considering a non-conformity function that can depend on the data set $D^{y}$.
This is illustrated in Section~\ref{sec.estimated.inter.task.covariance}
where the non-conformity function integrates an inter-task covariance-matrix estimator.

\medskip

The main motivation for considering a conformal prediction-region stems from the next property which guarantees a minimum confidence (called coverage) in a \emph{distribution-free} setting.
\begin{theorem} 
\label{thm.coverage}
\citep[in Section 2.2.5]{vovk2022}
For every confidence level $\alpha \in \left[\frac{1}{n+1}, 1\right)$,
the \textbf{FullCP}-region $\widehat{C}_{\lambda; \alpha}^{\full}\paren{X_{n+1}}$ enjoys the following coverage guarantee
\begin{equation*}
    \mathbb{P}\croch{Y_{n+1} \in \widehat{C}_{\lambda; \alpha}^{\full}\paren{X_{n+1}}} \geq 1 - \alpha.
\end{equation*}
As such the \textbf{FullCP}-region $\widehat{C}_{\lambda; \alpha}^{\full}\paren{X_{n+1}}$ is a confidence prediction-region.

Furthermore, if the non-conformity scores $S_{\lambda; D^{Y_{n+1}}} \paren{X_1, Y_1}$, \dots, $S_{\lambda; D^{Y_{n+1}}} \paren{X_{n+1}, Y_{n+1}}$ are almost surely distinct, then,
the coverage-probability is also bounded from above,
that is,
\begin{equation*}
    \mathbb{P}\croch{Y_{n+1} \in \widehat{C}_{\lambda; \alpha}^{\full}\paren{X_{n+1}}} \leq 1 - \alpha + \frac{1}{n+1}.
\end{equation*}
\end{theorem}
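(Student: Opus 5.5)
The plan is to reduce the coverage statement to a rank statement about exchangeable scores. First observe that, by definition of the \textbf{FullCP}-region \eqref{eq.conformalRegion}, the event $\{Y_{n+1} \in \widehat{C}_{\lambda; \alpha}^{\full}(X_{n+1})\}$ coincides with $\{\widehat{\pi}_{\lambda; D}^{\full}(X_{n+1}, Y_{n+1}) > \alpha\}$. So everything is evaluated at the single data set $D^{Y_{n+1}}$, which consists of the $n+1$ pairs $(X_1,Y_1),\ldots,(X_{n+1},Y_{n+1})$. Write $S_i := S_{\lambda; D^{Y_{n+1}}}(X_i, Y_i)$ for $i \in \{1,\ldots,n+1\}$. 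Then $(n+1)\,\widehat{\pi}_{\lambda; D}^{\full}(X_{n+1}, Y_{n+1}) = \sum_{i=1}^{n+1}\mathbbm{1}\{S_i \geq S_{n+1}\}$.

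The key step is to show that $(S_1,\ldots,S_{n+1})$ is an exchangeable vector. For this I would use that the predictor $\hat{f}_{\lambda; D^{Y_{n+1}}}$ is well-defined and unique by Lemma~\ref{lm.predictor.well.defined}. It is a function of the data set only through the multiset of pairs, since the regularized empirical risk is a symmetric sum over $D^{Y_{n+1}}$. I would also require that the non-conformity function $\widehat{s}_{D^{y}}$ depends on $D^{y}$ symmetrically, i.e.\ is invariant under permutation of its points. This assumption must be made explicit, since the paper lets $\widehat{s}$ depend on the data, e.g.\ through a covariance estimator. Under these two facts, each $S_i = G((X_i,Y_i); \{(X_j,Y_j)\}_{j})$ for one fixed measurable $G$. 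A permutation $\sigma$ of the data therefore permutes the score vector accordingly. Exchangeability of the data then gives $(S_{\sigma(1)},\ldots,S_{\sigma(n+1)}) \overset{d}{=} (S_1,\ldots,S_{n+1})$. Uniqueness of the minimizer is what prevents any hidden tie-breaking that could depend on data order, which is the main subtle point.

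Given exchangeability, the lower bound comes from a rank argument. Let $R := \sum_{i=1}^{n+1}\mathbbm{1}\{S_i \geq S_{n+1}\}$. Noncoverage means $R \leq \alpha(n+1)$, i.e.\ $S_{n+1}$ lies among the $k := \lfloor \alpha(n+1)\rfloor$ strictly largest positions in a sense made precise next. Summing over $j$ gives the deterministic bound $\sum_{j=1}^{n+1}\mathbbm{1}\{\sum_i \mathbbm{1}\{S_i\geq S_j\} \leq k\} \leq k$. The reason is that the indices $j$ with at most $k$ scores $\geq S_j$ are the top scores, and there are at most $k$ of them even with ties. Taking expectations and using exchangeability, each of the $n+1$ terms has the same probability. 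Hence $\mathbb{P}[R \leq \alpha(n+1)] \leq k/(n+1) \leq \alpha$, which is the claimed coverage. The condition $\alpha \geq 1/(n+1)$ only ensures the statement is nontrivial.

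For the upper bound, assume the scores are a.s.\ distinct. Then $R$ is the rank of $S_{n+1}$ counted from the top. By exchangeability $R$ is uniform on $\{1,\ldots,n+1\}$. Hence
\[
\mathbb{P}[R > \alpha(n+1)] = \frac{n+1-k}{n+1} \leq 1-\alpha+\frac{1}{n+1},
\]
since $k \geq \alpha(n+1)-1$. The main obstacle is the exchangeability step rather than the counting: one has to verify carefully that both the fitted predictor and the data-dependent score function are permutation invariant functions of $D^{y}$.
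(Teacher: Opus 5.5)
Your proof is correct and is the standard argument behind the result, which the paper does not prove but cites from Vovk et al.: exchangeability of the score vector $(S_1,\ldots,S_{n+1})$, the deterministic count showing that at most $\lfloor \alpha(n+1)\rfloor$ indices can have rank $\leq \alpha(n+1)$ (valid even with ties), and uniformity of the rank when the scores are a.s.\ distinct. You are also right that permutation-invariance in the points of $D^{y}$ of both $\hat{f}_{\lambda; D^{y}}$ and $\widehat{s}_{D^{y}}$ is the hypothesis implicit in that citation. For the predictor it follows from the uniqueness in Lemma~\ref{lm.predictor.well.defined}, and it holds for the covariance-based score of Section~\ref{sec.estimated.inter.task.covariance}.
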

By \eqref{def.conformal.pvalue.function}, the confidence level $\alpha$ cannot be lower than $\frac{1}{n+1}$.
This entails that any $\alpha < \frac{1}{n+1}$ results in a non-informative \textbf{FullCP}-region $\widehat{C}_{\lambda; \alpha}^{\full}\paren{X_{n+1}}$, that is, $\widehat{C}_{\lambda; \alpha}^{\full}\paren{X_{n+1}} = \mathcal{Y}$, which trivially fulfils the upper-bound on the coverage-probability.
\section{Approximate full-conformal prediction}
\label{sec.approximate.full.conformal.perdiction}
The objective of the present section is to reformulate
the generic approximation scheme introduced in \citet[see Definition 4]{razafindrakotoApproximateFullConformal2026}.
%
%
More precisely, Section~\ref{sec.approximation.scheme} formulates
a prediction-region containing the \textbf{FullCP}-region
and recalls the notion of \emph{thickness} which quantifies the tightness of this prediction-region in terms of volume.
Since this approximation relies on algorithmic stability,
Section~\ref{sec.stability.bounds} formulates the classical algorithmic stability-bound for the predictor, and instantiates this bound for a particular class of kernels.
\subsection{Approximation scheme}
\label{sec.approximation.scheme}
%
For every test output-value $y \in \mathcal{Y}$,
let $D^{y}$ denote the data set containing
$\paren{X_1, Y_1}$, \dots, $\paren{X_{n}, Y_{n}}$ and $\paren{X_{n+1}, y}$.
The following definition formulates a prediction-region which
contains the \textbf{FullCP}-region based on upper- and lower-approximate non-conformity scores.

\begin{definition}[Approximate \textbf{FullCP}-region]
    \label{def.approximate.fullCP.region}
    For every test output-value $y \in \mathcal{Y}$,
    let $\widetilde{S}_{\lambda; D^{y}}^{\up}\paren{\bullet, \bullet}$
    (and $\widetilde{S}_{\lambda; D^{y}}^{\lo}\paren{\bullet, \bullet}$) designate an upper-approximate non-conformity score function (resp. a lower one) that is, for every $\paren{x, u} \in \mathcal{X} \times \mathcal{Y}$,
    \begin{align}
        \label{eq.bound.score}
        \widetilde{S}_{\lambda; D^{y}}^{\lo}\paren{x, u}
        \leq S_{\lambda; D^{y}}\paren{x, u}
        \leq \widetilde{S}_{\lambda; D^{y}}^{\up}\paren{x, u},
        \quad \mbox{a.s.}.
    \end{align}
    Then, let $\widetilde{C}_{\lambda; \alpha}^{\up}\paren{X_{n+1}}$ name
    an upper-approximate \textbf{FullCP}-region,
    given by
    \begin{align}
        \label{eq.generic.approx.confidence.region}
        \widetilde{C}_{\lambda; \alpha}^{\up}\paren{X_{n+1}}
        :=
        \brac{
        y \in \mathcal{Y} :
        \widetilde{\pi}_{\lambda; D}^{\up} \paren{X_{n+1}, y} > \alpha
        },
    \end{align}
    where $\widetilde{\pi}_{\lambda; D}^{\up} \paren{X_{n+1}, \bullet} : \mathcal{Y} \to \croch{\frac{1}{n+1}, 1}$
    designates the so-called upper-approximate conformal p-value function given by, for every test output-value $y \in \mathcal{Y}$,
    \begin{align*}
        \widetilde{\pi}_{\lambda; D}^{\up} \paren{X_{n+1}, y}
        := \frac{
        1 + \sum_{i=1}^{n}
        \mathbbm{1}
        \brac{
        \widetilde{S}_{\lambda; D^{y}}^{\up}\paren{X_i, Y_i}
        \geq \widetilde{S}_{\lambda; D^{y}}^{\lo}\paren{X_{n+1}, y}
        }
        }{n+1}.
    \end{align*}
    %
    %
    %
\end{definition}

Compared with \citet[see Definition 4]{razafindrakotoApproximateFullConformal2026}, the present one is a generalization. As it will be clarified in Section~\ref{sec.estimated.inter.task.covariance} (see Lemma~\ref{lm.stable.score.maha}), the present definition allows for more general types of correction (for instance multiplicative ones) than the additive one formally detailed in \citet[see Definition 4]{razafindrakotoApproximateFullConformal2026}. This difference turns out to be particularly helpful when dealing with more complex score functions \citep{ndiayeStableConformalPrediction2022,leeLeaveOneOutStableConformal2025,razafindrakotoApproximateFullConformal2026}.
%
%
%
%

\medskip

A main motivation for the previous approximation scheme
owes to the next result
which guarantees that the upper-approximate \textbf{FullCP}-region
contains the \textbf{FullCP}-region and thus, inherits its coverage guarantee.
\begin{theorem}
    \label{thm.coverage.approximate.region}
    For any control-level $\alpha \in \left[\frac{1}{n+1}, 1\right)$,
    the upper-approximate \textbf{FullCP}-region $\widetilde{C}_{\lambda; \alpha}^{\up}\paren{X_{n+1}}$
    (see Eq.~\ref{eq.generic.approx.confidence.region})
    contains the \textbf{FullCP}-region $\widehat{C}_{\lambda; \alpha}^{\full}\paren{X_{n+1}}$ that is,
    $
        \widehat{C}_{\lambda; \alpha}^{\full}\paren{X_{n+1}}
        \subseteq
        \widetilde{C}_{\lambda; \alpha}^{\up}\paren{X_{n+1}}
    $.    
    Then Theorem~\ref{thm.coverage} implies that
    \begin{align*}
        \mathbb{P}\croch{
            Y_{n+1}
            \in
            \widetilde{C}_{\lambda; \alpha}^{\up}\paren{X_{n+1}}
        }
        \geq \mathbb{P}\croch{
            Y_{n+1}
            \in
            \widehat{C}_{\lambda; \alpha}^{\full}\paren{X_{n+1}}
        }
        \geq 1 - \alpha,
    \end{align*}
    making $\widetilde{C}_{\lambda; \alpha}^{\up}\paren{X_{n+1}}$ a confidence prediction-region.
\end{theorem}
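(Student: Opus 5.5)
The plan is to prove the inclusion pointwise and then read off the coverage bound from monotonicity of probability. Fix $\alpha \in \left[\frac{1}{n+1}, 1\right)$ and an output-value $y \in \widehat{C}_{\lambda; \alpha}^{\full}\paren{X_{n+1}}$, so that $\widehat{\pi}_{\lambda; D}^{\full}\paren{X_{n+1}, y} > \alpha$ by Eq.~\eqref{eq.conformalRegion}. It will suffice to show the pointwise domination
\begin{align*}
    \widehat{\pi}_{\lambda; D}^{\full}\paren{X_{n+1}, y}
    \leq
    \widetilde{\pi}_{\lambda; D}^{\up}\paren{X_{n+1}, y},
\end{align*}
for every $y \in \mathcal{Y}$ (almost surely). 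This immediately gives $\widetilde{\pi}_{\lambda; D}^{\up}\paren{X_{n+1}, y} > \alpha$, hence $y \in \widetilde{C}_{\lambda; \alpha}^{\up}\paren{X_{n+1}}$ by Eq.~\eqref{eq.generic.approx.confidence.region}.

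The domination is compared term by term in the numerators, since both p-values share the same leading $1$ and the same denominator $n+1$. For each $i \in \brac{1, \ldots, n}$, I will show the indicator inclusion
\begin{align*}
    \brac{S_{\lambda; D^{y}}\paren{X_i, Y_i} \geq S_{\lambda; D^{y}}\paren{X_{n+1}, y}}
    \subseteq
    \brac{\widetilde{S}_{\lambda; D^{y}}^{\up}\paren{X_i, Y_i} \geq \widetilde{S}_{\lambda; D^{y}}^{\lo}\paren{X_{n+1}, y}}.
\end{align*}
This follows by chaining the two halves of Eq.~\eqref{eq.bound.score}. The upper bound applied at $\paren{X_i, Y_i}$ and the lower bound applied at $\paren{X_{n+1}, y}$ give
\begin{align*}
    \widetilde{S}^{\up}_{\lambda; D^{y}}\paren{X_i, Y_i}
    \geq S_{\lambda; D^{y}}\paren{X_i, Y_i}
    \geq S_{\lambda; D^{y}}\paren{X_{n+1}, y}
    \geq \widetilde{S}^{\lo}_{\lambda; D^{y}}\paren{X_{n+1}, y}.
\end{align*}
Summing the indicators over $i$ yields the domination of the p-values.

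For the coverage statement, the inclusion $\widehat{C}_{\lambda; \alpha}^{\full}\paren{X_{n+1}} \subseteq \widetilde{C}_{\lambda; \alpha}^{\up}\paren{X_{n+1}}$ holds almost surely. It therefore implies the inclusion of events $\brac{Y_{n+1} \in \widehat{C}^{\full}_{\lambda;\alpha}} \subseteq \brac{Y_{n+1} \in \widetilde{C}^{\up}_{\lambda;\alpha}}$ up to a null set, so monotonicity of $\mathbb{P}$ gives the first inequality. Theorem~\ref{thm.coverage} then gives the lower bound $1-\alpha$.

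The only delicate point is the ``a.s.'' qualifier in Eq.~\eqref{eq.bound.score}. The bounds must hold simultaneously for all $y \in \mathcal{Y}$, or at least for $y = Y_{n+1}$, on a common full-probability event. I will interpret Eq.~\eqref{eq.bound.score} as holding on one event of probability one uniformly in $\paren{x, u}$ and $y$. If only a countable family of such statements is needed, a countable union of null sets suffices for the probability statement, because there one only needs the bounds at the $n+1$ points $\paren{X_i, Y_i}$ and $\paren{X_{n+1}, Y_{n+1}}$ with $y = Y_{n+1}$.
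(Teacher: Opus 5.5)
Your proof is correct and is essentially the paper's argument: the paper obtains the inclusion from the upper half of Lemma~\ref{lm.sandwiching}, which chains the two sides of Eq.~\eqref{eq.bound.score} exactly as you do, sums the indicators to dominate the p-values, and then invokes Theorem~\ref{thm.coverage}. The paper does not address the ``a.s.'' qualifier in Eq.~\eqref{eq.bound.score}; your requirement that the bounds hold on a common full-probability event (at least for $y = Y_{n+1}$) settles it adequately.
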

\begin{proof}
    Direct consequence of Lemma~\ref{lm.sandwiching} and Theorem~\ref{thm.coverage}.
\end{proof}

Since the upper-approximate \textbf{FullCP}-region is larger that the \textbf{FullCP}-region,
the next definition introduces the notion of \emph{thickness} which quantifies its tightness
through the volume of its symmetric difference with the \textbf{FullCP}-region.
\begin{definition}
    \label{def.thickness}
    Let $\mathrm{THK}_{\lambda; \alpha}\paren{X_{n+1}}$
    be the \emph{thickness} of the approximate \textbf{FullCP}-region
    $\widetilde{C}_{\lambda; \alpha}^{\up}\paren{X_{n+1}}$
    given by the Lebesgue measure
    of its symmetric difference with the \textbf{FullCP}-region
    $\widehat{C}_{\lambda; \alpha}^{\full}\paren{X_{n+1}}$
    that is,
    \begin{align}
        \label{eq.thickness}
        \mathrm{THK}_{\lambda; \alpha}\paren{X_{n+1}} := \leb{
            \widetilde{C}_{\lambda; \alpha}^{\up}\paren{X_{n+1}}
            \Delta
            \widehat{C}_{\lambda; \alpha}^{\full}\paren{X_{n+1}}
        } = \leb{
            \widetilde{C}_{\lambda; \alpha}^{\up}\paren{X_{n+1}}
            \setminus
            \widehat{C}_{\lambda; \alpha}^{\full}\paren{X_{n+1}}
        }.
    \end{align}
\end{definition}

In practice, the \emph{thickness} cannot be evaluated since it relies
on the intractable \textbf{FullCP}-region. To derive a computable empirical upper-bound
of the \emph{thickness}, a lower-approximate \textbf{FullCP}-region, named $\widetilde{C}_{\lambda; \alpha}^{\lo}\paren{X_{n+1}}$,
is formulated from the non-conformity scores in Eq.~\eqref{eq.score} as
\begin{align}
    \label{eq.generic.lower.approx.confidence.region}
    \widetilde{C}_{\lambda; \alpha}^{\lo}\paren{X_{n+1}}
    :=
    \brac{
    y \in \mathcal{Y} :
    \widetilde{\pi}_{\lambda; D}^{\lo} \paren{X_{n+1}, y} > \alpha
    },
\end{align}
where
$\widetilde{\pi}_{\lambda; D}^{\lo} \paren{X_{n+1}, \bullet} : \mathcal{Y} \to \croch{\frac{1}{n+1}, 1}$
denotes the lower-approximate conformal p-value function,
given by, for every test output-value $y \in \mathcal{Y}$,
\begin{align*}
    \widetilde{\pi}_{\lambda; D}^{\lo} \paren{X_{n+1}, y}
    := \frac{
    1 + \sum_{i=1}^{n}
    \mathbbm{1}
    \brac{
    \widetilde{S}_{\lambda; D^{y}}^{\lo}\paren{X_i, Y_i}
    \geq \widetilde{S}_{\lambda; D^{y}}^{\up}\paren{X_{n+1}, y}
    }
    }{n+1}.
\end{align*}

By obviously sandwiching the \textbf{FullCP}-region between the upper- and
lower-approximate \textbf{FullCP}-regions,
the subsequent result ensures that the \emph{thickness} is bounded from above by the next computable upper-bound.
\begin{lemma}[Sandwiching]
\label{lm.sandwiching}
Assuming Eq.~\eqref{eq.bound.score} hold true, $\widehat{C}_{\lambda; \alpha}^{\full}\paren{X_{n+1}}$ (\textbf{FullCP}-region) is sandwiched between
its upper $\widetilde{C}_{\lambda; \alpha}^{\up}\paren{X_{n+1}}$ and
lower $\widetilde{C}_{\lambda; \alpha}^{\lo}\paren{X_{n+1}}$ approximations that is,
\begin{align*}
    \widetilde{C}_{\lambda; \alpha}^{\lo}\paren{X_{n+1}}
    \subseteq
    \widehat{C}_{\lambda; \alpha}^{\full}\paren{X_{n+1}}
    \subseteq
    \widetilde{C}_{\lambda; \alpha}^{\up}\paren{X_{n+1}},
    \quad \mbox{a.s.}.
\end{align*}
It results that the \emph{thickness} is bounded from above
\begin{align}
\label{eq.confidence.region.gap.bound}
    \mathrm{THK}_{\lambda; \alpha}\paren{X_{n+1}} \leq
    \mathcal{V}\left(
        \widetilde{C}_{\lambda; \alpha}^{\up}\paren{X_{n+1}}
        \setminus
        \widetilde{C}_{\lambda; \alpha}^{\lo}\paren{X_{n+1}}
    \right),
    \quad \mbox{a.s.}.
\end{align}
\end{lemma}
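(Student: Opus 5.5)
The plan is a pointwise comparison of the three p-value functions, followed by monotonicity of the Lebesgue measure. Fix a realization of the data on the almost-sure event where the sandwich \eqref{eq.bound.score} holds for every test value $y\in\mathcal{Y}$ and for every evaluated pair $(X_i,Y_i)$, $i\le n$, and $(X_{n+1},y)$. Since the inequalities are stated pointwise for all $(x,u)$, one countable or uniform null set can be discarded. Then, for each fixed $y\in\mathcal{Y}$, I will show
\begin{align*}
    \widetilde{\pi}_{\lambda; D}^{\lo}\paren{X_{n+1}, y}
    \leq
    \widehat{\pi}_{\lambda; D}^{\full}\paren{X_{n+1}, y}
    \leq
    \widetilde{\pi}_{\lambda; D}^{\up}\paren{X_{n+1}, y}.
\end{align*}

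The comparison is done indicator by indicator. For the upper inequality, take $i\in\brac{1,\ldots,n}$ and suppose $S_{\lambda; D^{y}}\paren{X_i, Y_i}\geq S_{\lambda; D^{y}}\paren{X_{n+1}, y}$. Chaining with \eqref{eq.bound.score} gives
\begin{align*}
    \widetilde{S}_{\lambda; D^{y}}^{\up}\paren{X_i, Y_i}
    \geq S_{\lambda; D^{y}}\paren{X_i, Y_i}
    \geq S_{\lambda; D^{y}}\paren{X_{n+1}, y}
    \geq \widetilde{S}_{\lambda; D^{y}}^{\lo}\paren{X_{n+1}, y}.
\end{align*}
So the indicator in \eqref{def.conformal.pvalue.function} is dominated by the corresponding indicator in $\widetilde{\pi}^{\up}_{\lambda; D}$.

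Symmetrically, suppose $\widetilde{S}^{\lo}_{\lambda; D^{y}}\paren{X_i,Y_i}\geq \widetilde{S}^{\up}_{\lambda; D^{y}}\paren{X_{n+1},y}$. Then
\begin{align*}
    S_{\lambda; D^{y}}\paren{X_i,Y_i}
    \geq \widetilde{S}^{\lo}_{\lambda; D^{y}}\paren{X_i,Y_i}
    \geq \widetilde{S}^{\up}_{\lambda; D^{y}}\paren{X_{n+1},y}
    \geq S_{\lambda; D^{y}}\paren{X_{n+1},y},
\end{align*}
so the lower indicator is dominated by the full one. Summing over $i$, adding $1$ and dividing by $n+1$ yields the displayed chain of p-values.

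The set inclusions then follow from the definitions \eqref{eq.conformalRegion}, \eqref{eq.generic.approx.confidence.region} and \eqref{eq.generic.lower.approx.confidence.region}. Indeed, $\widetilde{\pi}^{\lo}>\alpha$ implies $\widehat{\pi}^{\full}>\alpha$, which in turn implies $\widetilde{\pi}^{\up}>\alpha$. This gives $\widetilde{C}^{\lo}_{\lambda;\alpha}\paren{X_{n+1}}\subseteq\widehat{C}^{\full}_{\lambda;\alpha}\paren{X_{n+1}}\subseteq\widetilde{C}^{\up}_{\lambda;\alpha}\paren{X_{n+1}}$ almost surely.

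For the thickness bound \eqref{eq.confidence.region.gap.bound}, the second inclusion shows that the symmetric difference in Definition~\ref{def.thickness} equals $\widetilde{C}^{\up}_{\lambda;\alpha}\setminus\widehat{C}^{\full}_{\lambda;\alpha}$. The first inclusion gives $\widetilde{C}^{\up}_{\lambda;\alpha}\setminus\widehat{C}^{\full}_{\lambda;\alpha}\subseteq\widetilde{C}^{\up}_{\lambda;\alpha}\setminus\widetilde{C}^{\lo}_{\lambda;\alpha}$. Monotonicity of the Lebesgue measure $\mathcal{V}$ then concludes.

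The only delicate point is measurability of these regions, needed so that $\mathcal{V}$ is defined on them. I would either assume it implicitly, as Definition~\ref{def.thickness} already does, or interpret $\mathcal{V}$ as outer measure, which is still monotone, so the argument is unaffected. Beyond that, the proof is a routine transitivity argument.
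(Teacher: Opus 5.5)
Your proof is correct and follows the paper's own argument. The score sandwich \eqref{eq.bound.score} gives indicator-wise implications; summing them yields $\widetilde{\pi}^{\mathrm{lo}} \le \widehat{\pi}^{\mathrm{full}} \le \widetilde{\pi}^{\mathrm{up}}$ for each $y$, hence the two region inclusions; and the thickness bound follows from monotonicity of $\mathcal{V}$.

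One caveat concerns the null set. Since $y$ ranges over all of $\mathcal{Y}$, you cannot discard a countable union of null sets. You need \eqref{eq.bound.score} to hold outside a single null set uniformly in $y$. The paper tacitly assumes the same, and this holds deterministically in its instances (Lemmas~\ref{lm.stable.score.norm} and~\ref{lm.stable.score.maha}).
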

The proof is deferred to Appendix~\ref{proof.sandwiching}.
Let us emphasize that such a proxy for the \emph{thickness}
is not available in the \textbf{SplitCP} framework.
In practice, this empirical upper-bound can be computed as long as
the upper and the lower-approximate \textbf{FullCP}-regions can.
Figures~\ref{fig.thickness.stableCP} and~\ref{fig.thickness.GlobalEllipsoidCP} display the value this upper bound with respect to different influential quantities such as the sample size $n$.

\subsection{Algorithmic stability-bounds}
\label{sec.stability.bounds}
%
%
The objective is now to reformulate the algorithmic stability-bound derived by \cite{audiffrenStabilityMultitaskKernel2013}
for our purposes.
Let us first introduce some more notation and
discuss an additional assumption.
\begin{notation}
Let $\norm{\bullet} : \mathbb{R}^{p} \to \mathbb{R}_+$ stand for
the norm induced by the Euclidean scalar product over $\mathbb{R}^{p}$, and
$\norm{\bullet}_{\mathrm{op}} : \mathbb{R}^{p \times p} \mapsto \mathbb{R}_+$,
the corresponding operator norm given by,
for every matrix $B \in \mathbb{R}^{p \times p}$,
\begin{align*}
    \norm{B}_{\mathrm{op}} :=
    \sup_{u \in \mathbb{R}^{p} \setminus \brac{0_{\mathbb{R}^{p}}}}
    \frac{\norm{B u}}{\norm{u}}.
\end{align*}    
\end{notation}
\begin{assumption}
    There exists a finite constant $\rho_p \in \paren{0, +\infty}$
    such that, for every $y \in \mathcal{Y}$,
    \begin{align}
        \label{asm.lip.loss}
        u \in \mathbb{R}^{p}
        \mapsto \ell\paren{y, u} \in \mathbb{R}
        \mbox{ is $\rho_p$-Lipschitz continuous
        w.r.t. }\norm{\bullet}.
        \tag{$\rho_p$-LipL}
    \end{align}
\end{assumption}

This assumption along with \eqref{asm.conv.loss}
are used by \citet{audiffrenStabilityMultitaskKernel2013}
to provide uniform stability-bounds in the context of multi-task kernel-regression
given an RKHS with Ridge-type regularization.
The next lemma provides
a class of loss-functions which fulfil these assumptions.
%
\begin{proposition}
    \label{prop.example.lip.loss}
    Let $c\paren{\bullet, \bullet} : \mathbb{R} \times \mathbb{R} \mapsto \mathbb{R}$
    denote a cost-function taking scalar values,
    which is convex and Lipschitz continuous w.r.t. its second argument that is, there exists a constant $\rho \in \paren{0, +\infty}$ such that, for every $a \in \mathbb{R}$,
    $c\paren{a, \bullet}$ is convex and $\rho$-Lipschitz continuous.
    Then, the loss-function $\ell\paren{\bullet, \bullet}: \mathcal{Y} \times \mathcal{Y} \mapsto \mathbb{R}$ given by
    for every $\paren{y, u} \in \mathcal{Y} \times \mathcal{Y}$,
    \begin{align*}
        \ell\paren{y, u} := \frac{1}{\sqrt{p}}\sum_{l=1}^{p}c\paren{y_l, u_l},
    \end{align*}
    fulfills \eqref{asm.conv.loss}
    and \eqref{asm.lip.loss} with $\rho_{p} = \rho$.
\end{proposition}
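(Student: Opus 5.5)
Fix $y \in \mathcal{Y}$ throughout and treat convexity and the Lipschitz property separately, since both follow directly from the coordinatewise structure of $\ell\paren{y, \bullet}$.

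\emph{Convexity (ConvL).} For each $l \in \brac{1, \ldots, p}$, the map $u \in \mathbb{R}^{p} \mapsto c\paren{y_l, u_l}$ is the composition of the linear coordinate projection $u \mapsto u_l = e_l^{T} u$ with the convex function $c\paren{y_l, \bullet}$, hence convex on $\mathbb{R}^p$. A nonnegative combination of convex functions is convex, and the weight $1/\sqrt{p} > 0$, so $\ell\paren{y, \bullet}$ is convex. This step is routine.

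\emph{Lipschitz continuity.} For $u, v \in \mathbb{R}^{p}$, the triangle inequality and the $\rho$-Lipschitz property of each $c\paren{y_l, \bullet}$ give
\begin{align*}
    \abss{\ell\paren{y, u} - \ell\paren{y, v}}
    \leq \frac{1}{\sqrt{p}} \sum_{l=1}^{p} \abss{c\paren{y_l, u_l} - c\paren{y_l, v_l}}
    \leq \frac{\rho}{\sqrt{p}} \sum_{l=1}^{p} \abss{u_l - v_l}.
\end{align*}
The only nontrivial point is converting the $\ell_1$-sum into the Euclidean norm $\norm{\bullet}$ without losing a factor depending on $p$. Cauchy--Schwarz gives $\sum_{l=1}^{p} \abss{u_l - v_l} \leq \sqrt{p}\, \norm{u - v}$. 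The normalisation $1/\sqrt{p}$ in the definition of $\ell$ is chosen exactly to cancel this factor. Hence
\[
    \abss{\ell\paren{y, u} - \ell\paren{y, v}} \leq \rho \norm{u - v},
\]
which is \eqref{asm.lip.loss} with $\rho_p = \rho$.

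I do not expect any real obstacle. The one thing to verify is that the bound is uniform in $y$, which holds because $\rho$ does not depend on the first argument $a$ of $c$.
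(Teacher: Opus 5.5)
Your proposal is correct and follows the paper's proof: convexity via composing each $c\paren{y_l, \bullet}$ with a coordinate projection and summing with the positive weight $1/\sqrt{p}$, then Lipschitz continuity via the triangle inequality, the coordinatewise $\rho$-Lipschitz bound, and $\sum_{l} \abss{u_l - v_l} \leq \sqrt{p}\,\norm{u - v}$ to cancel the $1/\sqrt{p}$. You justify that last step by Cauchy--Schwarz, where the paper cites the comparison between the $\ell^1$ and Euclidean norms, which is the same fact.
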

The proof is deferred to Appendix~\ref{proof.example.lip.loss}.
This procedure derives
a multivariate loss-function from
a univariate cost-function.
As such, one can lift a robust univariate cost-function \citep{salehStatisticalPropertiesLogcosh2022, zheng2011gradient, charbonnier1994two} into a multivariate one.
One such loss function is used in the subsequent numerical experiments (see Figure~\ref{fig.coverage.stableCP}).
Let us note that for this particular choice
of loss-function the Lipschitz-constant
does not depend on the dimension $p$ of the output space $\mathcal{Y}$.

\medskip

A key property for deriving our approximations is the algorithmic stability. In the present context, it results from the strong convexity property of the Ridge-type regularization term, which can be extended to any strongly convex regularization term \citep[see Eq.~1]{ndiayeStableConformalPrediction2022}.
%
Therefore the algorithmic stability ensures that
small differences in the training data set entails
small differences in the subsequent predictors.
The following result provides an upper-bound on the latter difference
involving explicitly known quantities such as
the Lipschitz constant of the loss-function,
the operator norm of the kernel,
and the Ridge-type regularization parameter.
\begin{lemma}[Uniform stability-bound]
    \label{lm.stability.bounds}
    Assume
    \eqref{asm.conv.loss},
    \eqref{asm.lip.loss} and
    \eqref{asm.lower.bounded.loss}
    hold true.
    Then for every test output-value $y \in \mathcal{Y}$, we get
    \begin{align*}
        \normh{
            \hat{f}_{\lambda; D^{y}}
            - \hat{f}_{\lambda^{+}; D}
        }
        \leq
        \frac{
            \rho_{p}\norm{K\paren{X_{n+1}, X_{n+1}}}_{\mathrm{op}}^{\frac{1}{2}}
        }{2 \lambda\paren{n+1}},
    \end{align*}
    with $\lambda^{+} := \frac{n+1}{n} \lambda$.
\end{lemma}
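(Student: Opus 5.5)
The plan is to compare the two strongly convex objectives directly. The key observation is that $\lambda^{+} = \frac{n+1}{n}\lambda$ is chosen so that the two regularized risks share the same normalization. Indeed,
\begin{align*}
\widehat{\mathbf{R}}_{\lambda^{+}; D}(f) = \frac{1}{n}\sum_{i=1}^{n}\ell\paren{Y_i, f(X_i)} + \frac{n+1}{n}\lambda\normh{f}^2 .
\end{align*}
Multiplying by $\frac{n}{n+1}$ gives
\begin{align*}
\frac{n}{n+1}\widehat{\mathbf{R}}_{\lambda^{+}; D}(f) = \frac{1}{n+1}\sum_{i=1}^{n}\ell\paren{Y_i, f(X_i)} + \lambda\normh{f}^2 =: G(f).
\end{align*}
So $\hat{f}_{\lambda^{+};D}$ also minimizes $G$, and
\begin{align*}
\widehat{\mathbf{R}}_{\lambda; D^{y}}(f) = G(f) + \frac{1}{n+1}\ell\paren{y, f(X_{n+1})} =: G(f)+h(f).
\end{align*}
Both $G$ and $G+h$ are $2\lambda$-strongly convex, by \eqref{asm.conv.loss} and Lemma~\ref{lm.predictor.well.defined}. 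Both minimizers exist and are unique, again by Lemma~\ref{lm.predictor.well.defined}.

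Write $f_1 := \hat{f}_{\lambda; D^y}$ and $f_0 := \hat{f}_{\lambda^{+}; D}$. Applying strong convexity at each minimizer gives
\begin{align*}
G(f_1) - G(f_0) &\geq \lambda\normh{f_1-f_0}^2, \\
(G+h)(f_0) - (G+h)(f_1) &\geq \lambda\normh{f_1-f_0}^2 .
\end{align*}
The quadratic growth here is $\frac{\mu}{2}\normh{\cdot}^2$ with $\mu = 2\lambda$. Summing the two inequalities, the $G$ terms cancel and we obtain
\begin{align*}
2\lambda\normh{f_1-f_0}^2 \leq h(f_0)-h(f_1) = \frac{1}{n+1}\croch{\ell\paren{y, f_0(X_{n+1})} - \ell\paren{y, f_1(X_{n+1})}}.
\end{align*}

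Next, \eqref{asm.lip.loss} bounds the right-hand side by $\frac{\rho_p}{n+1}\norm{f_0(X_{n+1}) - f_1(X_{n+1})}$. The reproducing property then controls the point evaluation. For $g\in\mathcal{H}$ and $u\in\mathbb{R}^p$ we have $u^T g(x) = \dotprod{g}{K(\bullet,x)u}{\mathcal{H}}$ and $\normh{K(\bullet,x)u}^2 = u^T K(x,x) u$. Hence
\begin{align*}
\norm{g(x)} \leq \norm{K(x,x)}_{\mathrm{op}}^{1/2}\normh{g}.
\end{align*}
Combining these gives
\begin{align*}
2\lambda\normh{f_1-f_0}^2 \leq \frac{\rho_p\norm{K(X_{n+1},X_{n+1})}_{\mathrm{op}}^{1/2}}{n+1}\normh{f_1-f_0}.
\end{align*}
Dividing by $\normh{f_1-f_0}$ yields the claim; the case where this norm is zero is trivial.

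The main delicate point is getting the constant exactly $\frac{1}{2\lambda(n+1)}$. A cruder argument would use only one strong-convexity inequality and obtain a factor $\frac{1}{\lambda}$. Summing the two inequalities at both minimizers, rather than using one with a first-order optimality condition, is what produces the factor $2\lambda$. It also avoids subgradient calculus for the possibly nonsmooth $\ell$.

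Two further checks are needed. First, the strong-convexity growth inequality at a minimizer holds for lower semi-continuous convex functions on a Hilbert space, because $0$ lies in the subdifferential. Second, the lower bound \eqref{asm.lower.bounded.loss} is used only through Lemma~\ref{lm.predictor.well.defined}, to guarantee existence of both minimizers.
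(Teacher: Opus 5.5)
Your proof is correct and follows essentially the same route as the paper. The paper also applies the minimum-gap inequality (Lemma~\ref{lm.minimum.gap}) once at $\hat{f}_{\lambda; D^{y}}$ for $\widehat{\mathbf{R}}_{\lambda; D^{y}}$ and once at $\hat{f}_{\lambda^{+}; D}$ for $\widehat{\mathbf{R}}_{\lambda^{+}; D}$, rescales the latter by $\frac{n}{n+1}$ so that everything except the test-point loss cancels when the two are added (your $G$ versus $G+h$ bookkeeping is a tidier way of writing this), and finishes with \eqref{asm.lip.loss} and the evaluation bound $\|g(x)\| \leq \|K(x,x)\|_{\mathrm{op}}^{1/2}\|g\|_{\mathcal{H}}$. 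One small quibble with your closing aside: a first-order argument (strong monotonicity of $\partial G$ together with $0 \in \partial G(\hat{f}_{\lambda^{+}; D})$) would also give the factor $2\lambda$; what actually loses the factor $2$ is replacing one of the two strong-convexity inequalities by plain optimality, $G(\hat{f}_{\lambda^{+}; D}) \leq G(\hat{f}_{\lambda; D^{y}})$.
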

The proof is deferred to Appendix~\ref{proof.stability.bounds}.
Let us briefly mention that this result is an adaptation of
an intermediate one from \citet[see Appendix A]{audiffrenStabilityMultitaskKernel2013}.
A slight difference in our notation is that,
following \citet[see Eq.~20]{bousquetStabilityGeneralization2002},
the truncated empirical-risk, evaluated over $n$ data points,
present a normalization of $\frac{1}{n+1}$.
Whereas in our case, the normalization is exactly the inverse of the number of data points (see Definition~\ref{def.predictor}).
In order to recover their normalization, the predictor $\hat{f}_{\lambda^{+}; D}$
trained over $n$ data points integrates an inflated regularization parameter $\lambda^{+} = \frac{n+1}{n} \lambda$,
instead of $\lambda$ in $\hat{f}_{\lambda; D^{y}}$.

\medskip

\notparagraph{Example of matrix-valued kernel}
Within the subsequent numerical experiments,
the following matrix-valued kernel is considered,
for every $\paren{x, t} \in \mathcal{X} \times \mathcal{X}$,
\begin{align*}
    K\paren{x, t} := k\paren{x, t}\Gamma \in \mathrm{Sym}_{p}^{+}\paren{\mathbb{R}},
\end{align*}
where $k\paren{\bullet, \bullet}: \mathcal{X} \times \mathcal{X}$
is a scalar-valued kernel,
and $\Gamma \in \mathrm{Sym}_{p}^{++}\paren{\mathbb{R}}$
is a symmetric positive definite matrix.
Here, $k\paren{x, t}$ encodes the similarity between $x$ and $t$, while the matrix $\Gamma$ encodes the inter-task correlation. In other words, the link between the feature vectors and the inter-task connections have been disentangled. This kernel was studied by \citet[with $\Gamma = M^{-1}$]{solnonMultitaskRegressionUsing2012}
and expanded upon by \citet[with $\Gamma = T$]{liOptimalRatesRegularized2022}
for function-valued predictors.
For every $\paren{x, t} \in \mathcal{X} \times \mathcal{X}$, the operator norm of the matrix-valued kernel is given by
\begin{align*}
    \norm{K\paren{x, t}}_{\mathrm{op}} = \abss{k\paren{x, t}} \norm{\Gamma}_{\mathrm{op}}.
\end{align*}
If the scalar-valued kernel is uniformly bounded (as is the case for the RBF Gaussian kernel) then, the above-mentioned algorithmic stability-bound improves at the standard rate of $O\paren{\frac{1}{\lambda n}}$ as  $\lambda n$ grows for any fixed output dimension $p$.
This entails that $\hat{f}_{\lambda^{+}; D}$ is an increasingly better approximation to
$\hat{f}_{\lambda; D^{y}}$ for larger values of $\lambda n$.

\section{Known inter-task covariance-matrix}
\label{sec.known.inter.task.covariance}
The present section studies an
instance of the approximation scheme from Definition~\ref{def.approximate.fullCP.region}
called \textbf{StableCP} when the inter-task covariance-matrix is known.
Section~\ref{sec.known.cov.ncms}
first introduces the non-conformity scores incorporating the inter-task covariance-matrix,
and derives the corresponding upper and lower approximations.
Then, Section~\ref{sec.known.cov.explicit.computation}
details its the \textbf{StableCP}-region expression, states the guarantee it enjoys, and numerically illustrates what this guarantee looks like on a synthetic data set.
The explicit rates for its \emph{thickness} are established in Section~\ref{sec.known.cov.thickness.upper.bound}, while Section~\ref{sec.known.cov.numerical.experiments}
numerically illustrates the evolution of the \emph{thickness} and
compares  the \textbf{StableCP}- and \textbf{SplitCP}-regions (see Appendix~\ref{sec.split.cp}) in terms of their volume relative to that of the \textbf{OracleCP}-region (see Appendix~\ref{sec.oracle.cp}).
\subsection{Non-conformity scores and approximations}
\label{sec.known.cov.ncms}

Since the inter-task covariance-matrix $\Gamma \in \mathrm{Sym}_{p}^{++}\paren{\mathbb{R}}$
is presently known, the following non-conformity score $S_{\lambda; D^{y}}^{\Gamma}\paren{x, u}$
measures the quality of the prediction-vector $\hat{f}_{\lambda; D^{y}}\paren{x} \in \mathbb{R}^{p}$
as an approximation of the output-vector $u \in \mathbb{R}^{p}$
through the $\Gamma^{-1}$-Mahalanobis distance.

%
%
\begin{definition}[Mahalanobis non-conformity score]
For every $\paren{x, u} \in \mathcal{X} \times \mathcal{Y}$,
let $S_{\lambda; D^{y}}^{\Gamma}\paren{x, u}$
denote the Mahalanobis non-conformity score
of the point $\paren{x, u}$
with respect to the data set $D^{y}$ that is,
\begin{align}
    \label{eq.score}
    S_{\lambda; D^{y}}^{\Gamma}
    \paren{x, u}
    :=
    \norm{
        \Gamma^{-\frac{1}{2}}
        \paren{
            u - \hat{f}_{\lambda; D^{y}}(x)
        }
    }.
\end{align}    
\end{definition}
This non-conformity score was already considered in the context of conformal prediction
(for \textbf{FullCP} see \cite{johnstoneExactApproximateConformal2024},
for \textbf{SplitCP} see \cite{messoudiEllipsoidalConformalInference2022} and
\cite{braunMultivariateStandardizedResiduals2026}).
Closest to the present work, \citet{johnstoneExactApproximateConformal2024}
approximately recover the \textbf{FullCP}-region when the prediction-vector $\hat{f}_{\lambda; D^{y}}\paren{x}$
is an affine transformation of the test output-value $y \in \mathcal{Y}$. However, this work departs from the present one in two respects:
\begin{enumerate}
    \item since exact recovery requires computing boundary points
along an infinite number of directions,
parts of the \textbf{FullCP}-region may lay outside the recovered prediction-region, then loosing some coverage guarantee in practice.
By contrast, the upper-approximate \textbf{FullCP}-region
formulated in Definition~\ref{def.approximate.fullCP.region} (namely the \textbf{StableCP}-region) fully contains the \textbf{FullCP}-region.

    \item the explicit expression of the \textbf{FullCP}-region holds with the quadratic loss-function that is, for every $\paren{y, u} \in \mathbb{R}^{p} \times \mathbb{R}^{p}$,
$\ell\paren{y, u} = \norm{y - u}^2$, whereas the present work builds approximate \textbf{FullCP}-region with any loss-function fulfilling \eqref{asm.conv.loss}, \eqref{asm.lip.loss} and \eqref{asm.lower.bounded.loss}.
\end{enumerate}

\medskip

Harnessing the algorithmic stability-bound on the prediction (see Lemma~\ref{lm.stability.bounds}),
the next result details the expressions of the upper- and lower-approximate non-conformity scores.

\begin{lemma}
    \label{lm.stable.score.norm}
    Assume
    \eqref{asm.conv.loss},
    \eqref{asm.lip.loss} and
    \eqref{asm.lower.bounded.loss}
    hold true.
    Then, for every test output-value $y \in \mathcal{Y}$
    and for every $\paren{x, u} \in \mathcal{X} \times \mathcal{Y}$,
    \begin{align*}
        \widetilde{S}_{\lambda; D^{y}}^{\Gamma, \lo}
        \paren{x, u}
        \leq
        S_{\lambda; D^{y}}^{\Gamma}
        \paren{x, u}
        \leq
        \widetilde{S}_{\lambda; D^{y}}^{\Gamma, \up}
        \paren{x, u},
    \end{align*}
    where $\widetilde{S}_{\lambda; D^{y}}^{\Gamma, \lo}\paren{x, u}$ and
    $\widetilde{S}_{\lambda; D^{y}}^{\Gamma, \up}\paren{x, u}$
    stand for the upper- and lower-approximate non-conformity scores given by
    \begin{align*}
        \widetilde{S}_{\lambda; D^{y}}^{\Gamma, \lo}\paren{x, u}
        &
        :=
        S_{\lambda^{+}; D}^{\Gamma}\paren{x, u}
        - \widehat{\tau}_{\lambda}^{\Gamma}(x),
        \\
        \widetilde{S}_{\lambda; D^{y}}^{\Gamma, \up}\paren{x, u}
        &
        :=
        S_{\lambda^{+}; D}^{\Gamma}\paren{x, u}
        + \widehat{\tau}_{\lambda}^{\Gamma}(x),
    \end{align*}
    with $\widehat{\tau}_{\lambda}^{\Gamma}(x)$ being a score stability-bound given by
    \begin{align}
        \label{eq.score.stability.bound}
        \widehat{\tau}_{\lambda}^{\Gamma}(x)
        &
        := 
        \norm{\Gamma^{-\frac{1}{2}}K\paren{x, x} \Gamma^{-\frac{1}{2}}}_{\mathrm{op}}^{\frac{1}{2}}
        \frac{
            \rho_{p} \norm{K\paren{X_{n+1}, X_{n+1}}}_{\mathrm{op}}^{\frac{1}{2}}       
        }{2\lambda \paren{n+1}}.
    \end{align}
\end{lemma}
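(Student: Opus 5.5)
The plan is to reduce the statement to a pointwise bound on the two predictions at $x$, then transfer the RKHS stability bound of Lemma~\ref{lm.stability.bounds} through the reproducing property. First, by the reverse triangle inequality for the Euclidean norm,
\begin{align*}
    \abss{
        S_{\lambda; D^{y}}^{\Gamma}\paren{x, u}
        - S_{\lambda^{+}; D}^{\Gamma}\paren{x, u}
    }
    \leq
    \norm{
        \Gamma^{-\frac{1}{2}}
        \paren{
            \hat{f}_{\lambda; D^{y}}(x) - \hat{f}_{\lambda^{+}; D}(x)
        }
    },
\end{align*}
since the term $\Gamma^{-\frac{1}{2}}u$ cancels. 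Both inequalities of the lemma then follow once the right-hand side is shown to be at most $\widehat{\tau}_{\lambda}^{\Gamma}(x)$.

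Next, set $g := \hat{f}_{\lambda; D^{y}} - \hat{f}_{\lambda^{+}; D} \in \mathcal{H}$. The goal is a bound of the form $\norm{\Gamma^{-\frac{1}{2}} g(x)} \leq \norm{\Gamma^{-\frac{1}{2}}K(x,x)\Gamma^{-\frac{1}{2}}}_{\mathrm{op}}^{\frac{1}{2}} \normh{g}$. I will use the reproducing property of the matrix-valued kernel, $\dotprod{g(x)}{v}{} = \dotprod{g}{K(\bullet, x)v}{\mathcal{H}}$ for $v \in \mathbb{R}^{p}$, together with $\normh{K(\bullet,x)v}^2 = v^{T}K(x,x)v$. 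For any unit vector $w \in \mathbb{R}^{p}$, take $v = \Gamma^{-\frac{1}{2}}w$. Cauchy--Schwarz in $\mathcal{H}$ then gives
\begin{align*}
    \abss{w^{T}\Gamma^{-\frac{1}{2}}g(x)}
    \leq \normh{g}\paren{w^{T}\Gamma^{-\frac{1}{2}}K(x,x)\Gamma^{-\frac{1}{2}}w}^{\frac{1}{2}}
    \leq \normh{g}\norm{\Gamma^{-\frac{1}{2}}K(x,x)\Gamma^{-\frac{1}{2}}}_{\mathrm{op}}^{\frac{1}{2}},
\end{align*}
where symmetry of $\Gamma^{-\frac{1}{2}}$ is used. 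Taking the supremum over unit $w$ yields the desired bound on the Euclidean norm.

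Finally, I will plug in Lemma~\ref{lm.stability.bounds}, which bounds $\normh{g}$ by $\rho_{p}\norm{K(X_{n+1},X_{n+1})}_{\mathrm{op}}^{\frac{1}{2}}/\paren{2\lambda(n+1)}$ under exactly the assumptions \eqref{asm.conv.loss}, \eqref{asm.lip.loss} and \eqref{asm.lower.bounded.loss} of the present statement. This gives precisely $\widehat{\tau}_{\lambda}^{\Gamma}(x)$. The only step needing care is the second one: it requires the correct form of the vector-valued reproducing property and the identity $\normh{K(\bullet,x)v}^2 = v^{T}K(x,x)v$, both standard for RKHS of vector-valued functions \citep{micchelliLearningVectorvaluedFunctions2005}. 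Because Lemma~\ref{lm.stability.bounds} holds for every $y$, the resulting bounds are deterministic and therefore hold almost surely, as required by Eq.~\eqref{eq.bound.score}.
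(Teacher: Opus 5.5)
Your proposal is correct and follows essentially the same route as the paper: the reverse triangle inequality, then the pointwise bound $\norm{\Gamma^{-\frac{1}{2}} g(x)} \leq \norm{\Gamma^{-\frac{1}{2}}K(x,x)\Gamma^{-\frac{1}{2}}}_{\mathrm{op}}^{\frac{1}{2}}\normh{g}$, then Lemma~\ref{lm.stability.bounds}. The only difference is cosmetic: the paper obtains the pointwise bound as Lemma~\ref{lm.gamma.prediction.bound} by testing against $\Gamma^{-1}g(x)$ with Lemma~\ref{lm.lambda.kernel.operator.norm} and then dividing, whereas you get it by taking a supremum over unit vectors $w$, which is equivalent.
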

The proof is deferred to Appendix~\ref{proof.stable.score.norm}.
Let us mention that such an additive uniform (w.r.t. $y$) correction $\widehat{\tau}_{\lambda}^{\Gamma}(\bullet)$
is not new in the context of single task learning \citep{ndiayeStableConformalPrediction2022,leeLeaveOneOutStableConformal2025,razafindrakotoApproximateFullConformal2026}.
The specificity of the present multi-task context lies in the inter-task relationships, emphasized here by $\norm{\Gamma^{-\frac{1}{2}}K\paren{\bullet, \bullet} \Gamma^{-\frac{1}{2}}}_{\mathrm{op}}$.
This first factor in Eq.~\eqref{eq.score.stability.bound}
outlines the importance of the relationship between the matrix-valued kernel $K\paren{\bullet, \bullet}$
and the covariance-matrix $\Gamma$ of the output-vector.
To be more specific, if the matrix-valued kernel is chosen to be
$K\paren{\bullet, \bullet} := k\paren{\bullet, \bullet}\Gamma$,
then
\begin{align*}
    \norm{\Gamma^{-\frac{1}{2}}K\paren{\bullet, \bullet} \Gamma^{-\frac{1}{2}}}_{\mathrm{op}}
    = \abss{k\paren{\bullet, \bullet}},
\end{align*}
where any dependence on $\Gamma$ (and thus on the output-space dimension $p$) cancels out.
Therefore, the dimension $p$ does not impact the correction $\widehat{\tau}_{\lambda}^{\Gamma}(\bullet)$ through this term.
This suggests that the non-conformity score (see Eq.~\ref{eq.score})
is a sounding choice when the predictor incorporates the matrix-valued kernel described above.

The right-most factor in Eq.~\eqref{eq.score.stability.bound} (already discussed at the end of Section~\ref{sec.stability.bounds}) straightforwardly results from the uniform stability-bound.
If $\sup_{x\in\mathcal{X}} \norm{K(x,x)}_{\mathrm{op}}<+\infty$, then the upper bound decays at rate $O((n\lambda)^{-1})$ as $n\lambda \to +\infty$.
%
%
%
%

\subsection{Explicit computation}
\label{sec.known.cov.explicit.computation}

Incorporating the upper and lower non-conformity scores introduced in Lemma~\ref{lm.stable.score.norm}, the resulting upper- and lower-approximate \textbf{FullCP}-regions are formulated as follows.

\begin{definition}[Lower and upper \textbf{StableCP}-regions]
Let $\widetilde{C}_{\lambda; \alpha}^{\Gamma, \up} \paren{X_{n+1}}$
stand for the upper \textbf{StableCP}-region 
with the Mahalanobis non-conformity. It is given by
\begin{align}
    \label{def.approx.fcpr.norm}
    \widetilde{C}_{\lambda; \alpha}^{\Gamma, \up} \paren{X_{n+1}}
    := \brac{
    y \in \mathcal{Y}:
    \frac{
    1 + \sum_{i=1}^{n}
    \mathbbm{1}\brac{
        \widetilde{S}_{\lambda; D^{y}}^{\Gamma, \up}\paren{X_i, Y_i}
        \geq \widetilde{S}_{\lambda; D^{y}}^{\Gamma, \lo}\paren{X_{n+1}, y}
        }
    }{n+1} > \alpha
    }.
\end{align}
    
\end{definition}
In order to derive the explicit expression of the upper \textbf{StableCP}-region,
let us first introduce some more notation and then state a result on quantiles.
\begin{notation}
Let $m \in \brac{0, \ldots, n}$ denote a number of data points.
For any control-level $\alpha \in \left[\frac{1}{n+1}, \frac{m+1}{n+1}\right)$,
let $i_{n, \alpha}^{m} \in \brac{1, \ldots, m}$
be the index given by
\begin{align}
    \label{eq.index.alpha}
    i_{n, \alpha}^{m} := \ceil{\paren{n+1}\paren{1 - \alpha} - \paren{n - m}},
\end{align}
Moreover, for any given real-valued sequence $a_{1}, \ldots, a_{m}$,
let $a_{(1)}, \ldots, a_{(m)}$ stand for the sequence of elements ordered such that $a_{\paren{1}} \leq \ldots \leq a_{\paren{m}}$. 
\end{notation}
Owing to the choice of non-conformity score (see Eq.~\ref{eq.score}),
the next result details the closed-form expression of the upper \textbf{StableCP}-region.
\begin{proposition}
    \label{prop.upper.stableCP.region.fixed}
    For any control-level $\alpha \in \left[\frac{1}{n+1}, 1\right)$,
    the upper \textbf{StableCP}-region $\widetilde{C}_{\lambda; \alpha}^{\Gamma, \up}\paren{X_{n+1}}$
    is the region enclosed by the $\Gamma^{-1}$-ellipsoid centred around $\hat{f}_{\lambda^{+}; D}\paren{X_{n+1}}$
    with a radius of $\widehat{Q}_{\lambda; D^{+}}^{\Gamma, \up}(\alpha)
    + \widehat{\tau}_{\lambda}^{\Gamma}\paren{X_{n+1}}$, that is,
    \begin{align*}
        \widetilde{C}_{\lambda; \alpha}^{\Gamma, \up}\paren{X_{n+1}}
        = \brac{
            y \in \mathcal{Y} :
            \norm{
                \Gamma^{-\frac{1}{2}}\paren{
                    y - \hat{f}_{\lambda^{+}; D}\paren{X_{n+1}}
                }
            } \leq \widehat{Q}_{\lambda; D^{+}}^{\Gamma, \up}(\alpha)
            + \widehat{\tau}_{\lambda}^{\Gamma}\paren{X_{n+1}}
        },
    \end{align*}
    where $\widehat{Q}_{\lambda; D^{+}}^{\Gamma, \up}(\alpha)$ is
    given by
    \begin{align}
        \label{eq.fixed.ncs.quantile}
        \widehat{Q}_{\lambda; D^{+}}^{\Gamma, \up}(\alpha)
        :=
        \norm{
            \Gamma^{-\frac{1}{2}}
            \paren{
                Y_{\paren{i_{n, \alpha}^{n}}}
                - \hat{f}_{\lambda^{+}; D}\paren{X_{\paren{i_{n, \alpha}^{n}}}}
            }
        } + \widehat{\tau}_{\lambda}^{\Gamma}\paren{X_{\paren{i_{n, \alpha}^{n}}}}
    \end{align}
    (see Eq.~\ref{eq.score.stability.bound} for $\widehat{\tau}_{\lambda}^{\Gamma}\paren{\bullet}$
    and Eq.~\ref{eq.index.alpha} for $i_{n, \alpha}^{n}$).
\end{proposition}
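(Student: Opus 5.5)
The plan is to unfold the definition of the upper \textbf{StableCP}-region and observe that the $y$-dependence disappears from every score except the test one. By Lemma~\ref{lm.stable.score.norm}, the approximate scores are built from $S_{\lambda^{+}; D}^{\Gamma}$, which only involves $\hat{f}_{\lambda^{+}; D}$, trained on $D$ and hence independent of $y$. For $1\le i\le n$, set
\begin{align*}
    a_i := \widetilde{S}_{\lambda; D^{y}}^{\Gamma, \up}\paren{X_i, Y_i}
    = \norm{\Gamma^{-\frac{1}{2}}\paren{Y_i - \hat{f}_{\lambda^{+}; D}\paren{X_i}}} + \widehat{\tau}_{\lambda}^{\Gamma}\paren{X_i}.
\end{align*}
These numbers do not depend on $y$. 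The test lower score is
\begin{align*}
    b(y) := \widetilde{S}_{\lambda; D^{y}}^{\Gamma, \lo}\paren{X_{n+1}, y}
    = \norm{\Gamma^{-\frac{1}{2}}\paren{y - \hat{f}_{\lambda^{+}; D}\paren{X_{n+1}}}} - \widehat{\tau}_{\lambda}^{\Gamma}\paren{X_{n+1}}.
\end{align*}
Note that $\widehat{\tau}_{\lambda}^{\Gamma}$ is also independent of $y$, since it only involves $X_{n+1}$ and the kernel. So membership of $y$ reduces to the scalar condition
\begin{align*}
    1 + \#\brac{i : a_i \geq b(y)} > \alpha (n+1).
\end{align*}

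The second step is the standard quantile lemma. Order the $a_i$ as $a_{(1)}\le\dots\le a_{(n)}$. The count $\#\brac{i: a_i\ge b}$ is non-increasing in $b$, and it is at least $k$ if and only if $b\le a_{(n-k+1)}$. The condition $\#\brac{i: a_i\ge b} > \alpha(n+1)-1$ is equivalent to the count being at least $k^\ast:=\lfloor \alpha(n+1)-1\rfloor+1$. Hence it is equivalent to $b\le a_{(n-k^\ast+1)}$.

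I would then check that $n-k^\ast+1 = \ceil{(n+1)(1-\alpha)} = i_{n,\alpha}^{n}$ from Eq.~\eqref{eq.index.alpha}, using $n+1-\lfloor t\rfloor-1 = \ceil{(n+1)-t}-1$ with $t=\alpha(n+1)$. The range $\alpha\in[\frac{1}{n+1},1)$ guarantees that this index lies in $\brac{1,\ldots,n}$. At the lower endpoint $\alpha=\frac1{n+1}$, the condition becomes count $\ge 1$ and the index equals $n$, which is consistent. This floor/ceiling bookkeeping is the step where off-by-one errors are most likely, so I would verify it carefully, including the case where $\alpha(n+1)$ is an integer.

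Finally, substituting $b(y)\le a_{(i_{n,\alpha}^n)}$ and moving $\widehat{\tau}_{\lambda}^{\Gamma}(X_{n+1})$ to the right-hand side gives exactly the ellipsoid centred at $\hat{f}_{\lambda^{+}; D}(X_{n+1})$ with radius $\widehat{Q}^{\Gamma,\up}_{\lambda;D^+}(\alpha)+\widehat{\tau}_{\lambda}^{\Gamma}(X_{n+1})$. Here $\widehat{Q}^{\Gamma,\up}_{\lambda;D^+}(\alpha)=a_{(i_{n,\alpha}^n)}$, the order statistic of the corrected scores, which is how the notation in Eq.~\eqref{eq.fixed.ncs.quantile} should be read (the index refers to the ordering of the $a_i$ as a whole). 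The region is a closed $\Gamma^{-1}$-ellipsoid because the inequality is non-strict.
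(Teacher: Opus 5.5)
Your proposal is correct and follows essentially the same route as the paper's proof: you unfold the definition of the upper \textbf{StableCP}-region, observe that only the test score depends on $y$, apply the order-statistic characterization, and move $\widehat{\tau}_{\lambda}^{\Gamma}(X_{n+1})$ to the right-hand side. The paper cites Lemma~\ref{lm.quantile} with $m=n$ for the order-statistic step, which your floor/ceiling counting argument reproves directly, and your reading that the ordering in Eq.~\eqref{eq.fixed.ncs.quantile} refers to the corrected scores $a_i$ matches the paper's intermediate step.
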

The proof is deferred to Appendix~\ref{proof.upper.StableCP.region.fixed}.
Being a region enclosed by an ellipsoid, the shape of the upper \textbf{StableCP} reveals the relationship between the coordinates of the prediction-vector.
Moreover, the additive correction term $\widehat{\tau}_{\lambda}^{\Gamma}(\bullet)$ is reflected by a thickening
of the quantile value involved in the expression of the radius.
Based on this simple expression, the shape (and most importantly the volume) of
the upper \textbf{StableCP}-region can be computed exactly, which comes in handy when computing the proxy for the \emph{thickness} in Lemma~\ref{lm.sandwiching}.
In contrast, such an exact computation cannot be done for the recovered prediction-region
computed by \citet{johnstoneExactApproximateConformal2024}.

\bigskip
\notparagraph{Empirical assessment of the coverage-probability}
The following experiments aim at illustrating that
(in practice on a synthetic data set) the upper \textbf{StableCP}-region is a confidence prediction-region for any control-level $\alpha \in \left[\frac{1}{n+1}, 1\right)$, and every regularization parameter $\lambda \in \paren{0, +\infty}$.
The code is available at \url{https://github.com/Davidson-Lova/approximate_full-conformal_multi-task_kernel_regression.git}.

Following \citet{braunMultivariateStandardizedResiduals2026} for each repetition, $n+1$ independent copies of $\paren{X, Y}$ are sampled,
where $Y \sim f\paren{X} + T\paren{X}B$ (see Appendix~\ref{sec.synthetic.data.set} for details).
\sloppy For $\lambda \in \brac{10^{-3}, 10^{-2}, 10^{-1}}$
and for $\alpha$ within a grid, the upper \textbf{StableCP}-region
is computed at the location $X_{n+1}$ from a predictor trained with ``Newton-CG'' with:
(1) the vector Logcosh loss-function \citep{salehStatisticalPropertiesLogcosh2022} derived by applying Proposition~\ref{prop.example.lip.loss} ,
(2) a matrix-valued kernel of the form $K\paren{\bullet, \bullet} = k\paren{\bullet, \bullet} \Gamma$,
where the scalar-valued kernel is the scikit-learn's Laplacian kernel \citep{Laplacian_kernel},
and the inter-task covariance-matrix $\Gamma$ is given by
\begin{align*}
    \Gamma = \begin{bmatrix}
        1 & \frac{1}{2}\\
        \frac{1}{2} & 1
    \end{bmatrix}.
\end{align*}
At the end of each repetition, if the unknown output-vector $Y_{n+1}$
is contained in the upper \textbf{StableCP}-region,
the coverage-value is set to $1$,  and $0$ otherwise.
Finally, across $100$ repetitions,
for each $\lambda \in \brac{10^{-3}, 10^{-2}, 10^{-1}}$
and each $\alpha$ within a grid,
the coverage-values are averaged into the empirical coverage-probability.

\begin{figure}[H]
    \centering
    \includegraphics[width=0.8\textwidth]{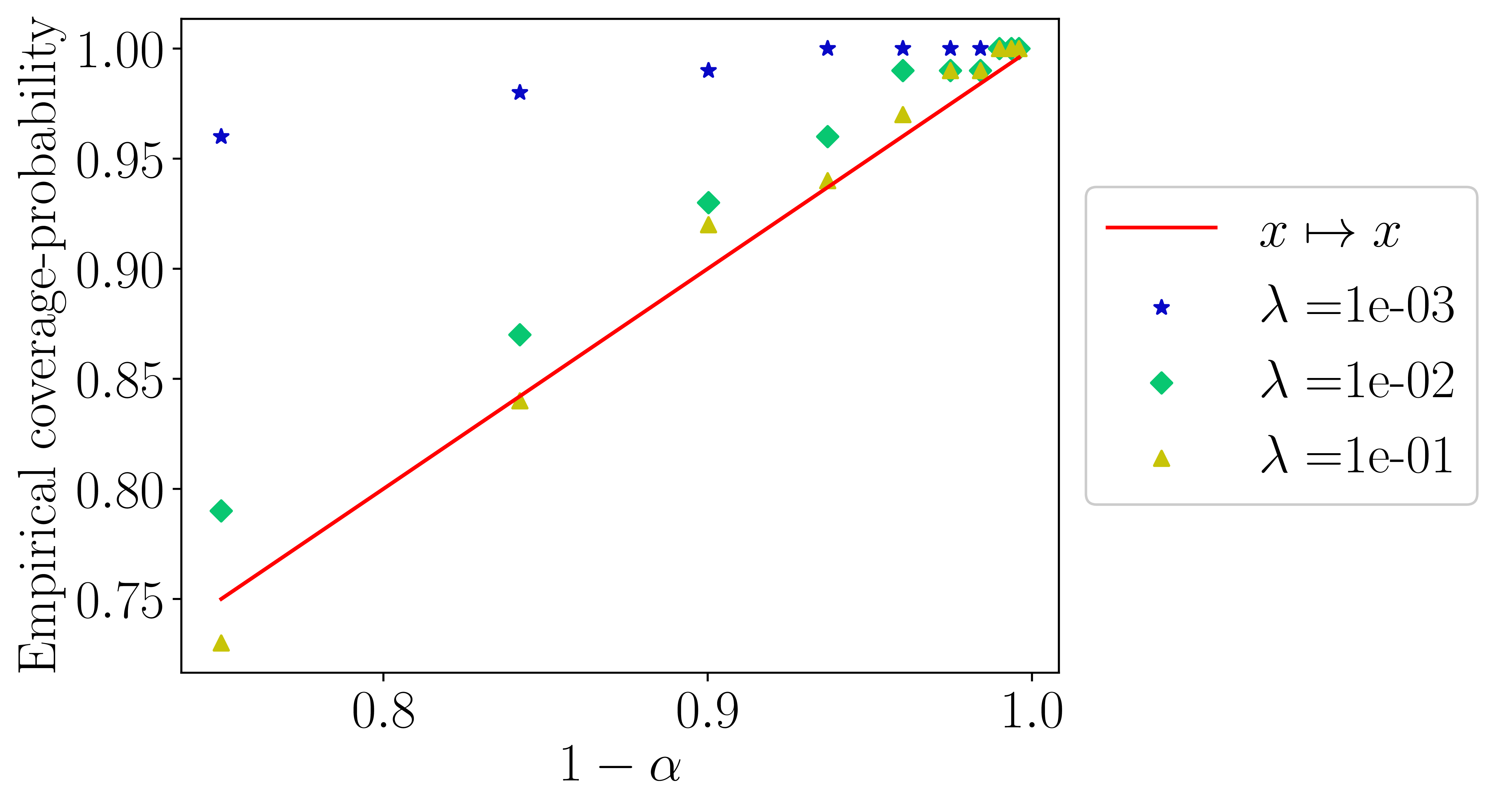}
    \caption{
        Evolution of the empirical coverage-probability of the upper \textbf{StableCP}-region,
        for $\lambda \in \brac{10^{-3}, 10^{-2}, 10^{-1}}$
        and for $\alpha$ within a logarithmic grid.
    }
    \label{fig.coverage.stableCP}
\end{figure}
The results are displayed in Figure~\ref{fig.coverage.stableCP}.
The red-line represents the desired confidence level $1 - \alpha$
over which the coverage-probability is guaranteed to lie.
The empirical coverage-probabilities are mostly (up to some variability) above the red-line,
that is, the prediction-regions mostly do empirically enjoy the desired coverage property.

Additionally, a smaller value of $\lambda$ induces, a larger empirical coverage-probability.
This is due to the additive correction $\widehat{\tau}_{\lambda}^{\Gamma}\paren{\bullet}$, and
thus the thickening of the radius of the upper \textbf{StableCP}-region (see Proposition~\ref{prop.upper.stableCP.region.fixed}), getting larger for a smaller value of $\lambda$.

\subsection{Finite sample upper-bound on the \emph{thickness}}
\label{sec.known.cov.thickness.upper.bound}
Let us first discuss the key assumptions
used in deriving a finite-sample upper-bound on the \emph{thickness} Eq.~\eqref{eq.thickness.stable.CP}.

\begin{assumption}
    There exists a constant $\kappa^{\Gamma} \in \paren{0, +\infty}$,
    such that,
    \begin{align}
        \label{asm.bounded.gamma.kernel}
        \forall x \in \mathcal{X},
        \quad
        \norm{K\paren{\bullet, x} \Gamma^{-\frac{1}{2}}}_{\mathrm{op}} =
        \norm{\Gamma^{-\frac{1}{2}} K\paren{x, x} \Gamma^{-\frac{1}{2}}}_{\mathrm{op}}^{\frac{1}{2}} \leq \kappa^{\Gamma}.
        \tag{$\kappa^{\Gamma}$-Bd$K_\Gamma$}
    \end{align}
\end{assumption}

Assumption \eqref{asm.bounded.gamma.kernel} links back to the discussion
below Lemma~\ref{lm.stable.score.norm}, where it was argued that a suitable matrix-valued kernel should satisfy that, for every $x\in\mathcal{X}$, $K(x,x)$ must have almost the same eigensapces as the ones of $\Gamma$. Here one specifies more strongly this constraint
by introducing a constant $\kappa$ governing the relationship between $K(\bullet,\bullet)$ and $\Gamma$.
For instance if $K(x,x) = k(x,x)\Gamma$, with $\sup_{x\in\mathcal{X}}k(x,x)\leq \kappa<+\infty$, then it would result that \eqref{asm.bounded.gamma.kernel} holds with $\kappa = \kappa^\Gamma$. In this case, let us also notice that the constant $\kappa^\Gamma$ does not depend on the output-space dimension $p$.
%
%
%
%
%
%

\begin{assumption}
    There exists a constant $C_{\ell} \in \paren{0, +\infty}$,
    such that,
    \begin{align}
        \label{asm.bounded.expected.loss}
        \mathbf{R}_{0}\paren{0} = \mathbb{E}\croch{\ell\paren{Y, 0}} \leq C_{\ell},
        \tag{$C_{\ell}$-BdEL}
    \end{align}
    where $\mathbf{R}_{0}\paren{\bullet} : \mathcal{H} \mapsto \mathbb{R}$ denotes the risk function given for every $f \in \mathcal{H}$ by
    \begin{align*}
        \mathbf{R}_{0}\paren{f} := \mathbb{E}\croch{\ell\paren{Y, f\paren{X}}}.
    \end{align*}

\end{assumption}

\begin{assumption}
    There exists a constant $C_{\mathcal{Y}}\paren{p} \in \paren{0, +\infty}$,
    such that,
    \begin{align}
        \label{asm.bounded.gamma.y}
        \forall y \in \mathcal{Y},
        \quad
        \norm{\Gamma^{-\frac{1}{2}}y} \leq C_{\mathcal{Y}}\paren{p}.
        \tag{$C_{\mathcal{Y}}\paren{p}$-BdY}
    \end{align}
\end{assumption}
These three assumptions ensure that the risk function
is not constant and equal to $+\infty$.
A similar hypothesis (formulated without the matrix $\Gamma$)
was formulated by \citet[see Hypothesis 5]{audiffrenStabilityMultitaskKernel2013}
in the multi-task setting to derive an algorithmic stability-bound with a quadratic loss.
An analogous assumption was also made by \citet{bousquetStabilityGeneralization2002} in the single task setting (see the comments following Definition~19).

\begin{assumption}
    The risk function $\mathbf{R}_{0}\paren{\bullet}$
    admits a minimizer over
    the hypothesis space $\mathcal{H}$ that is,
    \begin{align}
        \label{asm.risk.minimizer.attained}
        \brac{
            f \in \mathcal{H} :
            \mathbf{R}_{0}\paren{f} = \inf_{g\in \mathcal{H}} \mathbf{R}_{0}\paren{g}
        } \neq \emptyset.
        \tag{MinR}
    \end{align} 
\end{assumption}
\eqref{asm.risk.minimizer.attained} ensures that
there exists a function in $\mathcal{H}$, whose norm
is greater than that of the population counterpart $f_{\lambda}$ (see Lemma~\ref{lm.reg.risk.minimizer})
of the predictor $\hat{f}_{\lambda; D}$ (see Definition~\ref{def.predictor}).
Among all such functions, let us note $f_{\mathcal{H}} \in \mathcal{H}$
the one with the smallest norm that is,
\begin{align*}
    f_{\mathcal{H}} \in \argmin_{
        \substack{
            f \in \mathcal{H}\\
            \mathbf{R}_{0}\paren{f} = \inf_{g\in \mathcal{H}} \mathbf{R}_{0}\paren{g}
        }
    } \norm{f}_{\mathcal{H}}.
\end{align*}

\begin{assumption}
    There exists a constant $C_{\mathcal{H}} \in \paren{0, +\infty}$, such that,
    \begin{align}
        \label{asm.not.source.condition}
        \normh{f_{\mathcal{H}}} \leq C_{\mathcal{H}}.
        \tag{$C_\mathcal{H}$-SRC}
    \end{align}
\end{assumption}
\eqref{asm.not.source.condition} is a consequence
of a classical source condition in a well-specified setting.
Such source condition was formulated by \citet[see SRC]{liOptimalSobolevNorm2024}
in the context of vector-valued regularized kernel least-square regression.
When their regularity parameter $\beta$ is greater than $1$,
then (by continuous embedding) their source condition (SRC) implies \eqref{asm.not.source.condition}.

\begin{assumption}[Polynomial decay]
Let us note $\mu_{1}\paren{\Gamma} \geq \ldots \geq \mu_{p}\paren{\Gamma}>0$
the eigenvalues of the matrix $\Gamma \in \mathrm{Sym}_{p}^{++}\paren{\mathbb{R}}$.
Let assume there exist an exponent $\gamma \in \paren{1, +\infty}$ and
a constant $C_{\Gamma} \in \paren{0, +\infty}$
invariant w.r.t. the size $p$ of $\Gamma$
(which is also the dimension of the output space $\mathcal{Y} \subseteq \mathbb{R}^{p}$)
such that
\begin{align}
    \label{asm.eigenvalue.decay}
    \forall \ell \in \brac{1, \ldots, p},
    \quad
    \mu_{\ell}\paren{\Gamma}
    \leq C_{\Gamma} \ell^{-\gamma}.
    \tag{$\gamma$-EVD$\Gamma$}
\end{align}    
\end{assumption}
{Training a predictor that jointly solves multiple tasks
(instead of building one predictor per task)
relies on the assumption that the tasks are strongly related.
Therefore \eqref{asm.eigenvalue.decay} encodes the strength of the inter-task link through
the decay rate of the eigenvalues of $\Gamma$.
Larger values of $\gamma$ means a stronger link between the tasks.
Such a polynomial decay assumption is not new.
For instance, \citet[see Theorem 1]{steinwartOptimalRatesRegularized2009}
states a similar assumption for the integral operator associated with a scalar-valued kernel.
}

\medskip

Now equipped with the above assumptions, we are now in position to derive an upper-bound for the \emph{thickness}, namely $\mathrm{THK}_{\lambda; \alpha}^{\Gamma}\paren{X_{n+1}}$,
of the upper \textbf{StableCP}-region. It is 
given by the Lebesgue measure of its symmetric difference with
the \textbf{FullCP}-region that is,
\begin{definition}[Thickness of the upper-\textbf{StableCP}-region]
\begin{align}
    \label{eq.thickness.stable.CP}
    \mathrm{THK}_{\lambda; \alpha}^{\Gamma}\paren{X_{n+1}}
    := \leb{
        \widetilde{C}_{\lambda; \alpha}^{\Gamma, \up}\paren{X_{n+1}}
        \setminus
        \widehat{C}_{\lambda; \alpha}^{\Gamma, \full}\paren{X_{n+1}}
    }.
\end{align}    
\end{definition}
For deriving an upper bound on the thickness, a key step consists in constructing a lower \textbf{StableCP}-region,
denoted by $\widetilde{C}_{\lambda; \alpha}^{\Gamma, \lo}\paren{X_{n+1}}$ and
given by
\begin{align*}
    \widetilde{C}_{\lambda; \alpha}^{\Gamma, \lo}\paren{X_{n+1}}
    := \brac{
        y \in \mathcal{Y}:
        \frac{1
        + \sum_{i=1}^{n}
        \mathbbm{1}\brac{
            \widetilde{S}_{\lambda; D^{y}}^{\Gamma, \lo}\paren{X_i, Y_i}
            \geq \widetilde{S}_{\lambda; D^{y}}^{\Gamma, \up}\paren{X_{n+1}, y}
        }}{n+1}>\alpha
    }.
\end{align*}
This so-called lower \textbf{StableCP}-region is a lower approximation to the \textbf{FullCP}-region that is,
\begin{proposition}
\begin{align*}
    \widetilde{C}_{\lambda; \alpha}^{\Gamma, \lo}\paren{X_{n+1}} \subset \widehat{C}_{\lambda; \alpha}^{\Gamma, \full}\paren{X_{n+1}}.
\end{align*}
Furthermore, the \textbf{FullCP}-region can be sandwiched between its lower and upper \textbf{StableCP}-regions as follows.
\begin{align*}
    \widetilde{C}_{\lambda; \alpha}^{\Gamma, \lo}\paren{X_{n+1}}
    \subset \widehat{C}_{\lambda; \alpha}^{\Gamma, \full}\paren{X_{n+1}}
    \subset \widetilde{C}_{\lambda; \alpha}^{\Gamma, \up}\paren{X_{n+1}}.
\end{align*}
\end{proposition}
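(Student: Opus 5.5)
The plan is to reduce the claim to the generic Sandwiching Lemma~\ref{lm.sandwiching}, by showing that its hypothesis~\eqref{eq.bound.score} holds for the Mahalanobis score. Lemma~\ref{lm.stable.score.norm} provides exactly this. Under \eqref{asm.conv.loss}, \eqref{asm.lip.loss} and \eqref{asm.lower.bounded.loss}, for every $y\in\mathcal{Y}$ and every $(x,u)$ we have
\[
\widetilde{S}^{\Gamma,\lo}_{\lambda;D^y}(x,u)\le S^{\Gamma}_{\lambda;D^y}(x,u)\le \widetilde{S}^{\Gamma,\up}_{\lambda;D^y}(x,u).
\]
The lower and upper \textbf{StableCP}-regions are then precisely the regions $\widetilde{C}^{\lo}_{\lambda;\alpha}$ and $\widetilde{C}^{\up}_{\lambda;\alpha}$ of Definition~\ref{def.approximate.fullCP.region} and Eq.~\eqref{eq.generic.lower.approx.confidence.region}, instantiated with these scores. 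Likewise, $\widehat{C}^{\Gamma,\full}_{\lambda;\alpha}$ is the \textbf{FullCP}-region built from $S^\Gamma$.

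I would nonetheless write out the elementary comparison of indicators directly, since it is short. Fix $y\in\mathcal{Y}$ and $i\in\{1,\dots,n\}$. If $\widetilde{S}^{\Gamma,\lo}_{\lambda;D^y}(X_i,Y_i)\ge \widetilde{S}^{\Gamma,\up}_{\lambda;D^y}(X_{n+1},y)$, then chaining with the two inequalities above gives
\[
S^\Gamma_{\lambda;D^y}(X_i,Y_i)\ge \widetilde{S}^{\Gamma,\lo}_{\lambda;D^y}(X_i,Y_i)\ge \widetilde{S}^{\Gamma,\up}_{\lambda;D^y}(X_{n+1},y)\ge S^\Gamma_{\lambda;D^y}(X_{n+1},y).
\]
So each indicator in the lower p-value is dominated by the corresponding indicator in the full p-value~\eqref{def.conformal.pvalue.function}. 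Summing over $i$ yields $\widetilde{\pi}^{\lo}\le\widehat{\pi}^{\full}$ pointwise in $y$. Hence if $\widetilde{\pi}^{\lo}(X_{n+1},y)>\alpha$, then $\widehat{\pi}^{\full}(X_{n+1},y)>\alpha$, which gives the first inclusion.

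The reversed chain proves the second inclusion. Suppose $S^\Gamma_{\lambda;D^y}(X_i,Y_i)\ge S^\Gamma_{\lambda;D^y}(X_{n+1},y)$. Then
\[
\widetilde{S}^{\Gamma,\up}_{\lambda;D^y}(X_i,Y_i)\ge S^\Gamma_{\lambda;D^y}(X_i,Y_i)\ge S^\Gamma_{\lambda;D^y}(X_{n+1},y)\ge \widetilde{S}^{\Gamma,\lo}_{\lambda;D^y}(X_{n+1},y),
\]
so $\widehat{\pi}^{\full}\le\widetilde{\pi}^{\up}$. The inclusion $\widehat{C}^{\Gamma,\full}\subseteq\widetilde{C}^{\Gamma,\up}$ follows, which is also the content of Theorem~\ref{thm.coverage.approximate.region}. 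Combining the two inclusions gives the sandwich.

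There is essentially no obstacle. The only point needing care is that the score bounds of Lemma~\ref{lm.stable.score.norm} hold \emph{uniformly} in the test value $y$ and at every point $(x,u)$. This uniformity is what lets us apply them both at the training points $(X_i,Y_i)$ and at the test point $(X_{n+1},y)$ within the same data set $D^y$. The uniformity comes from the stability bound of Lemma~\ref{lm.stability.bounds}, which compares $\hat f_{\lambda;D^y}$ with the $y$-free predictor $\hat f_{\lambda^+;D}$. Finally, the inclusions should be read as non-strict ($\subseteq$), since nothing forces strictness.
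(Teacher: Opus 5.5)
Your proposal is correct and follows the paper's own route: the proposition is Lemma~\ref{lm.sandwiching} applied with the Mahalanobis score bounds of Lemma~\ref{lm.stable.score.norm}. Your indicator-chaining argument reproduces that lemma's proof, and you state the second chain correctly, whereas the last indicator in the paper's displayed chain is mistyped. Your remarks are also accurate: the result implicitly needs \eqref{asm.conv.loss}, \eqref{asm.lip.loss} and \eqref{asm.lower.bounded.loss}, it relies on the bounds being uniform in $y$, and the inclusions are to be read as non-strict.
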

As will be clarified in what follows, this sandwich inequality turns out to be useful for upper bounding the thickness.

\medskip

An important step towards deriving this upper bound is the next result, which details the closed-form expression of the lower \textbf{StableCP}-region
as a region enclosed by an ellipsoid.
\begin{lemma}
    \label{lm.lower.stableCP.region.fixed}
    For any control-level $\alpha \in \left[\frac{1}{n+1}, 1\right)$,
    the lower \textbf{StableCP}-region $\widetilde{C}_{\lambda; \alpha}^{\Gamma, \lo}\paren{X_{n+1}}$
    is the region enclosed by the $\Gamma^{-1}$-ellipsoid centred around $\hat{f}_{\lambda^{+}; D}\paren{X_{n+1}}$
    with a radius of $\croch{
        \widehat{Q}_{\lambda; D^{+}}^{\Gamma, \lo}(\alpha)
        - \widehat{\tau}_{\lambda}^{\Gamma}\paren{X_{n+1}}
    }_{+}$,
    that is,
    \begin{align*}
        \widetilde{C}_{\lambda; \alpha}^{\Gamma, \lo}\paren{X_{n+1}}
        = \brac{
            y \in \mathcal{Y} :
            \norm{
                \Gamma^{-\frac{1}{2}}\paren{
                    y - \hat{f}_{\lambda^{+}; D}\paren{X_{n+1}}
                }
            } \leq \widehat{Q}_{\lambda; D^{+}}^{\Gamma, \lo}(\alpha)
            - \widehat{\tau}_{\lambda}^{\Gamma}\paren{X_{n+1}}
        },
    \end{align*}
    where $\widehat{Q}_{\lambda; D^{+}}^{\Gamma, \lo}(\alpha)$
    is given by
    \begin{align}
        \label{eq.fixed.lower.ncs.quantile}
        \widehat{Q}_{\lambda; D^{+}}^{\Gamma, \lo}(\alpha)
        :=
        \norm{
            \Gamma^{-\frac{1}{2}}
            \paren{
                Y_{\paren{i_{n, \alpha}^{n}}}
                - \hat{f}_{\lambda^{+}; D}\paren{X_{\paren{i_{n, \alpha}^{n}}}}
            }
        } - \widehat{\tau}_{\lambda}^{\Gamma}\paren{X_{\paren{i_{n, \alpha}^{n}}}}
    \end{align}
    (see Eq.~\ref{eq.score.stability.bound} for $\widehat{\tau}_{\lambda}^{\Gamma}\paren{\bullet}$
    and Eq.~\ref{eq.index.alpha} for $i_{n, \alpha}^{n}$).
\end{lemma}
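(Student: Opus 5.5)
The plan is to unfold the definition of $\widetilde{C}_{\lambda; \alpha}^{\Gamma, \lo}\paren{X_{n+1}}$ with the explicit scores of Lemma~\ref{lm.stable.score.norm}, and to observe that the $y$-dependence is confined to the test score. Then the argument reduces to a one-dimensional order-statistic computation, mirroring the proof of Proposition~\ref{prop.upper.stableCP.region.fixed} with the roles of the upper and lower scores exchanged.

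First, for $1 \leq i \leq n$ set
\begin{align*}
    a_i := \widetilde{S}_{\lambda; D^{y}}^{\Gamma, \lo}\paren{X_i, Y_i}
    = \norm{\Gamma^{-\frac{1}{2}}\paren{Y_i - \hat{f}_{\lambda^{+}; D}\paren{X_i}}} - \widehat{\tau}_{\lambda}^{\Gamma}\paren{X_i}.
\end{align*}
The key point is that $a_i$ does not depend on $y$, because $\hat{f}_{\lambda^{+}; D}$ is trained on $D$ only and $\widehat{\tau}_{\lambda}^{\Gamma}$ involves only $X_i$ and $X_{n+1}$. Similarly, set
\begin{align*}
    t(y) := \widetilde{S}_{\lambda; D^{y}}^{\Gamma, \up}\paren{X_{n+1}, y}
    = \norm{\Gamma^{-\frac{1}{2}}\paren{y - \hat{f}_{\lambda^{+}; D}\paren{X_{n+1}}}} + \widehat{\tau}_{\lambda}^{\Gamma}\paren{X_{n+1}}.
\end{align*}
The order statistics $a_{(1)} \leq \ldots \leq a_{(n)}$ are the ones meant by the indexing $\paren{i_{n, \alpha}^{n}}$ in Eq.~\eqref{eq.fixed.lower.ncs.quantile}. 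In other words, $(X_{(i)}, Y_{(i)})$ denotes the pair achieving the $i$-th smallest lower score, so that $a_{(i_{n,\alpha}^n)} = \widehat{Q}_{\lambda; D^{+}}^{\Gamma, \lo}(\alpha)$.

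Second, the membership condition becomes
\begin{align*}
    \#\brac{i : a_i \geq t(y)} > (n+1)\alpha - 1.
\end{align*}
Since the left-hand side is an integer, this is equivalent to $\#\brac{i : a_i \geq t(y)} \geq m$ with $m := \lfloor (n+1)\alpha \rfloor$. For real $t$, having at least $m$ of the $a_i$ above $t$ is equivalent to $t \leq a_{(n-m+1)}$. It remains to check the index. We have
\begin{align*}
    n - m + 1 = n + 1 - \lfloor (n+1)\alpha \rfloor = \ceil{(n+1)(1-\alpha)} = i_{n, \alpha}^{n}.
\end{align*}
The range $\alpha \in \left[\frac{1}{n+1}, 1\right)$ guarantees $1 \leq m \leq n$, hence $i_{n,\alpha}^n \in \brac{1, \ldots, n}$. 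Thus $y$ belongs to the region if and only if $t(y) \leq \widehat{Q}_{\lambda; D^{+}}^{\Gamma, \lo}(\alpha)$. Moving $\widehat{\tau}_{\lambda}^{\Gamma}\paren{X_{n+1}}$ to the right-hand side gives exactly the displayed $\Gamma^{-1}$-ellipsoid condition centred at $\hat{f}_{\lambda^{+}; D}\paren{X_{n+1}}$.

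Finally, I would comment on the radius $\croch{\widehat{Q}_{\lambda; D^{+}}^{\Gamma, \lo}(\alpha) - \widehat{\tau}_{\lambda}^{\Gamma}\paren{X_{n+1}}}_{+}$. If the difference is negative, no $y$ satisfies the norm inequality and the region is empty. This is consistent with reading the positive part as a degenerate (empty or single-point) ellipsoid, and I would state this convention explicitly.

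The main obstacle is purely bookkeeping. One must make sure that the ordering used in Eq.~\eqref{eq.fixed.lower.ncs.quantile} is that of the lower scores $a_i$, not of the raw residuals, since the corrections $\widehat{\tau}_{\lambda}^{\Gamma}\paren{X_i}$ may reorder them. One must also get the floor/ceiling identity right at the strict inequality "$> \alpha$". I would verify the latter carefully, including the boundary case where $(n+1)\alpha$ is an integer.
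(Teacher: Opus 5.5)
Your proposal is correct and follows essentially the paper's route. The paper proves this lemma by analogy with Proposition~\ref{prop.upper.stableCP.region.fixed}: it unfolds the definition using the $y$-free lower scores and invokes Lemma~\ref{lm.quantile} with $m=n$. You instead derive that order-statistic step directly through the floor/ceiling count, which is equally valid. Your remarks on the two loose points of the statement are also accurate: the ordering in Eq.~\eqref{eq.fixed.lower.ncs.quantile} must be that of the lower scores, and the region is empty (not the single point $\hat{f}_{\lambda^{+}; D}(X_{n+1})$) when $\widehat{Q}_{\lambda; D^{+}}^{\Gamma, \mathrm{lo}}(\alpha) < \widehat{\tau}_{\lambda}^{\Gamma}(X_{n+1})$.
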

Proof is analogue to Proposition~\ref{prop.upper.stableCP.region.fixed}.
Comparing the expressions of the upper \textbf{StableCP}-region (Proposition~\ref{prop.upper.stableCP.region.fixed}) and the present lower one,
the only differences lie in the sign in front of the correction $\widehat{\tau}_{\lambda}^{\Gamma}\paren{\bullet}$.
This suggests that the smaller the correction,
the smaller the difference between these two prediction-regions,
and thus the smaller the \emph{thickness}.
\medskip

The next result formalizes this intuition
by providing an upper-bound on $\mathrm{THK}_{\lambda; \alpha}^{\Gamma}\paren{X_{n+1}}$.
%
%
\begin{lemma}[Coarse upper-bound on the \emph{thickness}]
    \label{lm.thickness.empirical.bound.maha}
    Assume~\eqref{asm.conv.loss},
    \eqref{asm.lip.loss} and
    \eqref{asm.lower.bounded.loss} hold true.
    Then, the \emph{thickness} $\mathrm{THK}_{\lambda; \alpha}^{\Gamma}\paren{X_{n+1}}$
    of the upper \textbf{StableCP}-region
    $\widetilde{C}_{\lambda; \alpha}^{\Gamma, \up}\paren{X_{n+1}}$
    is bounded from above by
    \begin{align*}
        \mathrm{THK}_{\lambda; \alpha}^{\Gamma}\paren{X_{n+1}}
        &\leq 
        2 \frac{
            \rho_{p} \norm{\Gamma}_{\mathrm{op}}^{\frac{1}{2}} \paren{\widehat{\kappa}^{\Gamma}}^{2}
        }{\lambda\paren{n+1}}
        \abss{\mathrm{det}\paren{\Gamma^{\frac{1}{2}}}}
        \frac{\pi^{\frac{p}{2}}}{\paren{\frac{p}{2}}!}
        p \paren{
            \widehat{Q}_{\lambda; D}^{\Gamma}(\alpha)
            + \frac{
                \rho_{p} \norm{\Gamma}_{\mathrm{op}}^{\frac{1}{2}} \paren{\widehat{\kappa}^{\Gamma}}^{2}
            }{\lambda\paren{n+1}}
        }^{p-1},
    \end{align*}
    where $\paren{\widehat{\kappa}^{\Gamma}}^{2}$ denotes
    the kernel operator norm bound given by
    \begin{align}
        \label{eq.kernel.norm.bound}
        \paren{\widehat{\kappa}^{\Gamma}}^{2}
        :=
        & 
        \max_{i \in \brac{1, \ldots, n+1}}
        \norm{\Gamma^{-\frac{1}{2}}K\paren{X_i, X_i}\Gamma^{-\frac{1}{2}}}_{\mathrm{op}},
    \end{align}
    and $\widehat{Q}_{\lambda; D}^{\Gamma}(\alpha)$
    stands for the non-conformity score quantile given by
    \begin{align}
        \label{eq.score.quantile}
        \widehat{Q}_{\lambda; D}^{\Gamma}(\alpha)
        := \norm{\Gamma^{-\frac{1}{2}}\paren{Y_{\paren{i_{n, \alpha}^{n}}} - \hat{f}_{\lambda^{+}; D}\paren{X_{\paren{i_{n, \alpha}^{n}}}}}}.
    \end{align}
    %
    %
    %
\end{lemma}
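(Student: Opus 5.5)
Since the \textbf{FullCP}-region is sandwiched between the lower and upper \textbf{StableCP}-regions, the thickness is at most $\leb{\widetilde{C}_{\lambda; \alpha}^{\Gamma, \up}(X_{n+1}) \setminus \widetilde{C}_{\lambda; \alpha}^{\Gamma, \lo}(X_{n+1})}$, exactly as in Lemma~\ref{lm.sandwiching}. So the plan is to bound this set difference and never touch the intractable \textbf{FullCP}-region.

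First, describe both regions explicitly. By Proposition~\ref{prop.upper.stableCP.region.fixed} and Lemma~\ref{lm.lower.stableCP.region.fixed}, both are $\Gamma^{-1}$-ellipsoids centred at the same point $\hat{f}_{\lambda^{+}; D}(X_{n+1})$. Their radii are
\[
R^{\up} = \widehat{Q}_{\lambda; D}^{\Gamma}(\alpha) + \widehat{\tau}_{\lambda}^{\Gamma}\bigl(X_{(i_{n,\alpha}^{n})}\bigr) + \widehat{\tau}_{\lambda}^{\Gamma}(X_{n+1})
\]
and
\[
R^{\lo} = \Bigl[\widehat{Q}_{\lambda; D}^{\Gamma}(\alpha) - \widehat{\tau}_{\lambda}^{\Gamma}\bigl(X_{(i_{n,\alpha}^{n})}\bigr) - \widehat{\tau}_{\lambda}^{\Gamma}(X_{n+1})\Bigr]_{+}.
\]
Because the ellipsoids are concentric, the lower one sits inside the upper one. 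The set difference therefore has volume $\mathrm{vol}(B_p)\,\abss{\det(\Gamma^{1/2})}\,\bigl((R^{\up})^p - (R^{\lo})^p\bigr)$, using the change of variables $y \mapsto \Gamma^{-1/2}(y - \hat{f}_{\lambda^+;D}(X_{n+1}))$. Here $\mathrm{vol}(B_p) = \pi^{p/2}/(p/2)!$, with the factorial understood as $\Gamma(p/2+1)$.

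Next, bound the corrections uniformly. In \eqref{eq.score.stability.bound}, $\norm{K(X_{n+1},X_{n+1})}_{\mathrm{op}}^{1/2} = \norm{\Gamma^{1/2}\,\Gamma^{-1/2}K\Gamma^{-1/2}\,\Gamma^{1/2}}_{\mathrm{op}}^{1/2} \le \norm{\Gamma}_{\mathrm{op}}^{1/2}\,\widehat{\kappa}^{\Gamma}$. The first factor is at most $\widehat{\kappa}^{\Gamma}$ whenever $x$ is one of $X_1,\dots,X_{n+1}$. Hence every $\widehat{\tau}$ that appears is at most $\delta/4$, where $\delta := \rho_p\norm{\Gamma}_{\mathrm{op}}^{1/2}(\widehat{\kappa}^{\Gamma})^2/(\lambda(n+1))$. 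Writing $Q = \widehat{Q}_{\lambda;D}^{\Gamma}(\alpha)$, this gives $R^{\up} \le Q + \delta/2$ and $R^{\lo} \ge [Q - \delta/2]_+$.

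Finally, apply the mean value theorem to $r \mapsto r^p$. If $Q \ge \delta/2$, then $(Q+\delta/2)^p - (Q-\delta/2)^p \le \delta\, p\, (Q+\delta/2)^{p-1} \le 2\delta\, p\,(Q+\delta)^{p-1}$. If $Q < \delta/2$, the lower radius is $0$ and $(Q+\delta/2)^p \le (\delta/2)\,(Q+\delta/2)^{p-1}$ with $\delta/2 \le 2\delta p$, so the same bound holds. Either way we obtain the stated inequality, with some slack in the constant $2$ and in the $(\cdot)^{p-1}$ argument. The main obstacle is bookkeeping rather than analysis. One must check that the quantile index in both lemmas is taken on the uncorrected scores so that the correction at $X_{(i_{n,\alpha}^n)}$ is controlled by $\widehat{\kappa}^{\Gamma}$. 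One must also handle the positive-part truncation of the lower radius.
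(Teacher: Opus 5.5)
Your overall strategy matches the paper's: bound the thickness via Lemma~\ref{lm.sandwiching}, trap the set difference between two concentric $\Gamma^{-1}$-ellipsoids centred at $\hat{f}_{\lambda^{+};D}(X_{n+1})$, change variables, and apply the mean value theorem to $r\mapsto r^{p}$, treating separately the case where the inner radius is truncated at $0$. Two steps are wrong as written. Both are easy to repair, and the repaired argument gives exactly the stated bound.

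\textbf{The correction bound is off by a factor of two.} From \eqref{eq.score.stability.bound}, for every $j\in\{1,\ldots,n+1\}$,
\[
\widehat{\tau}^{\Gamma}_{\lambda}(X_j)\le\widehat{\kappa}^{\Gamma}\cdot\frac{\rho_p\|\Gamma\|_{\mathrm{op}}^{1/2}\,\widehat{\kappa}^{\Gamma}}{2\lambda(n+1)}=\frac{\delta}{2},
\]
not $\delta/4$. So you only get $R^{\mathrm{up}}\le Q+\delta$ and $R^{\mathrm{lo}}\ge[Q-\delta]_{+}$. The mean value theorem on $[Q-\delta,Q+\delta]$ then gives $(Q+\delta)^p-(Q-\delta)^p\le 2\delta\,p\,(Q+\delta)^{p-1}$, which is precisely the lemma. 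There is no slack: the factor $2$ and the argument $Q+\delta$ are exactly what this argument produces, and this is the paper's computation.

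When $Q<\delta$, use $(Q+\delta)^p=(Q+\delta)(Q+\delta)^{p-1}<2\delta(Q+\delta)^{p-1}\le 2\delta\,p\,(Q+\delta)^{p-1}$. Your inequality $(Q+\delta/2)^p\le(\delta/2)(Q+\delta/2)^{p-1}$ is false whenever $Q>0$.

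\textbf{The check you propose on the quantile index would fail.} Write $s_i:=\|\Gamma^{-1/2}(Y_i-\hat{f}_{\lambda^{+};D}(X_i))\|$, $t_i:=\widehat{\tau}^{\Gamma}_{\lambda}(X_i)$ and $k:=i^{n}_{n,\alpha}$. Let $(s\pm t)_{(k)}$ denote the $k$-th smallest of the $s_i\pm t_i$.

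In the proof of Proposition~\ref{prop.upper.stableCP.region.fixed}, Lemma~\ref{lm.quantile} is applied to the corrected scores. The radius there is therefore $(s+t)_{(k)}+\widehat{\tau}^{\Gamma}_{\lambda}(X_{n+1})$, with the ordering taken with respect to the corrected scores; similarly $(s-t)_{(k)}$ appears in Lemma~\ref{lm.lower.stableCP.region.fixed}. Since $t_i$ varies with $i$, the point realizing $(s+t)_{(k)}$ need not be the one realizing $s_{(k)}=Q$ in \eqref{eq.score.quantile}. So your formulas for $R^{\mathrm{up}}$ and $R^{\mathrm{lo}}$ are not literally correct.

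What rescues the argument is monotonicity of order statistics. Since $0\le t_i\le\delta/2$ for all $i$,
\[
Q-\delta/2\le(s-t)_{(k)}\le Q\le(s+t)_{(k)}\le Q+\delta/2,
\]
which yields the radius bounds above.

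The paper avoids this bookkeeping by not using the explicit-region results at all. It replaces each approximate score by its uniform bound directly inside the indicators of the p-values: $s_i\pm\delta/2$ for the calibration points and $S^{\Gamma}_{\lambda^{+};D}(X_{n+1},y)\mp\delta/2$ for the test point. All calibration scores are then shifted by the same constant, and Lemma~\ref{lm.quantile} immediately gives $\widetilde{C}^{\Gamma,\mathrm{up}}_{\lambda;\alpha}(X_{n+1})\subseteq E(Q+\delta)$ and $E(Q-\delta)\subseteq\widetilde{C}^{\Gamma,\mathrm{lo}}_{\lambda;\alpha}(X_{n+1})$, where $E(r):=\{y:\|\Gamma^{-1/2}(y-\hat{f}_{\lambda^{+};D}(X_{n+1}))\|\le r\}$.
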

The proof is deferred to Appendix~\ref{proof.thickness.empirical.bound.maha}.
In line with the intuition, the first factor is an upper-bound
on the correction term $\widehat{\tau}_{\lambda}^{\Gamma}\paren{\bullet}$ (see Eq.~\ref{eq.score.stability.bound}), highlighting that the volume straightforwardly depends on the strength of the correction.
Moreover, the quantile value $\widehat{Q}_{\lambda; D}^{\Gamma}(\alpha)$
(see Eq.~\ref{eq.score.quantile}) appears in the upper-bound on the \emph{thickness} as long as $p\geq 2$ in contrast with the single-task setting
\citep[see Theorem 12]{razafindrakotoApproximateFullConformal2026}.
This quantile is the leading term within the right-most brackets of the upper bound on the thickness.
Regarding the dependence of this upper bound on the output-space dimension $p$, it appears that as long as the factor $\abss{\mathrm{det}\paren{\Gamma^{\frac{1}{2}}}}
\frac{\pi^{\frac{p}{2}}}{\paren{\frac{p}{2}}!}p$ balances the brackets involving the quantile $\widehat{Q}_{\lambda; D}^{\Gamma}(\alpha)$, the whole upper-bound does not necessarily worsen as $p$ increases.

\medskip

{To clarify this observation,
let us first deal with the randomness of the quantile.
The next result states that the population counterpart $f_{\lambda} \in \mathcal{H}$ of the predictor
$\hat{f}_{\lambda; D} \in \mathcal{H}$ (see Definition~\ref{def.predictor}) is well-defined.
}
\begin{lemma}
    \label{lm.reg.risk.minimizer}
    Assume \eqref{asm.conv.loss}, \eqref{asm.lsc.loss}
    and \eqref{asm.lower.bounded.loss} hold true.
    Then, for every regularization parameter $\lambda \in \paren{0, +\infty}$,
    the regularized risk function
    $\mathbf{R}_{\lambda}\paren{\bullet} : \mathcal{H} \to \mathbb{R}$
    given by, for every $f \in \mathcal{H}$,
    \begin{align*}
        \mathbf{R}_{\lambda}\paren{f}
        := \mathbf{R}_{0}\paren{f} + \lambda\normh{f}^2,
    \end{align*}
    admits a unique minimizer over the hypothesis space $\mathcal{H}$.
    Let us note said minimizer $f_{\lambda} \in \mathcal{H}$.
\end{lemma}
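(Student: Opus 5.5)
The plan mirrors the proof of Lemma~\ref{lm.predictor.well.defined}, with the empirical average replaced by an expectation. First I would check that $\mathbf{R}_{\lambda}$ is a proper function from $\mathcal{H}$ to $(-\infty,+\infty]$. By \eqref{asm.lower.bounded.loss}, $\ell(Y,f(X))\ge c_\ell$, so $\mathbf{R}_0(f)=\mathbb{E}[\ell(Y,f(X))]$ is well-defined in $[c_\ell,+\infty]$. Properness then needs at least one point of finite value; I would take $f=0$ and invoke the bounded expected loss assumption \eqref{asm.bounded.expected.loss}, which gives $\mathbf{R}_\lambda(0)=\mathbf{R}_0(0)\le C_\ell<\infty$. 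The lemma's hypotheses only list \eqref{asm.conv.loss}, \eqref{asm.lsc.loss} and \eqref{asm.lower.bounded.loss}, so I will use this extra ingredient explicitly. Alternatively, one can argue that if $\mathbf{R}_\lambda\equiv+\infty$ the statement is vacuous or degenerate.

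Next I would establish convexity and lower semi-continuity. For each $(x,y)$, the evaluation $f\mapsto f(x)=K(\bullet,x)^{*}f$ is a bounded linear map $\mathcal{H}\to\mathbb{R}^p$. Composing it with the convex function $u\mapsto\ell(y,u)$ gives a convex function, and taking expectations preserves convexity, so $\mathbf{R}_0$ is convex. For lower semi-continuity, take $f_k\to f$ in $\mathcal{H}$. Then $f_k(x)\to f(x)$ pointwise by the reproducing property, since $\|f_k(x)-f(x)\|\le\|K(x,x)\|_{\mathrm{op}}^{1/2}\|f_k-f\|_{\mathcal{H}}$. By \eqref{asm.lsc.loss}, $\liminf_k \ell(Y,f_k(X))\ge \ell(Y,f(X))$ almost surely. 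Fatou's lemma applies to the nonnegative functions $\ell-c_\ell$ and yields $\liminf_k\mathbf{R}_0(f_k)\ge\mathbf{R}_0(f)$. Adding $\lambda\|f\|_{\mathcal{H}}^2$, which is continuous and $2\lambda$-strongly convex, shows that $\mathbf{R}_\lambda$ is proper, convex, lower semi-continuous and $2\lambda$-strongly convex. It is also coercive, since $\mathbf{R}_\lambda(f)\ge c_\ell+\lambda\|f\|_{\mathcal{H}}^2$.

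For existence, I would use the classical argument in a Hilbert space. Take a minimizing sequence. Coercivity makes it bounded, so a subsequence converges weakly. A convex lower semi-continuous function is weakly lower semi-continuous, because its sublevel sets are closed and convex and hence weakly closed. Therefore the weak limit is a minimizer. Uniqueness follows from strong convexity: if $f\neq g$ were two minimizers, the midpoint would satisfy $\mathbf{R}_\lambda(\frac{f+g}{2})\le\frac{1}{2}(\mathbf{R}_\lambda(f)+\mathbf{R}_\lambda(g))-\frac{\lambda}{4}\|f-g\|_{\mathcal{H}}^2$, which is strictly below the minimum, a contradiction. Call the unique minimizer $f_\lambda$.

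I expect the main obstacle to be the measure-theoretic step. It requires two things: that $\omega\mapsto\ell(Y,f(X))$ is measurable, and that Fatou's lemma is applied correctly to obtain lower semi-continuity of $\mathbf{R}_0$. Measurability should follow from $\ell$ being a normal integrand, that is, measurable in $y$ and lower semi-continuous in $u$, together with measurability of $x\mapsto f(x)$. I would state these as implicit standing conditions rather than prove them in detail.
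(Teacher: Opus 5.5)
Your proposal is correct and follows the paper's route: the paper's proof just says to repeat the argument of Lemma~\ref{lm.predictor.well.defined} (lower semi-continuity, $2\lambda$-strong convexity and coercivity, then a cited existence-and-uniqueness result whose standard proof is your weak-compactness argument), and your use of \eqref{asm.bounded.expected.loss} for properness and of Fatou's lemma for lower semi-continuity of the expectation makes explicit two points that one-line proof glosses over. One correction: your fallback remark that the case $\mathbf{R}_{\lambda}\equiv+\infty$ is ``vacuous'' is wrong, because in that case every $f\in\mathcal{H}$ is a minimizer and uniqueness fails, so a finiteness input such as \eqref{asm.bounded.expected.loss} (or reading the stated codomain $\mathbb{R}$ as an implicit hypothesis) is genuinely needed.
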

\begin{proof}
    One can apply a similar reasoning as Lemma~\ref{lm.predictor.well.defined}.
\end{proof}

{Finally, integrating the deviation of $\hat{f}_{\lambda^{+};D}$ around $f_{\lambda}$,
the next result provides an upper-bound on the \emph{thickness}
(see Eq.~\ref{eq.thickness.stable.CP}) which is tighter for larger values of $\lambda n$.}
\begin{theorem}
    \label{thm.thickness.upper.bound}
    %
    %
    %
    Assume \eqref{asm.conv.loss}, \eqref{asm.lip.loss}, \eqref{asm.lower.bounded.loss},
    \eqref{asm.bounded.expected.loss}, \eqref{asm.bounded.gamma.kernel},
    \eqref{asm.risk.minimizer.attained},\\ \eqref{asm.bounded.gamma.y},
    \eqref{asm.eigenvalue.decay} and \eqref{asm.not.source.condition} hold true.
    For every risk level $\delta \in \paren{0, 1}$,
    with probability greater than $1 - \delta$,
    \begin{align*}
        \mathrm{THK}_{\lambda; \alpha}^{\Gamma}\paren{X_{n+1}}
        \leq
        \frac{
            \rho_{p} \paren{\kappa^{\Gamma}}^{2}
        }{\lambda \paren{n+1}}
        a^{\Gamma}\paren{p}
        b_{\lambda; n; \mathcal{H}}^{\Gamma, \gamma} \paren{p}
        p^{-\frac{\gamma+1}{2}p},
    \end{align*}
    where the terms $a^{\Gamma}\paren{p}$
    and $b_{\lambda; n; \mathcal{H}}^{\Gamma, \gamma} \paren{p}$
    are given by
    \begin{align}
        \label{eq.constant.term}
        a^{\Gamma}\paren{p}
        &
        := 2C_{\Gamma}^{\frac{1}{2}}
        2^{-\frac{\gamma}{4}}
        \pi^{-\frac{\gamma}{4} - \frac{1}{2}}
        e^{-\frac{\gamma}{24p + 2}
        - \frac{1}{6p + 1}}
        \\
        b_{\lambda; n; \mathcal{H}}^{\Gamma, \gamma} \paren{p}
        &
        := p^{\frac{1}{2} - \frac{\gamma}{4}}
        \paren{
            \paren{2 \pi C_{\Gamma}}^{\frac{p}{2}}
            e^{\frac{\gamma+1}{2}p}
        }\notag
        \\
        &
        \quad
        \times
        \paren{
            C_{\mathcal{Y}}\paren{p}
            + \kappa^{\Gamma} C_{\mathcal{H}}
            + \frac{
                \rho_{p} \paren{\kappa^{\Gamma}}^{2}
            }{
                \lambda \sqrt{n}
            }C_{\Gamma}^{\frac{1}{2}}
            \croch{
                \frac{3}{2}
                \frac{\sqrt{n}}{n+1}
                +
                2^{\frac{3}{2}}
                \paren{
                    \frac{\gamma}{\gamma - 1}
                }^{\frac{1}{2}}
                + \sqrt{2 \log \frac{1}{\delta}}
            }
        }^{p-1}\notag.
    \end{align}
Furthermore, these quantities satisfy that
    \begin{align*}
        \lim_{p \to +\infty} a^{\Gamma}\paren{p} < \infty,
    \end{align*}
    and if there exists a constant $C<\infty$ independent of $p$ such that $C_{\mathcal{Y}}\paren{p} \leq C p^{t}$, with $2t - 1 < \gamma$, then
    \begin{align*}
        \lim_{p \to \infty} b_{\lambda; n; \mathcal{H}}^{\Gamma, \gamma} \paren{p}
        p^{-\frac{\gamma+1}{2}p} = 0.
    \end{align*}    
\end{theorem}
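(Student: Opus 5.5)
The plan is to start from the coarse bound of Lemma~\ref{lm.thickness.empirical.bound.maha} and make every random quantity deterministic, except for a high-probability bound on the quantile $\widehat{Q}_{\lambda; D}^{\Gamma}(\alpha)$.

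\textbf{Deterministic factors.} Under \eqref{asm.bounded.gamma.kernel}, $\widehat{\kappa}^{\Gamma}\leq\kappa^{\Gamma}$. Using $\norm{\Gamma}_{\mathrm{op}}^{1/2}=\mu_1(\Gamma)^{1/2}\leq C_\Gamma^{1/2}$ by \eqref{asm.eigenvalue.decay}, the correction $\rho_p\norm{\Gamma}_{\mathrm{op}}^{1/2}(\widehat{\kappa}^{\Gamma})^2/(\lambda(n+1))$ is bounded by $\rho_p(\kappa^{\Gamma})^2C_\Gamma^{1/2}/(\lambda(n+1))$. The determinant satisfies $\abss{\det\Gamma^{1/2}}=\prod_{\ell}\mu_\ell^{1/2}\leq C_\Gamma^{p/2}(p!)^{-\gamma/2}$. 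I will then apply Stirling's formula with explicit remainder to both $p!$ and $(p/2)!=\Gamma(p/2+1)$. The two-sided bounds $\sqrt{2\pi m}(m/e)^m e^{1/(12m+1)}\leq m!\leq\sqrt{2\pi m}(m/e)^m e^{1/(12m)}$ produce exactly the exponentials $e^{-\gamma/(24p+2)-1/(6p+1)}$ and powers $(2\pi)^{-\gamma/4}p^{-\gamma/4}$ appearing in $a^{\Gamma}(p)$. They also give the factors $(2\pi C_\Gamma)^{p/2}e^{(\gamma+1)p/2}p^{-(\gamma+1)p/2}$, because $\pi^{p/2}/(p/2)!\approx(2\pi e/p)^{p/2}(\pi p)^{-1/2}$ and $(p!)^{-\gamma/2}\approx(e/p)^{\gamma p/2}(2\pi p)^{-\gamma/4}$. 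Collecting the powers of $p$, namely $p\cdot p^{-1/2}\cdot p^{-\gamma/4}$, gives $p^{1/2-\gamma/4}$. The constant factor $2$ and $C_\Gamma^{1/2}$ go into $a^\Gamma(p)$.

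\textbf{Random quantile.} This is the main obstacle. I bound $\widehat{Q}_{\lambda;D}^{\Gamma}(\alpha)\leq\max_i\norm{\Gamma^{-1/2}(Y_i-\hat f_{\lambda^+;D}(X_i))}$. By \eqref{asm.bounded.gamma.y}, and because $\norm{\Gamma^{-1/2}f(x)}\leq\kappa^{\Gamma}\normh{f}$ via the reproducing property and \eqref{asm.bounded.gamma.kernel}, this is at most $C_{\mathcal{Y}}(p)+\kappa^{\Gamma}\normh{\hat f_{\lambda^+;D}}$. I then split
\[
\normh{\hat f_{\lambda^+;D}}\leq\normh{f_{\lambda}}+\normh{\hat f_{\lambda^+;D}-f_{\lambda^+}}+\normh{f_{\lambda^+}-f_{\lambda}}.
\]
For the first term, since $f_\lambda$ minimizes $\mathbf{R}_\lambda$ and $f_{\mathcal H}$ minimizes $\mathbf{R}_0$, comparing $\mathbf{R}_\lambda(f_\lambda)\leq\mathbf{R}_\lambda(f_{\mathcal H})$ gives $\normh{f_\lambda}\leq\normh{f_{\mathcal H}}\leq C_{\mathcal H}$ by \eqref{asm.risk.minimizer.attained} and \eqref{asm.not.source.condition}. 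The third term is a deterministic $O(1/(\lambda n))$ quantity from strong convexity, with a first-order comparison analogous to Lemma~\ref{lm.stability.bounds}; this is presumably the source of the $\tfrac32\sqrt n/(n+1)$ term. The middle term is a concentration term. Writing the optimality condition $\hat f-f_{\lambda^+}=(2\lambda^+)^{-1}\times$(empirical minus population mean of the subgradients $K(\cdot,X)\partial\ell(Y,f_{\lambda^+}(X))$) and using $2\lambda$-strong convexity, I get $\normh{\hat f-f_{\lambda^+}}\leq(2\lambda)^{-1}\normh{\frac1n\sum_i\xi_i-\mathbb E\xi}$. Here the $\xi_i$ are i.i.d.\ Hilbert-valued with $\normh{\xi_i}\leq\rho_p\kappa^{\Gamma}\norm{\Gamma}^{1/2}_{\mathrm{op}}$. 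A McDiarmid or Pinelis bound then gives $\rho_p\kappa^{\Gamma}C_\Gamma^{1/2}(\text{const}+\sqrt{2\log(1/\delta)})/(\lambda\sqrt n)$ with probability $1-\delta$. The expectation part $\mathbb E\normh{\cdot}$ is bounded via variance through $\mathrm{tr}$-type sums of eigenvalues, $\sum_\ell\mu_\ell\leq C_\Gamma\gamma/(\gamma-1)$, which explains the factor $2^{3/2}(\gamma/(\gamma-1))^{1/2}$. Assembling these terms yields the bracket raised to the power $p-1$.

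\textbf{Limits.} For $\lim_p a^\Gamma(p)$, the exponential tends to $1$, so the limit is finite. For the second limit, I take logarithms: $\log\big(b\,p^{-(\gamma+1)p/2}\big)$ is bounded by $O(p)+(p-1)\log(Cp^t+c)-\tfrac{\gamma+1}{2}p\log p$. Its leading term is $p\log p\,(t-\tfrac{\gamma+1}{2})$, which tends to $-\infty$ since $2t-1<\gamma$, so the expression tends to $0$.
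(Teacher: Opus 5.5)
Your proof is correct, and its skeleton matches the paper's. You start from Lemma~\ref{lm.thickness.empirical.bound.maha}, replace $\widehat{\kappa}^{\Gamma}$ by $\kappa^{\Gamma}$ and $\norm{\Gamma}_{\mathrm{op}}$ by $C_\Gamma$, and bound $\det\Gamma^{1/2}$ and $\pi^{p/2}/(p/2)!$ by Stirling. You then control the quantile by $C_{\mathcal Y}(p)+\kappa^{\Gamma}\normh{\hat f_{\lambda^+;D}}$ together with $\normh{f_\lambda}\le\normh{f_{\mathcal H}}$ (Lemmas~\ref{lm.bounded.score} and~\ref{lm.reg.predictor.norm.bound}).

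You genuinely depart from the paper only in the probabilistic step.

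\textbf{The paper's route.} It goes through the empirical estimator $\hat f_{\lambda;D}$. It combines a deterministic empirical stability bound (Lemma~\ref{lm.regularization.stability}) with a localized excess-risk bound (Lemma~\ref{lm.estimation.error.bound}). The latter uses Maurer's vector contraction, a Rademacher bound in which $\mathrm{tr}\,K(X,X)\le(\kappa^\Gamma)^2\mathrm{tr}(\Gamma)$ appears, McDiarmid, and strong convexity. The factor $2^{3/2}(\gamma/(\gamma-1))^{1/2}$ comes from $2\sqrt2(\mathrm{tr}\,\Gamma)^{1/2}$ combined with $\mathrm{tr}(\Gamma)\le C_\Gamma\gamma/(\gamma-1)$.

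\textbf{Your route.} You go through the population minimizer $f_{\lambda^+}$ and use three ingredients:
\begin{itemize}
\item the subgradient representation $-2\lambda^+f_{\lambda^+}=\mathbb E[K(\cdot,X)s(X,Y)]$ with $s\in\partial_u\ell(Y,f_{\lambda^+}(X))$;
\item strong monotonicity, which gives an inequality, not the equality you wrote;
\item McDiarmid applied to $\normh{\frac1n\sum_i\xi_i-\mathbb E\xi}$.
\end{itemize}

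This route is sharper. Since $\normh{K(\cdot,X)s}^2\le\rho_p^2\norm{K(X,X)}_{\mathrm{op}}\le\rho_p^2C_\Gamma(\kappa^\Gamma)^2$, no trace or effective rank appears. You obtain $\frac12\bigl(1+\sqrt{2\log(1/\delta)}\bigr)$ where the paper has $2^{3/2}(\gamma/(\gamma-1))^{1/2}+\sqrt{2\log(1/\delta)}$, so the stated bound holds a fortiori. Your claim that the $\gamma/(\gamma-1)$ factor comes out of a variance computation in your own argument is therefore inaccurate; that factor arises only on the paper's route.

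The price is the interchange of subdifferential and expectation, i.e.\ a measurable selection $s$ giving that representation. It holds here because $\mathbf R_0$ is finite and continuous (Lemma~\ref{lm.risk.function.proper}) and the loss is Lipschitz. However, you must cite it, for instance the general representer theorem of Steinwart and Christmann adapted to vector outputs. The paper's localization argument avoids this.

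\textbf{Smaller remarks.}
\begin{itemize}
\item Your third term $\normh{f_{\lambda^+}-f_\lambda}$ is superfluous: the same comparison argument gives $\normh{f_{\lambda^+}}\le\normh{f_{\mathcal H}}$ directly.
\item In the paper, the $\frac32\frac{\sqrt n}{n+1}$ splits as $\frac12$ from the $\lambda$ versus $\lambda^+$ stability term plus $1$ from the additive correction already inside the bracket of Lemma~\ref{lm.thickness.empirical.bound.maha}.
\item The paper's appendix never verifies the two limits; your logarithmic argument does. For $t\le 0$ the leading term is $-\frac{\gamma+1}{2}p\log p$ instead. Like the paper, you implicitly treat $\rho_p$, $\kappa^\Gamma$ and $C_{\mathcal H}$ as independent of $p$.
\end{itemize}
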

The proof is deferred to Appendix~\ref{proof.thickness.upper.bound}.
{For a fixed output dimension $p \geq 2$,
the upper bound improves at the rate of $O\paren{\frac{1}{\lambda n}}$,
if $b_{\lambda; n; \mathcal{H}}^{\Gamma, \gamma} \paren{p}$ is consistent, that is,
if $\frac{1}{\lambda \sqrt{n}} = O(1)$.
This rate was reported by \citet[see Theorem 12]{razafindrakotoApproximateFullConformal2026}
in the single-task setting with $p = 1$.
Conversely, in the multi-task setting with larger values of $p$,
the second factor $a^{\Gamma}\paren{p}$ converges,
and the third factor $b_{\lambda; \theta}^{\Gamma, \gamma} p^{-\frac{\gamma + 1}{2}}$,
converges to 0 if $\gamma > 2t - 1$, where
$\sup_{y \in \mathcal{Y}} \norm{\Gamma^{-\frac{1}{2}}y} \leq C_{\mathcal{Y}}\paren{p} \leq Cp^{t}$.
Since $\gamma$ reflects the strength of the inter-task relatedness,
this means that stronger links between the tasks correspond to larger values of $\gamma$, which allows for larger values of $t$ without worsening the final convergence rate of the thickness.

\subsection{Numerical experiments}
\label{sec.known.cov.numerical.experiments}

Within the subsequent experiments and for the remainder of the present work,
the data generating distribution and the predictor are the ones detailed in the illustration at the end of Section~\ref{sec.known.cov.explicit.computation}.

\bigskip

\notparagraph{Upper bound on the \emph{thickness} and sample size}
The present experiments aim at illustrating how the upper-bound derived in Theorem~\ref{thm.thickness.upper.bound} depends on the sample size $n$ (with $\lambda = 10^{-2}$).
To be more specific, for sample sizes on a logarithmic grid,
the empirical upper-bound $\leb{\widetilde{C}_{\lambda; \alpha}^{\Gamma, \mathrm{up}}\paren{X_{n+1}} \setminus \widetilde{C}_{\lambda; \alpha}^{\Gamma, \lo}\paren{X_{n+1}}}$ 
on the \emph{thickness} $\thicc{\Gamma}$ is computed from 20 repetitions.
%
%
Figure~\ref{fig.thickness.stableCP} displays the boxplots corresponding to the values of this upper bound recorded for each sample size $n$.
Then, a linear regression is performed to estimate the slope of the straight line (corresponding to the exponent of $n$).
%
%
The dashed-red-line reports the resulting straight line. 
\begin{figure}[H]
    \centering
    \includegraphics[width=0.8\textwidth]{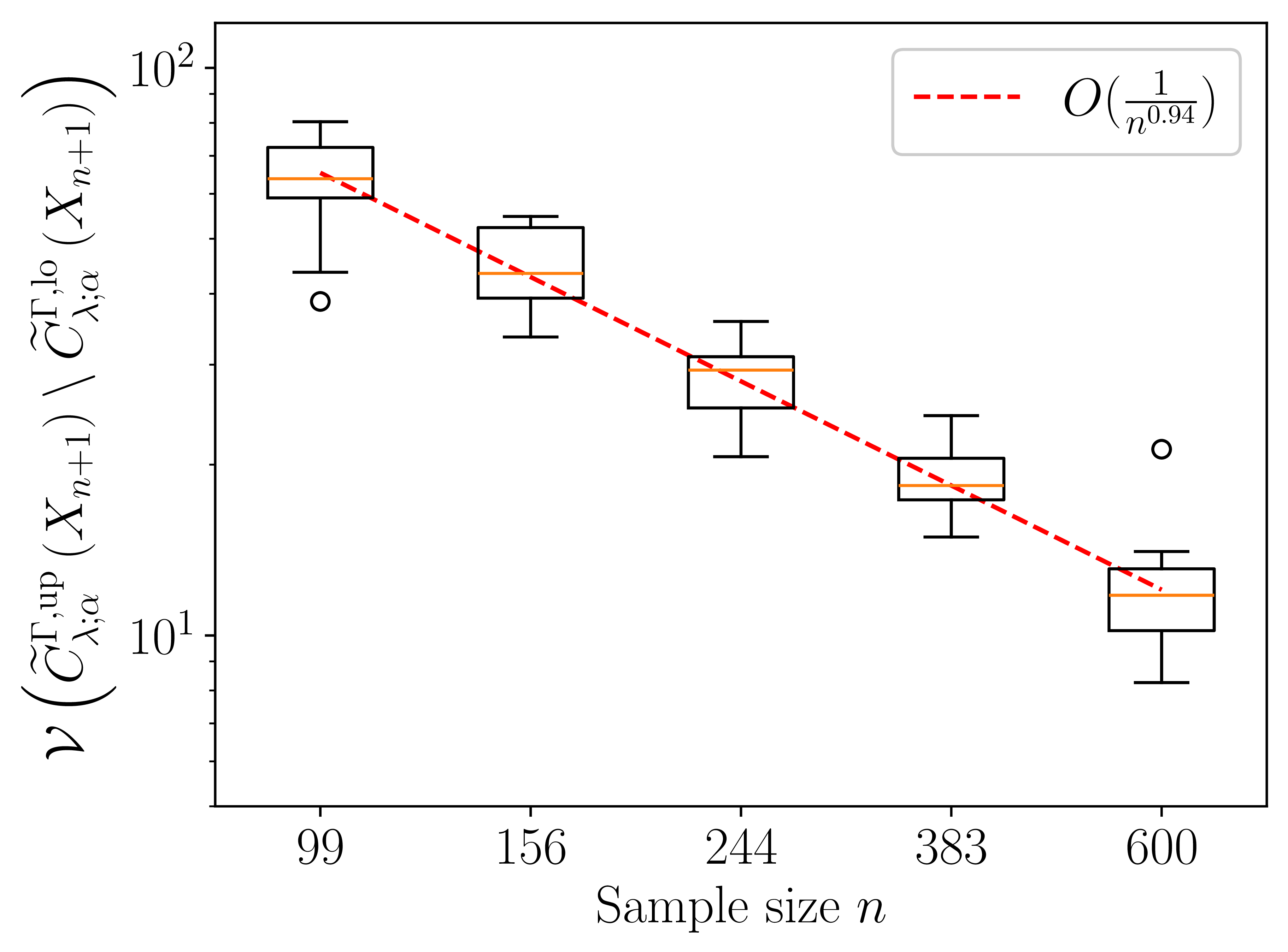}
    \caption{
        Evolution of the computable empirical upper-bound $\leb{\widetilde{C}_{\lambda; \alpha}^{\Gamma, \mathrm{up}}\paren{X_{n+1}} \setminus \widetilde{C}_{\lambda; \alpha}^{\Gamma, \lo}\paren{X_{n+1}}}$
        for the \emph{thickness} $\thicc{\Gamma}$ across 20 repetitions for $\alpha = 0.1$ for fixed $\lambda = 10^{-2}$.
    }
    \label{fig.thickness.stableCP}
\end{figure}

On average, the empirical upper-bound
$\leb{\widetilde{C}_{\lambda; \alpha}^{\Gamma, \mathrm{up}}\paren{X_{n+1}} \setminus \widetilde{C}_{\lambda; \alpha}^{\Gamma, \lo}\paren{X_{n+1}}}$ gets smaller as the training sample size increases.
The estimated exponent is close to the theoretical one which is 1 for a fixed $\lambda$ (see Theorem~\ref{thm.thickness.upper.bound}).  
This suggests that the theoretical upper bound derived in Theorem~\ref{thm.thickness.upper.bound} on the thickness is reasonably tight. Let us emphasize that this already holds over a range of moderate values of $n$. (We refer interested readers to Appendix~\ref{sec.evolution.thickness.fixed} for assessing the behavior of the upper bound with $\lambda$ of the order of $O(\frac{1}{\sqrt{n}})$.)

\bigskip

\notparagraph{Comparison between \textbf{StableCP} and \textbf{SplitCP}}
It is also desirable to draw a comparison between the upper \textbf{StableCP}-region (advocated in the present paper as a means to approximate the \textbf{FullCP}-region) and the \textbf{SplitCP}-region (see Appendix~\ref{sec.split.cp} for a detailed description). 
The volume of the considered region is divided by the one of an \textbf{OracleCP}-region which exploits the knowledge of $Y_{n+1}$ (see Appendix~\ref{sec.oracle.cp} for details).
For this comparison, the volume is computed from the sample size $n = 500$.

In practice, the value of the regularization parameter $\lambda$ from Definition~\ref{def.predictor} is chosen by minimizing a
penalized criterion leading to a reliable predictor.
Therefore each procedure incorporates its own regularization parameter value, which varies with the training sample size.
To be more specific, \textbf{SplitCP} incorporates $\widehat{\lambda}_{n_{\mathrm{train}}}$, while \textbf{StableCP} incorporates $\widehat{\lambda}_{n}$.
For the coverage guarantee to hold, these regularization parameters are computed from the same procedure used to choose: $(1)$ $\widehat{\lambda}_{n}$ from a set $D^{\prime}$ of cardinality $n$, and $(2)$ $\widehat{\lambda}_{n_{\mathrm{train}}}$ from a subset $D^{\prime}_{\mathrm{train}} \subset D^{\prime}$ of cardinality $n_{\mathrm{train}}$.

Let $\Lambda$ denote a set of candidate regularization parameter values.
A regularization parameter $\widehat{\lambda}_{n}$ is chosen
as the minimizer of the subsequent penalized criterion that is,
\begin{align*}
    \widehat{\lambda}_{n}
    \in \argmin_{\lambda \in \Lambda}
    \brac{
        \widehat{\mathbf{R}}_{0, D^{\prime}}\paren{\hat{f}_{\lambda; D^{\prime}}}
        + \widehat{\mathrm{pen}}_{D^{\prime}}\paren{\lambda}
    }
\end{align*}
involving $D^{\prime}$, independent of
$\paren{X_{1}, Y_{1}}, \ldots, \paren{X_{n+1}, Y_{n+1}}$,
where for every $\lambda \in \Lambda$,
the penalization term $\mathrm{pen}\paren{\lambda}$
is set to be
\begin{align*}
    \widehat{\mathrm{pen}}_{D^{\prime}}\paren{\lambda}
    := \frac{\rho_{p}^2}{\lambda n}
    \croch{
        \frac{1}{n}\sum_{i=1}^{n}
        \norm{K\paren{X_i^{\prime}, X_i^{\prime}}}_{\mathrm{op}}^{\frac{1}{2}}
    }.
\end{align*}
The idea is that of the structural risk minimization (SRM) procedure \citep[see Section 4.6.1]{bachLearningTheoryFirst}, that is, minimizing an upper-bound on the risk.
Compared to \citep[see Section 4.6.1]{bachLearningTheoryFirst}, the high probability control have been dropped since
the objective is only to get a $\lambda_{n}$
with about the right order of magnitude.
This also corresponds to a control of the true risk in expectation (see Proposition~\ref{prop.generalization.upper.bound})
}, as established by the following result adapted from an intermediate result in \citet[see the proof of Theorem 12]{bousquetStabilityGeneralization2002}.

\begin{proposition}
    \label{prop.generalization.upper.bound}
    Under \eqref{asm.conv.loss},
    \eqref{asm.lip.loss} and \eqref{asm.lower.bounded.loss},
    \begin{align*}
        \mathbb{E}\croch{
            \mathbf{R}_{0}\paren{\hat{f}_{\lambda; D}}
            - \widehat{\mathbf{R}}_{0; D}\paren{\hat{f}_{\lambda; D}}
        } \leq
        \frac{
            \rho_{p}^2
        }{\lambda n}\mathbb{E}\croch{\norm{K\paren{X, X}}_{\mathrm{op}}^{\frac{1}{2}}}.
    \end{align*}
\end{proposition}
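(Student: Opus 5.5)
The proof follows the classical stability-implies-generalization argument of \citet{bousquetStabilityGeneralization2002}, adapted to the vector-valued RKHS. We work in expectation and use the exchangeability of $\paren{X_1,Y_1},\ldots,\paren{X_n,Y_n}$ together with an independent copy $\paren{X,Y}$.

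The first step is a rename-and-swap identity. Let $D^{(i)}$ denote $D$ with $\paren{X_i,Y_i}$ replaced by the independent copy $\paren{X,Y}$. Exchangeability gives, for every $i$,
\begin{align*}
\mathbb{E}\croch{\mathbf{R}_{0}\paren{\hat f_{\lambda;D}}}=\mathbb{E}\croch{\ell\paren{Y,\hat f_{\lambda;D}(X)}}=\mathbb{E}\croch{\ell\paren{Y_i,\hat f_{\lambda;D^{(i)}}(X_i)}}.
\end{align*}
Averaging over $i$, the expected generalization gap becomes
\begin{align*}
\frac1n\sum_{i=1}^n\mathbb{E}\croch{\ell\paren{Y_i,\hat f_{\lambda;D^{(i)}}(X_i)}-\ell\paren{Y_i,\hat f_{\lambda;D}(X_i)}}.
\end{align*}
By \eqref{asm.lip.loss}, each term is at most $\rho_p\norm{\hat f_{\lambda;D^{(i)}}(X_i)-\hat f_{\lambda;D}(X_i)}$. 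The reproducing property gives $\norm{g(x)}\le\norm{K(x,x)}_{\mathrm{op}}^{1/2}\normh{g}$, because $u^Tg(x)=\dotprod{g}{K(\bullet,x)u}{\mathcal{H}}$ and $\normh{K(\bullet,x)u}^2=u^TK(x,x)u$. Each term is therefore bounded by $\rho_p\norm{K(X_i,X_i)}_{\mathrm{op}}^{1/2}\normh{\hat f_{\lambda;D^{(i)}}-\hat f_{\lambda;D}}$.

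The second step is a replace-one stability bound, analogous to Lemma~\ref{lm.stability.bounds}. Let $\Delta=\hat f_{\lambda;D^{(i)}}-\hat f_{\lambda;D}$. Both risks are $2\lambda$-strongly convex (Lemma~\ref{lm.predictor.well.defined}), so
\begin{align*}
\widehat{\mathbf R}_{\lambda;D}\paren{\hat f_{\lambda;D^{(i)}}}-\widehat{\mathbf R}_{\lambda;D}\paren{\hat f_{\lambda;D}}\ge\lambda\normh{\Delta}^2,
\end{align*}
and the same holds symmetrically for $D^{(i)}$. Adding the two inequalities, the shared $n-1$ terms cancel. What remains is
\begin{align*}
2\lambda\normh{\Delta}^2\le\frac1n\Big[\ell\paren{Y_i,\hat f_{\lambda;D^{(i)}}(X_i)}-\ell\paren{Y_i,\hat f_{\lambda;D}(X_i)}+\ell\paren{Y,\hat f_{\lambda;D}(X)}-\ell\paren{Y,\hat f_{\lambda;D^{(i)}}(X)}\Big].
\end{align*}
Applying the Lipschitz property and the reproducing bound again yields
\begin{align*}
\normh{\Delta}\le\frac{\rho_p}{2\lambda n}\paren{\norm{K(X_i,X_i)}_{\mathrm{op}}^{1/2}+\norm{K(X,X)}_{\mathrm{op}}^{1/2}}.
\end{align*}

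Combining the two steps gives the bound
\begin{align*}
\frac{\rho_p^2}{2\lambda n}\,\mathbb{E}\croch{\norm{K(X_i,X_i)}_{\mathrm{op}}^{1/2}\paren{\norm{K(X_i,X_i)}_{\mathrm{op}}^{1/2}+\norm{K(X,X)}_{\mathrm{op}}^{1/2}}}.
\end{align*}
This is where the main obstacle lies: the expectation is quadratic in $\norm{K}_{\mathrm{op}}^{1/2}$, whereas the stated bound has only $\mathbb{E}\croch{\norm{K(X,X)}_{\mathrm{op}}^{1/2}}$.

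To resolve this, the plan is to be more careful in how the pieces are combined. The first option is to bound one of the kernel factors using the normalization $\norm{K}_{\mathrm{op}}\le1$, if the paper implicitly assumes it, as \citet{bousquetStabilityGeneralization2002} do with $\kappa$. Alternatively, one can apply the Lipschitz step only once and evaluate the stability in the sup-norm; using exchangeability, the cross term $\mathbb{E}\croch{\norm{K(X_i,X_i)}^{1/2}\norm{K(X,X)}^{1/2}}$ then factorizes by independence. In either case the result is $\frac{\rho_p^2}{\lambda n}\mathbb{E}\croch{\norm{K(X,X)}_{\mathrm{op}}^{1/2}}$ up to the normalization of the kernel. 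I would check which normalization reproduces exactly the constant stated in Proposition~\ref{prop.generalization.upper.bound}.
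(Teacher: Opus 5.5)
Your two ingredients are the same as the paper's. The first is the Bousquet--Elisseeff swap identity (their Lemma~7). The second is the replace-one stability bound obtained by adding the two strong-convexity inequalities, which gives the same $\|\hat f_{\lambda;D}-\hat f_{\lambda;D^{(i)}}\|_{\mathcal H}\le\frac{\rho_p}{2\lambda n}\bigl(\|K(X_i,X_i)\|_{\mathrm{op}}^{1/2}+\|K(X,X)\|_{\mathrm{op}}^{1/2}\bigr)$ as in the paper. You combine them correctly, and with independence of $X$ and $X_i$ this yields
\[
\frac{\rho_p^2}{2\lambda n}\Bigl(\mathbb{E}\|K(X,X)\|_{\mathrm{op}}+\bigl(\mathbb{E}\|K(X,X)\|_{\mathrm{op}}^{1/2}\bigr)^2\Bigr)\le\frac{\rho_p^2}{\lambda n}\,\mathbb{E}\|K(X,X)\|_{\mathrm{op}}.
\]
The proposal breaks at your second proposed fix. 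Independence only factorizes the cross term into $(\mathbb{E}\|K(X,X)\|_{\mathrm{op}}^{1/2})^2$, which is still quadratic. The diagonal term $\mathbb{E}\|K(X_i,X_i)\|_{\mathrm{op}}$ is untouched. A sup-norm stability argument merely replaces these quantities by $\sup_x\|K(x,x)\|_{\mathrm{op}}$.

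No argument can close this gap, because the displayed inequality is false without a normalization of the kernel. Replacing $(K,\lambda)$ by $(cK,c\lambda)$ divides $\|\cdot\|_{\mathcal H}^2$ by $c$. So $\hat f_{\lambda;D}$ and the left-hand side are unchanged, while the right-hand side is multiplied by $c^{-1/2}$.

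Here is a concrete counterexample. Take $p=1$, $\mathcal X=\{x_0\}$, $\ell(y,u)=|y-u|$ (so $\rho_p=1$, $c_\ell=0$), $Y$ uniform on $\{0,1\}$, $n=1$, $K\equiv100$ and $\lambda=25$. The training problem is $\min_\theta|Y_1-\theta|+\theta^2/4$, whose minimizer is $\hat\theta=Y_1$. The empirical risk is therefore $0$ and the true risk is $1/2$, whereas the right-hand side equals $10/25=0.4$.

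The paper's own proof reaches the linear form only by bounding $\ell(Y_i',\hat f_{\lambda;D}(X_i'))-\ell(Y_i',\hat f_{\lambda;D^i}(X_i'))$ by $\rho_p\|\hat f_{\lambda;D}-\hat f_{\lambda;D^i}\|_{\mathcal H}$. This drops the factor $\|K(X_i',X_i')\|_{\mathrm{op}}^{1/2}$ that you correctly kept, and the next line also writes $\rho_p$ where $\rho_p^2$ should appear. That step is legitimate only when $\sup_x\|K(x,x)\|_{\mathrm{op}}\le 1$, which is your first option. Under that extra hypothesis, $\|K\|_{\mathrm{op}}\le\|K\|_{\mathrm{op}}^{1/2}$, and your bound implies the stated one.

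So you should write down your bound with $\mathbb{E}\|K(X,X)\|_{\mathrm{op}}$, and state explicitly the normalization needed to recover the proposition as displayed. Do not claim the linear bound in general.
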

The proof is deferred to Appendix~\ref{proof.generalization.upper.bound}.

\medskip

By contrast, the ideal regularization parameter $\widehat{\lambda}^{\mathrm{ideal}}$ would be the one for which the resulting predictor minimizes the risk that is,
\begin{align*}
    \widehat{\lambda}_{\mathrm{ideal}} := \argmin_{\lambda \in \Lambda} \mathbf{R}_{0}\paren{\hat{f}_{\lambda; D}}.
\end{align*}
However, since $\mathbf{R}_{0}\paren{\hat{f}_{\lambda; D}}$ is intractable,
the above penalized criterion provides a computable proxy
to $\mathbf{R}_{0}\paren{\hat{f}_{\lambda; D}}$.
\begin{figure}[H]
    \centering
    \includegraphics[width=\textwidth]{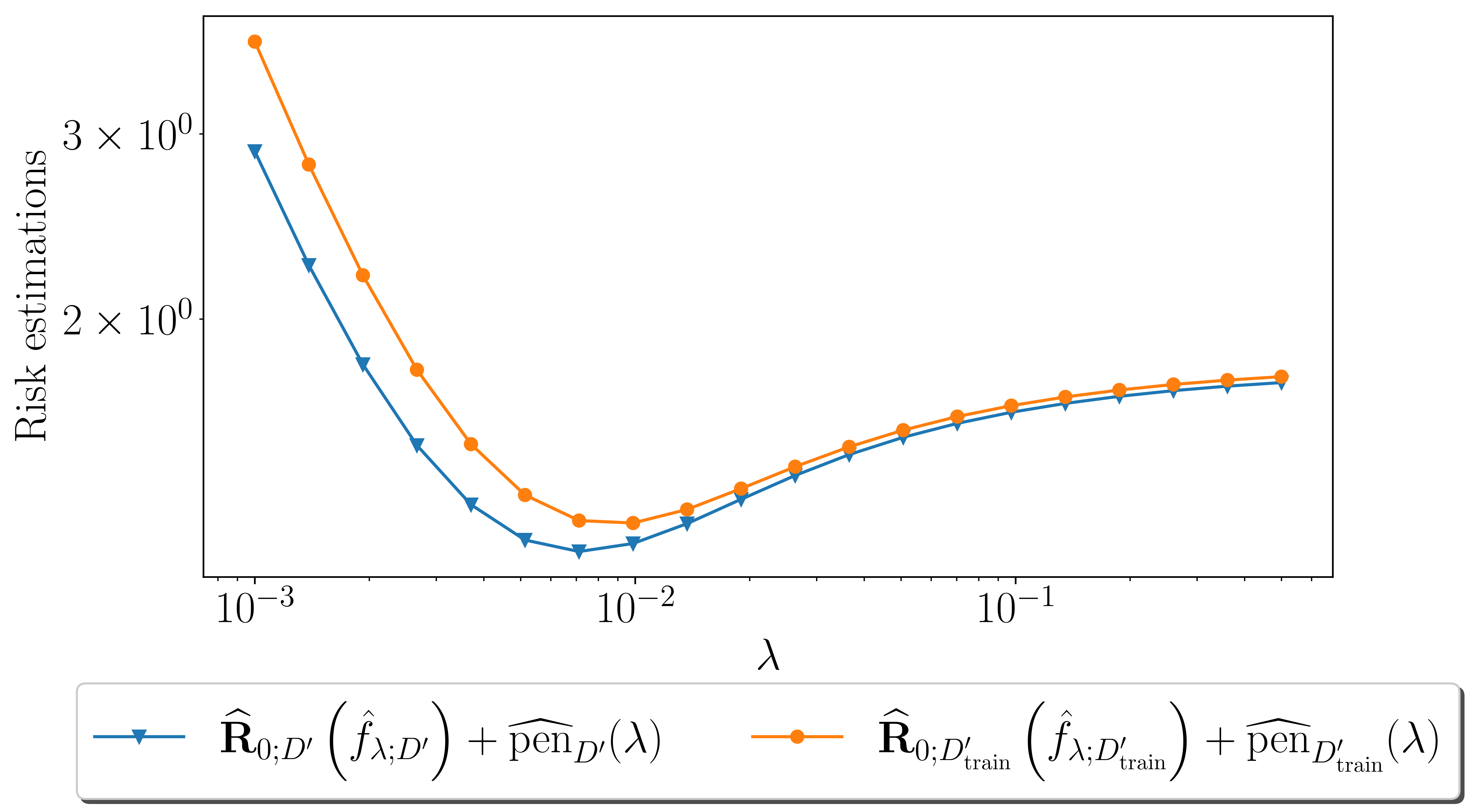}
    \caption{
        Evolution of the penalized criterion for $D^{\prime}$ and $D^{\prime}_{\mathrm{train}}$
        with $\abss{D^{\prime}} = n = 500$ and $\abss{D^{\prime}_{\mathrm{train}}} = 375$.
    }
    \label{fig.risk estimation}
\end{figure}
Figure~\ref{fig.risk estimation} displays the values of the estimated risks as a function of $\lambda$. Since the lower the better, one observes that both curves smoothly varies as functions of $\lambda$. The blue one (corresponding to the upper \textbf{StableCP}-region) is minimized at $\hat \lambda_n \approx 0.007$,
while the orange one (corresponding to the \textbf{SplitCP}-region) is minimized at $\hat \lambda_{n_{\mathrm{train}}} \approx 0.009$.

{
Now for each repetition and each procedure (\textbf{SplitCP} and \textbf{StableCP}),
the ratio between the volume of the resulting prediction-region
with that of the \textbf{OracleCP}-region (see Appendix~\ref{sec.oracle.cp}) is computed.
Then, each boxplot displays the values of this ratio across 100 repetitions.
Additionally, $\overline{cov}$ reports the empirical coverage-probability that is,  the empirical proportion of output-value that are contained within the prediction-regions across these 100 repetitions.
Finally, $\overline{T}$ measures the average computation time (relative to that of the \textbf{OracleCP}-region).
}

\begin{figure}[H]
    \centering
    \includegraphics[width=\textwidth]{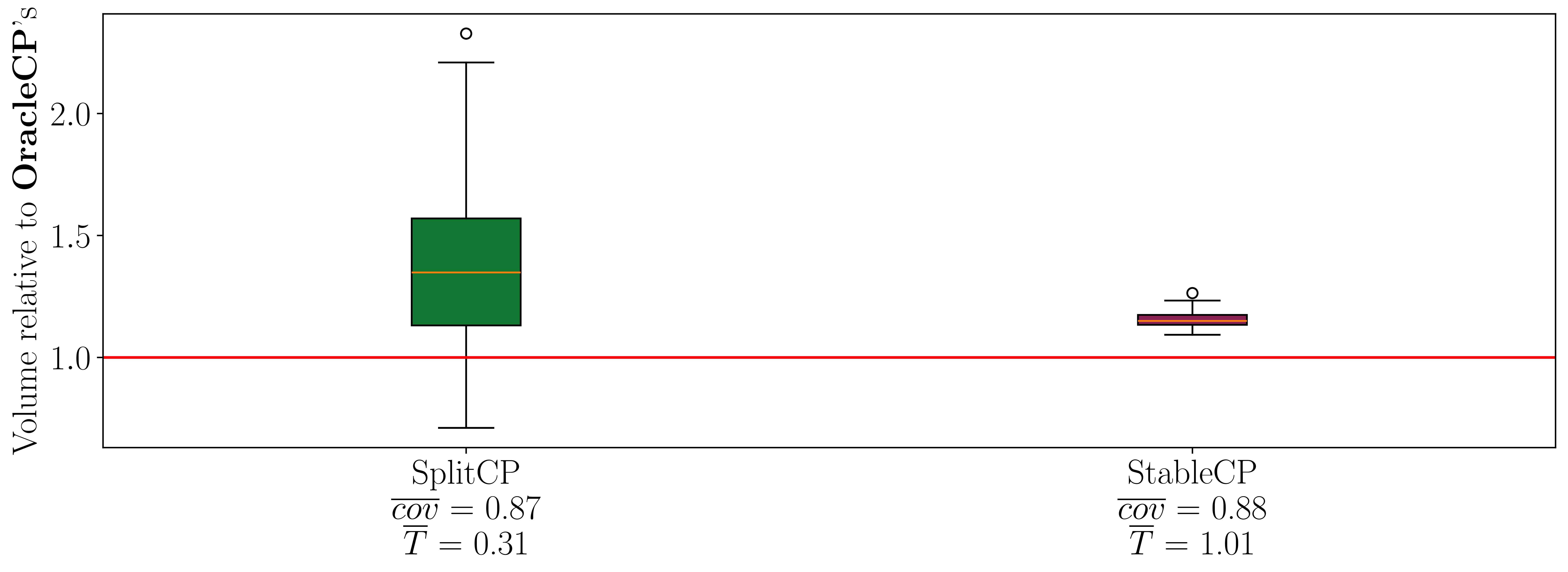}
    \caption{
        Comparison between the \textbf{StableCP}-regions (in red)
        and the \textbf{SplitCP}-regions (in green) in terms of
        volume relative to that of the \textbf{OracleCP}-regions
        (see Appendix~\ref{sec.oracle.cp}) for $\alpha = 0.1$
        with $\abss{D} = n = 500$ and $\abss{D_{\mathrm{train}}} = 375$.
    }
    \label{fig.volume.stableCP}
\end{figure}

{%
Figure~\ref{fig.volume.stableCP} displays the boxplots corresponding to the \textbf{SplitCP}-region (left-green) and the (right-red) \textbf{StableCP}-region.
On average the \textbf{StableCP}-region is smaller  than
the \textbf{SplitCP}-region in terms of volume relative to that of the \textbf{OracleCP}-region.
The variability of the volumes of the \textbf{SplitCP}-regions is also
higher than that of the \textbf{StableCP}-regions.
In practice, this higher variability implies that
the volume of the \textbf{SplitCP}-region can be much larger
than that of the \textbf{StableCP}-region, which one would like to avoid.

In terms of empirical coverage-probability, despite its smaller volume (on average),
the \textbf{StableCP}-region empirically presents a larger empirical coverage-probability
compared to the \textbf{SplitCP}-region.
Regarding the computation time, the reported values indicate that on average,
\textbf{StableCP}-region has almost the same cost as  \textbf{OracleCP}-region, which also relies on considering all the $n+1$ observations. On the contrary, the \textbf{SplitCP}-region is at least 3 times less expensive than the \textbf{StableCP}-one, which is due to the smaller cardinality of the training set it is based on.
}

\section{Estimated inter-task covariance-matrix}
\label{sec.estimated.inter.task.covariance}
The present section addresses an instance of
the approximation scheme introduced in Definition~\ref{def.approximate.fullCP.region},
called \textbf{G-EllipsoidCP}, where the inter-task covariance-matrix is unknown and has to be estimated.
More precisely,
Section~\ref{sec.global.covariance.ncms.approximations}
introduces the inter-task covariance-estimator along with the resulting non-conformity scores,
and derives the corresponding upper- and lower-approximate non-conformity scores.
Then, Section~\ref{sec.global.covariance.explicit.computation}
explains the guarantee enjoyed by the upper \textbf{G-EllipsoidCP}-region, while providing its explicit expression. Numerical experiments also highlight the good empirical coverage guarantee the upper \textbf{G-EllipsoidCP}-region also enjoys on a synthetic data set.
Finally, Section~\ref{sec.global.covariance.numerical.experiments} numerically explores the the tightness of the upper \textbf{G-EllipsoidCP}-region
as a function of the training sample size, and
compares its volume with that of the \textbf{SplitCP}-region on a synthetic data set (see Appendix~\ref{sec.split.cp}).

\subsection{Non-conformity scores and approximations}
\label{sec.global.covariance.ncms.approximations}
Let us first introduce the inter-task covariance-matrix estimator
and the corresponding non-conformity score.

\bigskip
\notparagraph{Inter-task covariance-matrix estimator}
Let $a \in \paren{0, 1}$ denote a regularization parameter,
and $y \in \mathcal{Y}$ a test output-value.
Then, $\widehat{\Gamma}_{a; D^{y}} \in \mathbb{R}^{p \times p}$
refers to the $a$-regularized inter-task covariance-matrix based on $D^{y}$, which is given by
\begin{align}
    \label{eq.estimated.output.covariance}
    \widehat{\Gamma}_{a; D^{y}}
    := \frac{1}{n+1} y y^{T}
    + \frac{1}{n+1}
    \sum_{i=1}^{n}
    Y_{i} Y_{i}^{T}
    - \widehat{\mu}_{D^{y}}
    \widehat{\mu}_{D^{y}}^{T}
    + a I_{p},
\end{align}
where $\widehat{\mu}_{D^{y}}$ stands for the average
of the output-vectors in $D^{y}$ that is,
\begin{align*}
    \widehat{\mu}_{D^{y}}
    := \frac{1}{n+1}y
    + \frac{1}{n+1}\sum_{i=1}^{n}Y_i.
\end{align*}
Moreover,
let $\widehat{\Gamma}_{a^{+}} \in \mathrm{Sym}_{p}^{++}\paren{\mathbb{R}}$
denote the matrix given by,
\begin{align}
    \label{eq.global.covariance}
    \widehat{\Gamma}_{a^{+}} := \widehat{\Gamma}_{\frac{a\paren{n+1}}{n}; D}.
\end{align}
In the \textbf{SplitCP} setting, such an estimated covariance matrix was
considered by \citet{messoudiEllipsoidalConformalInference2022} and later by \citet{braunMultivariateStandardizedResiduals2026}.
Building up on that, the present covariance-matrix estimator makes sense for two reasons: $(1)$ up to the regularization term, it is the maximum likelihood estimator under a Gaussian distributional assumption, and $(2)$ it enjoys a simple closed-form expression (Eq.~\ref{eq.estimated.output.covariance}).
However other covariance-matrix estimators could be considered as well although this is left for a future work.

\notparagraph{Estimated Mahalanobis non-conformity score}
For every $\paren{x, u} \in \mathcal{X} \times \mathcal{Y}$,
let $S_{\lambda; D^{y}}^{\widehat{\Gamma}_{a}}\paren{x, u}$ denote
the estimated Mahalanobis non-conformity score
of $\paren{x, u}$ with respect to the data set $D^{y}$. It is defined as
\begin{align*}
    S_{\lambda; D^{y}}^{\widehat{\Gamma}_{a}}\paren{x, u}
    :=
    \norm{
        \widehat{\Gamma}_{a; D^{y}}^{-\frac{1}{2}}\paren{
            u - \hat{f}_{\lambda; D^{y}}(x)
        }
    },
\end{align*}
where $\widehat{\Gamma}_{a; D^{y}}$ is the covariance-matrix given by Eq.~\eqref{eq.estimated.output.covariance}.

Harnessing the algorithmic stability-bounds and
the Sherman-Woodbury Morrison formula,
the next result details the expressions
of the upper- and lower-approximate non-conformity scores induced by this choice of score.
\begin{lemma}[Upper- and lower-non-conformity score approximations]
    \label{lm.stable.score.maha}
    Assume
    \eqref{asm.conv.loss},
    \eqref{asm.lip.loss} and
    \eqref{asm.lower.bounded.loss}
    hold true.
    Then, for every $\paren{x, u} \in \mathcal{X} \times \mathcal{Y}$,
    \begin{align*}
        \widetilde{S}_{\lambda; D^{y}}^{\widehat{\Gamma}_{a}, \lo}
        \paren{x, u}
        \leq
        S_{\lambda; D^{y}}^{\widehat{\Gamma}_{a}}
        \paren{x, u}
        \leq
        \widetilde{S}_{\lambda; D^{y}}^{\widehat{\Gamma}_{a}, \up}
        \paren{x, u},
    \end{align*}
    where $\widetilde{S}_{\lambda; D^{y}}^{\widehat{\Gamma}_{a}, \lo}
    \paren{x, u}$ designates the lower-approximate non-conformity score,
    defined as
    \begin{align*}
        \widetilde{S}_{\lambda; D^{y}}^{\widehat{\Gamma}_{a}, \lo}
        \paren{x, u}
        := \frac{
            \paren{
                \frac{n+1}{n}
            }^{\frac{1}{2}}
            \croch{
                \norm{
                    \widehat{\Gamma}_{a^{+}}^{-\frac{1}{2}}
                    \paren{u - \hat{f}_{\lambda^{+}; D}(x)}
                } - \widehat{\tau}_{\lambda}^{\widehat{\Gamma}_{a^{+}}}(x)
            }
        }{
            \paren{
                1 + \frac{1}{n+1}
                \paren{
                    \norm{
                        \widehat{\Gamma}_{a^{+}}^{-\frac{1}{2}}
                        \paren{y - \hat{f}_{\lambda^{+}; D}\paren{X_{n+1}}}
                    } + \widehat{t}_{\lambda; a}
                }^{2}
            }^{\frac{1}{2}}
        },
    \end{align*}
    and $\widetilde{S}_{\lambda; D^{y}}^{\widehat{\Gamma}_{a}, \up}
    \paren{x, u}$, the upper-approximate non-conformity score,
    defined as
    \begin{align}
        \label{eq.upper.global.score}
        \widetilde{S}_{\lambda; D^{y}}^{\widehat{\Gamma}_{a}, \up}
        \paren{x, u} :=
        \paren{
            \frac{n+1}{n}
        }^{\frac{1}{2}}
        \croch{
            \norm{
                \widehat{\Gamma}_{a^{+}}^{-\frac{1}{2}}
                \paren{u - \hat{f}_{\lambda^{+}; D}(x)}
            } + \widehat{\tau}_{\lambda}^{\widehat{\Gamma}_{a^{+}}}(x)
        },
    \end{align}
    and $\widehat{\tau}_{\lambda}^{\widehat{\Gamma}_{a^{+}}}(x)$
    the stability-bound, defined as
    \begin{align}
        \label{eq.global.score.stability.bound}
        \widehat{\tau}_{\lambda}^{\widehat{\Gamma}_{a^{+}}}(x)
        := \norm{
            \widehat{\Gamma}_{a^{+}}^{-\frac{1}{2}}
            K\paren{x, x}
            \widehat{\Gamma}_{a^{+}}^{-\frac{1}{2}}
        }_{\mathrm{op}}^{\frac{1}{2}}
        \frac{\rho_p\norm{K\paren{X_{n+1}, X_{n+1}}}_{\mathrm{op}}^{\frac{1}{2}}
        }{2\lambda (n+1)},
    \end{align}
    and the deviation $\widehat{t}_{\lambda; a}$ is given by
    \begin{align}
        \label{eq.correction}
        \widehat{t}_{\lambda; a}
        := \norm{
            \widehat{\Gamma}_{a^{+}}^{-\frac{1}{2}} \paren{
                \hat{f}_{\lambda^{+}; D}\paren{X_{n+1}} - \widehat{\mu}_{D}
            }
        }.
    \end{align}
\end{lemma}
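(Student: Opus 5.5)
The proof rests on one exact identity relating the covariance estimator on $D^{y}$ to the one on $D$, followed by two matrix inequalities and the stability bound of Lemma~\ref{lm.stability.bounds}. First I would prove the rank-one update formula. Write $\widehat{\Sigma}_{D} := \frac1n\sum_{i=1}^n Y_iY_i^T - \widehat{\mu}_D\widehat{\mu}_D^T$ and $v := y - \widehat{\mu}_D$. Then expand $\widehat{\mu}_{D^{y}} = \frac{n}{n+1}\widehat{\mu}_D + \frac1{n+1}y$ in Eq.~\eqref{eq.estimated.output.covariance}. This gives the standard online-covariance identity
\begin{align*}
    \widehat{\Gamma}_{a; D^{y}}
    = \frac{n}{n+1}\widehat{\Sigma}_{D} + \frac{n}{(n+1)^2} v v^{T} + a I_p
    = \frac{n}{n+1}\paren{\widehat{\Gamma}_{a^{+}} + \frac{1}{n+1} v v^{T}}.
\end{align*}
The second equality uses $\widehat{\Gamma}_{a^{+}} = \widehat{\Sigma}_D + \frac{a(n+1)}{n} I_p$ from Eq.~\eqref{eq.global.covariance}. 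This explains where the factors $\frac{n+1}{n}$ and $\frac{1}{n+1}$ in the statement come from. Set $B := \widehat{\Gamma}_{a^{+}}$, which is positive definite.

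Second, I would sandwich $\widehat{\Gamma}_{a;D^{y}}$ in the Loewner order. On one side $vv^T \succeq 0$. On the other, $B^{-1/2}vv^TB^{-1/2} \preceq \norm{B^{-1/2}v}^2 I_p$. Together these give
\begin{align*}
    \frac{n}{n+1} B \preceq \widehat{\Gamma}_{a; D^{y}} \preceq \frac{n}{n+1}\paren{1 + \frac{\norm{B^{-1/2}v}^2}{n+1}} B.
\end{align*}
Inverting reverses the order. Hence for every $w$,
\begin{align*}
    \frac{\frac{n+1}{n}\norm{B^{-1/2}w}^2}{1 + \norm{B^{-1/2}v}^2/(n+1)} \le \norm{\widehat{\Gamma}_{a;D^{y}}^{-1/2}w}^2 \le \frac{n+1}{n}\norm{B^{-1/2}w}^2.
\end{align*}
One could get the lower side from the Sherman--Morrison formula as the paper suggests, but the Loewner argument is shorter. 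I apply this with $w = u - \hat f_{\lambda;D^{y}}(x)$.

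Third, I would replace $\hat f_{\lambda;D^{y}}$ by $\hat f_{\lambda^{+};D}$. By the triangle inequality, $\norm{B^{-1/2}(u-\hat f_{\lambda;D^y}(x))}$ lies within $\norm{B^{-1/2}(\hat f_{\lambda;D^y}-\hat f_{\lambda^+;D})(x)}$ of $\norm{B^{-1/2}(u-\hat f_{\lambda^+;D}(x))}$. The reproducing property gives $g(x) = K(\bullet,x)^{*}g$. So $\norm{B^{-1/2}g(x)} \le \norm{K(\bullet,x)B^{-1/2}}_{\mathrm{op}}\normh{g} = \norm{B^{-1/2}K(x,x)B^{-1/2}}_{\mathrm{op}}^{1/2}\normh{g}$. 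Lemma~\ref{lm.stability.bounds} then bounds the deviation by $\widehat{\tau}_{\lambda}^{\widehat{\Gamma}_{a^{+}}}(x)$, as in Lemma~\ref{lm.stable.score.norm}. Combined with the upper Loewner bound, this yields Eq.~\eqref{eq.upper.global.score} directly.

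For the lower bound, the denominator still involves $\norm{B^{-1/2}(y-\widehat{\mu}_D)}$. I would bound it from above by the triangle inequality through $\hat f_{\lambda^{+};D}(X_{n+1})$: it is at most $\norm{B^{-1/2}(y-\hat f_{\lambda^{+};D}(X_{n+1}))} + \widehat{t}_{\lambda;a}$. Enlarging the denominator only decreases the bound.

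The main obstacle is the sign bookkeeping in the lower bound. If the bracket $\norm{B^{-1/2}(u-\hat f_{\lambda^+;D}(x))} - \widehat{\tau}$ is negative, dividing by a denominator larger than one makes it larger. In that case I would argue separately that $\widetilde S^{\lo}$ is still $\le 0 \le S$, so the inequality holds trivially. When the bracket is nonnegative, taking square roots and chaining the monotone inequalities gives the claimed lower bound.
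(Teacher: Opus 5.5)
Your proposal is correct and follows the paper's proof step for step. It uses the same rank-one identity $\widehat{\Gamma}_{a; D^{y}} = \frac{n}{n+1}\bigl(\widehat{\Gamma}_{a^{+}} + \frac{1}{n+1}(y-\widehat{\mu}_{D})(y-\widehat{\mu}_{D})^{T}\bigr)$, the triangle inequality through $\hat{f}_{\lambda^{+}; D}(X_{n+1})$ to bring $\widehat{t}_{\lambda; a}$ into the denominator, the same stability step (Lemma~\ref{lm.gamma.prediction.bound} with $\widehat{\Gamma}_{a^{+}}$ in place of $\Gamma$, then Lemma~\ref{lm.stability.bounds}), and nonnegativity of the score to handle the sign issue.

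The only variation is how you get the two-sided Mahalanobis bound (Lemma~\ref{lm.global.norm.bound}). You use a Loewner-order sandwich, whereas the paper uses the Sherman--Morrison expansion plus Cauchy--Schwarz. Both give identical constants, and your route also yields invertibility of $\widehat{\Gamma}_{a; D^{y}}$ directly.
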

The proof is deferred to Appendix~\ref{proof.stable.score.maha}.
Let us emphasize that, one main difference compared to similar results from \citet{ndiayeStableConformalPrediction2022,leeLeaveOneOutStableConformal2025,razafindrakotoApproximateFullConformal2026}, the lower-non-conformity score approximation involves not only an additive correction, but also a multiplicative one as well as a denominator term depending on $y$.

Concerning the additive correction in Eq.~\eqref{eq.global.score.stability.bound}, the first term involves
the inter-task covariance-matrix estimator combined with the influence of the matrix-valued $K(x,x)$. This again emphasizes the importance
of specifying a matrix-value kernel that is able to balance the inverse of the estimated covariance-matrix (estimator).
Provided that the inter-task covariance-matrix converges,
similarly to the ``known inter-task covariance-matrix'' setting,
with a matrix-value kernel such that $K\paren{\bullet, \bullet} = k\paren{\bullet, \bullet} \Gamma$ (with $k\paren{\bullet, \bullet}$ is a scalar-valued bounded kernel), then the score approximations improve at a rate of $O\paren{\frac{1}{\lambda n}}$.

\subsection{Explicit approximate \textbf{G-EllipsoidCP}-regions computation}
\label{sec.global.covariance.explicit.computation}
From the above upper and lower non-conformity scores from Lemma~\ref{lm.stable.score.maha}, the resulting
upper- and lower-approximate FullCP are formulated as follows.

\bigskip
\notparagraph{\textbf{G-EllipsoidCP}-region}
Let $\widetilde{C}_{\lambda; \alpha}^{\widehat{\Gamma}_{a}, \up} \paren{X_{n+1}}$
denote the upper \textbf{G-EllipsoidCP}-region
with the estimated Mahalanobis non-conformity, given by
\begin{align}
    \label{def.approx.fcpr.global.maha}
    \widetilde{C}_{\lambda; \alpha}^{\widehat{\Gamma}_{a}, \up} \paren{X_{n+1}}
    := \brac{
    y \in \mathcal{Y}:
    \frac{
    1 + \sum_{i=1}^{n}
    \mathbbm{1}
    \brac{
    \widetilde{S}_{\lambda; D^{y}}^{\widehat{\Gamma}_{a}, \up}\paren{X_i, Y_i}
    \geq \widetilde{S}_{\lambda; D^{y}}^{\widehat{\Gamma}_{a}, \lo}\paren{X_{n+1}, y}
    }
    }{n+1} > \alpha
    }.
\end{align}
Conversely,
let $\widehat{C}_{\lambda; \alpha}^{\widehat{\Gamma}_{a}, \full}\paren{X_{n+1}}$
designate the \textbf{FullCP}-region, given by
\begin{align*}
    \widehat{C}_{\lambda; \alpha}^{\widehat{\Gamma}_{a}, \full}\paren{X_{n+1}}
    := \brac{
        y \in \mathcal{Y}:
        \frac{1
        + \sum_{i=1}^{n}
        \mathbbm{1}\brac{
            S_{\lambda; D^{y}}^{\widehat{\Gamma}_{a}}\paren{X_i, Y_i}
            \geq S_{\lambda; D^{y}}^{\widehat{\Gamma}_{a}}\paren{X_{n+1}, y}
        }}{n+1}>\alpha
    }.
\end{align*}

As an instance of the approximate scheme in Definition~\ref{def.approximate.fullCP.region},
the next result establishes the coverage guarantee that the upper \textbf{G-EllipsoidCP}-region enjoys.

\begin{corollary}
    \label{cor.coverage.ellipsoid}
    Assume \eqref{asm.conv.loss}, \eqref{asm.lip.loss} and \eqref{asm.lower.bounded.loss} hold true.
    Then, for any control-level $\alpha \in \left[\frac{1}{n+1}, 1\right)$,
    the upper \textbf{G-EllipsoidCP}-region $\widetilde{C}_{\lambda; \alpha}^{\widehat{\Gamma}_{a}, \up} \paren{X_{n+1}}$
    contains the \textbf{FullCP}-region $\widehat{C}_{\lambda; \alpha}^{\widehat{\Gamma}_{a}, \full} \paren{X_{n+1}}$, that is
    $%
        \widehat{C}_{\lambda; \alpha}^{\widehat{\Gamma}_{a}, \full} \paren{X_{n+1}}
        \subseteq
        \widetilde{C}_{\lambda; \alpha}^{\widehat{\Gamma}_{a}, \up}\paren{X_{n+1}}
    $.
    As a result, the coverage-probability of $\widetilde{C}_{\lambda; \alpha}^{\widehat{\Gamma}_{a}} \paren{X_{n+1}}$
    is bounded from below,
    \begin{align*}
        \mathbb{P}\croch{
        Y_{n+1}
        \in
        \widetilde{C}_{\lambda; \alpha}^{\widehat{\Gamma}_{a}, \up}\paren{X_{n+1}}
        }
        \geq \mathbb{P}\croch{
        Y_{n+1}
        \in
        \widehat{C}_{\lambda; \alpha}^{\widehat{\Gamma}_{a}, \full}\paren{X_{n+1}}
        }
        \geq 1 - \alpha,
    \end{align*}
    making $\widetilde{C}_{\lambda; \alpha}^{\widehat{\Gamma}_{a}, \up}\paren{X_{n+1}}$ a confidence prediction-region.
\end{corollary}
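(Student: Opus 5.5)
This corollary is a direct instance of the generic scheme, so I will reduce it to Theorem~\ref{thm.coverage.approximate.region} (equivalently, to Lemma~\ref{lm.sandwiching} combined with Theorem~\ref{thm.coverage}). The only thing to check is that the hypotheses of Definition~\ref{def.approximate.fullCP.region} hold. Concretely, this means the sandwich inequality~\eqref{eq.bound.score} must hold for the estimated Mahalanobis score $S_{\lambda; D^{y}}^{\widehat{\Gamma}_{a}}$ with the approximations $\widetilde{S}_{\lambda; D^{y}}^{\widehat{\Gamma}_{a}, \lo}$ and $\widetilde{S}_{\lambda; D^{y}}^{\widehat{\Gamma}_{a}, \up}$. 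This is exactly the content of Lemma~\ref{lm.stable.score.maha}, valid under \eqref{asm.conv.loss}, \eqref{asm.lip.loss} and \eqref{asm.lower.bounded.loss}, for every test value $y$ and every $\paren{x,u}$.

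The main steps are as follows. First, I take the non-conformity function $\widehat{s}_{D^{y}}\paren{u, v} := \norm{\widehat{\Gamma}_{a; D^{y}}^{-\frac{1}{2}}\paren{u - v}}$, which depends on $D^{y}$. Definition~\ref{def.conformal.pvalues} explicitly allows such dependence, so Theorem~\ref{thm.coverage} applies to $\widehat{C}_{\lambda; \alpha}^{\widehat{\Gamma}_{a}, \full}\paren{X_{n+1}}$. The exchangeability argument still works because $\widehat{\Gamma}_{a; D^{y}}$ is a symmetric function of the $n+1$ points, and $\hat f_{\lambda; D^y}$ is symmetric as well. Second, I check that the region~\eqref{def.approx.fcpr.global.maha} is exactly $\widetilde{C}_{\lambda; \alpha}^{\up}\paren{X_{n+1}}$ of Eq.~\eqref{eq.generic.approx.confidence.region} with these choices of approximate scores.

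Third, I prove the inclusion pointwise in $y$. Fix $y \in \widehat{C}_{\lambda; \alpha}^{\widehat{\Gamma}_{a}, \full}\paren{X_{n+1}}$. For each $i \le n$, Lemma~\ref{lm.stable.score.maha} gives
\[
\widetilde{S}^{\up}\paren{X_i,Y_i} \geq S\paren{X_i,Y_i} \geq S\paren{X_{n+1},y} \geq \widetilde{S}^{\lo}\paren{X_{n+1},y}
\]
whenever the middle inequality holds, where all scores are those with respect to $D^{y}$. Hence each indicator in the full p-value is dominated by the corresponding indicator in the upper-approximate p-value. This gives $\widetilde{\pi}^{\up} \geq \widehat{\pi}^{\full} > \alpha$, so $y$ belongs to the upper region. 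Finally, monotonicity of probability together with Theorem~\ref{thm.coverage} yields the coverage chain.

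The only potentially delicate point is the first step. I need that the data-dependent covariance estimator does not break the validity of Theorem~\ref{thm.coverage}, that is, that the scores stay exchangeable. This follows from the permutation invariance of $\widehat{\Gamma}_{a; D^{y}}$ and of the predictor, which is unique by Lemma~\ref{lm.predictor.well.defined}. Everything else is routine once Lemma~\ref{lm.stable.score.maha} is granted.
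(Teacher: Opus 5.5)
Your proposal is correct and matches the paper's proof: the paper also reduces the corollary in one line to Lemma~\ref{lm.stable.score.maha}, which supplies the sandwich~\eqref{eq.bound.score}, and Theorem~\ref{thm.coverage.approximate.region}. Your extra check that Theorem~\ref{thm.coverage} still applies, because $\widehat{\Gamma}_{a; D^{y}}$ and $\hat{f}_{\lambda; D^{y}}$ are symmetric functions of the $n+1$ points, is something the paper leaves implicit, and it is the right justification.
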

\begin{proof}
    Direct application of Lemma~\ref{lm.stable.score.maha}
    and Theorem~\ref{thm.coverage.approximate.region}.
\end{proof}

Owing to the choice of non-conformity scores in Lemma~\ref{lm.stable.score.maha},
the next proposition details the closed-form expression of the upper \textbf{G-EllipsoidCP}-region.
\begin{proposition}
    \label{prop.expression.upper.GlobalEllipsoidCP.region}
    For any control-level $\alpha \in \left[\frac{1}{n+1}, 1\right)$,
    the upper \textbf{G-EllipsoidCP}-region
    $\widetilde{C}_{\lambda; \alpha}^{\widehat{\Gamma}_{a}, \up}\paren{X_{n+1}}$
    is the region enclosed by a
    $\widehat{\Gamma}_{a^{+}}^{-1}$-ellipsoid
    centred at $\hat{f}_{\lambda^{+}; D}\paren{X_{n+1}}$
    with a radius of $\widetilde{Q}_{\lambda; D^{+}}^{\widehat{\Gamma}_{a}, \up}(\alpha)$, that is,
    \begin{align*}
        \widetilde{C}_{\lambda; \alpha}^{\widehat{\Gamma}_{a}, \up}\paren{X_{n+1}}
        =
        \brac{
            y \in \mathcal{Y}:
                \norm{
                \widehat{\Gamma}_{a^{+}}^{-\frac{1}{2}}
                \paren{y - \hat{f}_{\lambda^{+}; D}\paren{X_{n+1}}}
            } \leq \widetilde{Q}_{\lambda; D^{+}}^{\widehat{\Gamma}_{a}, \up}(\alpha)
        },
    \end{align*}
    where the radius $\widetilde{Q}_{\lambda; D^{+}}^{\widehat{\Gamma}_{a}, \up}(\alpha)$ is given by
    \begin{align*}
        \widetilde{Q}_{\lambda; D^{+}}^{\widehat{\Gamma}_{a}, \up}(\alpha)
        := \paren{
            L_{
                \frac{1}{n+1}
            }^{\widehat{t}_{\lambda; a}}
        }^{-1}
        \paren{
            \widehat{Q}_{\lambda; D^{+}}^{\widehat{\Gamma}_{a}, \up}(\alpha);
            \widehat{\tau}_{\lambda}^{\widehat{\Gamma}_{a^{+}}}\paren{X_{n+1}}
        },
    \end{align*}
    (see Eq.~\ref{eq.global.covariance}
    for $\widehat{\Gamma}_{a^{+}}$,
    Eq.~\ref{eq.global.score.stability.bound}
    for $\widehat{\tau}_{\lambda}^{\widehat{\Gamma}_{a^{+}}}\paren{\bullet}$,
    Eq.~\ref{eq.correction} for $\widehat{t}_{\lambda; a}$,
    Eq.~\ref{eq.lower.transformation.function} for $L_{w}^{t}\paren{\bullet; \tau}$)
    provided that the quantile value $\widehat{Q}_{\lambda; D^{+}}^{\widehat{\Gamma}_{a}, \up}(\alpha)$,
    given below, is smaller than $\sqrt{n+1}$,
    that is,
    \begin{align}
        \label{eq.upper.quantile.global.covariance}
        \widehat{Q}_{\lambda; D^{+}}^{\widehat{\Gamma}_{a}, \up}(\alpha)
        := \croch{
            \norm{
                \widehat{\Gamma}_{a^{+}}^{-\frac{1}{2}}
                \paren{
                    Y_{\paren{i_{n, \alpha}^{n}}}
                    - \hat{f}_{\lambda^{+}; D}\paren{X_{\paren{i_{n, \alpha}^{n}}}}
                }
            } + \widehat{\tau}_{\lambda}^{\widehat{\Gamma}_{a^{+}}}\paren{X_{\paren{i_{n, \alpha}^{n}}}}
        } < \sqrt{n+1}
    \end{align}
    (see Eq.~\ref{eq.index.alpha} for $i_{n, \alpha}^{n}$).
    Otherwise, $\widetilde{C}_{\lambda; \alpha}^{\widehat{\Gamma}_{a}}\paren{X_{n+1}} = \mathcal{Y}$.
\end{proposition}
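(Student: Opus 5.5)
The plan is to reduce membership of $y$ in $\widetilde{C}_{\lambda; \alpha}^{\widehat{\Gamma}_{a}, \up}\paren{X_{n+1}}$ to a one-dimensional inequality in the radial variable
\[
r(y) := \norm{\widehat{\Gamma}_{a^{+}}^{-\frac{1}{2}}\paren{y - \hat{f}_{\lambda^{+}; D}\paren{X_{n+1}}}}.
\]
The argument mirrors the proof of Proposition~\ref{prop.upper.stableCP.region.fixed}.

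First, observe from Lemma~\ref{lm.stable.score.maha} that the upper scores $\widetilde{S}_{\lambda; D^{y}}^{\widehat{\Gamma}_{a}, \up}\paren{X_i, Y_i}$, $1\le i\le n$, do not depend on $y$. Indeed, $\widehat{\Gamma}_{a^{+}}$, $\hat{f}_{\lambda^{+}; D}$ and $\widehat{\tau}_{\lambda}^{\widehat{\Gamma}_{a^{+}}}$ are computed from $D$ and $X_{n+1}$ only. Up to the common factor $\paren{\frac{n+1}{n}}^{1/2}$, these upper scores are the numbers entering Eq.~\eqref{eq.upper.quantile.global.covariance}.

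Next, I count how many of them exceed the lower test score. The standard order-statistics argument gives
\[
\widetilde{\pi}^{\up} > \alpha \iff \#\brac{i : \widetilde{S}^{\up}_i \ge \widetilde{S}^{\lo}_{n+1}(y)} \ge (n+1)(1-\alpha)-n+\ldots .
\]
As in the known-covariance case, this is equivalent to
\[
\widetilde{S}^{\lo}_{\lambda; D^{y}}\paren{X_{n+1}, y} \le \paren{\tfrac{n+1}{n}}^{1/2}\,\widehat{Q}_{\lambda; D^{+}}^{\widehat{\Gamma}_{a}, \up}(\alpha).
\]
Here the order statistic has index $i_{n,\alpha}^{n}$ from Eq.~\eqref{eq.index.alpha}, and the sorting is with respect to the upper scores. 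I will reuse the bookkeeping of Proposition~\ref{prop.upper.stableCP.region.fixed} verbatim for this step.

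Then I write the lower test score as a function of $r=r(y)$ alone. From Lemma~\ref{lm.stable.score.maha}, with $w=\frac{1}{n+1}$, $t=\widehat{t}_{\lambda; a}$ and $\tau=\widehat{\tau}_{\lambda}^{\widehat{\Gamma}_{a^{+}}}\paren{X_{n+1}}$,
\[
\widetilde{S}^{\lo}_{\lambda; D^{y}}\paren{X_{n+1}, y} = \paren{\tfrac{n+1}{n}}^{1/2} L_{w}^{t}(r;\tau), \qquad L_{w}^{t}(r;\tau)=\frac{r-\tau}{\sqrt{1+w(r+t)^2}}.
\]
I expect this to match Eq.~\eqref{eq.lower.transformation.function}. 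The factor $\paren{\frac{n+1}{n}}^{1/2}$ cancels, so membership reads $L_{w}^{t}(r;\tau)\le \widehat{Q}_{\lambda; D^{+}}^{\widehat{\Gamma}_{a}, \up}(\alpha)$.

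The key step, and the only real obstacle, is the analysis of $r\mapsto L_w^t(r;\tau)$ on $[0,+\infty)$. Differentiating, the sign of the derivative is that of
\[
\paren{1+w(r+t)^2}-w(r-\tau)(r+t)=1+w(r+t)(t+\tau),
\]
which is positive since $t,\tau\ge0$. Hence $L_w^t(\cdot;\tau)$ is continuous and strictly increasing, and $\lim_{r\to\infty}L_w^t(r;\tau)=w^{-1/2}=\sqrt{n+1}$. The inverse $\paren{L_w^t}^{-1}(\bullet;\tau)$ is therefore well defined on the range $[L_w^t(0;\tau),\sqrt{n+1})$.

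Two cases then conclude the proof. If $\widehat{Q}^{\up}<\sqrt{n+1}$, the sublevel set $\brac{r: L_w^t(r;\tau)\le \widehat{Q}^{\up}}$ is an interval $[0,\widetilde{Q}^{\up}]$ with $\widetilde{Q}^{\up}=\paren{L_w^t}^{-1}(\widehat{Q}^{\up};\tau)$. This gives the claimed $\widehat{\Gamma}_{a^{+}}^{-1}$-ellipsoid. If $\widehat{Q}^{\up}$ lies below $L_w^t(0;\tau)=-\tau/\sqrt{1+wt^2}$, which is impossible as it is nonnegative, the set would be empty; I would note this. If $\widehat{Q}^{\up}\ge\sqrt{n+1}$, the inequality holds for every $r$, so the region is all of $\mathcal{Y}$.
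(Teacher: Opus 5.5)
Your proposal is correct and follows the paper's proof essentially step for step: Lemma~\ref{lm.quantile} with $m=n$ reduces membership to $\widetilde{S}_{\lambda; D^{y}}^{\widehat{\Gamma}_{a}, \lo}\paren{X_{n+1}, y}$ being at most the upper score of index $i_{n,\alpha}^{n}$. The lower test score is then rewritten as $\paren{\frac{n+1}{n}}^{1/2} L_{\frac{1}{n+1}}^{\widehat{t}_{\lambda; a}}\paren{r; \widehat{\tau}_{\lambda}^{\widehat{\Gamma}_{a^{+}}}\paren{X_{n+1}}}$, and the strict monotonicity and range $[L(0), \sqrt{n+1})$ of $L$ (Lemma~\ref{lm.variation.lower.transformation.function}) give the two cases. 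The only blemish is your unfinished counting display with trailing dots, which you can simply replace by a direct citation of Lemma~\ref{lm.quantile}, as the paper does.
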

The proof is deferred to Appendix~\ref{proof.expression.upper.GlobalEllipsoidCP.region}.
The shape of the upper \textbf{G-EllipsoidCP}
encodes the influence of the estimated inter-task connections.
Moreover, the upper-correction function involved in the radius of the ellipsoid, $\paren{
    L_{
        \frac{1}{n+1}
    }^{\widehat{t}_{\lambda; a}}
}^{-1}
\paren{
    \bullet;
    \widehat{\tau}_{\lambda}^{\widehat{\Gamma}_{a^{+}}}\paren{X_{n+1}}
}$ is an increasing function, and becomes smaller as $\widehat{\tau}_{\lambda}^{\widehat{\Gamma}_{a^{+}}}\paren{X_{n+1}}$ decreases
by Lemma~\ref{lm.variation.lower.transformation.function}.
This suggests that for larger values of $\lambda n$ (see Eq.~\eqref{eq.global.score.stability.bound}), the radius of the ellipsoid would be less thickened.

\bigskip

\notparagraph{Evaluating the empirical coverage-probability}
On a synthetic data set, the empirical coverage guarantee enjoyed by the upper \textbf{G-EllipsoidCP}-region is displayed on Figure~\ref{fig.coverage.GlobalEllipsoidCP}.
All experimental parameters are the same as the ones detailed at the end of Section~\ref{sec.known.cov.explicit.computation}.
\begin{figure}[H]
    \centering
    \includegraphics[width=0.80\textwidth]{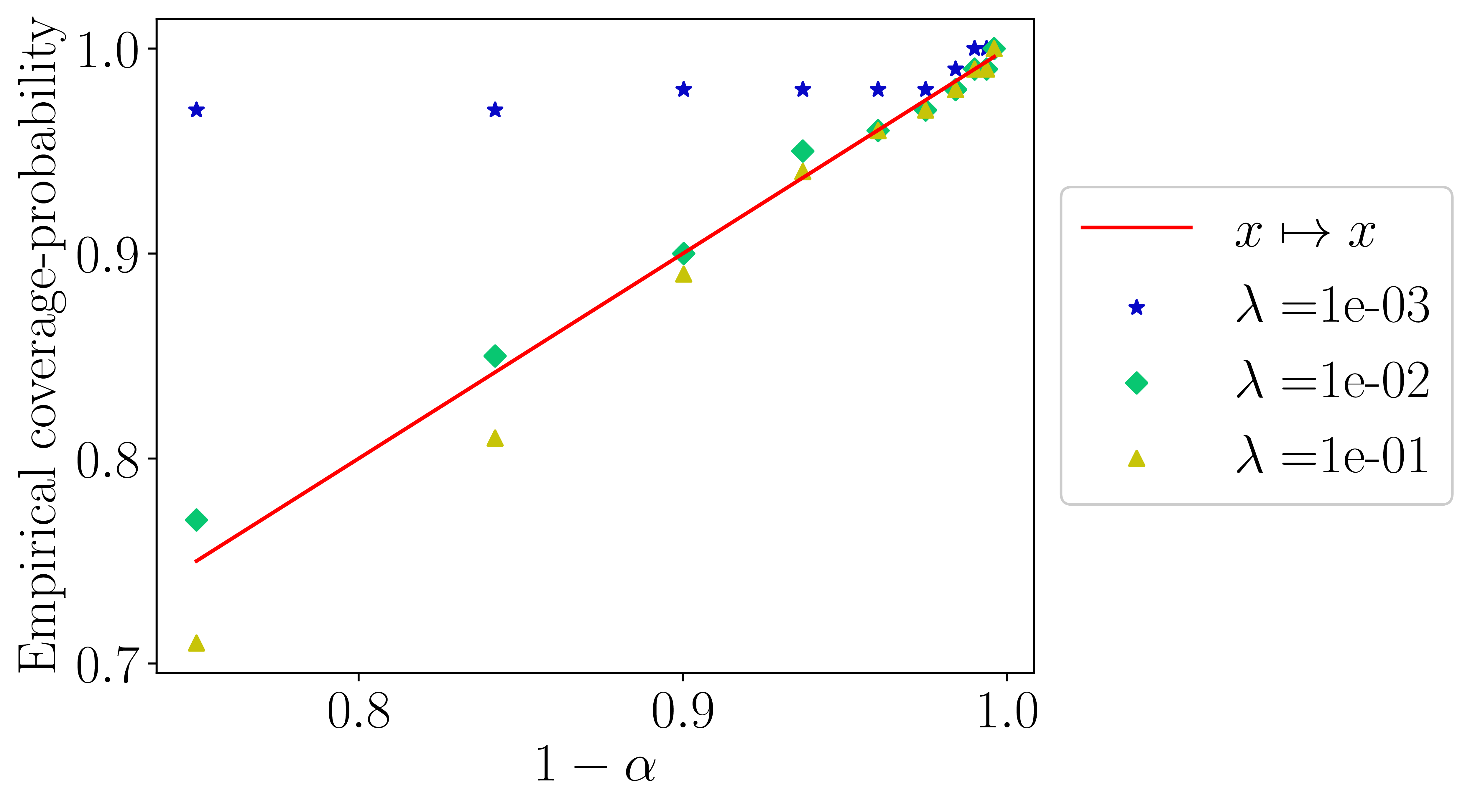}
    \caption{
        Evolution of the empirical coverage-probability of the upper \textbf{G-EllipsoidCP}-region,
        for $\lambda \in \brac{10^{-3}, 10^{-2}, 10^{-1}}$
        and for $\alpha$ within a grid.
    }
    \label{fig.coverage.GlobalEllipsoidCP}
\end{figure}
The red-line represents the desired control-level $1 - \alpha$
over which the theoretical coverage-probability is guaranteed to remain (see Corollary~\ref{cor.coverage.ellipsoid}).
The empirical coverage-probabilities are mostly above the red-line, which illustrates that the prediction-regions does empirically enjoy the desired control property.
Nevertheless, a few values are located under the red-line,
with larger deviations for lower values of $1 - \alpha$.
This results the randomness induced by the empirical coverage-probability. The empirical coverage-probability is a Binomial random variable $\mathrm{Bin}\paren{100, 1 - \alpha}$ divided by $100$.
Since its variance is equal to $\frac{\alpha\paren{1-\alpha}}{100}$,
its variance becomes higher for $\alpha$ closer to $\frac{1}{2}$.
Similarly to the ``known inter-task covariance-matrix'' setting (see Section~\ref{sec.known.cov.explicit.computation}), smaller values of $\lambda$ yields larger values empirical coverage-probabilities due
to higher thickening of the radius of the prediction-regions.

\subsection{Numerical experiments}
\label{sec.global.covariance.numerical.experiments}
The next empirical experiments aim at illustrating
the performances of the upper \textbf{G-EllipsoidCP}-region
in terms of \emph{thickness} as a function the sample size $n$, and through a comparison to the \textbf{SplitCP}-region in terms of their respective volumes.

As already explained in Section~\ref{sec.known.cov.thickness.upper.bound},
a key ingredient for deriving a computable proxy for the \emph{thickness} is the lower \textbf{G-EllipsoidCP}-region.
As such, let $\widetilde{C}_{\lambda; \alpha}^{\widehat{\Gamma}_{a}, \lo}\paren{X_{n+1}}$ denote the lower \textbf{G-EllipsoidCP}-region given by
\begin{align*}
    \widetilde{C}_{\lambda; \alpha}^{\widehat{\Gamma}_{a}, \lo}\paren{X_{n+1}}
    := \brac{
        y \in \mathcal{Y}:
        \frac{1
        + \sum_{i=1}^{n}
        \mathbbm{1}\brac{
            \widetilde{S}_{\lambda; D^{y}}^{\widehat{\Gamma}_{a}, \lo}\paren{X_i, Y_i}
            \geq \widetilde{S}_{\lambda; D^{y}}^{\widehat{\Gamma}_{a}, \up}\paren{X_{n+1}, y}
        }}{n+1}>\alpha
    }.
\end{align*}
The next result provides the closed-form expression
for the lower \textbf{G-EllipsoidCP}-region.
\begin{proposition}
    \label{prop.expresion.lower.GlobalEllipsoidCP.region}
    For any control-level $\alpha \in \left[\frac{1}{n+1}, 1\right)$,
    the lower \textbf{G-EllipsoidCP}-region
    $\widetilde{C}_{\lambda; \alpha}^{\widehat{\Gamma}_{a}, \lo}\paren{X_{n+1}}$
    is the region enclosed by a
    $\widehat{\Gamma}_{a^{+}}^{-\frac{1}{2}}$-ellipsoid
    centred at $\hat{f}_{\lambda^{+}; D}\paren{X_{n+1}}$
    with a radius of $\widetilde{Q}_{\lambda; D^{+}}^{\widehat{\Gamma}_{a}, \lo}(\alpha)$ given by
    \begin{align*}
        \widetilde{C}_{\lambda; \alpha}^{\widehat{\Gamma}_{a}, \lo}\paren{X_{n+1}}
        = \brac{
            y \in \mathcal{Y}:
            \norm{
                \widehat{\Gamma}_{a^{+}}^{-\frac{1}{2}}
                \paren{y - \hat{f}_{\lambda^{+}; D}\paren{X_{n+1}}}
            } \leq \widetilde{Q}_{\lambda; D^{+}}^{\widehat{\Gamma}_{a}, \lo}(\alpha)
        },
    \end{align*}
    where the radius is
    \begin{align*}
        \widetilde{Q}_{\lambda; D^{+}}^{\widehat{\Gamma}_{a}, \lo}(\alpha)
        := 
        \paren{
            U_{\frac{1}{n+1}; 1}^{\widehat{t}_{\lambda; a}}
        }^{-1}\paren{
            \widehat{Q}_{\lambda; D^{+}}^{\widehat{\Gamma}_{a}, \lo}(\alpha);
            \widehat{\tau}_{\lambda}^{\widehat{\Gamma}_{a^{+}}}\paren{X_{n+1}}
        },
    \end{align*}
    (see Eq.~\ref{eq.global.covariance}
    for $\widehat{\Gamma}_{a^{+}}$,
    Eq.~\ref{eq.global.score.stability.bound}
    for $\widehat{\tau}_{\lambda}^{\widehat{\Gamma}_{a^{+}}}\paren{\bullet}$,
    Eq.~\ref{eq.correction} for $\widehat{t}_{\lambda; a}$,
    and Eq.~\ref{eq.upper.transformation.function} for $U_{w; c}^{t}\paren{\bullet; \tau}$)
    and the quantile value $\widehat{Q}_{\lambda; D^{+}}^{\widehat{\Gamma}_{a}, \lo}(\alpha)$ (bellow) is greater than $\widehat{\tau}_{\lambda}^{\widehat{\Gamma}_{a^{+}}}\paren{X_{n+1}}\paren{1 + \frac{\widehat{t}_{\lambda; a}^2}{n+1}}^{\frac{1}{2}}$
    that is,
    \begin{align}
        \label{eq.lower.quantile.global.covariance}
        \widehat{Q}_{\lambda; D^{+}}^{\widehat{\Gamma}_{a}, \lo}(\alpha)
        &
        :=
        \norm{
            \widehat{\Gamma}_{a^{+}}^{-\frac{1}{2}}\paren{
                Y_{\paren{i_{n, \alpha}^{n}}}
                - \hat{f}_{\lambda^{+}; D}\paren{X_{\paren{i_{n, \alpha}^{n}}}}
            }
        } - \widehat{\tau}_{\lambda}^{\widehat{\Gamma}_{a^{+}}}\paren{X_{\paren{i_{n, \alpha}^{n}}}}
        \\
        &
        \geq \widehat{\tau}_{\lambda}^{\widehat{\Gamma}_{a^{+}}}\paren{X_{n+1}}\paren{1 + \frac{\widehat{t}_{\lambda; a}^2}{n+1}}^{\frac{1}{2}},
        \notag
    \end{align}
    (see Eq.~\ref{eq.index.alpha} for $i_{n, \alpha}^{n}$).
    Otherwise, $\widetilde{C}_{\lambda; \alpha}^{\widehat{\Gamma}_{a}}\paren{X_{n+1}} = \emptyset$.
\end{proposition}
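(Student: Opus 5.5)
The plan is to reduce the membership condition defining $\widetilde{C}_{\lambda; \alpha}^{\widehat{\Gamma}_{a}, \lo}\paren{X_{n+1}}$ to a scalar inequality in the single variable $r(y) := \norm{\widehat{\Gamma}_{a^{+}}^{-\frac{1}{2}}\paren{y - \hat{f}_{\lambda^{+}; D}\paren{X_{n+1}}}}$. We then invert a monotone function of $r$, exactly as in Proposition~\ref{prop.upper.stableCP.region.fixed} and Proposition~\ref{prop.expression.upper.GlobalEllipsoidCP.region}.

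\emph{Step 1: rewrite each indicator.} Plug the expressions of Lemma~\ref{lm.stable.score.maha} into the indicator of the lower region. The test point enters through the upper score, $\widetilde{S}^{\widehat{\Gamma}_{a}, \up}_{\lambda; D^{y}}\paren{X_{n+1}, y} = \paren{\frac{n+1}{n}}^{\frac{1}{2}}\croch{r(y) + \widehat{\tau}_{\lambda}^{\widehat{\Gamma}_{a^{+}}}\paren{X_{n+1}}}$. Each calibration point enters through its lower score. Writing $s_i := \norm{\widehat{\Gamma}_{a^{+}}^{-\frac{1}{2}}\paren{Y_i - \hat{f}_{\lambda^{+}; D}\paren{X_i}}}$, this score is $\paren{\frac{n+1}{n}}^{\frac{1}{2}}\croch{s_i - \widehat{\tau}_{\lambda}^{\widehat{\Gamma}_{a^{+}}}\paren{X_i}}$ divided by $\paren{1 + \frac{1}{n+1}\paren{r(y) + \widehat{t}_{\lambda; a}}^{2}}^{\frac{1}{2}}$. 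That denominator is common to all $i$ and depends on $y$ only through $r(y)$. Cancelling the factor $\paren{\frac{n+1}{n}}^{\frac{1}{2}}$ and multiplying by the positive denominator, the $i$-th indicator becomes
\begin{align*}
    s_i - \widehat{\tau}_{\lambda}^{\widehat{\Gamma}_{a^{+}}}\paren{X_i}
    \geq
    \paren{r(y) + \widehat{\tau}_{\lambda}^{\widehat{\Gamma}_{a^{+}}}\paren{X_{n+1}}}
    \paren{1 + \tfrac{1}{n+1}\paren{r(y) + \widehat{t}_{\lambda; a}}^{2}}^{\frac{1}{2}}.
\end{align*}
We expect the right-hand side to be precisely $U_{\frac{1}{n+1}; 1}^{\widehat{t}_{\lambda; a}}\paren{r(y); \widehat{\tau}_{\lambda}^{\widehat{\Gamma}_{a^{+}}}\paren{X_{n+1}}}$ from Eq.~\eqref{eq.upper.transformation.function}, with $c=1$ the multiplicative constant. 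Hence the indicator no longer depends on $y$ except through $r(y)$.

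\emph{Step 2: the counting step.} Set $a_i := s_i - \widehat{\tau}_{\lambda}^{\widehat{\Gamma}_{a^{+}}}\paren{X_i}$ and order these values, with $Y_{\paren{i}}, X_{\paren{i}}$ the correspondingly reordered data. The condition $\frac{1 + \#\brac{i : a_i \geq U(r)}}{n+1} > \alpha$ is equivalent to $\#\brac{i : a_i \geq U(r)} \geq n + 1 - i_{n,\alpha}^{n}$, which holds if and only if $U(r) \leq a_{\paren{i_{n,\alpha}^{n}}} = \widehat{Q}_{\lambda; D^{+}}^{\widehat{\Gamma}_{a}, \lo}(\alpha)$. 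This is the same bookkeeping with $i_{n,\alpha}^{n}$ from Eq.~\eqref{eq.index.alpha} as in the proof of Proposition~\ref{prop.upper.stableCP.region.fixed}, which we reuse verbatim.

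\emph{Step 3: inversion.} On $r \geq 0$, the map $r \mapsto U(r)$ is a product of two positive, continuous, strictly increasing factors, so it is strictly increasing and tends to $+\infty$. Its minimum is $U(0) = \widehat{\tau}_{\lambda}^{\widehat{\Gamma}_{a^{+}}}\paren{X_{n+1}}\paren{1 + \frac{\widehat{t}_{\lambda; a}^{2}}{n+1}}^{\frac{1}{2}}$. There are two cases.
\begin{itemize}
    \item If $\widehat{Q}_{\lambda; D^{+}}^{\widehat{\Gamma}_{a}, \lo}(\alpha) \geq U(0)$, the inverse $U^{-1}$ is well defined at this value. The condition $U(r(y)) \leq \widehat{Q}_{\lambda; D^{+}}^{\widehat{\Gamma}_{a}, \lo}(\alpha)$ is then equivalent to $r(y) \leq U^{-1}\paren{\widehat{Q}_{\lambda; D^{+}}^{\widehat{\Gamma}_{a}, \lo}(\alpha)}$, which gives the stated ellipsoid.
    \item Otherwise no $y$ satisfies the condition, and the region is empty.
\end{itemize}

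The main obstacle is Step 1: it requires checking that the algebraic form of $U_{w;c}^{t}$ defined in Eq.~\eqref{eq.upper.transformation.function} matches this expression exactly, including the role of $c=1$. The monotonicity needed in Step 3 should then follow directly or via an analogue of Lemma~\ref{lm.variation.lower.transformation.function}. A secondary point is the handling of ties, which determine whether the inequality in Step 2 is strict or weak; we would treat these as in the proof of Proposition~\ref{prop.upper.stableCP.region.fixed}.
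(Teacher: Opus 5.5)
Your proposal is correct and follows essentially the same route as the paper: reduce the $p$-value condition, via the quantile/counting step, to $U_{\frac{1}{n+1};1}^{\widehat{t}_{\lambda;a}}\bigl(r(y);\widehat{\tau}_{\lambda}^{\widehat{\Gamma}_{a^{+}}}(X_{n+1})\bigr)\le \widehat{Q}_{\lambda;D^{+}}^{\widehat{\Gamma}_{a},\mathrm{lo}}(\alpha)$, then invert the increasing bijection of Lemma~\ref{lm.variation.upper.transformation.function}, splitting on whether the quantile is at least the value of $U$ at $0$. Clearing the common $y$-dependent denominator before counting and sorting by $a_i$ only makes explicit a step the paper applies implicitly through Lemma~\ref{lm.quantile}; moreover, the match with Eq.~\eqref{eq.upper.transformation.function} is exact ($w=\frac{1}{n+1}$, $c=1$) and your counting argument already holds with ties, so neither obstacle you flagged is a real issue.
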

The proof is deferred to Appendix~\ref{proof.expresion.lower.GlobalEllipsoidCP.region}.
Comparing the expression of the upper and lower \textbf{G-EllipsoidCP}-regions (See Proposition~\ref{prop.expression.upper.GlobalEllipsoidCP.region}),
two differences emerge.
First in Eq.~\ref{eq.lower.quantile.global.covariance}, the sign before the correction $\widehat{\tau}_{\lambda}^{\Gamma}\paren{\bullet}$
in the initial quantile value is now negative compared to Eq.~\eqref{eq.upper.quantile.global.covariance}.
Second, the lower-correction function
$\paren{
    U_{\frac{1}{n+1}; 1}^{\widehat{t}_{\lambda; a}}
}^{-1}\paren{
    \bullet;
    \widehat{\tau}_{\lambda}^{\widehat{\Gamma}_{a^{+}}}\paren{X_{n+1}}
}$
replaces the upper one in the expression of the radius.
By Lemma~\ref{lm.variation.lower.transformation.function},
this lower-correction function is an increasing function, and it gets larger for smaller values of $\widehat{\tau}_{\lambda}^{\widehat{\Gamma}_{a^{+}}}\paren{X_{n+1}}$, which implies that the radius of the lower \textbf{G-EllipsoidCP}-region is increased for larger values of $\lambda n$.
This means that the larger $\lambda n$, the smaller the gap between these two lower and upper \textbf{G-EllipsoidCP}-regions,
and thus the smaller the \emph{thickness} (see also the experiment below for an empirical evidence of this conclusion).
%
%

\bigskip

\notparagraph{Evolution of the \emph{thickness} w.r.t. the sample size $n$} 
Figure~\ref{fig.thickness.GlobalEllipsoidCP} illustrate how the upper bound $\leb{\widetilde{C}_{\lambda; \alpha}^{\widehat{\Gamma}_{a}, \mathrm{up}}\paren{X_{n+1}} \setminus \widetilde{C}_{\lambda; \alpha}^{\widehat{\Gamma}_{a}, \lo}\paren{X_{n+1}}}$ on the \emph{thickness} $\mathrm{THK}_{\lambda; \alpha}^{\widehat{\Gamma}_{a}}\paren{X_{n+1}}$
does depend on the sample size $n$.
All the simulation setup is the same as the one previously described in Section~\ref{sec.known.cov.numerical.experiments}.
Each boxplot (corresponding to a training-sample size) reports the
values of this upper-bound across 20 repetitions.

\begin{figure}[H]
    \centering
    \includegraphics[width=0.80\textwidth]{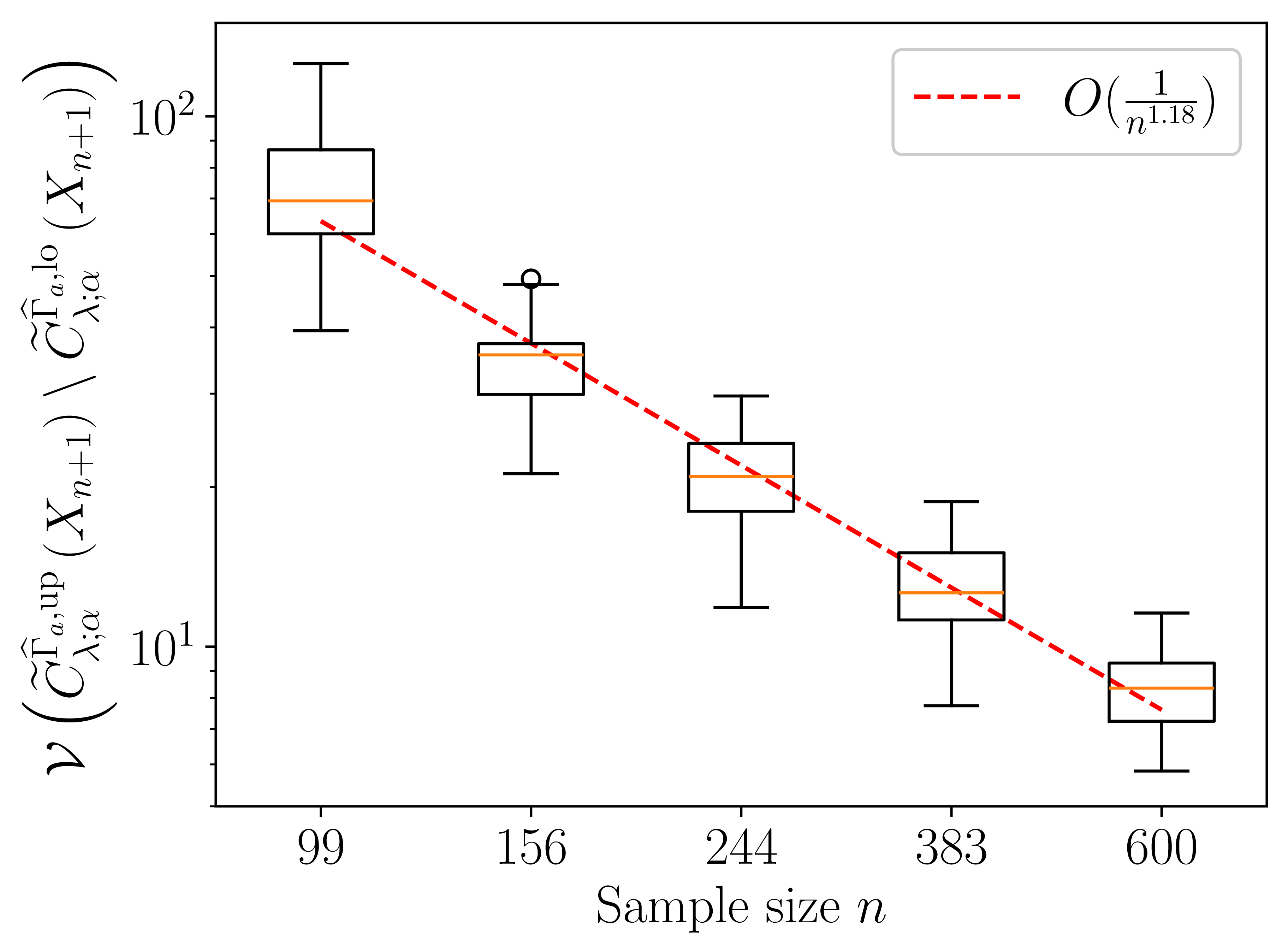}
    \caption{
        Evolution of the computable empirical upper-bound $\leb{\widetilde{C}_{\lambda; \alpha}^{\widehat{\Gamma}_{a}, \mathrm{up}}\paren{X_{n+1}} \setminus \widetilde{C}_{\lambda; \alpha}^{\widehat{\Gamma}_{a}, \lo}\paren{X_{n+1}}}$
        for the \emph{thickness} $\thicc{\widehat{\Gamma}_{a}}$ across 20 repetitions for $\alpha = 0.1$ and $\lambda = 10^{-2}$.
    }
    \label{fig.thickness.GlobalEllipsoidCP}
\end{figure}
As in Figure~\ref{fig.thickness.stableCP},
the dashed red-line highlights the rate of improvement of the empirical upper-bound as $n$ grows ($\lambda$ is fixed).
On average, the upper-bound gets smaller as the training-sample size grows.
The rate of improvement of the upper-bound is close to 1 (estimated at 1.18), which is consistent with the one of Figure~\ref{fig.thickness.stableCP} (up to the variability of the small number of replicates) formerly established with a known inter-task covariance.
That is to say, integrating a global inter-task covariance estimator
does not seem to affect too strongly the rate of improvement of the upper-bound for $\lambda$ fixed
(see Appendix~\ref{sec.evolution.thickness.global} for $\lambda \propto \frac{1}{\sqrt{n}}$).

\bigskip

\notparagraph{Comparison between \textbf{G-EllipsoidCP} and \textbf{SplitCP}}
Keeping the same experimental setup as the one described in Section~\ref{sec.known.cov.numerical.experiments}, the volume of the upper \textbf{G-EllipsoidCP}-region is now compared with the one of the \textbf{SplitCP}-region. 
Once more, let us emphasize that the values reported in Figure~\ref{fig.volume.GlobalEllipsoidCP} correspond to the ratio between the volume of the considered region divided by the volume of the \textbf{OracleCP}-region.

\begin{figure}[H]
    \centering
    \includegraphics[width=\textwidth]{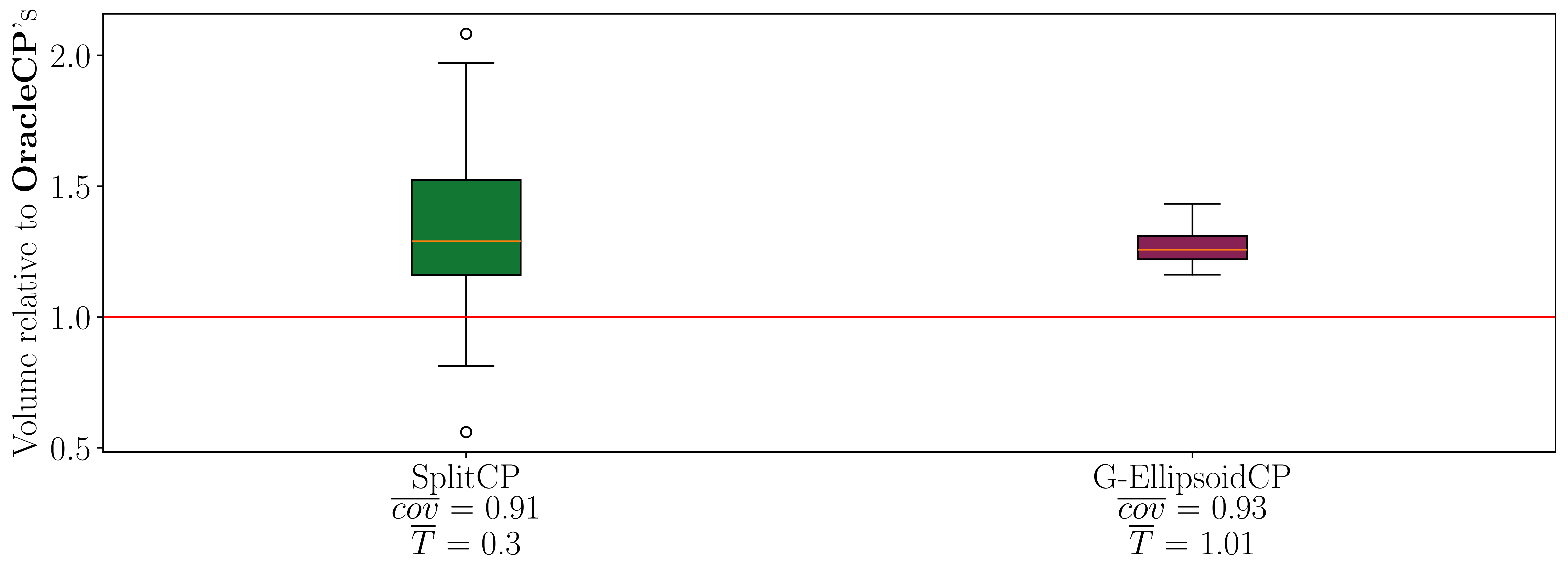}
    \caption{
        Comparison between the \textbf{G-EllipsoidCP}-regions (in red)
        and the \textbf{SplitCP}-regions (in green) in terms of
        volume relative to that of the \textbf{OracleCP}-regions
        (see Appendix~\ref{sec.oracle.cp}) for $\alpha = 0.1$
        with $\abss{D} = n = 500$ and $\abss{D_{\mathrm{train}}} = 375$.
    }
    \label{fig.volume.GlobalEllipsoidCP}
\end{figure}

According to Figure~\ref{fig.volume.GlobalEllipsoidCP}, the average (relative) volume of the \textbf{G-EllipsoidCP}-region is slightly smaller than the one of the \textbf{SplitCP}-region.
But the most striking phenomenon is the 
significantly larger variability of the volume of the \textbf{SplitCP}-regions compared to that of the \textbf{G-EllipsoidCP}-regions. 
Therefore introducing an estimated inter-task covariance does not seem to change the observation that the \textbf{SplitCP}-region can be much larger (and then strongly less informative) than the \textbf{G-EllipsoidCP}-region.

Despite its smaller volume (on average), the \textbf{G-EllipsoidCP}-region empirically exhibits a larger empirical coverage-probability
compared to the \textbf{SplitCP}-region.

Finally, the comparison of the reported computation times leads to the same conclusion as the one already drawn for a known inter-task covariance matrix:
$(1)$ the \textbf{G-EllipsoidCP}-region has almost the same computational cost as the one of the \textbf{OracleCP}-region, $(2)$ computing the \textbf{SplitCP}-region is half the price of computing the \textbf{OracleCP}-region. But the latter comes with a loss of informativeness which induces the higher variability (observed in the above boxplots).

\newpage
\appendix
\section{Concerning the statistical framework}
\label{sec.appendix.stat.framework}
This section contains the proofs of the results in Section~\ref{sec.statistical.framework}.

\subsection{Proof of Lemma~\ref{lm.predictor.well.defined}}
\label{proof.predictor.well.defined}
\begin{proof}
\notparagraph{Lower semi-continuity}
Let $\paren{x, y} \in D$.
By \citet[see Lemma 3]{audiffrenStabilityMultitaskKernel2013},
for every $x \in \mathcal{X}$,
the evaluation linear-function
$L_x : \mathcal{H} \to \mathcal{Y},$
$f \mapsto L_x\paren{f} = f(x)$
is bounded, thus continuous.
It follows that under  \eqref{asm.lsc.loss},
the following function
\begin{align*}
    f \in \mathcal{H} \mapsto \ell\paren{y, f(x)}
    = \croch{\ell\paren{y, \bullet}}\paren{f(x)}
    = \croch{\ell\paren{y, \bullet} \circ L_x} \paren{f} \in \mathbb{R}
\end{align*}
is lower semi-continuous,
since it is a composition of
a lower semi-continuous function and
a continuous function.
Therefore,
the regularized empirical-risk function (see Eq.~\ref{eq.regularized.empirical.risk.function})
is lower semi-continuous since
the regularization term,
being the norm squared, is continuous, and
the empirical-risk term
is a linear combination of lower semi-continuous functions.

\bigskip
\notparagraph{Convexity}
Let $\paren{x, y} \in D$. Under \eqref{asm.conv.loss},
the following function
\begin{align*}
    f \in \mathcal{H} \mapsto \ell\paren{y, f(x)}
    = \croch{\ell\paren{y, \bullet} \circ L_x} \paren{f} \in \mathbb{R}
\end{align*}
is convex since it is the composition between a convex function
and a linear function.
Therefore, the regularized empirical-risk function (see Eq.~\ref{eq.regularized.empirical.risk.function}) is $2\lambda$-strongly convex,
since the regularization term is the $\lambda$ times the norm squared,
and  the empirical-risk term is a linear combination with positive
coefficient of convex functions.

\bigskip
\notparagraph{Coercivity}
Under \eqref{asm.lower.bounded.loss},
for every $f \in \mathcal{H}$,
$\widehat{\mathbf{R}}_{\lambda; D}\paren{f} \geq c_{\ell} + \lambda \normh{f}^2$,
where the lower bound is a coercive function of $f$.

\bigskip
\notparagraph{Conclusion}
By \citet[]{alexanderianOptimizationInfinitedimensionalHilbert2019}~(see Corollary 5.6),
the regularized empirical-risk function admits
a unique minimizer in $\mathcal{H}$.
Let us note said minimizer $\hat{f}_{\lambda; D}$.
\end{proof}

\subsection{Proof of Lemma~\ref{lm.representer}}
\label{proof.representer}
\begin{proof}
    Under \eqref{asm.conv.loss}, \eqref{asm.lsc.loss} and \eqref{asm.lower.bounded.loss},
    Lemma~\ref{lm.predictor.well.defined}
    states that
    the regularized empirical-risk function
    $\widehat{\mathbf{R}}_{\lambda; D}\paren{\bullet}$
    (see Eq.~\ref{eq.regularized.empirical.risk.function})
    admits a unique minimizer $\hat{f}_{\lambda; D} \in \mathcal{H}$.
    Additionally, for every function $f \in \mathcal{H}$, and
    every index $i \in \brac{1, \ldots, n}$, and
    every coordinate $k \in \brac{1, \ldots, p}$,
    \begin{align*}
        \croch{f\paren{X_i}}_k
        = \dotprod{e_k}{f\paren{X_i}}{}
        = \dotprod{K\paren{\bullet, X_i} e_k}{f}{\mathcal{H}},
    \end{align*}
    where $e_k \in \mathbb{R}^{p \times 1}$, and
    the last equality follows from the reproducing property of
    the hypothesis space $\mathcal{H}$
    \citep[see Definition 2.1]{micchelliLearningVectorvaluedFunctions2005}.
    It follows that
    \begin{align*}
        f\paren{X_i}
        = \sum_{k = 1}^{p}
        \dotprod{K\paren{\bullet, X_i} e_{k}}{f}{\mathcal{H}} e_{k}.
    \end{align*}
    Since the hypothesis space $\mathcal{H}$
    is a Hilbert space, and since
    its subspace $\mathcal{A}_{\mathbf{X}}$
    (see Eq.~\ref{eq.subspace})
    has a finite dimension,
    thus, $\mathcal{H} = \mathcal{A}_{\mathbf{X}} \oplus \mathcal{A}_{\mathbf{X}}^{\perp}$.
    That is to say, for every $f \in \mathcal{H}$,
    there exist unique functions $\mathrm{proj}_{\mathcal{A}_{\mathbf{X}}}\paren{f} \in \mathcal{A}_{\mathbf{X}}$ and
    $\mathrm{proj}_{\mathcal{A}_{\mathbf{X}}^{\perp}}\paren{f} \in \mathcal{A}_{\mathbf{X}}^{\perp}$
    such that
    $f = \mathrm{proj}_{\mathcal{A}_{\mathbf{X}}}\paren{f} + \mathrm{proj}_{\mathcal{A}_{\mathbf{X}}^{\perp}}\paren{f}$.
    It follows that, for every $i \in \brac{1, \ldots, n}$,
    \begin{align*}
        f\paren{X_i}
        &
        = \sum_{k = 1}^{p}
        \dotprod{K\paren{\bullet, X_i} e_{k}}{f}{\mathcal{H}} e_{k}
        = \sum_{k = 1}^{p}
        \dotprod{K\paren{\bullet, X_i} e_{k}}{\mathrm{proj}_{\mathcal{A}_{\mathbf{X}}}\paren{f}}{\mathcal{H}} e_{k}
        \\
        &
        = \croch{\mathrm{proj}_{\mathcal{A}_{\mathbf{X}}}\paren{f}}\paren{X_i}.
    \end{align*}
    Moreover, 
    since $\mathrm{proj}_{\mathcal{A}_{\mathbf{X}}}\paren{f} \perp \mathrm{proj}_{\mathcal{A}_{\mathbf{X}}^{\perp}}\paren{f}$,
    \begin{align*}
        \norm{f}_{\mathcal{H}}^2
        &
        = \norm{
            \mathrm{proj}_{\mathcal{A}_{\mathbf{X}}}\paren{f}
        }_{\mathcal{H}}^2
        + \norm{
            \mathrm{proj}_{\mathcal{A}_{\mathbf{X}}^{\perp}}\paren{f}
        }_{\mathcal{H}}^2
        + 2 \dotprod{
            \mathrm{proj}_{\mathcal{A}_{\mathbf{X}}}\paren{f}
        }{
            \mathrm{proj}_{\mathcal{A}_{\mathbf{X}}^{\perp}}\paren{f}
        }{\mathcal{H}}
        \\
        &
        = \norm{
            \mathrm{proj}_{\mathcal{A}_{\mathbf{X}}}\paren{f}
        }_{\mathcal{H}}^2
        + \norm{
            \mathrm{proj}_{\mathcal{A}_{\mathbf{X}}^{\perp}}\paren{f}
        }_{\mathcal{H}}^2.
    \end{align*}
    Then, the regularized empirical-risk
    $\widehat{\mathbf{R}}_{\lambda; D}\paren{\hat{f}_{\lambda; D}}$ of
    the predictor $\hat{f}_{\lambda; D} \in \mathcal{H}$ is given by
    \begin{align*}
        \widehat{\mathbf{R}}_{\lambda; D}\paren{\hat{f}_{\lambda; D}}
        &
        = \frac{1}{n}\sum_{i=1}^{n}
        \ell\paren{
            Y_i, f\paren{X_i}
        } + \lambda \normh{f}^2
        \\
        &
        =
        \frac{1}{n}\sum_{i=1}^{n}
        \ell\paren{
            Y_i, \croch{\mathrm{proj}_{\mathcal{A}_{\mathbf{X}}}\paren{\hat{f}_{\lambda; D}}}\paren{X_i}
        }
        + \lambda \norm{
            \mathrm{proj}_{\mathcal{A}_{\mathbf{X}}}\paren{\hat{f}_{\lambda; D}}
        }_{\mathcal{H}}^2
        \\
        &
        \quad
        + \lambda \norm{
            \mathrm{proj}_{\mathcal{A}_{\mathbf{X}}^{\perp}}\paren{\hat{f}_{\lambda; D}}
        }_{\mathcal{H}}^2
        \\
        &
        =
        \widehat{\mathbf{R}}_{\lambda; D}\paren{
            \mathrm{proj}_{\mathcal{A}_{\mathbf{X}}}\paren{\hat{f}_{\lambda; D}}
        } + \lambda \norm{
            \mathrm{proj}_{\mathcal{A}_{\mathbf{X}}^{\perp}}\paren{\hat{f}_{\lambda; D}}
        }_{\mathcal{H}}^2
        \\
        &
        \geq
        \widehat{\mathbf{R}}_{\lambda; D}\paren{
            \mathrm{proj}_{\mathcal{A}_{\mathbf{X}}}\paren{\hat{f}_{\lambda; D}}
        }
        \\
        &
        \geq 
        \widehat{\mathbf{R}}_{\lambda; D}\paren{\hat{f}_{\lambda; D}},
    \end{align*}
    where the last inequality holds true since $\hat{f}_{\lambda; D} \in \mathcal{H}$
    is the minimizer of $\widehat{\mathbf{R}}_{\lambda; D}\paren{\bullet}$.
    Thus,
    \begin{align*}
        \widehat{\mathbf{R}}_{\lambda; D}\paren{\hat{f}_{\lambda; D}}
        = \widehat{\mathbf{R}}_{\lambda; D}\paren{
            \mathrm{proj}_{\mathcal{A}_{\mathbf{X}}}\paren{\hat{f}_{\lambda; D}}
        },
    \end{align*}
    and by uniqueness of the minimizer,
    $\hat{f}_{\lambda; D} = \mathrm{proj}_{\mathcal{A}_{\mathbf{X}}}\paren{\hat{f}_{\lambda; D}} \in \mathcal{A}_{\mathbf{X}}$.
    Therefore, by definition of $\mathcal{A}_{\mathbf{X}}$ (see Eq.~\ref{eq.subspace}),
    there exists a weight matrix $\widehat{W}_{\lambda; D} \in \mathbb{R}^{np}$
    such that, for every $x \in \mathcal{X}$,
    \begin{align*}
        \hat{f}_{\lambda; D}
        = \sum_{i = 1}^{n} K\paren{\bullet, X_i}\paren{e_i^{T} \otimes I_{p}}\widehat{W}_{\lambda; D}.
    \end{align*}
\end{proof}

\subsection{Proof of Eq.~\ref{eq.regularization.erf.matrix}}
\label{proof.regularization.erf.matrix}
\begin{proof}
Let $f \in \mathcal{A}_{\mathbf{X}}$.
Then, there exists a matrix $W \in \mathbb{R}^{np}$
such that,
\begin{align*}
    f = \sum_{j = 1}^{n} K\paren{\bullet, X_j} \paren{e_j^{T} \otimes I_{p}} W.
\end{align*}
Thus, for every $i \in \brac{1, \ldots, n}$,
\begin{align*}
    f\paren{X_i}
    &
    = \sum_{j = 1}^{n} K\paren{X_i, X_j} \paren{e_j^{T} \otimes I_{p}} W
    = \sum_{j = 1}^{n} \paren{e_i^{T} \otimes I_{p}} \mathbf{K}_{\mathbf{X}} \paren{e_{j} \otimes I_{p}}\paren{e_j^{T} \otimes I_{p}} W
    \\
    &
    = \paren{e_i^{T} \otimes I_{p}} \mathbf{K}_{\mathbf{X}} \paren{\sum_{j=1}^{n}\paren{e_{j} \otimes I_{p}}\paren{e_j^{T} \otimes I_{p}}} W
    = \paren{e_i^{T} \otimes I_{p}} \mathbf{K}_{\mathbf{X}} W,
\end{align*}
where $\mathbf{K}_{\mathbf{X}} \in \mathbb{R}^{np \times np}$ is
the Gram matrix (see Eq.~\ref{eq.gram.matrix}).
Moreover,
\begin{align*}
    \normh{f}^2
    &
    = \dotprod{f}{f}{\mathcal{H}}
    \\
    &
    =
    \dotprod{
        \sum_{j = 1}^{n} K\paren{\bullet, X_j} \paren{e_j^{T} \otimes I_{p}} W
    }{
        \sum_{j = 1}^{n} K\paren{\bullet, X_j} \paren{e_j^{T} \otimes I_{p}} W
    }{\mathcal{H}}
    \\
    &
    =
    \sum_{j = 1}^{n}
    \sum_{i = 1}^{n}
    \dotprod{
        K\paren{\bullet, X_j} \paren{e_j^{T} \otimes I_{p}} W
    }{
        K\paren{\bullet, X_i} \paren{e_i^{T} \otimes I_{p}} W
    }{\mathcal{H}}
    \\
    &
    =
    \sum_{l = 1}^{n}
    \sum_{j = 1}^{n}
    \dotprod{
        \paren{e_i^{T} \otimes I_{p}} W
    }{
        K\paren{X_i, X_j} 
        \paren{e_j^{T} \otimes I_{p}} W
    }{}
    \\
    &
    =
    \sum_{l = 1}^{n}
    \sum_{j = 1}^{n}
    W^{T}
    \paren{e_{i} \otimes I_{p}} K\paren{X_i, X_j}
    \paren{e_j^{T} \otimes I_{p}} W
    \\
    &
    =
    W^{T}
    \paren{
        \sum_{l = 1}^{n}
        \sum_{j = 1}^{n}
        \paren{e_{i} \otimes I_{p}} K\paren{X_i, X_j}
        \paren{e_j^{T} \otimes I_{p}}
    } W
    = W^{T} \mathbf{K}_{\mathbf{X}} W,
\end{align*}
where the third inequality follows from \citet[see Proposition 2.1.a]{micchelliLearningVectorvaluedFunctions2005}.
Therefore, the regularized empirical-risk is given by,
for every $f \in \mathcal{H}$,
\begin{align*}
    \widehat{\mathbf{R}}_{\lambda; D}\paren{f}
    &
    = \frac{1}{n}\sum_{i=1}^{n}
    \ell\paren{Y_i, f\paren{X_i}}
    + \lambda \normh{f}^2
    \\
    &
    =
    \frac{1}{n}\sum_{i=1}^{n}
    \ell\paren{Y_i, 
        \paren{e_i^{T} \otimes I_{p}} \mathbf{K}_{\mathbf{X}}W
    }
    + \lambda W^{T} \mathbf{K}_{\mathbf{X}} W
    =: \widehat{R}_{\lambda; D}\paren{W}.
\end{align*}
\end{proof}
%
%
%

\begin{proposition}
    \label{prop.vector.predictor.well.defined}
    Assume \eqref{asm.conv.loss}, \eqref{asm.lsc.loss} and \eqref{asm.lower.bounded.loss} hold true.
    Then, the regularized empirical-risk vector-function
    $\widehat{R}_{\lambda;D}\paren{\bullet} : \mathbb{R}^{np} \to \mathbb{R}$
    (see Eq.~\ref{eq.regularization.erf.matrix})
    admits minimizers over $\mathbb{R}^{np}$.
    Moreover, it admits a unique minimizer $\widehat{W}_{\lambda; D}$ over
    $\mathrm{range}\paren{\mathbf{K}_{\mathbf{X}}} \subseteq \mathbb{R}^{np}$
    where $\mathbf{K}_{\mathbf{X}} \in \mathbb{R}^{np \times np}$ is the Gram matrix (see Eq.~\ref{eq.gram.matrix}).
\end{proposition}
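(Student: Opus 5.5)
Existence follows by transferring the problem back to $\mathcal{H}$, and uniqueness on $\mathrm{range}(\mathbf{K}_{\mathbf{X}})$ follows because the map $W\mapsto f_W$ only sees $W$ modulo $\ker(\mathbf{K}_{\mathbf{X}})$.

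Let $\Phi:\mathbb{R}^{np}\to\mathcal{A}_{\mathbf{X}}$ be the linear map $\Phi(W) := \sum_{i=1}^{n} K(\bullet,X_i)(e_i^T\otimes I_p)W$. By definition of $\mathcal{A}_{\mathbf{X}}$ (Eq.~\ref{eq.subspace}), $\Phi$ is onto. By the computation in Appendix~\ref{proof.regularization.erf.matrix}, for every $W$ we have $\widehat{R}_{\lambda;D}(W)=\widehat{\mathbf{R}}_{\lambda;D}(\Phi(W))$ and $\normh{\Phi(W)}^2 = W^T\mathbf{K}_{\mathbf{X}}W$.

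\emph{Existence.} Lemma~\ref{lm.predictor.well.defined} and Lemma~\ref{lm.representer} give the unique minimizer $\hat f_{\lambda;D}$ of $\widehat{\mathbf{R}}_{\lambda;D}$ over $\mathcal{H}$, and place it in $\mathcal{A}_{\mathbf{X}}$. Hence $\hat f_{\lambda;D}=\Phi(W_0)$ for some $W_0\in\mathbb{R}^{np}$. For any $W$,
\begin{align*}
    \widehat{R}_{\lambda;D}(W)=\widehat{\mathbf{R}}_{\lambda;D}(\Phi(W))\ge \widehat{\mathbf{R}}_{\lambda;D}(\hat f_{\lambda;D})=\widehat{R}_{\lambda;D}(W_0),
\end{align*}
so $W_0$ minimizes $\widehat{R}_{\lambda;D}$ over $\mathbb{R}^{np}$. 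Moreover, $W$ is a minimizer if and only if $\Phi(W)=\hat f_{\lambda;D}$, by uniqueness in $\mathcal{H}$.

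\emph{Kernel of $\Phi$.} This is the key step. If $\Phi(W)=0$, then $W^T\mathbf{K}_{\mathbf{X}}W=0$. Since $\mathbf{K}_{\mathbf{X}}$ is symmetric positive semi-definite (a Gram matrix of a matrix-valued kernel), this forces $\mathbf{K}_{\mathbf{X}}^{1/2}W=0$, hence $\mathbf{K}_{\mathbf{X}}W=0$. Conversely, if $\mathbf{K}_{\mathbf{X}}W=0$ then $\normh{\Phi(W)}^2=0$. Therefore $\ker\Phi=\ker\mathbf{K}_{\mathbf{X}}$, which by symmetry is $\mathrm{range}(\mathbf{K}_{\mathbf{X}})^{\perp}$.

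\emph{Uniqueness on the range.} Decompose $\mathbb{R}^{np}=\mathrm{range}(\mathbf{K}_{\mathbf{X}})\oplus\ker\mathbf{K}_{\mathbf{X}}$. Projecting $W_0$ onto the range gives $\widehat{W}_{\lambda;D}$ with $\Phi(\widehat{W}_{\lambda;D})=\Phi(W_0)$, so it is a minimizer lying in the range. If two minimizers $W,W'$ both lie in the range, then $\Phi(W)=\Phi(W')=\hat f_{\lambda;D}$, so $W-W'\in\ker\Phi=\ker\mathbf{K}_{\mathbf{X}}$. But $W-W'$ also lies in the range, and the range and kernel meet only at $0$, so $W=W'$.

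The only delicate point is the identification $\ker\Phi=\ker\mathbf{K}_{\mathbf{X}}$, which rests on the positive semi-definiteness of $\mathbf{K}_{\mathbf{X}}$. Everything else reduces to results already established.
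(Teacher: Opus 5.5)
Your proof is correct. The existence half matches the paper's: both transfer the problem to $\mathcal{H}$ through Lemmas~\ref{lm.predictor.well.defined} and~\ref{lm.representer} and the identity $\widehat{R}_{\lambda;D}(W)=\widehat{\mathbf{R}}_{\lambda;D}(\Phi(W))$.

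The uniqueness half takes a genuinely different route. The paper stays in $\mathbb{R}^{np}$. Positive semi-definiteness of $\mathbf{K}_{\mathbf{X}}$ makes the penalty $\lambda W^{T}\mathbf{K}_{\mathbf{X}}W$, and hence $\widehat{R}_{\lambda;D}$, strongly convex on $\mathrm{range}(\mathbf{K}_{\mathbf{X}})$. Assumption \eqref{asm.lower.bounded.loss} makes it coercive there, so its restriction to that subspace has exactly one minimizer. You instead pull uniqueness back from $\mathcal{H}$ and reduce it to the linear-algebra identity $\ker\Phi=\ker\mathbf{K}_{\mathbf{X}}$.

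Your route yields more than the paper's:
\begin{itemize}
\item It describes the whole set of global minimizers as the affine set $\widehat{W}_{\lambda;D}+\ker\mathbf{K}_{\mathbf{X}}$, all of which represent the same predictor.
\item It shows explicitly that the minimizer over the range is a global minimizer. The paper leaves this link implicit; it requires observing that $\widehat{R}_{\lambda;D}(W)=\widehat{R}_{\lambda;D}(PW)$, with $P$ the orthogonal projector onto the range.
\item It never has to quantify a convexity modulus. The paper states the modulus as $\lambda$, but with respect to the Euclidean norm the modulus the penalty supplies on the range is $2\lambda$ times the smallest positive eigenvalue of $\mathbf{K}_{\mathbf{X}}$. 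This is harmless, since strict convexity is all uniqueness needs.
\end{itemize}

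The paper's route, for its part, is a standard finite-dimensional convexity argument that does not rely on the uniqueness of $\hat f_{\lambda;D}$ in $\mathcal{H}$.
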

\begin{proof}
\notparagraph{Existence}
Under \eqref{asm.conv.loss}, \eqref{asm.lsc.loss} and \eqref{asm.lower.bounded.loss},
Lemma~\ref{lm.predictor.well.defined}
states that
the regularized empirical-risk function
$\widehat{\mathbf{R}}_{\lambda; D}\paren{\bullet}$
admits a unique minimizer $\hat{f}_{\lambda; D} \in \mathcal{H}$.
Additionally, Lemma~\ref{lm.representer} states
that $\hat{f}_{\lambda; D} \in \mathcal{H} \in \mathcal{A}_{\mathbf{X}}$
(see Eq.~\ref{eq.subspace}).
It follows that
\begin{align*}
    \min_{f \in \mathcal{H}}
    \widehat{\mathbf{R}}_{\lambda; D}\paren{f}
    =
    \min_{f \in \mathcal{A}_{\mathbf{X}}}
    \widehat{\mathbf{R}}_{\lambda; D}\paren{f}
    =
    \min_{W \in \mathbb{R}^{np}}
    \widehat{R}_{\lambda; D}\paren{W},
\end{align*}
where $\widehat{\mathbf{R}}_{\lambda; D}\paren{\bullet}$
given by Eq.~\eqref{eq.regularized.empirical.risk.function}
and $\widehat{R}_{\lambda}\paren{\bullet}$ in Eq.~\eqref{eq.regularization.erf.matrix}.
Since the former admits a minimum such that
$\widehat{\mathbf{R}}_{\lambda; D}\paren{\hat{f}_{\lambda; D}} = \widehat{R}_{\lambda; D}\paren{\widehat{W}_{\lambda; D}}$
for some $\widehat{W}_{\lambda; D} \in \mathbb{R}^{np}$,
it follows that $\widehat{R}_{\lambda; D}\paren{\bullet}$ admits a minimizer. 

\bigskip
\notparagraph{Uniqueness}
Since the Gram matrix $\mathbf{K}_{\mathbf{X}} \in \mathrm{Sym}_{np}^{+}\paren{\mathbb{R}}$
\citep[see Proposition 2.1.c]{micchelliLearningVectorvaluedFunctions2005},
under \eqref{asm.conv.loss},
$\widehat{R}_{\lambda; D}\paren{\bullet}$
is $\lambda$-strongly convex over the range of
the Gram matrix $\mathbf{K}_{\mathbf{X}} \in \mathbb{R}^{np \times np}$.
Further, under \eqref{asm.lower.bounded.loss},
this function is also coercive
over the range of the Gram matrix $\mathbf{K}_{\mathbf{X}}$.
Therefore, it admits a unique minimizer
over the range of the Gram matrix $\mathbf{K}_{\mathbf{X}}$.
\end{proof}

\section{Concerning the approximation scheme}
This sections lists the proofs of the results in Section~\ref{sec.approximate.full.conformal.perdiction}.

\subsection{Preliminary property}
\begin{lemma}[Minimum gap]
    \label{lm.minimum.gap}
    Let $\mathcal{G}$ designate a Hilbert space with a norm $\norm{\bullet}_{\mathcal{G}}$,
    and $R : \mathcal{G} \to \mathbb{R}$,
    a $2M$-strongly convex function, and
    $f^{\star}$, a minimizer of $R$.
    Then, for every $f \in \mathcal{G}$,
    \begin{align*}
        M\norm{f - f^\star}_{\mathcal{G}}^2
        \leq
        R(f)
        -
        R(f^\star).
    \end{align*}
\end{lemma}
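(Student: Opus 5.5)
Since $R$ is $2M$-strongly convex, the function $g := R - M\norm{\bullet}_{\mathcal{G}}^2$ is convex on $\mathcal{G}$. This is the definition of strong convexity the paper implicitly uses in Lemma~\ref{lm.predictor.well.defined}, where $\lambda\normh{\bullet}^2$ is called $2\lambda$-strongly convex. The plan is to avoid subgradients of $R$, which may not be available without continuity, and work directly from the convexity inequality along the segment joining $f^\star$ and $f$.

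Fix $f \in \mathcal{G}$ and $t \in (0,1)$, and set $f_t := f^\star + t(f - f^\star)$. The parallelogram identity in the Hilbert space $\mathcal{G}$ gives
\begin{align*}
    \norm{(1-t)f^\star + t f}_{\mathcal{G}}^2 = (1-t)\norm{f^\star}_{\mathcal{G}}^2 + t\norm{f}_{\mathcal{G}}^2 - t(1-t)\norm{f - f^\star}_{\mathcal{G}}^2 .
\end{align*}
Combining this with the convexity of $g$ yields the standard strong-convexity inequality
\begin{align*}
    R(f_t) \leq (1-t)R(f^\star) + tR(f) - M t(1-t)\norm{f - f^\star}_{\mathcal{G}}^2 .
\end{align*}

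Since $f^\star$ is a minimizer, $R(f^\star) \leq R(f_t)$. Substituting and subtracting $R(f^\star)$ gives
\begin{align*}
    0 \leq t\paren{R(f) - R(f^\star)} - M t(1-t)\norm{f - f^\star}_{\mathcal{G}}^2 .
\end{align*}
Dividing by $t>0$ gives $M(1-t)\norm{f-f^\star}_{\mathcal{G}}^2 \leq R(f) - R(f^\star)$, and letting $t \to 0^+$ yields the claim.

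If $R(f) = +\infty$ the inequality is trivial, so no finiteness issue arises. The only delicate point is fixing the normalization convention for ``$2M$-strongly convex''. It must be the one under which the modulus $M$, and not $2M$ or $M/2$, appears in front of $t(1-t)\norm{f-f^\star}^2$. The convention above matches the paper's usage for $\lambda\normh{\bullet}^2$, so the constant comes out as $M$.
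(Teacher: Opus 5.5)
Your proof is correct and follows essentially the same argument as the paper. Both apply the strong-convexity inequality along the segment from $f^\star$ to $f$, use minimality of $f^\star$, divide by $t$, and send $t\to 0^+$; the paper phrases the last step as a supremum over $t\in(0,1]$, which is equivalent. Your derivation of that inequality from convexity of $R - M\|\cdot\|_{\mathcal{G}}^2$ via the Hilbert-space identity simply makes explicit the characterization the paper cites directly, with the same normalization.
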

\begin{proof}
    The characterization of a $2M$-strongly convex function
    implies the following, for every $t \in (0, 1]$,
    \begin{align*}
        R\paren{
            tf + (1 - t)f^{\star}
        }
        \leq
        tR(f)
        + (1 - t) R(f^{\star})
        - M t (1 - t)\norm{f - f^{\star}}_{\mathcal{G}}^2.
    \end{align*}
    It follows that,
    \begin{align*}
        R\paren{
            f^{\star}
            + t \paren{f - f^{\star}}
        }
        -
        R(f^{\star})
        \leq
        tR(f)
        -
        tR(f^{\star})
        - M t (1 - t)\norm{f - f^{\star}}_{\mathcal{G}}^2.
    \end{align*}
    Since $f^{\star}$ is a minimizer,
    then, the left-hand side is non-negative,
    and dividing both sides by $t$,
    \begin{align*}
        0
        \leq
        R(f)
        -
        R(f^{\star})
        - M (1 - t)\norm{f - f^{\star}}_{\mathcal{G}}^2.
    \end{align*}
    Putting the third term on the other side of the inequality
    \begin{align*}
        M (1 - t)\norm{f - f^{\star}}_{\mathcal{G}}^2
        \leq
        R(f)
        -
        R(f^{\star}).
    \end{align*}
    Finally, taking the supremum over all values of $t \in (0, 1]$
    on the left-hand side,
    \begin{align*}
        M\norm{f - f^{\star}}_{\mathcal{G}}^2
        \leq
        R(f)
        -
        R(f^{\star}).
    \end{align*}
\end{proof}

\subsection{Proof of Lemma~\ref{lm.sandwiching}}
\label{proof.sandwiching}
\begin{proof}
\notparagraph{p-value function}
Assuming Eq.~\eqref{eq.bound.score},
for every $y \in \mathcal{Y}$, and
every $i \in \brac{1, \ldots, n}$,
the following implications hold, almost surely,
\begin{align*}
\begin{aligned}
    &&\widetilde{S}_{\lambda; D^{y}}^{\lo} \paren{X_{i}, Y_{i}}
    &\geq \widetilde{S}_{\lambda; D^{y}}^{\up} \paren{X_{n+1}, y}
    \\
    \Longrightarrow
    &&S_{\lambda; D^{y}} \paren{X_i, Y_i} &\geq S_{\lambda; D^{y}} \paren{X_{n+1}, y}
    \\
    \Longrightarrow
    &&\widetilde{S}_{\lambda; D^{y}}^{\up} \paren{X_{i}, Y_{i}}
    &\geq \widetilde{S}_{\lambda; D^{y}}^{\lo} \paren{X_{n+1}, y}.
\end{aligned}
\end{align*}
By taking the indicator, almost surely,
\begin{align*}
    &\mathbbm{1}\brac{
        \widetilde{S}_{\lambda; D^{y}}^{\lo} \paren{X_{i}, Y_{i}}
        \geq \widetilde{S}_{\lambda; D^{y}}^{\up} \paren{X_{n+1}, y}
    }
    \\
    \leq
    &
    \mathbbm{1}\brac{S_{\lambda; D^{y}} \paren{X_i, Y_i} \geq S_{\lambda; D^{y}} \paren{X_{n+1}, y}},
    \\
    \leq
    &
    \mathbbm{1}\brac{
        \widetilde{S}_{\lambda; D^{y}}^{\lo} \paren{X_{i}, Y_{i}}
        \geq \widetilde{S}_{\lambda; D^{y}}^{\up} \paren{X_{n+1}, y}
    }.
\end{align*}
Finally, the sandwiching of the full-conformal p-value function follows
from  summing over $i \in \brac{1, \ldots, n}$, 
adding one, and dividing by $n+1$,
that is, almost surely,
\begin{align*}
    0 \leq \lfcpv{}{y} \leq \fcpv{y} \leq \ufcpv{}{y} \leq 1.
\end{align*}

\notparagraph{Prediction-regions and \emph{thickness}}
Let $\alpha \in \left[\frac{1}{n+1}, 1\right)$ designate a control-level 
and $y \in \mathcal{Y}$, a test output-value. The above sandwiching
of the full-conformal p-value function entails, almost surely,
\begin{align*}
    \lfcpv{}{y} > \alpha
    \Rightarrow
    \fcpv{y} > \alpha
    \Rightarrow
    \ufcpv{}{y} > \alpha.
\end{align*}
As a consequence of the definition of the prediction-regions,
almost surely,
\begin{align*}
    \lfcpr{}
    \subseteq
    \fcpr
    \subseteq
    \ufcpr{}.
\end{align*}
Moreover, the \emph{thickness} of the upper-approximate \textbf{FullCP}-region is bounded from above, almost surely,
\begin{align*}
    \mathrm{THK}_{\lambda; \alpha}\paren{X_{n+1}} 
    &=
    \mathcal{V}\left(
        \ufcpr{} \Delta \fcpr
    \right)
    =\mathcal{V}\left(
        \ufcpr{} \setminus \fcpr
    \right)
    \\
    &
    \leq
    \mathcal{V}\left(
        \ufcpr{} \setminus \lfcpr{}
    \right)
    .
\end{align*}
\end{proof}

\subsection{Proof of Proposition~\ref{prop.example.lip.loss}}
\label{proof.example.lip.loss}
\begin{proof}
Let $y \in \mathcal{Y}$
stand for an output-value.
For every $u \in \mathcal{Y}$
\begin{align*}
    \ell\paren{y, u}
    = \frac{1}{\sqrt{p}}
    \sum_{l=1}^{p} c\paren{y_l, u_l}
    = \frac{1}{\sqrt{p}}
    \sum_{l=1}^{p} \paren{
        c\paren{y_l, \bullet} \circ
        \dotprod{e_l}{\bullet}{}
    }\paren{u},
\end{align*}
where, for every $l \in \brac{1, \ldots, p}$,
$y_{l} \in \mathbb{R}$ is the coordinate of
index $l$ of the vector $y$.
Since for every $l \in \brac{1, \ldots, p}$,
$c\paren{y_l, \bullet} : \mathbb{R} \to \mathbb{R}$ is convex,
then by composition with a linear function, $u \in \mathcal{Y} \mapsto c\paren{y_{l}, u_{l}}$ is convex.
Since the sum of convex functions is itself convex,
it follows that $\ell\paren{y, \bullet}$ is convex.
For every $u, v \in \mathcal{Y}$
\begin{align*}
    \abss{
        \ell\paren{y, u}
        - \ell\paren{y, v}
    }
    = \abss{
    \frac{1}{\sqrt{p}}
    \sum_{l=1}^{p}
    \croch{
        c\paren{y_l, u_l}
        - c\paren{y_l, v_l}
    }
    }
    &
    \leq
    \frac{1}{\sqrt{p}}
    \sum_{l=1}^{p}
    \abss{
        c\paren{y_l, u_l}
        - c\paren{y_l, v_l}
    }
    \\
    &
    \leq
    \rho
    \frac{1}{\sqrt{p}}
    \sum_{l=1}^{p}
    \abss{
        u_l - v_l
    }
    \\
    &
    \leq \rho \norm{u - v},
\end{align*}
where the last inequality follows from
the order between the $\ell^{1}$-norm and the Euclidian norm.
Thus, $\ell\paren{y, \bullet}$ is $\rho$-Lipschitz continuous.
\end{proof}

\subsection{Proof of Lemma~\ref{lm.stability.bounds}}
\label{proof.stability.bounds}
\begin{proof}
    Let $y \in \mathcal{Y}$ denote a test output-value.
    Since \eqref{asm.lip.loss} implies \eqref{asm.lsc.loss},
    then, under \eqref{asm.conv.loss}, \eqref{asm.lip.loss}
    and \eqref{asm.lower.bounded.loss},
    Lemma~\ref{lm.predictor.well.defined} states that
    the regularized empirical-risk functions $\widehat{\mathbf{R}}_{\lambda; D^{y}}\paren{\bullet}$
    and $\widehat{\mathbf{R}}_{\lambda^{+}; D}\paren{\bullet}$ (see Eq.~\ref{eq.regularized.empirical.risk.function}),
    are respectively $2\lambda$ and $2\lambda^{+}$-strongly convex,
    and respectively admit unique minimizers $\hat{f}_{\lambda; D^{y}}$ and $\hat{f}_{\lambda^{+}; D}$.
    Since $\hat{f}_{\lambda; D^{y}}$
    is a minimizer of the $2 \lambda$-strongly convex function
    $\widehat{\mathbf{R}}_{\lambda; D^{y}}\paren{\bullet}$,
    by Lemma~\ref{lm.minimum.gap},
    \begin{align*}
        &
        \lambda
        \normh{
        \hat{f}_{\lambda; D^{y}}
        - \hat{f}_{\lambda^{+}; D}
        }^2
        \\
        &
        \leq
        \widehat{\mathbf{R}}_{\lambda; D^{y}}\paren{
            \hat{f}_{\lambda^{+}; D}
        } - \widehat{\mathbf{R}}_{\lambda; D^{y}}\paren{
        \hat{f}_{\lambda; D^{y}}
        }
        \\
        &
        \leq
        \frac{n}{n+1}\widehat{\mathbf{R}}_{0; D}\paren{
            \hat{f}_{\lambda^{+}; D}
        } + \lambda \normh{\hat{f}_{\lambda^{+}; D}}^2
        - \frac{n}{n+1} \widehat{\mathbf{R}}_{0; D}\paren{
            \hat{f}_{\lambda; D^{y}}
        } - \lambda \normh{\hat{f}_{\lambda; D^{y}}}^2
        \\
        &
        \quad
        +
        \frac{1}{n+1}
        \croch{
            \ell\paren{y, \hat{f}_{\lambda^{+}; D}\paren{X_{n+1}}}
            - \ell\paren{y, \hat{f}_{\lambda; D^{y}}\paren{X_{n+1}}}
        }.
    \end{align*}
    Moreover, since $\hat{f}_{\lambda^{+}; D}$
    is a minimizer of the $2 \lambda^{+}$-strongly convex function
    $\widehat{\mathbf{R}}_{\lambda^{+}; D}\paren{\bullet}$,
    by Lemma~\ref{lm.minimum.gap},
    \begin{align*}
        &
        \lambda^{+}
        \normh{
            \hat{f}_{\lambda; D^{y}}
            - \hat{f}_{\lambda^{+}; D}
        }^2
        \leq
        \widehat{\mathbf{R}}_{0; D}\paren{\hat{f}_{\lambda; D^{y}}}
        + \lambda^{+} \normh{\hat{f}_{\lambda; D^{y}}}^2
        - \widehat{\mathbf{R}}_{0; D}\paren{\hat{f}_{\lambda^{+}; D}}
        - \lambda^{+} \normh{\hat{f}_{\lambda^{+}; D}}^2
    \end{align*}
    Since $\lambda^{+} = \frac{n+1}{n}\lambda$,
    multiplying both sides by $-\frac{n}{n+1}$ entails
    \begin{align*}
        &
        \frac{n}{n+1}\widehat{\mathbf{R}}_{0; D}\paren{
            \hat{f}_{\lambda^{+}; D}
        } + \lambda \normh{\hat{f}_{\lambda^{+}; D}}^2
        - \frac{n}{n+1} \widehat{\mathbf{R}}_{0; D}\paren{
            \hat{f}_{\lambda; D^{y}}
        } - \lambda \normh{\hat{f}_{\lambda; D^{y}}}^2
        \leq
        - \lambda \normh{
            \hat{f}_{\lambda; D^{y}}
            - \hat{f}_{\lambda^{+}; D}
        }^2.
    \end{align*}
    By plugging this into the first sets of inequalities,
    \begin{align*}
        &
        \lambda
        \normh{
        \hat{f}_{\lambda; D^{y}}
        - \hat{f}_{\lambda^{+}; D}
        }^2
        \\
        &
        \leq - \lambda\normh{
            \hat{f}_{\lambda; D^{y}}
            - \hat{f}_{\lambda^{+}; D}
        }^2
        +
        \frac{1}{n+1}
        \croch{
            \ell\paren{y, \hat{f}_{\lambda^{+}; D}\paren{X_{n+1}}}
            - \ell\paren{y, \hat{f}_{\lambda; D^{y}}\paren{X_{n+1}}}
        }.
    \end{align*}
    This implies what follows,
    \begin{align*}
        \normh{
        \hat{f}_{\lambda; D^{y}}
        - \hat{f}_{\lambda^{+}; D}
        }^2
        &
        \leq \frac{1}{2\lambda\paren{n+1}}
        \croch{
            \ell\paren{y, \hat{f}_{\lambda^{+}; D}\paren{X_{n+1}}}
            - \ell\paren{y, \hat{f}_{\lambda; D^{y}}\paren{X_{n+1}}}
        }.
    \end{align*}
    Under \eqref{asm.lip.loss} and
    from \citet[see Lemma 3]{audiffrenStabilityMultitaskKernel2013},
    \begin{align*}
        \ell\paren{
            y,
            \hat{f}_{\lambda^{+}; D}\paren{X_{n+1}}
        }
        -
        \ell\paren{
        y,
        \hat{f}_{\lambda; D^{y}}\paren{X_{n+1}}
        }
         &
        \leq
        \rho_p
        \norm{
        \hat{f}_{\lambda^{+}; D}\paren{X_{n+1}}
        - \hat{f}_{\lambda; D^{y}}\paren{X_{n+1}}
        }
        \\
         &
        \leq
        \rho_p
        \norm{
        \croch{
        \hat{f}_{\lambda^{+}; D}
        - \hat{f}_{\lambda; D^{y}}
        } \paren{X_{n+1}}
        }
        \\
         &
        \leq
        \rho_p
        \norm{K\paren{X_{n+1}, X_{n+1}}}_{\mathrm{op}}^{\frac{1}{2}}
        \normh{
            \hat{f}_{\lambda^{+}; D}
            - \hat{f}_{\lambda; D^{y}}
        }.
    \end{align*}
    By conjoining the above this inequality
    with the one before, and dividing both sides by
    $\normh{
        \hat{f}_{\lambda^{+}; D}
        - \hat{f}_{\lambda; D^{y}}
    }$,
    \begin{align*}
        \normh{
            \hat{f}_{\lambda; D^{y}}
            - \hat{f}_{\lambda^{+}; D}
        }
        &
        \leq
        \frac{
            \rho_{p}\norm{K\paren{X_{n+1}, X_{n+1}}}_{\mathrm{op}}^{\frac{1}{2}}
        }{2 \lambda\paren{n+1}}.
    \end{align*}
\end{proof}

\section{Concerning the known inter-task covariance-matrix case}
This section details the proofs of the results in Section~\ref{sec.known.inter.task.covariance}.

\subsection{Preliminary properties}
\begin{lemma}
\label{lm.lambda.kernel.operator.norm}
For every $x \in \mathcal{X}$,
\begin{align*}
    \norm{K\paren{\bullet, x} \Gamma^{-\frac{1}{2}}}_{\mathrm{op}}
    := \sup_{y \in \mathbb{R}^{p}\setminus \brac{0_{\mathbb{R}^{p}}}}
    \frac{\norm{K\paren{\bullet, x} \Gamma^{-\frac{1}{2}} y}_{\mathcal{H}}}{\norm{y}}
    = 
    \norm{
        \Gamma^{-\frac{1}{2}}
        K\paren{x, x}
        \Gamma^{-\frac{1}{2}}
    }_{\mathrm{op}}^{\frac{1}{2}}.
\end{align*}
\end{lemma}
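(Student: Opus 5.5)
The plan is to compute the Hilbert norm of $K(\bullet, x)\Gamma^{-\frac{1}{2}} y$ directly through the reproducing property, so that it becomes a quadratic form on $\mathbb{R}^{p}$. The key identity is \citet[see Proposition 2.1.a]{micchelliLearningVectorvaluedFunctions2005}: for every $u, v \in \mathbb{R}^{p}$ and $x, t \in \mathcal{X}$,
\begin{align*}
    \dotprod{K\paren{\bullet, x} u}{K\paren{\bullet, t} v}{\mathcal{H}} = \dotprod{u}{K\paren{x, t} v}{}.
\end{align*}
This identity was already used in the proof of Eq.~\eqref{eq.regularization.erf.matrix}.

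Fix $x \in \mathcal{X}$ and $y \in \mathbb{R}^{p} \setminus \brac{0_{\mathbb{R}^{p}}}$, and set $u := \Gamma^{-\frac{1}{2}} y$. Because $\Gamma \in \mathrm{Sym}_{p}^{++}\paren{\mathbb{R}}$, the matrix $\Gamma^{-\frac{1}{2}}$ is well-defined and symmetric. Applying the identity with $t = x$ gives
\begin{align*}
    \norm{K\paren{\bullet, x}\Gamma^{-\frac{1}{2}} y}_{\mathcal{H}}^{2}
    = \dotprod{\Gamma^{-\frac{1}{2}} y}{K\paren{x, x}\Gamma^{-\frac{1}{2}} y}{}
    = y^{T} M_x y,
\end{align*}
where $M_x := \Gamma^{-\frac{1}{2}} K\paren{x, x} \Gamma^{-\frac{1}{2}}$.

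The matrix $M_x$ is symmetric positive semi-definite. Symmetry holds because $K(x,x)^{T} = K(x,x)$ for a matrix-valued reproducing kernel. Positive semi-definiteness follows from the display itself, since its left-hand side is a squared norm.

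It then suffices to recall that for such a matrix
\begin{align*}
    \sup_{y \neq 0} \frac{y^{T} M_x y}{\norm{y}^{2}} = \lambda_{\max}\paren{M_x} = \norm{M_x}_{\mathrm{op}}.
\end{align*}
This is the Rayleigh quotient characterization, combined with the fact that the operator norm of a symmetric positive semi-definite matrix equals its largest eigenvalue. Taking square roots, which is monotone, and passing the supremum through the square root yields
\begin{align*}
    \norm{K\paren{\bullet, x}\Gamma^{-\frac{1}{2}}}_{\mathrm{op}} = \norm{M_x}_{\mathrm{op}}^{\frac{1}{2}},
\end{align*}
which is the claimed equality.

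There is no substantial obstacle. The only points to verify carefully are that $K(x,x)$ is symmetric, which follows from $K(x,t)^{T} = K(t,x)$ in \citet{micchelliLearningVectorvaluedFunctions2005}, and that $\Gamma^{-\frac{1}{2}}$ can be moved across the inner product by symmetry. Both are standard.
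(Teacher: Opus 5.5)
Your proof is correct, and its first step is the same as the paper's: the reproducing identity of Micchelli and Pontil gives $\|K(\bullet,x)\Gamma^{-1/2}y\|_{\mathcal{H}}^{2} = y^{T}M_x y$ with $M_x = \Gamma^{-1/2}K(x,x)\Gamma^{-1/2}$.

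The two arguments part ways in how they pass from this quadratic form to the operator norm.

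\emph{Your route.} You invoke the Rayleigh-quotient characterization, $\sup_{y\neq 0} y^{T}M_x y/\|y\|^{2} = \lambda_{\max}(M_x) = \|M_x\|_{\mathrm{op}}$. This gives the equality in one stroke, resting on the spectral theorem for the symmetric positive semi-definite matrix $M_x$.

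\emph{The paper's route.} It proves two inequalities separately.
The upper bound is $y^{T}M_x y \leq \|M_x\|_{\mathrm{op}}\|y\|^{2}$.
For the reverse inequality, it writes $\|M_x y\|^{2} = \langle K(\bullet,x)\Gamma^{-1/2}y,\, K(\bullet,x)\Gamma^{-1/2}M_x y\rangle_{\mathcal{H}}$. Cauchy--Schwarz in $\mathcal{H}$ then yields $\|M_x y\| \leq \|K(\bullet,x)\Gamma^{-1/2}\|_{\mathrm{op}}^{2}\|y\|$.
This is the standard proof of $\|T^{*}T\|_{\mathrm{op}} = \|T\|_{\mathrm{op}}^{2}$ for $T = K(\bullet,x)\Gamma^{-1/2}$, where $T^{*}T = M_x$. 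It needs only Cauchy--Schwarz and the definition of the operator norm; the upper bound can also be read that way, via $y^{T}M_x y \leq \|y\|\,\|M_x y\|$. So it never appeals to eigenvalues or eigenvectors.

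\emph{What each buys.} Your version is shorter and makes explicit that the norm equals $\sqrt{\lambda_{\max}(M_x)}$. The paper's version is purely operator-theoretic. In the present finite-dimensional setting both are complete.
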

\begin{proof}
Let $x \in \mathcal{X}$ name an input vector, and
$y \in \mathbb{R}^p \setminus \brac{0_{\mathbb{R}^{p}}}$, an output-value.
On the one hand, 
\begin{align*}
    \norm{K\paren{\bullet, x} \Gamma^{-\frac{1}{2}} y}_{\mathcal{H}}^{2}
    &
    = \dotprod{
        K\paren{\bullet, x} \Gamma^{-\frac{1}{2}} y
    }{
        K\paren{\bullet, x} \Gamma^{-\frac{1}{2}} y
    }{\mathcal{H}}
    \\
    &
    =
    \dotprod{
        \Gamma^{-\frac{1}{2}}y
    }{
        K\paren{x, x}\Gamma^{-\frac{1}{2}}y
    }{}
    \\
    &
    =
    \dotprod{
        y
    }{
        \Gamma^{-\frac{1}{2}} K\paren{x, x}\Gamma^{-\frac{1}{2}}y
    }{}
    \\
    &
    \leq
    \norm{\Gamma^{-\frac{1}{2}} K\paren{x, x}\Gamma^{-\frac{1}{2}}}_{\mathrm{op}}
    \norm{y}^2,
\end{align*}
where the second equality follows from
\citet[in Proposition 2.1(a)]{micchelliLearningVectorvaluedFunctions2005}, and
the third equality holds true since $\Gamma^{-\frac{1}{2}}$ is symmetric, and
the inequality follows from the definition of the operator normalized of
a symmetric semi-definite matrix, which is equal to its spectral radius.
Dividing both sides by $\norm{y}^2$ and taking the square root yields
\begin{align*}
    \frac{\norm{K\paren{\bullet, x} \Gamma^{-\frac{1}{2}} y}_{\mathcal{H}}}{\norm{y}}
    \leq \norm{\Gamma^{-\frac{1}{2}} K\paren{x, x}\Gamma^{-\frac{1}{2}}}_{\mathrm{op}}^{\frac{1}{2}}.
\end{align*}
On the other hand,
if $\norm{\Gamma^{-\frac{1}{2}} K\paren{x, x}\Gamma^{-\frac{1}{2}} y} = 0$,
then,
\begin{align*}
    \frac{\norm{\Gamma^{-\frac{1}{2}} K\paren{x, x}\Gamma^{-\frac{1}{2}} y}}{\norm{y}} = 0 \leq 
    \norm{K\paren{\bullet, x} \Gamma^{-\frac{1}{2}}}_{\mathrm{op}}^2.
\end{align*}
Otherwise, that is, if $\norm{\Gamma^{-\frac{1}{2}} K\paren{x, x}\Gamma^{-\frac{1}{2}} y} \neq 0$,
then,
\begin{align*}
    \norm{\Gamma^{-\frac{1}{2}} K\paren{x, x}\Gamma^{-\frac{1}{2}} y}^2
    &
    = \dotprod{
        \Gamma^{-\frac{1}{2}} K\paren{x, x}\Gamma^{-\frac{1}{2}} y
    }{
        \Gamma^{-\frac{1}{2}} K\paren{x, x}\Gamma^{-\frac{1}{2}} y
    }{}
    \\
    &
    = \dotprod{
        \Gamma^{-\frac{1}{2}} \Gamma^{-\frac{1}{2}} K\paren{x, x}\Gamma^{-\frac{1}{2}} y
    }{
        K\paren{x, x}\Gamma^{-\frac{1}{2}} y
    }{}
    \\
    &
    = \dotprod{
        K\paren{\bullet, x} \Gamma^{-\frac{1}{2}} y
    }{
        K\paren{\bullet, x} \Gamma^{-\frac{1}{2}} \Gamma^{-\frac{1}{2}} K\paren{x, x}\Gamma^{-\frac{1}{2}} y
    }{\mathcal{H}}
    \\
    &
    \leq
    \normh{K\paren{\bullet, x} \Gamma^{-\frac{1}{2}} y}
    \normh{K\paren{\bullet, x} \Gamma^{-\frac{1}{2}} \Gamma^{-\frac{1}{2}} K\paren{x, x}\Gamma^{-\frac{1}{2}} y}
    \\
    &
    \leq
    \norm{K\paren{\bullet, x} \Gamma^{-\frac{1}{2}}}_{\mathrm{op}}
    \norm{y}
    \norm{K\paren{\bullet, x} \Gamma^{-\frac{1}{2}}}_{\mathrm{op}}
    \norm{\Gamma^{-\frac{1}{2}} K\paren{x, x}\Gamma^{-\frac{1}{2}} y},
\end{align*}
where the third equality follows from
\citet[in Proposition 2.1(a)]{micchelliLearningVectorvaluedFunctions2005},
and the first inequality follows from the Cauchy-Schwarz inequality, and
the last inequality follows from the definition of the operator norm.
Dividing both sides by $\norm{y} \norm{\Gamma^{-\frac{1}{2}} K\paren{x, x}\Gamma^{-\frac{1}{2}} y}$
yields
\begin{align*}
    \frac{\norm{\Gamma^{-\frac{1}{2}} K\paren{x, x}\Gamma^{-\frac{1}{2}} y}}{\norm{y}} \leq 
    \norm{K\paren{\bullet, x} \Gamma^{-\frac{1}{2}}}_{\mathrm{op}}^2.
\end{align*}
One concludes using the definition of the operator norm.
\end{proof}

\begin{lemma}
    \label{lm.gamma.prediction.bound}
    For every predictor $f \in \mathcal{H}$, and every input value $x \in \mathcal{X}$,
    \begin{align*}
        \norm{\Gamma^{-\frac{1}{2}} f(x)}
        \leq \norm{\Gamma^{-\frac{1}{2}} K\paren{x, x} \Gamma^{-\frac{1}{2}}}_{\mathrm{op}}^{\frac{1}{2}}
        \norm{f}_{\mathcal{H}}.
    \end{align*}
\end{lemma}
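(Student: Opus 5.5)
The bound will follow from the reproducing property combined with Lemma~\ref{lm.lambda.kernel.operator.norm}. Fix $f \in \mathcal{H}$ and $x \in \mathcal{X}$. Write the Euclidean norm variationally:
\begin{align*}
    \norm{\Gamma^{-\frac{1}{2}} f(x)}
    = \sup_{v \in \mathbb{R}^{p},\, \norm{v} = 1} \dotprod{v}{\Gamma^{-\frac{1}{2}} f(x)}{}.
\end{align*}
If $\Gamma^{-\frac{1}{2}} f(x) = 0$ there is nothing to prove, so one may also simply take $v = \Gamma^{-\frac{1}{2}} f(x) / \norm{\Gamma^{-\frac{1}{2}} f(x)}$.

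Since $\Gamma^{-\frac{1}{2}}$ is symmetric, $\dotprod{v}{\Gamma^{-\frac{1}{2}} f(x)}{} = \dotprod{\Gamma^{-\frac{1}{2}} v}{f(x)}{}$. The reproducing property of $\mathcal{H}$ \citep[see Definition 2.1]{micchelliLearningVectorvaluedFunctions2005} gives $\dotprod{w}{f(x)}{} = \dotprod{K\paren{\bullet, x} w}{f}{\mathcal{H}}$ for every $w \in \mathbb{R}^{p}$. This is the same identity used coordinatewise in the proof of Lemma~\ref{lm.representer}, extended by linearity. Applying it with $w = \Gamma^{-\frac{1}{2}} v$ turns the quantity into $\dotprod{K\paren{\bullet, x}\Gamma^{-\frac{1}{2}} v}{f}{\mathcal{H}}$.

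The Cauchy--Schwarz inequality in $\mathcal{H}$ then bounds this by $\normh{K\paren{\bullet, x}\Gamma^{-\frac{1}{2}} v}\,\normh{f}$. By the definition of the operator norm and $\norm{v} = 1$, we have $\normh{K\paren{\bullet, x}\Gamma^{-\frac{1}{2}} v} \leq \norm{K\paren{\bullet, x}\Gamma^{-\frac{1}{2}}}_{\mathrm{op}}$. Lemma~\ref{lm.lambda.kernel.operator.norm} identifies this operator norm with $\norm{\Gamma^{-\frac{1}{2}} K\paren{x, x} \Gamma^{-\frac{1}{2}}}_{\mathrm{op}}^{\frac{1}{2}}$. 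Taking the supremum over $v$ concludes.

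There is no real obstacle here. The only point needing care is stating the reproducing identity for a general vector $w$ rather than for a basis vector $e_k$, and this follows by linearity of $w \mapsto K\paren{\bullet, x} w$.
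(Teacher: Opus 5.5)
Your proof is correct and is essentially the paper's argument. The paper writes $\|\Gamma^{-\frac{1}{2}} f(x)\|^2 = \langle K(\bullet, x)\Gamma^{-\frac{1}{2}}\Gamma^{-\frac{1}{2}} f(x), f\rangle_{\mathcal{H}}$, then applies Cauchy--Schwarz, the operator-norm bound and Lemma~\ref{lm.lambda.kernel.operator.norm}, and finally divides by $\|\Gamma^{-\frac{1}{2}} f(x)\|$; this is exactly your argument with the choice $v = \Gamma^{-\frac{1}{2}} f(x)/\|\Gamma^{-\frac{1}{2}} f(x)\|$.
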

\begin{proof}
    Let $f \in \mathcal{H}$ stand for a predictor, and
    $x \in \mathcal{X}$, an input value.
    If $\norm{\Gamma^{-\frac{1}{2}} f(x)} = 0$, then the inequality trivially holds.
    Otherwise,
    \begin{align*}
        \norm{\Gamma^{-\frac{1}{2}}f(x)}^2
        &
        = f(x)^T \Gamma^{-\frac{1}{2}} \Gamma^{-\frac{1}{2}} f(x)
        \\
        &
        = \dotprod{\Gamma^{-\frac{1}{2}} \Gamma^{-\frac{1}{2}} f(x)}{f(x)}{}
        \\
        &
        = \dotprod{K\paren{\bullet, x} \Gamma^{-\frac{1}{2}} \Gamma^{-\frac{1}{2}} f(x)}{f}{\mathcal{H}}
        \\
        &
        \leq
        \normh{K\paren{\bullet, x} \Gamma^{-\frac{1}{2}} \Gamma^{-\frac{1}{2}} f(x)}
        \normh{f}
        \\
        &
        \leq
        \norm{K\paren{\bullet, x} \Gamma^{-\frac{1}{2}}}_{\mathrm{op}}
        \norm{\Gamma^{-\frac{1}{2}} f(x)}
        \normh{f}
        \\
        &
        \leq 
        \norm{\Gamma^{-\frac{1}{2}} K\paren{x, x} \Gamma^{-\frac{1}{2}}}_{\mathrm{op}}^{\frac{1}{2}}
        \norm{\Gamma^{-\frac{1}{2}} f(x)}
        \normh{f},
    \end{align*}
    where the third equality follows from the reproducing property of
    the hypothesis space $\mathcal{H}$, and
    the first inequality follows from the Cauchy-Schwarz inequality, and
    the second inequality follows form the definition of the operator norm, and
    the last inequality follows from Lemma~\ref{lm.lambda.kernel.operator.norm}.
    One concludes by dividing both sides by $\norm{\Gamma^{-\frac{1}{2}} f(x)}$.
\end{proof}
Let us adopt the convention stating that
\begin{align*}
    \sup \emptyset = -\infty.
\end{align*}
\begin{lemma}
    \label{lm.quantile}
    Let $a_{1}, \ldots, a_{m} \in \mathbb{R}$
    name some points, 
    \begin{align*}
        \sup \brac{
            q \in \mathbb{R} :
            \frac{1 + \sum_{i=1}^{m} \mathbbm{1}\brac{a_{i} \geq q}}{n+1} > \alpha
        } =
        \begin{cases}
        a_{\paren{i_{n, \alpha}^{m}}}&\mbox{if }\alpha \in \left[\frac{1}{n+1}, \frac{m+1}{n+1}\right),\\
        - \infty&\mbox{if }\alpha \in \left[\frac{m+1}{n+1}, 1\right).
        \end{cases}
    \end{align*}
\end{lemma}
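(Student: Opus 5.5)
The plan is to study the counting function $N(q) := \sum_{i=1}^{m} \mathbbm{1}\brac{a_i \geq q}$ in terms of the order statistics $a_{(1)} \leq \ldots \leq a_{(m)}$. The condition $\frac{1+N(q)}{n+1} > \alpha$ is equivalent to $N(q) > (n+1)\alpha - 1$. Since $N$ is integer-valued, this is in turn equivalent to $N(q) \geq k$, where $k$ is the smallest integer strictly larger than $(n+1)\alpha - 1$. I will check that $k = m - i_{n,\alpha}^{m} + 1$ whenever $i_{n,\alpha}^{m}$ makes sense. Indeed, the smallest integer strictly greater than $x$ is $\lfloor x \rfloor + 1$. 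Hence
\[
m + 1 - k = m - \lfloor (n+1)\alpha - 1 \rfloor = \lceil m + 1 - (n+1)\alpha \rceil .
\]
This equals $\lceil (n+1)(1-\alpha) - (n-m) \rceil = i_{n,\alpha}^{m}$ by Eq.~\eqref{eq.index.alpha}.

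Next I describe the set $\brac{q : N(q) \geq k}$. The function $N$ is non-increasing and left-continuous in $q$. Moreover, $N(q) \geq k$ holds iff at least $k$ of the points are $\geq q$, which is iff $q \leq a_{(m-k+1)}$ when $1 \leq k \leq m$. When $k \leq 0$ the condition holds for every $q$; this case is excluded because $\alpha \geq \frac{1}{n+1}$ gives $(n+1)\alpha - 1 \geq 0$, so $k \geq 1$. When $k > m$ the set is empty, and the supremum is $-\infty$ by the stated convention.

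It remains to match the ranges of $\alpha$. I will show that $k \leq m$ iff $(n+1)\alpha - 1 < m$, i.e.\ iff $\alpha < \frac{m+1}{n+1}$. Then for $\alpha \in \left[\frac{1}{n+1}, \frac{m+1}{n+1}\right)$ the set is $(-\infty, a_{(i_{n,\alpha}^{m})}]$, whose supremum (actually a maximum) is $a_{(i_{n,\alpha}^{m})}$. In that range $i_{n,\alpha}^{m} = m+1-k \in \brac{1,\ldots,m}$, as the notation requires. For $\alpha \geq \frac{m+1}{n+1}$ the supremum is $-\infty$.

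The only delicate point is the integer bookkeeping: the conversion between strict inequality and ceiling, and ties among the $a_i$. Ties are harmless because the characterization through order statistics, ``$N(q) \geq k$ iff $q \leq a_{(m-k+1)}$'', holds regardless of ties.
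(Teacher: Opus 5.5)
Your proof is correct, and it reaches the same order statistic as the paper by a somewhat different route.

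The paper rewrites the condition in terms of the complementary count, $\#\{i : a_i < q\} < (n+1)(1-\alpha)-(n-m)$, so the ceiling defining $i_{n,\alpha}^{m}$ appears directly as the threshold. It then proceeds in three steps:
\begin{itemize}
\item it uses left-continuity of this count, together with the strict inequality, to argue that the supremum is attained at one of the $a_i$;
\item it checks, by an explicit count of ties, that $a_{(i_{n,\alpha}^{m})}$ is feasible;
\item it shows that every strictly larger data value is infeasible.
\end{itemize}

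You instead turn the strict real inequality into $N(q)\ge k$ for an integer $k\ge 1$. You then identify the feasible set exactly as the closed half-line $(-\infty, a_{(m-k+1)}]$, and the identity $m+1-k=\lceil m+1-(n+1)\alpha\rceil$ recovers the paper's index.

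Your route removes the ``supremum is attained at a data point'' step and the separate tie bookkeeping. The half-line description is insensitive to ties and makes the supremum a maximum. You also explicitly verify that $i_{n,\alpha}^{m}\in\{1,\ldots,m\}$ on the first range of $\alpha$, and that $k\ge 1$ because $\alpha\ge \frac{1}{n+1}$. The price is the small floor-to-ceiling conversion, which the paper's complementary formulation avoids.

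Your remark that $N$ is left-continuous is never used in your argument and can be dropped.
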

\begin{proof}
    Let us notice that,
    for any $q \in \mathbb{R}$,
    \begin{align*}
        &
        \frac{1 + \sum_{i=1}^{m} \mathbbm{1}\brac{a_{i} \geq q}}{n+1} > \alpha
        \\
        &
        \Longleftrightarrow
        \sum_{i=1}^{m} \mathbbm{1}\brac{a_{i} \geq q} > \paren{n+1}\alpha - 1
        \\
        &
        \Longleftrightarrow
        \sum_{i=1}^{m} \mathbbm{1}\brac{a_{i} < q} < \paren{n+1}\paren{1 - \alpha} - \paren{n - m}.
    \end{align*}
    If $\alpha \in \left[\frac{m+1}{n+1}, 1\right)$, then
    \begin{align*}
        \paren{n+1}\paren{1 - \alpha} - \paren{n - m}
        &
        = n + 1 - \alpha \paren{n+1} + m - n
        \\
        &
        = m + 1 - \alpha \paren{n+1}
        \\
        &
        =
        \paren{n+1} \paren{\frac{m+1}{n+1} - \alpha} \leq 0.
    \end{align*}
    This entails that
    \begin{align*}
        &
        \sup \brac{
            q \in \mathbb{R} :
            \frac{1 + \sum_{i=1}^{m} \mathbbm{1}\brac{a_{i} \geq q}}{n+1} > \alpha
        }
        \\
        &
        =
        \sup \brac{
            q \in \mathbb{R} :
            \sum_{i=1}^{m} \mathbbm{1}\brac{a_{i} < q} < \paren{n+1}\paren{1 - \alpha} - \paren{n - m}
        }
        \\
        &
        \leq
        \sup \brac{
            q \in \mathbb{R} :
            \sum_{i=1}^{m} \mathbbm{1}\brac{a_{i} < q} < 0
        } = \sup \emptyset = - \infty.
    \end{align*}
    Let us then consider the case where $\alpha \in \left[\frac{1}{n+1}, \frac{m+1}{n+1}\right)$.
    Since the function $q \mapsto \sum_{i=1}^{m} \mathbbm{1}\brac{a_{i} < q}$
    is left continuous, 
    and since the set is defined by a strict inequality,
    then the supremum is attained at some value $a \in \brac{a_i : i \in \brac{1, \ldots, m}}$.
    \begin{align*}
        \sum_{i = 1}^{m}\mathbbm{1}\brac{a_{i} < a_{\paren{i_{n, \alpha}^{m}}}}
        &
        =
        \sum_{i = 1}^{m}\mathbbm{1}\brac{a_{\paren{i}} < a_{\paren{i_{n, \alpha}^{m}}}}
        \\
        &
        =
        \sum_{i = 1}^{i_{n, \alpha}^{m}}\mathbbm{1}\brac{a_{\paren{i}} < a_{\paren{i_{n, \alpha}^{m}}}}
        \\
        &
        = \sum_{i = 1}^{i_{n, \alpha}^{m}}\mathbbm{1}\brac{a_{\paren{i}} \leq a_{\paren{i_{n, \alpha}^{m}}}}
        - \sum_{i = 1}^{i_{n, \alpha}^{m}}\mathbbm{1}\brac{a_{\paren{i}} = a_{\paren{i_{n, \alpha}^{m}}}}
        \\
        &
        =
        \ceil{\paren{n+1}\paren{1 - \alpha} - \paren{n - m}}
        - \sum_{i = 1}^{i_{n, \alpha}^{n}}\mathbbm{1}\brac{a_{\paren{i}} = a_{\paren{i_{n, \alpha}^{n}}}}
        \\
        &
        \leq
        \ceil{\paren{n+1}\paren{1 - \alpha} - \paren{n - m}} - 1
        < \paren{n+1}\paren{1 - \alpha} - \paren{n - m},
    \end{align*}
    where the second equality follows from the fact that
    $i > i_{n, \alpha}^{m} \implies a_{\paren{i}} \geq a_{\paren{i_{n, \alpha}^{m}}}$, and
    the fourth inequality holds by counting and the definition of $i_{n, \alpha}^{m}$
    (see Eq.~\ref{eq.index.alpha}),
    and the last, by the fact that
    for $i = i_{n, \alpha}^{m}$,
    $a_{\paren{i}} = a_{\paren{i_{n, \alpha}^{n}}}$.
    Thus, $a_{\paren{i_{n, \alpha}^{n}}}$ is a candidate value for the supremum.
    Let us now show that any term greater than $a_{\paren{i_{n, \alpha}^{n}}}$
    cannot be a supremum, making $a_{\paren{i_{n, \alpha}^{n}}}$ the supremum.
    Let $j \in \brac{1, \ldots, n}$ be an index, such that,
    $a_{\paren{i_{n, \alpha}^{n}}} < a_{\paren{j}}$.
    It follows that $i_{n, \alpha}^{n} < j$ and
    \begin{align*}
        \sum_{i=1}^{m}\mathbbm{1}\brac{a_i < a_{\paren{j}}}
        &
        =
        \sum_{i=1}^{m}\mathbbm{1}\brac{a_{\paren{i}} < a_{\paren{j}}}
        \\
        &
        =
        \sum_{i=1}^{i_{n, \alpha}^{m}}\mathbbm{1}\brac{a_{\paren{i}} < a_{\paren{j}}}
        + \sum_{i=i_{n, \alpha}^{m}}^{j}\mathbbm{1}\brac{a_{\paren{i}} < a_{\paren{j}}}
        \\
        &
        \geq
        \sum_{i=1}^{i_{n, \alpha}^{m}}\mathbbm{1}\brac{a_{\paren{i}} \leq a_{\paren{i_{n, \alpha}^{m}}}}
        \\
        &
        \geq \ceil{\paren{n+1}\paren{1 - \alpha} - \paren{n - m}}
        \geq \paren{n+1}\paren{1 - \alpha} - \paren{n - m},
    \end{align*}
    where the second equality follows from $i>j \implies a_{\paren{i}} \geq a_{\paren{j}}$, and
    the first inequality follows from the fact that $a_{\paren{i}} \leq a_{\paren{i_{n, \alpha}^{m}}} \implies a_{\paren{i}} < a_{\paren{j}}$
    since $a_{\paren{i_{n, \alpha}^{m}}} < a_{\paren{j}}$, and the second one from counting.
    Therefore, $a_{\paren{i_{n, \alpha}^{m}}}$ must be the supremum.
\end{proof}

\subsection{Proof of Lemma~\ref{lm.stable.score.norm}}
\label{proof.stable.score.norm}
\begin{proof}
    Let $y \in \mathcal{Y}$ name a test output-value, and
    $\paren{x, u} \in \mathcal{X} \times \mathcal{Y}$ a data point.
    Since \eqref{asm.lip.loss} implies \eqref{asm.lsc.loss},
    then, under \eqref{asm.conv.loss}, \eqref{asm.lip.loss}
    and \eqref{asm.lower.bounded.loss},
    Lemma~\ref{lm.predictor.well.defined} states that
    $\hat{f}_{\lambda; D^{y}}$ and $\hat{f}_{\lambda^{+}; D}$ are well-defined.
    Then, under \eqref{asm.conv.loss}, \eqref{asm.lip.loss} and \eqref{asm.lower.bounded.loss},
    \begin{align*}
        \abss{
            S_{\lambda; D^{y}}^{\Gamma}
            \paren{x, u}
            - S_{\lambda^{+}; D}^{\Gamma}
            \paren{x, u}
        }
        &
        =
        \abss{
            \norm{
                \Gamma^{-\frac{1}{2}}
                \paren{
                    u - \hat{f}_{\lambda; D^{y}}(x)
                }
            }
            - \norm{
                \Gamma^{-\frac{1}{2}}
                \paren{
                    u - \hat{f}_{\lambda^{+}; D}(x)
                }
            }
        }
        \\
        &
        \leq
        \norm{
            \Gamma^{-\frac{1}{2}}
            \paren{
                \hat{f}_{\lambda; D^{y}}(x)
            - \hat{f}_{\lambda^{+}; D}(x)
            } 
        }
        \\
        &
        \leq
        \norm{
            \croch{
                \Gamma^{-\frac{1}{2}}
                \paren{
                    \hat{f}_{\lambda; D^{y}}
                    - \hat{f}_{\lambda^{+}; D}
                }
            }(x)
        }
        \\
        &
        \leq
        \norm{
            \Gamma^{-\frac{1}{2}} K\paren{x,x}\Gamma^{-\frac{1}{2}}
        }_{\mathrm{op}}^{\frac{1}{2}}
        \normh{
            \hat{f}_{\lambda; D^{y}}
            - \hat{f}_{\lambda^{+}; D}
        }
        \\
        &
        \leq \widehat{\tau}_{\lambda}^{\Gamma}(x),
    \end{align*}
    where the first inequality follows from the reverse triangle inequality, and
    the third inequality follows from Lemma~\ref{lm.gamma.prediction.bound}, and
    the last inequality follows from Lemma~\ref{lm.stability.bounds}
    (see Eq.~\ref{eq.score.stability.bound} for $\widehat{\tau}_{\lambda}^{\Gamma}(x)$).
    One can conclude by applying the triangle inequality.
\end{proof}

\subsection{Proof of Proposition~\ref{prop.upper.stableCP.region.fixed}}
\label{proof.upper.StableCP.region.fixed}
\begin{proof}
    Let $\alpha \in \left[\frac{1}{n+1}, 1\right)$
    designate a control-level, and
    $y \in \mathcal{Y}$,
    a test output-value.
    \begin{align*}
        &
        y \in \widetilde{C}_{\lambda; \alpha}^{\Gamma, \up}\paren{X_{n+1}}
        \\
        &
        \Longleftrightarrow
        \frac{
            1 + \sum_{i=1}^{n}
            \mathbbm{1}\brac{
                \widetilde{S}_{\lambda; D^{y}}^{\Gamma, \up} \paren{X_i, Y_i}
                \geq \widetilde{S}_{\lambda; D^{y}}^{\Gamma, \lo} \paren{X_{n+1}, y}
            }
        }{n+1}
        > \alpha
        \\
        &
        \Longleftrightarrow
        S_{\lambda; D}^{\Gamma} \paren{X_{n+1}, y}
        - \widehat{\tau}_{\lambda}^{\Gamma}\paren{X_{n+1}}
        \leq
        S_{\lambda; D}^{\Gamma} \paren{X_{\paren{i_{n, \alpha}^{n}}}, Y_{\paren{i_{n, \alpha}^{n}}}}
        + \widehat{\tau}_{\lambda}^{\Gamma}\paren{X_{\paren{i_{n, \alpha}^{n}}}}
        \\
        &
        \Longleftrightarrow
        \norm{
            \Gamma^{-\frac{1}{2}}\paren{
                y - \hat{f}_{\lambda^{+}; D}\paren{X_{n+1}}
            }
        }
        \leq
        \widehat{Q}_{\lambda; D^{+}}^{\Gamma, \up}(\alpha)
        + \widehat{\tau}_{\lambda}^{\Gamma}\paren{X_{n+1}},
    \end{align*}
    where the second equivalence follows from Lemma~\ref{lm.quantile} with $m = n$, and
    $\widehat{Q}_{\lambda; D^{+}}^{\Gamma, \up}(\alpha)$ is given by Eq.~\eqref{eq.fixed.ncs.quantile},
    and $D^{+}$, the data set containing
    $\paren{X_1, Y_1}$, \dots, $\paren{X_{n}, Y_{n}}$ and only
    the test input $X_{n+1}$, that is,
    \begin{align}
        \label{eq.half.augmented.data.set}
        D^{+} := \brac{
            \paren{X_{n}, Y_{n}},
            \ldots, \paren{X_{n}, Y_{n}},
            X_{n+1}
        }.
    \end{align}
\end{proof}

\subsection{Concerning the synthetic data set}
\label{sec.synthetic.data.set}
The data generating distribution
is from \citet[Appendix C.3]{braunMultivariateStandardizedResiduals2026}.
Let us detail the experimental parameters.
The input dimension is 2, and so is the output dimension, that is $d = p = 2$.
The perturbation is chosen to be Gaussian, and the number of anchors is set to $2$, that is, $K=2$.
When an inter-task covariance-matrix is involved, the regularization parameter $a$ is set to $10^{-6}$.

\section{Concerning the upper-bound on the thickness in the known inter-task covariance-matrix case}
This section provides the proofs of the results in Section~\ref{sec.known.cov.thickness.upper.bound}.

\subsection{Preliminary properties}
\begin{lemma}
    \label{lm.regularization.stability}
    Assume \eqref{asm.conv.loss}, \eqref{asm.lip.loss}, \eqref{asm.lower.bounded.loss} hold true.
    Then, the next upper-bound holds true,
    \begin{align*}
        \normh{\hat{f}_{\lambda^{+}; D} - \hat{f}_{\lambda; D}}
        \leq \frac{\rho_{p} \paren{\frac{1}{n}\sum_{i=1}^{n} \norm{K\paren{X_i, X_i}}_{\mathrm{op}}^{\frac{1}{2}}}}{2\lambda\paren{n+1}},
    \end{align*}
    where $\lambda^{+} := \frac{n+1}{n} \lambda$.
\end{lemma}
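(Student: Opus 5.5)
The plan is to compare the two minimizers $\hat{f}_{\lambda; D}$ and $\hat{f}_{\lambda^{+}; D}$ through strong convexity, exactly as in the proof of Lemma~\ref{lm.stability.bounds}, but now with the same data set and two regularization levels. The cross terms cancel and leave a difference of squared norms, which I then control with an a priori bound on $\normh{\hat{f}_{\lambda; D}}$ derived from the first-order optimality condition. Throughout, write $\Delta := \hat{f}_{\lambda^{+}; D} - \hat{f}_{\lambda; D}$ and $A := \frac{1}{n}\sum_{i=1}^{n}\norm{K\paren{X_i, X_i}}_{\mathrm{op}}^{\frac{1}{2}}$.

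\emph{Step 1 (two strong-convexity inequalities).} By Lemma~\ref{lm.predictor.well.defined}, $\widehat{\mathbf{R}}_{\lambda; D}$ is $2\lambda$-strongly convex and $\widehat{\mathbf{R}}_{\lambda^{+}; D}$ is $2\lambda^{+}$-strongly convex, each with a unique minimizer. Applying Lemma~\ref{lm.minimum.gap} to each of them gives
\begin{align*}
\lambda\normh{\Delta}^2 &\leq \widehat{\mathbf{R}}_{\lambda; D}\paren{\hat{f}_{\lambda^{+}; D}} - \widehat{\mathbf{R}}_{\lambda; D}\paren{\hat{f}_{\lambda; D}}, \\
\lambda^{+}\normh{\Delta}^2 &\leq \widehat{\mathbf{R}}_{\lambda^{+}; D}\paren{\hat{f}_{\lambda; D}} - \widehat{\mathbf{R}}_{\lambda^{+}; D}\paren{\hat{f}_{\lambda^{+}; D}}.
\end{align*}
Adding the two inequalities, the empirical-risk terms cancel, and since $\lambda^{+}-\lambda = \lambda/n$ this leaves
\begin{align*}
\paren{\lambda + \lambda^{+}}\normh{\Delta}^2 \leq \frac{\lambda}{n}\paren{\normh{\hat{f}_{\lambda; D}}^2 - \normh{\hat{f}_{\lambda^{+}; D}}^2}.
\end{align*}
Factor the right-hand side as $\paren{\normh{\hat{f}_{\lambda; D}} - \normh{\hat{f}_{\lambda^{+}; D}}}\paren{\normh{\hat{f}_{\lambda; D}} + \normh{\hat{f}_{\lambda^{+}; D}}}$. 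The reverse triangle inequality bounds the first factor by $\normh{\Delta}$. Dividing by $\normh{\Delta}$ (the case $\Delta=0$ is trivial) and using $\lambda + \lambda^{+} = \lambda\frac{2n+1}{n}$ yields
\begin{align*}
\normh{\Delta} \leq \frac{\normh{\hat{f}_{\lambda; D}} + \normh{\hat{f}_{\lambda^{+}; D}}}{2n+1}.
\end{align*}

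\emph{Step 2 (a priori norm bound).} I will show that $\normh{\hat{f}_{\mu; D}} \leq \frac{\rho_{p} A}{2\mu}$ for every $\mu>0$. By \eqref{asm.conv.loss} and \eqref{asm.lip.loss}, each $\ell\paren{Y_i,\bullet}$ is finite, convex and continuous, and the evaluation map $L_{X_i}$ is bounded with adjoint $g\mapsto K\paren{\bullet, X_i}g$. The subdifferential sum and chain rules therefore apply. Fermat's rule then gives subgradients $g_i \in \partial\ell\paren{Y_i, \hat{f}_{\mu; D}\paren{X_i}}$ such that
\begin{align*}
\hat{f}_{\mu; D} = -\frac{1}{2\mu n}\sum_{i=1}^{n} K\paren{\bullet, X_i} g_i .
\end{align*}
Lipschitz continuity gives $\norm{g_i}\leq\rho_{p}$, and $\normh{K\paren{\bullet, x}g}\leq\norm{K\paren{x,x}}_{\mathrm{op}}^{\frac{1}{2}}\norm{g}$ (this is Lemma~\ref{lm.lambda.kernel.operator.norm} with $\Gamma=I_p$). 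The triangle inequality then gives the claimed bound. This step is the main technical point: one has to justify the subdifferential calculus in the Hilbert space $\mathcal{H}$. If that becomes delicate, I will instead work in the finite-dimensional space $\mathcal{A}_{\mathbf{X}}$, using Lemma~\ref{lm.representer}.

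\emph{Step 3 (combination).} Step 2 with $\mu=\lambda$ and $\mu=\lambda^{+}$ gives
\begin{align*}
\normh{\hat{f}_{\lambda; D}} + \normh{\hat{f}_{\lambda^{+}; D}} \leq \frac{\rho_{p}A}{2\lambda}\paren{1 + \frac{n}{n+1}} = \frac{\rho_{p}A\paren{2n+1}}{2\lambda\paren{n+1}}.
\end{align*}
Inserting this into the bound of Step 1, the factor $2n+1$ cancels exactly and gives $\normh{\Delta}\leq \frac{\rho_{p}A}{2\lambda(n+1)}$, which is the statement. The bound comes out exact only because the factor $1+\frac{n}{n+1}$ is kept rather than rounded up to $2$.
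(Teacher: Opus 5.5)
Your proof is correct and gives exactly the stated constant, but it takes a different route from the paper's.

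\textbf{The paper's route.} The paper also starts from the two inequalities of Lemma~\ref{lm.minimum.gap}. Before combining them, it multiplies the second one by $\frac{n}{n+1}$, so that the \emph{regularization} terms cancel rather than the empirical risks. This leaves $2\lambda\|\hat{f}_{\lambda; D} - \hat{f}_{\lambda^{+}; D}\|_{\mathcal{H}}^2 \le \frac{1}{n+1}\bigl(\widehat{\mathbf{R}}_{0; D}(\hat{f}_{\lambda^{+}; D}) - \widehat{\mathbf{R}}_{0; D}(\hat{f}_{\lambda; D})\bigr)$. The Lipschitz assumption together with $\|f(x)\| \le \|K(x,x)\|_{\mathrm{op}}^{1/2}\|f\|_{\mathcal{H}}$ then bounds the right-hand side by $\frac{\rho_p A}{n+1}\|\Delta\|_{\mathcal{H}}$ in a single step, mirroring the proof of Lemma~\ref{lm.stability.bounds}.

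\textbf{Your route.} Your unweighted sum instead produces the intermediate estimate $\|\Delta\|_{\mathcal{H}} \le (\|\hat{f}_{\lambda; D}\|_{\mathcal{H}} + \|\hat{f}_{\lambda^{+}; D}\|_{\mathcal{H}})/(2n+1)$. This estimate does not use the Lipschitz assumption, and it is data-dependent. For instance, inserting $\|\hat{f}_{\lambda; D}\|_{\mathcal{H}} \le \|\hat{f}_{\lambda^{+}; D}\|_{\mathcal{H}} + \|\Delta\|_{\mathcal{H}}$ yields the computable bound $\|\Delta\|_{\mathcal{H}} \le \|\hat{f}_{\lambda^{+}; D}\|_{\mathcal{H}}/n$. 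By your Step~2 with $\mu = \lambda^{+}$, this is never worse than the stated bound.

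\textbf{The cost, and a simplification.} The price of your route is the extra a priori norm bound, which the paper does not need. There you can avoid subdifferential calculus altogether. Lemma~\ref{lm.minimum.gap} with comparison point $0$ gives $2\mu\|\hat{f}_{\mu; D}\|_{\mathcal{H}}^2 \le \widehat{\mathbf{R}}_{0; D}(0) - \widehat{\mathbf{R}}_{0; D}(\hat{f}_{\mu; D}) \le \rho_p A\|\hat{f}_{\mu; D}\|_{\mathcal{H}}$. That is your Step~2, obtained with the same tools the paper uses.
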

\begin{proof}
    The following proof takes bits of reasoning from \citet[see Proof of Theorem 2]{leeLeaveOneOutStableConformal2025}.
    Since \eqref{asm.lip.loss} implies \eqref{asm.lsc.loss},
    then, under \eqref{asm.conv.loss}, \eqref{asm.lip.loss}
    and \eqref{asm.lower.bounded.loss},
    Lemma~\ref{lm.predictor.well.defined} states that
    the regularized empirical-risk functions $\widehat{\mathbf{R}}_{\lambda; D}\paren{\bullet}$
    and $\widehat{\mathbf{R}}_{\lambda^{+}; D}\paren{\bullet}$,
    are respectively $2\lambda$ and $2\lambda^{+}$-strongly convex,
    and respectively admit unique minimizers $\hat{f}_{\lambda; D}$ and $\hat{f}_{\lambda^{+}; D}$.
    Since $\hat{f}_{\lambda; D}$
    is a minimizer of the $2 \lambda$-strongly convex function
    $\widehat{\mathbf{R}}_{\lambda; D}\paren{\bullet}$,
    by Lemma~\ref{lm.minimum.gap},
    \begin{align*}
        &
        \lambda
        \normh{
        \hat{f}_{\lambda; D}
        - \hat{f}_{\lambda^{+}; D}
        }^2
        \\
        &
        \leq
        \widehat{\mathbf{R}}_{\lambda; D}\paren{
            \hat{f}_{\lambda^{+}; D}
        } - \widehat{\mathbf{R}}_{\lambda; D}\paren{
        \hat{f}_{\lambda; D}
        }
        \\
        &
        \leq
        \widehat{\mathbf{R}}_{0; D}\paren{
            \hat{f}_{\lambda^{+}; D}
        } + \lambda \normh{\hat{f}_{\lambda^{+}; D}}^2
        - \widehat{\mathbf{R}}_{0; D}\paren{
            \hat{f}_{\lambda; D}
        } - \lambda \normh{\hat{f}_{\lambda; D}}^2.
    \end{align*}
    Moreover, since $\hat{f}_{\lambda^{+}; D}$
    is a minimizer of the $2 \lambda^{+}$-strongly convex function
    $\widehat{\mathbf{R}}_{\lambda^{+}; D}\paren{\bullet}$,
    by Lemma~\ref{lm.minimum.gap},
    \begin{align*}
        &
        \lambda^{+}
        \normh{
            \hat{f}_{\lambda; D}
            - \hat{f}_{\lambda^{+}; D}
        }^2
        \leq
        \widehat{\mathbf{R}}_{0; D}\paren{\hat{f}_{\lambda; D}}
        + \lambda^{+} \normh{\hat{f}_{\lambda; D}}^2
        - \widehat{\mathbf{R}}_{0; D}\paren{\hat{f}_{\lambda^{+}; D}}
        - \lambda^{+} \normh{\hat{f}_{\lambda^{+}; D}}^2
    \end{align*}
    Since $\lambda^{+} = \frac{n+1}{n}\lambda$,
    multiplying both sides by $-\frac{n}{n+1}$ entails
    \begin{align*}
        &
        \frac{n}{n+1}\widehat{\mathbf{R}}_{0; D}\paren{
            \hat{f}_{\lambda^{+}; D}
        } + \lambda \normh{\hat{f}_{\lambda^{+}; D}}^2
        - \frac{n}{n+1} \widehat{\mathbf{R}}_{0; D}\paren{
            \hat{f}_{\lambda; D}
        } - \lambda \normh{\hat{f}_{\lambda; D}}^2
        \leq
        - \lambda \normh{
            \hat{f}_{\lambda; D}
            - \hat{f}_{\lambda^{+}; D}
        }^2.
    \end{align*}
    Summing up with the first sets of inequalities yields
    \begin{align*}
        2 \lambda
        \normh{
        \hat{f}_{\lambda; D}
        - \hat{f}_{\lambda^{+}; D}
        }^2
        \leq \frac{1}{n+1}\paren{
            \widehat{\mathbf{R}}_{0; D}\paren{
                \hat{f}_{\lambda^{+}; D}
            } - \widehat{\mathbf{R}}_{0; D}\paren{
                \hat{f}_{\lambda; D}
            }
        }.
    \end{align*}
    Under \eqref{asm.lip.loss} and
    from \citet[see Lemma 3]{audiffrenStabilityMultitaskKernel2013},
    \begin{align*}
        \widehat{\mathbf{R}}_{0; D}\paren{
            \hat{f}_{\lambda^{+}; D}
        } - \widehat{\mathbf{R}}_{0; D}\paren{
            \hat{f}_{\lambda; D}
        }
        &
        = \frac{1}{n+1}
        \sum_{i=1}^{n}
        \croch{
            \ell\paren{Y_i, \hat{f}_{\lambda^{+}; D}\paren{X_i}}
            - \ell\paren{Y_i, \hat{f}_{\lambda; D}\paren{X_i}}
        }
        \\
        &
        \leq
        \rho_{p} \frac{1}{n+1}
        \sum_{i=1}^{n}
        \norm{\croch{\hat{f}_{\lambda^{+}; D} - \hat{f}_{\lambda; D}}\paren{X_i}}
        \\
        &
        \leq \rho_{p} \frac{1}{n+1}
        \sum_{i=1}^{n}
        \norm{K\paren{X_i, X_i}}_{\mathrm{op}}^{\frac{1}{2}}
        \normh{\hat{f}_{\lambda^{+}; D} - \hat{f}_{\lambda; D}}.
    \end{align*}
    Conjoining this inequality with the one before,
    and then dividing both sides by $2 \lambda \normh{\hat{f}_{\lambda^{+}; D} - \hat{f}_{\lambda; D}}$
    yields the desired result.
\end{proof}
\begin{lemma}
    \label{lm.risk.function.proper}
    Assume \eqref{asm.conv.loss} and \eqref{asm.lip.loss} hold true.
    Then, the risk function $\mathbf{R}_{0}\paren{\bullet} : \mathcal{H} \mapsto \mathbb{R}$
    is a continuous convex function.
    Additionally, assuming \eqref{asm.bounded.expected.loss} and
    \eqref{asm.bounded.gamma.kernel} also hold true,
    then the risk function $\mathbf{R}_{0}\paren{\bullet} : \mathcal{H} \mapsto \mathbb{R}$
    is proper and
    \begin{align*}
        \mathrm{Dom}\paren{\mathbf{R}_{0}} := \brac{f \in \mathcal{H} : \mathbf{R}_{0}\paren{f}<+\infty} = \mathcal{H}.
    \end{align*}
\end{lemma}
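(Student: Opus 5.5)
We first show that $\mathbf{R}_{0}$ is convex and continuous, and then that it is finite everywhere; properness follows from the second part.

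\emph{Convexity.} For each fixed $(X,Y)$, the map $f \mapsto \ell\paren{Y, f(X)}$ is the composition of the convex function $\ell\paren{Y,\bullet}$ (by \eqref{asm.conv.loss}) with the bounded linear evaluation map $L_X : f \mapsto f(X)$, as in the proof of Lemma~\ref{lm.predictor.well.defined}. Convexity is preserved by taking expectations, so $\mathbf{R}_{0}$ is convex wherever it is finite, with values in $(-\infty,+\infty]$ in general.

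\emph{Finiteness.} This is the step where the assumptions \eqref{asm.bounded.expected.loss} and \eqref{asm.bounded.gamma.kernel} enter. Fix $f \in \mathcal{H}$. By \eqref{asm.lip.loss},
\begin{align*}
    \ell\paren{Y, f(X)} \leq \ell\paren{Y, 0} + \rho_{p}\norm{f(X)}.
\end{align*}
To control $\norm{f(X)}$ we write $f(X) = \Gamma^{\frac{1}{2}}\Gamma^{-\frac{1}{2}}f(X)$ and apply Lemma~\ref{lm.gamma.prediction.bound}, which gives
\begin{align*}
    \norm{f(X)} \leq \norm{\Gamma}_{\mathrm{op}}^{\frac{1}{2}} \norm{\Gamma^{-\frac{1}{2}} f(X)} \leq \norm{\Gamma}_{\mathrm{op}}^{\frac{1}{2}}\kappa^{\Gamma}\normh{f}.
\end{align*}
Taking expectations and using \eqref{asm.bounded.expected.loss} yields
\begin{align*}
    \mathbf{R}_{0}(f) \leq C_{\ell} + \rho_{p}\norm{\Gamma}_{\mathrm{op}}^{\frac{1}{2}}\kappa^{\Gamma}\normh{f} < +\infty.
\end{align*}
We also need $\mathbf{R}_{0}(f) > -\infty$, so that the expectation is well defined. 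The same Lipschitz argument gives the lower bound $\ell\paren{Y,f(X)} \geq \ell\paren{Y,0} - \rho_p\norm{\Gamma}_{\mathrm{op}}^{\frac{1}{2}}\kappa^{\Gamma}\normh{f}$. Because $\mathbb{E}\croch{\ell(Y,0)} \leq C_\ell$ is finite, the negative part of $\ell\paren{Y,f(X)}$ is integrable. This is the delicate point: it requires $\ell(Y,0)$ to be integrable, not merely bounded above in expectation. If $\mathbb{E}\croch{\ell(Y,0)}$ is read as a well-defined finite quantity, this holds. Otherwise we invoke \eqref{asm.lower.bounded.loss}, $\ell \geq c_\ell$, directly. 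Either way $\mathbf{R}_{0}(f)\in\mathbb{R}$ for every $f \in \mathcal{H}$, so $\mathrm{Dom}(\mathbf{R}_0)=\mathcal{H}$ and $\mathbf{R}_0$ is proper.

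\emph{Continuity.} For $f,g\in\mathcal{H}$, \eqref{asm.lip.loss} together with the bound $\norm{h(X)} \leq \norm{K(X,X)}_{\mathrm{op}}^{\frac{1}{2}}\normh{h}$ from \citet[Lemma 3]{audiffrenStabilityMultitaskKernel2013}, applied to $h = f-g$, gives
\begin{align*}
    \abss{\mathbf{R}_0(f)-\mathbf{R}_0(g)} \leq \rho_p\,\mathbb{E}\croch{\norm{(f-g)(X)}} \leq \rho_p \norm{\Gamma}_{\mathrm{op}}^{\frac{1}{2}}\kappa^{\Gamma}\normh{f-g}.
\end{align*}
Hence $\mathbf{R}_0$ is Lipschitz, and in particular continuous. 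Here the constant relies on \eqref{asm.bounded.gamma.kernel}, through the inequality $\norm{K(x,x)}_{\mathrm{op}} \leq \norm{\Gamma}_{\mathrm{op}}(\kappa^{\Gamma})^2$.

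Under only \eqref{asm.conv.loss} and \eqref{asm.lip.loss}, continuity has to be stated on the domain, where the differences of finite values are controlled as above; that is how we would read the first sentence of the statement. The main obstacle we expect is therefore not a computation but the measure-theoretic bookkeeping: ensuring integrability of $\norm{K(X,X)}_{\mathrm{op}}^{\frac{1}{2}}$ and of $\ell(Y,0)$. We handle the first through \eqref{asm.bounded.gamma.kernel} and the second through \eqref{asm.bounded.expected.loss} combined with \eqref{asm.lower.bounded.loss}.
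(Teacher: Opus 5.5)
Your convexity and finiteness steps follow the paper's proof. You compose $\ell(Y,\cdot)$ with the bounded evaluation map, compare with $f=0$ through \eqref{asm.lip.loss}, and bound $\|f(X)\|\le\|\Gamma\|_{\mathrm{op}}^{1/2}\kappa^{\Gamma}\|f\|_{\mathcal{H}}$. You get that estimate from Lemma~\ref{lm.gamma.prediction.bound} and the factor $\Gamma^{1/2}$, the paper from Lemma~\ref{lm.bounded.kernel}, but it is the same estimate, and both arrive at $C_\ell+\rho_p\|\Gamma\|_{\mathrm{op}}^{1/2}\kappa^{\Gamma}\|f\|_{\mathcal{H}}$.

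Where you genuinely depart is continuity. The paper asserts it under \eqref{asm.conv.loss} and \eqref{asm.lip.loss} alone, arguing that each $f\mapsto\ell(y,f(x))$ is continuous and then ``taking the expectation''. That step tacitly needs a domination argument, and none is available without some integrability of $\|K(X,X)\|_{\mathrm{op}}^{1/2}$. For instance, with $p=1$, $K(x,t)=xt$, $Y\equiv 0$, the absolute loss and a Cauchy-distributed $X$, one gets $\mathbf{R}_0(0)=0$ but $\mathbf{R}_0(f)=+\infty$ for every $f\neq 0$. Your global Lipschitz bound $|\mathbf{R}_0(f)-\mathbf{R}_0(g)|\le\rho_p\|\Gamma\|_{\mathrm{op}}^{1/2}\kappa^{\Gamma}\|f-g\|_{\mathcal{H}}$ supplies exactly that domination through \eqref{asm.bounded.gamma.kernel}. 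The price is that you prove continuity only under the full hypothesis set. That is, however, the form in which the lemma actually holds and in which it is used later: Lemma~\ref{lm.min.norm.risk.minimizer} assumes all of them.

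For the same reason, your fallback reading of the first sentence is not delivered by your argument. That reading is continuity on the domain under \eqref{asm.conv.loss} and \eqref{asm.lip.loss} only, but the estimate you cite ``as above'' itself relies on \eqref{asm.bounded.gamma.kernel}.

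Finally, you are right that properness also requires $\mathbf{R}_0>-\infty$, which the paper never checks. Integrability of $\ell(Y,0)$ settles it. So does \eqref{asm.lower.bounded.loss}, which is not among the lemma's hypotheses but is in force wherever the lemma is invoked.
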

\begin{proof}
    For every $x \in \mathcal{X}$,
    let $L_{x} : \mathcal{H} \mapsto \mathbb{R}^{p}$
    designate a function, given by, for every $f \in \mathcal{H}$,
    $L_{x}\paren{f} = f(x) = \paren{f_1(x), \ldots, f_p(x)}$.
    Since $f \in \mathcal{H}$,
    for every $l \in \brac{1, \ldots, p}$
    \begin{align*}
        f_l(x) = \dotprod{e_l}{f(x)}{}
        = \dotprod{K\paren{\bullet, x} e_l}{f}{\mathcal{H}},
    \end{align*}
    where $\norm{K\paren{\bullet, x} e_l}_{\mathcal{H}} < +\infty$
    by definition of $K\paren{\bullet, x} : \mathbb{R}^{p} \to \mathcal{H}$.
    Thus, $L_x$ is a bounded linear operator.
    Under \eqref{asm.conv.loss} and \eqref{asm.lip.loss},
    for every $y \in \mathcal{Y}$,
    $\ell\paren{y, \bullet}$ is convex and continuous.
    Then, by composition,
    for every $\paren{x, y} \in \mathcal{X} \times \mathcal{Y}$,
    $f \in \mathcal{H} \mapsto \ell\paren{y, f(x)} = \paren{\ell\paren{y, \bullet} \circ L_{x}}$
    is convex and continuous.
    Therefore, by taking the expectation,
    $\mathbf{R}_{0}\paren{\bullet}$ is convex and continuous.

    For every $f \in \mathcal{H}$,
    \begin{align*}
        \mathbf{R}_{0}\paren{f}
        &
        \leq \mathbf{R}_{0}\paren{0}
        + \abss{\mathbf{R}_{0}\paren{f} - \mathbf{R}_{0}\paren{0}}
        \\
        &
        \leq C_{\ell}
        + \mathbb{E}\croch{
            \abss{
            \ell\paren{Y, f\paren{X}}
            - \ell\paren{Y, 0}}
        }
        \\
        &
        \leq C_{\ell}
        + \rho_{p}
        \mathbb{E}\croch{
            \norm{f\paren{X}}
        }
        \\
        &
        \leq C_{\ell}
        + \rho_{p}
        \mathbb{E}\croch{
            \norm{K\paren{\bullet, X}}_{\mathrm{op}}
        }
        \norm{f}_{\mathcal{H}}
        \\
        &
        \leq 
        C_{\ell}
        + \rho_{p} \norm{\Gamma}_{\mathrm{op}}^{\frac{1}{2}}
        \kappa^{\Gamma}
        \norm{f}_{\mathcal{H}} < +\infty,
    \end{align*}
    where the second inequality follows
    from \eqref{asm.bounded.expected.loss} and
    Jensen's inequality, and
    the third inequality follows from \eqref{asm.lip.loss}, and
    the last inequality follows form Lemma~\ref{lm.bounded.kernel}
    which holds true under \eqref{asm.bounded.gamma.kernel}.
    Therefore, $\mathrm{Dom}\paren{\mathbf{R}} = \brac{f \in \mathcal{H} : \mathbf{R}_{0}\paren{f}<+\infty} = \mathcal{H} \neq \emptyset$.
\end{proof}
\begin{lemma}
    \label{lm.reg.predictor.norm.bound}
    Assume \eqref{asm.conv.loss}, \eqref{asm.lip.loss},
    \eqref{asm.lower.bounded.loss}, \eqref{asm.bounded.gamma.kernel},
    \eqref{asm.bounded.expected.loss} and \eqref{asm.risk.minimizer.attained}
    hold true.
    Then, for every regularization parameter $\lambda \in \paren{0, +\infty}$,
    \begin{align*}
        \normh{f_{\lambda}} \leq \normh{f_{\mathcal{H}}},
    \end{align*}
    where the predictor $f_{\lambda} \in \mathcal{H}$ is given by Lemma~\ref{lm.reg.risk.minimizer}
    and the predictor $f_{\mathcal{H}} \in \mathcal{H}$ is given by Lemma~\ref{lm.min.norm.risk.minimizer}.
\end{lemma}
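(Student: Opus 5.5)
The plan is to compare the regularized risk $\mathbf{R}_{\lambda}$ at its minimizer $f_{\lambda}$ and at $f_{\mathcal{H}}$, and then to use that $f_{\mathcal{H}}$ minimizes the unregularized risk $\mathbf{R}_{0}$. By Lemma~\ref{lm.risk.function.proper}, under \eqref{asm.conv.loss}, \eqref{asm.lip.loss}, \eqref{asm.bounded.expected.loss} and \eqref{asm.bounded.gamma.kernel}, the risk $\mathbf{R}_{0}$ is finite everywhere on $\mathcal{H}$. All quantities below are therefore real numbers, and subtracting them is legitimate. By Lemma~\ref{lm.reg.risk.minimizer}, $f_{\lambda}$ exists and uniquely minimizes $\mathbf{R}_{\lambda}$. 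By \eqref{asm.risk.minimizer.attained} and Lemma~\ref{lm.min.norm.risk.minimizer}, $f_{\mathcal{H}}$ exists and satisfies $\mathbf{R}_{0}(f_{\mathcal{H}}) = \inf_{g\in\mathcal{H}}\mathbf{R}_{0}(g)$.

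The key steps are two inequalities. First, optimality of $f_{\lambda}$ for $\mathbf{R}_{\lambda}$ gives
\begin{align*}
    \mathbf{R}_{0}\paren{f_{\lambda}} + \lambda\normh{f_{\lambda}}^2
    \leq \mathbf{R}_{0}\paren{f_{\mathcal{H}}} + \lambda\normh{f_{\mathcal{H}}}^2 .
\end{align*}
Second, optimality of $f_{\mathcal{H}}$ for $\mathbf{R}_{0}$ gives $\mathbf{R}_{0}(f_{\mathcal{H}}) \leq \mathbf{R}_{0}(f_{\lambda})$. Adding the two inequalities and cancelling the finite terms $\mathbf{R}_{0}(f_{\lambda})$ and $\mathbf{R}_{0}(f_{\mathcal{H}})$ yields $\lambda\normh{f_{\lambda}}^2\leq\lambda\normh{f_{\mathcal{H}}}^2$. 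Dividing by $\lambda>0$ and taking square roots gives the claim.

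The only delicate point is ensuring that the cancellation is legitimate, that is, that $\mathbf{R}_{0}$ never takes the value $+\infty$ at these points. This is exactly what Lemma~\ref{lm.risk.function.proper} provides under the listed assumptions. A secondary point is the well-definedness of $f_{\mathcal{H}}$ as a minimum-norm element of the set of risk minimizers. This set is closed and convex by continuity and convexity of $\mathbf{R}_{0}$, and non-empty by \eqref{asm.risk.minimizer.attained}, so a unique minimum-norm element exists. I take this from Lemma~\ref{lm.min.norm.risk.minimizer}.
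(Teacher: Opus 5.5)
Your proposal is correct and matches the paper's proof: test the optimality of $f_{\lambda}$ for $\mathbf{R}_{\lambda}$ against $f_{\mathcal{H}}$, use $\mathbf{R}_{0}(f_{\mathcal{H}}) \le \mathbf{R}_{0}(f_{\lambda})$, then divide by $\lambda$ and take square roots. Your check that the $\mathbf{R}_{0}$ terms are finite before cancelling — via Lemma~\ref{lm.risk.function.proper}, together with \eqref{asm.lower.bounded.loss} for the lower bound — makes explicit a step the paper leaves implicit.
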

\begin{proof}
Let $\lambda \in \paren{0, +\infty}$
designate a regularization parameter.
Under \eqref{asm.conv.loss}, \eqref{asm.lip.loss},
\eqref{asm.lower.bounded.loss}, \eqref{asm.bounded.gamma.kernel},
\eqref{asm.bounded.expected.loss} and \eqref{asm.risk.minimizer.attained},
Lemma~\ref{lm.min.norm.risk.minimizer} states that
the predictor $f_{\mathcal{H}}$ is well-defined, and
for every $f \in \mathcal{H}$,
\begin{align*}
    \mathbf{R}_{0}\paren{f_{\mathcal{H}}} \leq \mathbf{R}_{0}\paren{f}.
\end{align*}
Since \eqref{asm.lip.loss} implies \eqref{asm.lsc.loss},
under \eqref{asm.conv.loss}, \eqref{asm.lip.loss}
and \eqref{asm.lower.bounded.loss},
Lemma~\ref{lm.reg.risk.minimizer} states that
the predictor $f_{\lambda} \in \mathcal{H}$ is well-defined, and
for every $f \in \mathcal{H}$,
\begin{align*}
    \mathbf{R}_{0}\paren{f_{\lambda}}
    + \lambda \normh{f_{\lambda}}^2
    \leq \mathbf{R}_{0}\paren{f}
    + \lambda \normh{f}^2.
\end{align*}
In particular, for $f = f_{\mathcal{H}}$,
\begin{align*}
    \mathbf{R}_{0}\paren{f_{\lambda}}
    + \lambda \normh{f_{\lambda}}^2
    \leq \mathbf{R}_{0}\paren{f_{\mathcal{H}}}
    + \lambda \normh{f_{\mathcal{H}}}^2.
\end{align*}
This entails that
\begin{align*}
    \lambda \normh{f_{\lambda}}^2
    \leq \mathbf{R}_{0}\paren{f_{\mathcal{H}}}
    - \mathbf{R}_{0}\paren{f_{\lambda}}
    + \lambda \normh{f_{\mathcal{H}}}^2,
    \leq \lambda \normh{f_{\mathcal{H}}}^2,
\end{align*}
where the second inequality follows from
the optimality of $f_{\mathcal{H}}$.
One can conclude by dividing on both sides
by $\lambda$ and taking the square root.
\end{proof}

\subsection{Proof of Lemma~\ref{lm.thickness.empirical.bound.maha}}
\label{proof.thickness.empirical.bound.maha}
\begin{proof}
\notparagraph{Preliminary properties}
Under \eqref{asm.conv.loss}, \eqref{asm.lip.loss} and \eqref{asm.lower.bounded.loss},
for every $y \in \mathcal{Y}$,
Lemma~\ref{lm.predictor.well.defined} ensures that
the predictors $\hat{f}_{\lambda; D^{y}}$ and $\hat{f}_{\lambda^{+}; D}$
exists and are unique.
Using the notations introduced above,
the upper-approximate non-conformity scores are bounded from above,
that is, for every $i \in \brac{1, \ldots, n}$,
and every $y \in \mathcal{Y}$,
\begin{align*}
    \widetilde{S}_{\lambda; D^{y}}^{\Gamma, \up}
    \paren{X_i, Y_i}
    &
    = 
    S_{\lambda; D}^{\Gamma}
    \paren{X_i, Y_i}
    +
    \widehat{\tau}_{\lambda}^{\Gamma}\paren{X_i}
    &&
    \leq
    S_{\lambda; D}^{\Gamma}
    \paren{X_i, Y_i}
    + \frac{
        \rho_{p} \norm{\Gamma}_{\mathrm{op}}^{\frac{1}{2}} \paren{\widehat{\kappa}^{\Gamma}}^{2}
    }{2 \lambda\paren{n+1}},
    \mbox{and }
    \\
    \widetilde{S}_{\lambda; D^{y}}^{\Gamma, \up}
    \paren{X_{n+1}, y}
    &
    = 
    S_{\lambda; D}^{\Gamma}
    \paren{X_{n+1}, y}
    + \widehat{\tau}_{\lambda}^{\Gamma}\paren{X_{n+1}}
    &&
    \leq
    S_{\lambda; D}^{\Gamma}
    \paren{X_{n+1}, y}
    + \frac{
        \rho_{p} \norm{\Gamma}_{\mathrm{op}}^{\frac{1}{2}} \paren{\widehat{\kappa}^{\Gamma}}^{2}
    }{2\lambda\paren{n+1}},
\end{align*}
where the stability-bounds $\widehat{\tau}_{\lambda}^{\Gamma}\paren{\bullet}$
is given by Eq.~\eqref{eq.score.stability.bound}
and $\widehat{\kappa}^{\Gamma}$ is given by Eq.~\eqref{eq.kernel.norm.bound}.
Similarly,
the lower-approximate non-conformity scores are bounded from below,
that is, for every $i \in \brac{1, \ldots, n}$,
and every $y \in \mathcal{Y}$,
\begin{align*}
    \widetilde{S}_{\lambda; D^{y}}^{\Gamma, \lo}\paren{X_i, Y_i}
    &
    \geq
    S_{\lambda; D}^{\Gamma}\paren{X_i, Y_i}
    - \frac{
        \rho_{p} \norm{\Gamma}_{\mathrm{op}}^{\frac{1}{2}} \paren{\widehat{\kappa}^{\Gamma}}^{2}
    }{2 \lambda\paren{n+1}},
    \\
    \widetilde{S}_{\lambda; D^{y}}^{\Gamma, \lo}\paren{X_{n+1}, y}
    &
    \geq S_{\lambda; D}^{\Gamma}\paren{X_{n+1}, y}
    - \frac{
        \rho_{p} \norm{\Gamma}_{\mathrm{op}}^{\frac{1}{2}} \paren{\widehat{\kappa}^{\Gamma}}^{2}
    }{2 \lambda\paren{n+1}}.
\end{align*}

\bigskip
\notparagraph{Simplified upper \textbf{StableCP}-region}
The upper \textbf{StableCP}-region
$\widetilde{C}_{\lambda; \alpha}^{\Gamma, \up}\paren{X_{n+1}}$
is contained in a simpler region. In fact,
for every $y \in \mathcal{Y}$,
\begin{align*}
    &
    y \in \widetilde{C}_{\lambda; \alpha}^{\Gamma, \up}\paren{X_{n+1}}
    \\
    &
    \Longleftrightarrow
    \frac{
        1 + \sum_{i = 1}^{n}
        \mathbbm{1}\brac{
            \widetilde{S}_{\lambda; D^{y}}^{\Gamma, \up}
            \paren{X_i, Y_i}
            \geq 
            \widetilde{S}_{\lambda; D^{y}}^{\Gamma, \lo}
            \paren{X_{n+1}, y}
        }
    }{n+1} > \alpha
    \\
    &
    \implies
    \frac{
        1 + \sum_{i = 1}^{n}
        \mathbbm{1}\brac{
            S_{\lambda; D}^{\Gamma}
            \paren{X_i, Y_i}
            + \frac{
                \rho_{p} \norm{\Gamma}_{\mathrm{op}}^{\frac{1}{2}} \paren{\widehat{\kappa}^{\Gamma}}^{2}
            }{2 \lambda\paren{n+1}}
            \geq 
            S_{\lambda; D}^{\Gamma}
            \paren{X_{n+1}, y}
            - \frac{
                \rho_{p} \norm{\Gamma}_{\mathrm{op}}^{\frac{1}{2}} \paren{\widehat{\kappa}^{\Gamma}}^{2}
            }{2 \lambda\paren{n+1}}
        }
    }{n+1} > \alpha
    \\
    &
    \implies
    \frac{
        1 + \sum_{i = 1}^{n}
        \mathbbm{1}\brac{
            S_{\lambda; D}^{\Gamma}
            \paren{X_i, Y_i}
            +
            \frac{
                \rho_{p} \norm{\Gamma}_{\mathrm{op}}^{\frac{1}{2}} \paren{\widehat{\kappa}^{\Gamma}}^{2}
            }{\lambda\paren{n+1}}
            \geq 
            S_{\lambda; D}^{\Gamma}
            \paren{X_{n+1}, y}
        }
    }{n+1} > \alpha
    \\
    &
    \implies
    S_{\lambda; D}^{\Gamma}
    \paren{X_{n+1}, y}
    \leq
    \widehat{Q}_{\lambda; D}^{\Gamma}(\alpha)
    + \frac{
        \rho_{p} \norm{\Gamma}_{\mathrm{op}}^{\frac{1}{2}} \paren{\widehat{\kappa}^{\Gamma}}^{2}
    }{\lambda\paren{n+1}}
    \\
    &
    \implies
    \norm{
        \Gamma^{-\frac{1}{2}}
        \paren{
            y - \hat{f}_{\lambda^{+}; D} \paren{X_{n+1}} 
        }             
    }
    \leq
    \widehat{Q}_{\lambda; D}^{\Gamma}(\alpha)
    + \frac{
        \rho_{p} \norm{\Gamma}_{\mathrm{op}}^{\frac{1}{2}} \paren{\widehat{\kappa}^{\Gamma}}^{2}
    }{\lambda\paren{n+1}},
\end{align*}
where the first implication follows
from the inequalities provided as preliminary properties, and
the third implication holds true by definition of
$\widehat{Q}_{\lambda; D}^{\Gamma}(\alpha)$ in Eq.~\eqref{eq.score.quantile}, and
the last implication follows from the definition
of $S_{\lambda; D}^{\Gamma} \paren{X_{n+1}, y}$.

\bigskip
\notparagraph{Simplified lower \textbf{StableCP}-region}
Similarly, the lower \textbf{StableCP}-region $\widetilde{C}_{\lambda; \alpha}^{\Gamma, \lo}\paren{X_{n+1}}$
contains a simpler region. In fact,
for every $y \in \mathcal{Y}$,
\begin{align*}
    &
    y \in \widetilde{C}_{\lambda; \alpha}^{\Gamma, \lo}\paren{X_{n+1}}
    \\
    &
    \Longleftrightarrow
    \frac{
        1 + \sum_{i = 1}^{n}
        \mathbbm{1}\brac{
            \widetilde{S}_{\lambda; D^{y}}^{\Gamma, \lo}
            \paren{X_i, Y_i}
            \geq 
            \widetilde{S}_{\lambda; D^{y}}^{\Gamma, \up}
            \paren{X_{n+1}, y}
        }
    }{n+1} > \alpha
    \\
    &
    \Longleftarrow 
    \frac{
        1 + \sum_{i = 1}^{n}
        \mathbbm{1}\brac{
            S_{\lambda; D}^{\Gamma}
            \paren{X_i, Y_i}
            - \frac{
                \rho_{p} \norm{\Gamma}_{\mathrm{op}}^{\frac{1}{2}} \paren{\widehat{\kappa}^{\Gamma}}^{2}
            }{2 \lambda\paren{n+1}}
            \geq 
            S_{\lambda; D}^{\Gamma}
            \paren{X_{n+1}, y}
            + \frac{
                \rho_{p} \norm{\Gamma}_{\mathrm{op}}^{\frac{1}{2}} \paren{\widehat{\kappa}^{\Gamma}}^{2}
            }{2 \lambda\paren{n+1}}
        }
    }{n+1} > \alpha
    \\
    &
    \Longleftarrow 
    \norm{
        \Gamma^{-\frac{1}{2}}
        \paren{
            y - \hat{f}_{\lambda^{+}; D} \paren{X_{n+1}}
        }               
    }
    \leq
    \widehat{Q}_{\lambda; D}^{\Gamma}(\alpha)
    - \frac{
        \rho_{p} \norm{\Gamma}_{\mathrm{op}}^{\frac{1}{2}} \paren{\widehat{\kappa}^{\Gamma}}^{2}
    }{\lambda\paren{n+1}}.
\end{align*}
 
\bigskip
\notparagraph{upper-bound on the \emph{thickness}}
Let us consider the case where
\begin{align*}
    \widehat{Q}_{\lambda; D}^{\Gamma}(\alpha)
    - \frac{
        \rho_{p} \norm{\Gamma}_{\mathrm{op}}^{\frac{1}{2}} \paren{\widehat{\kappa}^{\Gamma}}^{2}
    }{\lambda\paren{n+1}} \geq 0.
\end{align*}
Then, the \emph{thickness} $\mathrm{THK}_{\lambda; \alpha}^{\Gamma}\paren{X_{n+1}}$ of
the upper \textbf{StableCP}-region $\widetilde{C}_{\lambda; \alpha}^{\Gamma, \up}\paren{X_{n+1}}$
is bounded from above, that is,
\begin{align*}
    &\mathrm{THK}_{\lambda; \alpha}^{\Gamma}\paren{X_{n+1}}
    \leq \leb{
        \widetilde{C}_{\lambda; \alpha}^{\Gamma, \up}\paren{X_{n+1}}
        \setminus \widetilde{C}_{\lambda; \alpha}^{\Gamma, \lo}\paren{X_{n+1}}
    }
    \\
    &
    \leq \mathcal{V}\left(
        y \in \mathcal{Y}:
        \norm{
            \Gamma^{-\frac{1}{2}}
            \paren{
                y - \hat{f}_{\lambda^{+}; D} \paren{X_{n+1}}     
            }       
        }
        \leq
        \widehat{Q}_{\lambda; D}^{\Gamma}(\alpha)
        + \frac{
            \rho_{p} \norm{\Gamma}_{\mathrm{op}}^{\frac{1}{2}} \paren{\widehat{\kappa}^{\Gamma}}^{2}
        }{\lambda\paren{n+1}},
    \right.
    \\
    &
    \left.
    \qquad
    \qquad
    \qquad
    \quad
    \norm{
        \Gamma^{-\frac{1}{2}}
        \paren{
            y - \hat{f}_{\lambda^{+}; D} \paren{X_{n+1}}     
        }       
    }
    >
    \widehat{Q}_{\lambda; D}^{\Gamma}(\alpha)
    - \frac{
        \rho_{p} \norm{\Gamma}_{\mathrm{op}}^{\frac{1}{2}} \paren{\widehat{\kappa}^{\Gamma}}^{2}
    }{\lambda\paren{n+1}}
    \right)
    \\
    &
    \leq \mathcal{V}\left(
        y \in \mathbb{R}^{p}:
        \norm{
            \Gamma^{-\frac{1}{2}}
            \paren{
                y - \hat{f}_{\lambda^{+}; D} \paren{X_{n+1}}     
            }       
        }
        \leq
        \widehat{Q}_{\lambda; D}^{\Gamma}(\alpha)
        + \frac{
            \rho_{p} \norm{\Gamma}_{\mathrm{op}}^{\frac{1}{2}} \paren{\widehat{\kappa}^{\Gamma}}^{2}
        }{\lambda\paren{n+1}},
    \right.
    \\
    &
    \left.
    \qquad
    \qquad
    \qquad
    \quad
    \norm{
        \Gamma^{-\frac{1}{2}}
        \paren{
            y - \hat{f}_{\lambda^{+}; D} \paren{X_{n+1}}     
        }       
    }
    >
    \widehat{Q}_{\lambda; D}^{\Gamma}(\alpha)
    - \frac{
        \rho_{p} \norm{\Gamma}_{\mathrm{op}}^{\frac{1}{2}} \paren{\widehat{\kappa}^{\Gamma}}^{2}
    }{\lambda\paren{n+1}}
    \right)
    \\
    &
    \leq \leb{
        y \in \mathbb{R}^{p}:
        \norm{
            \Gamma^{-\frac{1}{2}}
            \paren{
                y - \hat{f}_{\lambda^{+}; D} \paren{X_{n+1}}     
            }       
        }
        \leq
        \widehat{Q}_{\lambda; D}^{\Gamma}(\alpha)
        + \frac{
            \rho_{p} \norm{\Gamma}_{\mathrm{op}}^{\frac{1}{2}} \paren{\widehat{\kappa}^{\Gamma}}^{2}
        }{\lambda\paren{n+1}}
    }
    \\
    &
    \quad
    - \leb{
        y \in \mathbb{R}^{p}:
        \norm{
            \Gamma^{-\frac{1}{2}}
            \paren{
                y - \hat{f}_{\lambda^{+}; D} \paren{X_{n+1}}         
            }   
        }
        \leq
        \widehat{Q}_{\lambda; D}^{\Gamma}(\alpha)
        - \frac{
            \rho_{p} \norm{\Gamma}_{\mathrm{op}}^{\frac{1}{2}} \paren{\widehat{\kappa}^{\Gamma}}^{2}
        }{\lambda\paren{n+1}}
    }
    \\
    &
    \leq
    \abss{\mathrm{det}\paren{\Gamma^{\frac{1}{2}}}}
    \frac{\pi^{\frac{p}{2}}}{\paren{\frac{p}{2}}!}
    \paren{
        \widehat{Q}_{\lambda; D}^{\Gamma}(\alpha)
        + \frac{
            \rho_{p} \norm{\Gamma}_{\mathrm{op}}^{\frac{1}{2}} \paren{\widehat{\kappa}^{\Gamma}}^{2}
        }{\lambda\paren{n+1}}
    }^{p}
    -\abss{\mathrm{det}\paren{\Gamma^{\frac{1}{2}}}}
    \frac{\pi^{\frac{p}{2}}}{\paren{\frac{p}{2}}!}
    \paren{
        \widehat{Q}_{\lambda; D}^{\Gamma}(\alpha)
        - \frac{
            \rho_{p} \norm{\Gamma}_{\mathrm{op}}^{\frac{1}{2}} \paren{\widehat{\kappa}^{\Gamma}}^{2}
        }{\lambda\paren{n+1}}
    }^{p}
    \\
    &
    \leq
    \abss{\mathrm{det}\paren{\Gamma^{\frac{1}{2}}}}
    \frac{\pi^{\frac{p}{2}}}{\paren{\frac{p}{2}}!}
    \croch{
        \paren{
            \widehat{Q}_{\lambda; D}^{\Gamma}(\alpha)
            + \frac{
                \rho_{p} \norm{\Gamma}_{\mathrm{op}}^{\frac{1}{2}} \paren{\widehat{\kappa}^{\Gamma}}^{2}
            }{\lambda\paren{n+1}}
        }^{p}
        - \paren{
            \widehat{Q}_{\lambda; D}^{\Gamma}(\alpha)
            - \frac{
                \rho_{p} \norm{\Gamma}_{\mathrm{op}}^{\frac{1}{2}} \paren{\widehat{\kappa}^{\Gamma}}^{2}
            }{\lambda\paren{n+1}}
        }^{p}
    }
    \\
    &
    \leq 2 \frac{
        \norm{\Gamma}_{\mathrm{op}}^{\frac{1}{2}}
        \paren{\widehat{\kappa}^{\Gamma}}^{2}
        \rho_{p}
    }{\lambda\paren{n+1}}
    \abss{\mathrm{det}\paren{\Gamma^{\frac{1}{2}}}}
    \frac{\pi^{\frac{p}{2}}}{\paren{\frac{p}{2}}!}
    p \paren{
        \widehat{Q}_{\lambda; D}^{\Gamma}(\alpha)
        + \frac{
            \rho_{p} \norm{\Gamma}_{\mathrm{op}}^{\frac{1}{2}} \paren{\widehat{\kappa}^{\Gamma}}^{2}
        }{\lambda\paren{n+1}}
    }^{p-1}
\end{align*}
where the first inequality follows from Lemma~\ref{lm.sandwiching}, and
the second, from the simpler regions provided before, and
the fifth, from a change of variable $u = \Gamma^{-\frac{1}{2}}
\paren{
    y - \hat{f}_{\lambda^{+}; D} \paren{X_{n+1}}         
}$, under the assumption that
$\widehat{Q}_{\lambda; D}^{\Gamma}(\alpha) - \frac{
    \rho_{p} \norm{\Gamma}_{\mathrm{op}}^{\frac{1}{2}} \paren{\widehat{\kappa}^{\Gamma}}^{2}
}{\lambda\paren{n+1}} \geq 0$ and the expression of the volume a $p$-ball, and
the last, from the mean value theorem
applied to the function $x \in \mathbb{R} \mapsto x^p  \in \mathbb{R}$
over the interval $\croch{
    \widehat{Q}_{\lambda; D}^{\Gamma}(\alpha)
    - \frac{
        \rho_{p} \norm{\Gamma}_{\mathrm{op}}^{\frac{1}{2}} \paren{\widehat{\kappa}^{\Gamma}}^{2}
    }{\lambda\paren{n+1}},
    \widehat{Q}_{\lambda; D}^{\Gamma}(\alpha)
    + \frac{
        \rho_{p} \norm{\Gamma}_{\mathrm{op}}^{\frac{1}{2}} \paren{\widehat{\kappa}^{\Gamma}}^{2}
    }{\lambda\paren{n+1}}
}$.

Let us now consider the case where
\begin{align*}
    \widehat{Q}_{\lambda; D}^{\Gamma}(\alpha)
    - \frac{
        \rho_{p} \norm{\Gamma}_{\mathrm{op}}^{\frac{1}{2}} \paren{\widehat{\kappa}^{\Gamma}}^{2}
    }{\lambda\paren{n+1}} < 0.
\end{align*}
Then, the \emph{thickness} $\mathrm{THK}_{\lambda; \alpha}^{\Gamma}\paren{X_{n+1}}$ of
the upper \textbf{StableCP}-region $\widetilde{C}_{\lambda; \alpha}^{\Gamma, \up}\paren{X_{n+1}}$
is bounded from above, that is,
\begin{align*}
    &
    \mathrm{THK}_{\lambda; \alpha}^{\Gamma}\paren{X_{n+1}}
    \\
    &
    \leq \leb{
        \widetilde{C}_{\lambda; \alpha}^{\Gamma, \up}\paren{X_{n+1}}
        \setminus \widetilde{C}_{\lambda; \alpha}^{\Gamma, \lo}\paren{X_{n+1}}
    }
    \\
    &
    \leq \leb{
        y \in \mathbb{R}^{p}:
        \norm{
            \Gamma^{-\frac{1}{2}}
            \paren{
                y - \hat{f}_{\lambda^{+}; D} \paren{X_{n+1}}            
            } 
        }
        \leq
        \widehat{Q}_{\lambda; D}^{\Gamma}(\alpha)
        + \frac{
            \rho_{p} \norm{\Gamma}_{\mathrm{op}}^{\frac{1}{2}} \paren{\widehat{\kappa}^{\Gamma}}^{2}
        }{\lambda\paren{n+1}}
    }
    \\
    &
    \leq
    \abss{\mathrm{det}\paren{\Gamma^{\frac{1}{2}}}}
    \frac{\pi^{\frac{p}{2}}}{\paren{\frac{p}{2}}!}
    \paren{
        \widehat{Q}_{\lambda; D}^{\Gamma}(\alpha)
        + \frac{
            \rho_{p} \norm{\Gamma}_{\mathrm{op}}^{\frac{1}{2}} \paren{\widehat{\kappa}^{\Gamma}}^{2}
        }{\lambda\paren{n+1}}
    }^{p}
    \\
    &
    \leq 
    \frac{
        \rho_{p} \norm{\Gamma}_{\mathrm{op}}^{\frac{1}{2}} \paren{\widehat{\kappa}^{\Gamma}}^{2}
    }{\lambda\paren{n+1}}
    \abss{\mathrm{det}\paren{\Gamma^{\frac{1}{2}}}}
    \frac{\pi^{\frac{p}{2}}}{\paren{\frac{p}{2}}!}
    p
    \paren{
        \widehat{Q}_{\lambda; D}^{\Gamma}(\alpha)
        + \frac{
            \rho_{p} \norm{\Gamma}_{\mathrm{op}}^{\frac{1}{2}} \paren{\widehat{\kappa}^{\Gamma}}^{2}
        }{\lambda\paren{n+1}}
    }^{p-1}
\end{align*}
where the first inequality follows from Lemma~\ref{lm.sandwiching}, and 
the second, from the simpler region provided previously, and
the third, from a change of variable $u = \Gamma^{-\frac{1}{2}}
\paren{
    y - \hat{f}_{\lambda^{+}; D} \paren{X_{n+1}}         
}$ and the expression of the volume a $p$-ball, and
the last, from
the mean value theorem applied to the function $x \in \mathbb{R} \mapsto x^{p} \in \mathbb{R}$
over the interval
$\croch{\widehat{Q}_{\lambda; D}^{\Gamma}(\alpha),
\widehat{Q}_{\lambda; D}^{\Gamma}(\alpha)
+ \frac{
    \rho_{p} \norm{\Gamma}_{\mathrm{op}}^{\frac{1}{2}} \paren{\widehat{\kappa}^{\Gamma}}^{2}
}{\lambda\paren{n+1}}}$.
\end{proof}

\subsection{Deriving an estimation error bound}
The next definition formulates a complexity measure of a class of functions,
that is the Rademacher complexity, which is classical used to derive deviation bounds
for data dependent quantities.

\bigskip
\notparagraph{Rademacher complexity}
Let $b \in \paren{0, +\infty}$
designate a scalar,
and $\mathcal{\mathcal{F}}_{\leq b} \subseteq \mathcal{H}$,
the space of function, given by
\begin{align*}
    \mathcal{F}_{\leq b} := \brac{
        \paren{x, y} \mapsto \ell\paren{y, f(x)}
        :f \in \mathcal{H}, \normh{f} \leq b 
    },
\end{align*}
and $\mathrm{R}_n\paren{\mathcal{F}_{\leq b}}$,
the Rademacher complexity
of the hypothesis space $\mathcal{F}_{\leq b}$,
given by 
\begin{align}
    \label{eq.rademacher.bounded.function}
    \mathrm{R}_n\paren{\mathcal{F}_{\leq b}}
    := \mathbb{E}\brac{
        \sup_{h \in \mathcal{F}_{\leq b}}
        \frac{1}{n}
        \sum_{i=1}^{n}
        \epsilon_i
        h\paren{X_i, Y_i}
    },
\end{align}
where $\epsilon_1, \ldots, \epsilon_n$
are independent Rademacher random variables,
that is $\mathbb{P}_{\epsilon_1} = \frac{1}{2}\delta_{-1} + \frac{1}{2}\delta_{1}$
(where $\delta_{\bullet}$ name the Dirac measure).
\begin{lemma}
\label{lm.bounded.trace}
For every $A, B \in \mathrm{Sym}_{p}^{+}\paren{\mathbb{R}}$,
\begin{align*}
    \mathrm{tr}\paren{A B} \leq \norm{A}_{\mathrm{op}} \mathrm{tr}\paren{B}.
\end{align*}    
\end{lemma}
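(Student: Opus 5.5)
Since $B$ is positive semi-definite, I will write it through its spectral decomposition and reduce the trace to a weighted sum of quadratic forms of $A$. Let $B = \sum_{k=1}^{p} \beta_k u_k u_k^{T}$ with $\beta_k \geq 0$ and $u_1, \ldots, u_p$ an orthonormal basis of $\mathbb{R}^{p}$. Linearity and cyclicity of the trace then give
\begin{align*}
    \mathrm{tr}\paren{AB}
    = \sum_{k=1}^{p} \beta_k \, \mathrm{tr}\paren{A u_k u_k^{T}}
    = \sum_{k=1}^{p} \beta_k \, u_k^{T} A u_k .
\end{align*}

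Next I bound each quadratic form. Since $A$ is symmetric positive semi-definite, its operator norm equals its largest eigenvalue. Therefore, for every unit vector $u_k$,
\[
0 \leq u_k^{T} A u_k \leq \norm{A}_{\mathrm{op}}.
\]
The upper bound also follows directly from Cauchy--Schwarz: $u_k^{T} A u_k \leq \norm{u_k}\norm{A u_k} \leq \norm{A}_{\mathrm{op}}$.

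Because every weight satisfies $\beta_k \geq 0$, multiplying these bounds by $\beta_k$ preserves them. Summing over $k$ yields
\[
\mathrm{tr}\paren{AB} \leq \norm{A}_{\mathrm{op}} \sum_{k} \beta_k = \norm{A}_{\mathrm{op}}\, \mathrm{tr}\paren{B}.
\]

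The only point needing care is the sign condition $\beta_k \geq 0$, which is exactly where $B \in \mathrm{Sym}_{p}^{+}\paren{\mathbb{R}}$ is used. The positive semi-definiteness of $A$ is used only to identify the operator norm with the largest eigenvalue, and the Cauchy--Schwarz bound avoids even that. There is no real obstacle here; the argument is routine.
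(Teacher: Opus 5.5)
Your proof is correct, but it runs dual to the paper's. The paper diagonalizes $A = UDU^{T}$, where $D$ is diagonal. It writes $\mathrm{tr}(AB) = \sum_{l} D_{ll}\,(U^{T}BU)_{ll}$ and then combines three facts: $D_{ll} \le D_{11} = \|A\|_{\mathrm{op}}$, $(U^{T}BU)_{ll} = (Ue_l)^{T}B(Ue_l) \ge 0$, and $\mathrm{tr}(U^{T}BU) = \mathrm{tr}(B)$. You instead take the spectral decomposition of $B$ and bound the quadratic forms $u_k^{T}Au_k$ of $A$.

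In the paper's route, positive semi-definiteness of $B$ enters only through the nonnegativity of the diagonal of $U^{T}BU$, so no eigendecomposition of $B$ is needed. Symmetry of $A$ is required, however, in order to diagonalize it.

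In your route, the Cauchy--Schwarz bound $|u^{T}Au| \le \|u\|\,\|Au\| \le \|A\|_{\mathrm{op}}$ holds for an arbitrary square matrix $A$. You therefore actually obtain the slightly stronger statement $|\mathrm{tr}(AB)| \le \|A\|_{\mathrm{op}}\,\mathrm{tr}(B)$ for every $A \in \mathbb{R}^{p\times p}$ and every positive semi-definite $B$, so the hypotheses on $A$ are superfluous.

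For the one place the lemma is used, the Rademacher bound with $A = \Gamma^{-1/2}K(x,x)\Gamma^{-1/2}$ and $B = \Gamma$, either version suffices.
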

\begin{proof}
    Let $A, B \in \mathrm{Sym}_{p}^{+}\paren{\mathbb{R}}$.
    Since $A \in \mathrm{Sym}_{p}^{+}\paren{\mathbb{R}}$,
    there exists an orthogonal matrix $U \in \mathbb{R}^{p \times p}$
    and a diagonal matrix $D \in \mathbb{R}^{p \times p}$,
    such that $A = UDU^{T}$, and
    $\norm{A}_{\mathrm{op}} = D_{1, 1} \geq \ldots \geq  D_{p, p} \geq 0$.
    Moreover, since $B \in \mathrm{Sym}_{p}^{++}\paren{\mathbb{R}}$,
    for every $l \in \brac{1, \ldots, p}$
    \begin{align*}
        \paren{U^{T}B U}_{l, l}
        = e_l^{T} U^{T} B U e_l
        = \paren{U e_l}^{T} B \paren{U e_l} \geq 0.
    \end{align*}
    It follows that
    \begin{align*}
        \mathrm{tr}\paren{AB}
        &
        = \mathrm{tr}\paren{UDU^{T}B}
        = \mathrm{tr}\paren{DU^{T}BU}
        \\
        &
        = \sum_{l = 1}^{p}
        \paren{
            DU^{T}BU
        }_{l, l}
        = \sum_{l = 1}^{p}
        \sum_{s = 1}^{p}
        D_{l, s} \paren{U^{T}B U}_{s, l}
        = \sum_{l = 1}^{p}
        D_{l, l} \paren{U^{T}B U}_{l, l}
        \\
        &
        \leq
        D_{1, 1} 
        \sum_{l = 1}^{p}
        \paren{U^{T}B U}_{l, l}
        \leq
        \norm{A}_{\mathrm{op}} \mathrm{tr}\paren{U^{T} B U}
        \leq
        \norm{A}_{\mathrm{op}} \mathrm{tr}\paren{B UU^{T}}
        \leq
        \norm{A}_{\mathrm{op}} \mathrm{tr}\paren{B}.
    \end{align*}
\end{proof}

{The next result provides an upper-bound on the Rademacher complexity
given by Eq.~\ref{eq.rademacher.bounded.function} involving quantities that
are assumed to be known.
}
\begin{lemma}
    \label{lm.bound.rademacher.complexity}
    Assume \eqref{asm.lip.loss} and
    \eqref{asm.bounded.gamma.kernel}
    hold true.
    Then, for every $b \in \paren{0, +\infty}$,
    the Rademacher complexity
    $\mathrm{R}_n\paren{\mathcal{F}_{\leq b}}$
    (see Eq.~\ref{eq.rademacher.bounded.function})
    is bounded from above, that is, 
    \begin{align*}
        \mathrm{R}_n\paren{\mathcal{F}_{\leq b}}
        \leq
        \sqrt{2} b
        \frac{ \rho_{p} \mathrm{tr}\paren{\Gamma}^{\frac{1}{2}} \kappa^{\Gamma}}{\sqrt{n}}.
    \end{align*}
\end{lemma}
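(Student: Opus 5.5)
The plan is to reduce the Rademacher complexity of the loss class to that of the vector-valued function class, and then bound the latter through the reproducing property and a trace computation.

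\textbf{Step 1: vector contraction.} Under \eqref{asm.lip.loss}, each map $u \mapsto \ell(Y_i,u)$ is $\rho_p$-Lipschitz with respect to $\norm{\bullet}$ on $\mathbb{R}^p$. I would apply the vector-contraction inequality for Rademacher averages (Maurer's lemma). It yields
\begin{align*}
\mathrm{R}_n\paren{\mathcal{F}_{\leq b}} \leq \sqrt{2}\,\rho_p\,\mathbb{E}\croch{\sup_{\normh{f}\leq b}\frac{1}{n}\sum_{i=1}^{n}\sum_{l=1}^{p}\epsilon_{il}\,f_l(X_i)},
\end{align*}
where the $\epsilon_{il}$ are independent Rademacher variables. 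This step is the source of the factor $\sqrt{2}$. Conditioning on the data, the inequality holds pointwise, and I then take expectations.

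\textbf{Step 2: reproducing property and Cauchy--Schwarz.} By the reproducing property, $\sum_{i,l}\epsilon_{il}f_l(X_i)=\dotprod{f}{\sum_i K(\bullet,X_i)\varepsilon_i}{\mathcal{H}}$ with $\varepsilon_i=(\epsilon_{il})_{l}\in\mathbb{R}^p$. The supremum over the ball of radius $b$ therefore equals $b\normh{\sum_i K(\bullet,X_i)\varepsilon_i}$. Jensen's inequality gives
\begin{align*}
\mathbb{E}\normh{\sum_{i} K(\bullet,X_i)\varepsilon_i} \leq \paren{\mathbb{E}\sum_{i,j}\varepsilon_i^{T}K(X_i,X_j)\varepsilon_j}^{1/2}.
\end{align*}
By independence of the Rademacher variables, only the diagonal terms survive. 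The right-hand side is thus $\paren{\sum_i \mathbb{E}\,\mathrm{tr}\,K(X_i,X_i)}^{1/2}$.

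\textbf{Step 3: trace bound.} I write $\mathrm{tr}\,K(x,x)=\mathrm{tr}\paren{\Gamma^{1/2}\,\Gamma^{-1/2}K(x,x)\Gamma^{-1/2}\,\Gamma^{1/2}}=\mathrm{tr}\paren{\Gamma^{-1/2}K(x,x)\Gamma^{-1/2}\,\Gamma}$. Lemma~\ref{lm.bounded.trace} applies with $A=\Gamma^{-1/2}K(x,x)\Gamma^{-1/2}$ and $B=\Gamma$, both positive semi-definite. Together with \eqref{asm.bounded.gamma.kernel}, this gives $\mathrm{tr}\,K(x,x)\leq (\kappa^{\Gamma})^2\mathrm{tr}(\Gamma)$.

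\textbf{Combining.} The sum in Step 2 is then at most $\sqrt{n}\,\kappa^{\Gamma}\mathrm{tr}(\Gamma)^{1/2}$. Dividing by $n$ and inserting the result into Step 1 yields
\begin{align*}
\mathrm{R}_n\paren{\mathcal{F}_{\leq b}} \leq \sqrt{2}\,b\,\rho_p\,\mathrm{tr}(\Gamma)^{1/2}\kappa^{\Gamma}/\sqrt{n}.
\end{align*}

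\textbf{Main obstacle.} The delicate step is the vector-valued contraction. The scalar Talagrand contraction does not apply directly to a loss that is Lipschitz in the Euclidean norm on $\mathbb{R}^p$, so I would invoke Maurer's vector-contraction inequality (Maurer, 2016) explicitly. That result requires the functions to be Lipschitz with respect to the Euclidean norm, which is exactly what \eqref{asm.lip.loss} provides, and it produces the $\sqrt{2}$ constant. One should also check that the Rademacher average in Eq.~\eqref{eq.rademacher.bounded.function} has no absolute value, since the contraction inequality is stated in that form.
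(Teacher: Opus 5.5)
Your proposal is correct and follows the paper's proof essentially step for step. Both arguments use Maurer's vector-contraction inequality for the $\sqrt{2}\rho_p$ factor, then the reproducing property with Cauchy--Schwarz over the ball of radius $b$, then Jensen. Your vanishing of the off-diagonal terms corresponds to the paper writing the squared norm as $\mathrm{tr}(\mathbf{K}_{\mathbf{X}}\epsilon\epsilon^{T})$ and using $\mathbb{E}[\epsilon\epsilon^{T}]=I_{np}$. The final bound $\mathrm{tr}\,K(X_i,X_i)\leq(\kappa^{\Gamma})^{2}\,\mathrm{tr}(\Gamma)$ comes, in both, from Lemma~\ref{lm.bounded.trace} together with \eqref{asm.bounded.gamma.kernel}.
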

\begin{proof}
    This proof is an adaptation of \citet[in Section 4.5.3]{bachLearningTheoryFirst}.
    Let $b \in \paren{0, +\infty}$ denote a bound.
    Let us consider the following term
    \begin{align*}
        \normh{
            \sum_{i=1}^{n}
            \sum_{l = 1}^{p}
            \epsilon_{i, l}
            K\paren{\bullet, X_i} e_l
        }^2
        &
        = \dotprod{
            \sum_{i=1}^{n}
            \sum_{l = 1}^{p}
            \epsilon_{i, l}
            K\paren{\bullet, X_i} e_l
        }{
            \sum_{j=1}^{n}
            \sum_{s = 1}^{p}
            \epsilon_{j, s}
            K\paren{\bullet, X_j} e_s
        }{\mathcal{H}}
        \\
        &
        =
        \sum_{i=1}^{n}
        \sum_{l = 1}^{p}
        \sum_{j=1}^{n}
        \sum_{s = 1}^{p}
        \epsilon_{i, l}
        \epsilon_{j, s}
        \dotprod{
            K\paren{\bullet, X_i} e_l
        }{
            K\paren{\bullet, X_j} e_s
        }{\mathcal{H}}
        \\
        &
        =
        \sum_{i=1}^{n}
        \sum_{l = 1}^{p}
        \sum_{j=1}^{n}
        \sum_{s = 1}^{p}
        \epsilon_{i, l}
        \epsilon_{j, s}
        \dotprod{
            e_s
        }{
            K\paren{X_j, X_i} e_l
        }{}
        \\
        &
        =
        \sum_{i=1}^{n}
        \sum_{j=1}^{n}
        \sum_{l = 1}^{p}
        \sum_{s = 1}^{p}
        \epsilon_{j, s}
        K\paren{X_j, X_i}_{s,l}
        \epsilon_{i, l}
        \\
        &
        =
        \sum_{i=1}^{n}
        \sum_{j=1}^{n}
        \epsilon_{j, \bullet}^{T}
        K\paren{X_j, X_i}
        \epsilon_{i, \bullet}
        \\
        &
        =
        \epsilon^T
        \sum_{i=1}^{n}
        \sum_{j=1}^{n}
        \paren{e_j \otimes I_{p}}
        K\paren{X_j, X_i}
        \paren{e_i^{T} \otimes I_{p}}
        \epsilon
        \\
        &
        =
        \epsilon^T
        \mathbf{K}_{\mathbf{X}}
        \epsilon
        =
        \norm{\croch{\mathbf{K}_{\mathbf{X}}}^{\frac{1}{2}} \epsilon}^2
        \\
        &
        =
        \mathrm{tr}\paren{
            \croch{\mathbf{K}_{\mathbf{X}}}^{\frac{1}{2}} \epsilon
            \epsilon^T \croch{\mathbf{K}_{\mathbf{X}}}^{\frac{1}{2}}
        }
        =\mathrm{tr}\paren{
            \mathbf{K}_{\mathbf{X}}
            \epsilon\epsilon^T
        },
    \end{align*}
    where third equality follows from
    \citet[in Proposition 2.1.a]{micchelliLearningVectorvaluedFunctions2005}, and
    the fifth equality holds with $\epsilon_{j, \bullet} = \paren{\epsilon_{j,1}, \ldots, \epsilon_{j,p}} \in \mathbb{R}^{p}$
    for every $j \in \brac{1, \ldots, n}$,
    the sixth equality holds with $\epsilon = \begin{pmatrix}
        \epsilon_{1, \bullet} & \ldots & \epsilon_{n, \bullet} 
    \end{pmatrix}^{T} \in \mathbb{R}^{np}$,
    and the seventh equality follows from
    the definition of the Gram matrix $\mathbf{K}_{\mathbf{X}} \in \mathbb{R}^{np \times np}$
    in Eq.~\eqref{eq.gram.matrix}
    which is a positive semi-definite matrix by
    \citet[in Proposition 2.1.c]{micchelliLearningVectorvaluedFunctions2005}.
    
    Then, $\mathrm{R}_n\paren{\mathcal{F}_{\leq b}}$
    is bounded from above, that is,
    \begin{align*}
        \mathrm{R}_n\paren{\mathcal{F}_{\leq b}}
        &
        = \mathbb{E}\brac{
            \sup_{h \in \mathcal{F}_{\leq b}}
            \frac{1}{n}
            \epsilon \sum_{i=1}^{n}
            h\paren{X_i, Y_i}
        }
        \\
        &
        \leq
        \sqrt{2}
        \rho_{p}
        \mathbb{E}\croch{
            \sup_{\substack{
                f \in \mathcal{H}\\
                \normh{f} \leq b
            }}
            \frac{1}{n}
            \sum_{i=1}^{n}
            \sum_{l = 1}^{p}
            \epsilon_{i, l}
            f_l\paren{X_i}
        }
        \\
        &
        \leq
        \sqrt{2}
        \frac{
        \rho_{p}}{n}
        \mathbb{E}\croch{
            \sup_{\substack{
                f \in \mathcal{H}\\
                \normh{f} \leq b
            }}
            \sum_{i=1}^{n}
            \sum_{l = 1}^{p}
            \epsilon_{i, l}
            \dotprod{e_l}{f\paren{X_i}}{}
        }
        \\
        &
        \leq
        \sqrt{2}
        \frac{
        \rho_{p}}{n}
        \mathbb{E}\croch{
            \sup_{\substack{
                f \in \mathcal{H}\\
                \normh{f} \leq b
            }}
            \sum_{i=1}^{n}
            \sum_{l = 1}^{p}
            \epsilon_{i, l}
            \dotprod{K\paren{\bullet, X_i} e_l}{f}{\mathcal{H}}
        }
        \\
        &
        \leq
        \sqrt{2}
        \frac{
        \rho_{p}}{n}
        \mathbb{E}\croch{
            \sup_{\substack{
                f \in \mathcal{H}\\
                \normh{f} \leq b
            }}
            \dotprod{
                \sum_{i=1}^{n}
                \sum_{l = 1}^{p}
                \epsilon_{i, l}
                K\paren{\bullet, X_i} e_l
            }{f}{\mathcal{H}}
        }
        \\
        &
        \leq
        \sqrt{2} b
        \frac{
        \rho_{p}}{n}
        \mathbb{E}\croch{
            \normh{
                \sum_{i=1}^{n}
                \sum_{l = 1}^{p}
                \epsilon_{i, l}
                K\paren{\bullet, X_i} e_l
            }
        }
        \\
        &
        \leq
        \sqrt{2} b
        \frac{
        \rho_{p}}{n}
        \sqrt{
            \mathbb{E}\croch{
            \normh{
                \sum_{i=1}^{n}
                \sum_{l = 1}^{p}
                \epsilon_{i, l}
                K\paren{\bullet, X_i} e_l
            }^2
            }
        }
        \\
        &
        \leq
        \sqrt{2} b
        \frac{
        \rho_{p}}{n}
        \sqrt{
            \mathbb{E}\croch{
            \mathrm{tr}\paren{
                \mathbf{K}_{\mathbf{X}}\epsilon\epsilon^T
            }
            }
        }
        \\
        &
        \leq
        \sqrt{2} b
        \frac{
        \rho_{p}}{n}
        \sqrt{
            \mathbb{E}\croch{
            \mathrm{tr}\paren{
                \mathbf{K}_{\mathbf{X}}
                }
            }
        }
        \\
        &
        \leq
        \sqrt{2} b
        \frac{\rho_{p}}{\sqrt{n}}
        \sqrt{
            \frac{1}{n}
            \sum_{i=1}^{n}
            \mathbb{E}\croch{
                \mathrm{tr}\paren{
                    K\paren{X_i, X_i}
                }
            }
        }
        \\
        &
        \leq
        \sqrt{2} b
        \frac{\rho_{p}}{\sqrt{n}}
        \sqrt{
            \frac{1}{n}
            \sum_{i=1}^{n}
            \mathbb{E}\croch{
                \mathrm{tr}\paren{
                    \Gamma^{-\frac{1}{2}} K\paren{X_i, X_i} \Gamma^{-\frac{1}{2}} \Gamma
                }
            }
        }
        \\
        &
        \leq
        \sqrt{2} b
        \frac{\rho_{p}}{\sqrt{n}}
        \sqrt{
            \frac{1}{n}
            \sum_{i=1}^{n}
            \mathbb{E}\croch{
                \norm{\Gamma^{-\frac{1}{2}} K\paren{X_i, X_i} \Gamma^{-\frac{1}{2}}}_{\mathrm{op}}
                \mathrm{tr}\paren{
                     \Gamma
                }
            }
        }
        \\
        &
        \leq
        \sqrt{2} b
        \frac{ \rho_{p} \mathrm{tr}\paren{\Gamma}^{\frac{1}{2}} \kappa^{\Gamma}}{\sqrt{n}},
    \end{align*}
    where the first inequality follows from
    the vector-contraction inequality
    \citep[see Corollary 4]{maurerVectorcontractionInequalityRademacher2016}
    which holds true under \eqref{asm.lip.loss}, and
    the third, from
    the reproducing property of the hypothesis space
    $\mathcal{H}$, and
    the fifth, from
    Cauchy-Schwarz inequality, and
    the sixth, from
    Jensen's inequality, and
    the eight, from
    the linearity of the expectation
    and the trace, the independence
    of the Rademacher vector $\epsilon$ and
    the data set $D$, and
    the fact that $\mathbb{E}\croch{\epsilon\epsilon^{T}} = I_{np}$, and
    the eleventh, from Lemma~\ref{lm.bounded.trace}
    since for every $x \in \mathcal{X}$,
    $\Gamma^{-\frac{1}{2}}K\paren{x, x} \Gamma^{-\frac{1}{2}}, \Gamma \in \mathrm{Sym}_{p}^{++}\paren{\mathbb{R}}$, and
    the last inequality holds true under \eqref{asm.bounded.gamma.kernel}.
\end{proof}

This bound is akin to the one provided by \cite{bachLearningTheoryFirst}
but in the more general setting of vector-valued outputs.
Hence, the effect of the dimension is reflected in the term $\sqrt{2}\mathrm{tr}\paren{\Gamma}$.
It is important to observe that this effect goes through the inter-task covariance-matrix
and not directly from the ambiant dimension $p$.

\begin{lemma}
    \label{lm.bounded.kernel}
    Assume \eqref{asm.bounded.gamma.kernel} hold true.
    Then, for every $x \in \mathcal{X}$,
    \begin{align*}
        \norm{K\paren{\bullet, x}}_{\mathrm{op}}
        = \norm{K\paren{x, x}}_{\mathrm{op}}^{\frac{1}{2}}
        \leq \norm{\Gamma}_{\mathrm{op}}^{\frac{1}{2}} \kappa^{\Gamma}.
    \end{align*}
\end{lemma}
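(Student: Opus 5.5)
The plan is to prove the equality first and then the inequality. For the equality, I would use the fact that $K(\bullet, x) : \mathbb{R}^p \to \mathcal{H}$ is a bounded linear operator whose adjoint is the evaluation map $L_x : f \mapsto f(x)$. This is the reproducing property $\dotprod{K(\bullet,x)u}{f}{\mathcal{H}} = \dotprod{u}{f(x)}{}$. Composing the two gives $L_x K(\bullet, x) = K(x,x)$, so the $C^*$-identity $\norm{T}_{\mathrm{op}}^2 = \norm{T^*T}_{\mathrm{op}}$ yields $\norm{K(\bullet,x)}_{\mathrm{op}}^2 = \norm{K(x,x)}_{\mathrm{op}}$.

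To avoid relying on the abstract identity, I would mimic the proof of Lemma~\ref{lm.lambda.kernel.operator.norm} with $\Gamma^{-\frac12}$ replaced by $I_p$. The inequality ``$\leq$'' follows from
\begin{align*}
\normh{K(\bullet,x)u}^2 = u^T K(x,x) u \leq \norm{K(x,x)}_{\mathrm{op}}\norm{u}^2,
\end{align*}
which uses Proposition 2.1(a) of \citet{micchelliLearningVectorvaluedFunctions2005} and the positive semi-definiteness of $K(x,x)$. The inequality ``$\geq$'' follows from Cauchy--Schwarz:
\begin{align*}
\norm{K(x,x)u}^2 = \dotprod{K(\bullet,x)u}{K(\bullet,x)K(x,x)u}{\mathcal{H}} \leq \norm{K(\bullet,x)}_{\mathrm{op}}^2\norm{u}\norm{K(x,x)u}.
\end{align*}
Dividing by $\norm{u}\,\norm{K(x,x)u}$ (the trivial case $K(x,x)u=0$ aside) gives the reverse bound. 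Alternatively, the whole equality is exactly Lemma~\ref{lm.lambda.kernel.operator.norm} applied with $\Gamma = I_p$, since that lemma's proof never used the specific $\Gamma$ beyond symmetry and invertibility.

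For the inequality, I would factor $K(x,x) = \Gamma^{\frac12}\bigl(\Gamma^{-\frac12}K(x,x)\Gamma^{-\frac12}\bigr)\Gamma^{\frac12}$ and use submultiplicativity of the operator norm together with $\norm{\Gamma^{\frac12}}_{\mathrm{op}}^2 = \norm{\Gamma}_{\mathrm{op}}$. This gives
\begin{align*}
\norm{K(x,x)}_{\mathrm{op}} \leq \norm{\Gamma}_{\mathrm{op}}\,\norm{\Gamma^{-\frac12}K(x,x)\Gamma^{-\frac12}}_{\mathrm{op}} \leq \norm{\Gamma}_{\mathrm{op}}\paren{\kappa^{\Gamma}}^2
\end{align*}
by \eqref{asm.bounded.gamma.kernel}. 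Taking square roots concludes the proof.

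No step is a real obstacle. The only point requiring care is the ``$\geq$'' direction of the equality, which needs the degenerate case $K(x,x)u=0$ handled separately, exactly as in Lemma~\ref{lm.lambda.kernel.operator.norm}.
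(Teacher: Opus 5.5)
Your proposal is correct and follows the paper's proof: the paper also factors $K(x,x)=\Gamma^{\frac12}\bigl(\Gamma^{-\frac12}K(x,x)\Gamma^{-\frac12}\bigr)\Gamma^{\frac12}$, applies submultiplicativity, and then uses \eqref{asm.bounded.gamma.kernel}. The only difference is that the paper obtains the equality $\norm{K(\bullet,x)}_{\mathrm{op}}=\norm{K(x,x)}_{\mathrm{op}}^{\frac12}$ by citing Proposition~2.1(d) of \citet{micchelliLearningVectorvaluedFunctions2005}, whereas you prove it directly (equivalently, Lemma~\ref{lm.lambda.kernel.operator.norm} with $\Gamma=I_p$), which is valid.
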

\begin{proof}
    Let $x \in \mathcal{X}$,
    \begin{align*}
        \norm{K\paren{\bullet, x}}_{\mathrm{op}}
        &
        = \norm{K\paren{x, x}}_{\mathrm{op}}^{\frac{1}{2}}
        \\
        &
        = \norm{
            \Gamma^{\frac{1}{2}} \Gamma^{-\frac{1}{2}}
            K\paren{x, x}
            \Gamma^{-\frac{1}{2}} \Gamma^{\frac{1}{2}}
        }_{\mathrm{op}}^{\frac{1}{2}}
        \\
        &
        \leq
        \norm{\Gamma^{\frac{1}{2}}}_{\mathrm{op}}^{\frac{1}{2}}
        \norm{
            \Gamma^{-\frac{1}{2}}
            K\paren{x, x}
            \Gamma^{-\frac{1}{2}}
        }_{\mathrm{op}}^{\frac{1}{2}}
        \norm{\Gamma^{\frac{1}{2}}}_{\mathrm{op}}^{\frac{1}{2}}
        \\
        &
        \leq
        \norm{\Gamma}_{\mathrm{op}}^{\frac{1}{2}}
        \kappa^{\Gamma},
    \end{align*}
    where the first equality follows from
    \citet[in Proposition 2.1.d]{micchelliLearningVectorvaluedFunctions2005}, and
    the first inequality follows from the submultiplicativity of the operator norm, and
    the last inequality holds true under \eqref{asm.bounded.gamma.kernel}.
\end{proof}

{Harnessing the upper-bound on the Rademacher complexity,
the next result provides an upper-bound on the regularized risk of
the predictor $\hat{f}_{\lambda; D}$
provided that of its population counterpart $f_{\lambda}$.
}
\begin{lemma}
    \label{lm.estimation.error.bound}
    Assume \eqref{asm.conv.loss},
    \eqref{asm.lip.loss}, \eqref{asm.lower.bounded.loss} and
    \eqref{asm.bounded.gamma.kernel} hold true.
    Then, for any $\delta \in \paren{0, 1}$,
    with probability greater than $1 - \delta$,
    \begin{align*}
        \mathbf{R}_{\lambda}\paren{\hat{f}_{\lambda; D}}
        \leq \mathbf{R}_{\lambda}\paren{f_{\lambda}}
        + \frac{
            \rho_{p}^2 \norm{\Gamma}_{\mathrm{op}} \paren{\kappa^{\Gamma}}^{2}
        }{
            \lambda n
        }
        \paren{
            2\sqrt{2}\paren{
                \frac{
                    \mathrm{tr}\paren{\Gamma}
                }{
                    \norm{\Gamma}_{\mathrm{op}}
                }
            }^{\frac{1}{2}}
            + \sqrt{2 \log \frac{1}{\delta}}
        }^2,
    \end{align*}
    where $\hat{f}_{\lambda; D}$ is given by Eq.~\eqref{def.predictor}
    and $f_{\lambda}$ is given in Lemma~\ref{lm.reg.risk.minimizer},
\end{lemma}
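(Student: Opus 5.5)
We follow the classical localized Rademacher argument for regularized ERM, as in \citet[Section 4.5]{bachLearningTheoryFirst}. The plan has three steps: localize both predictors in a ball of $\mathcal{H}$, bound the uniform deviation over that ball, and then turn the resulting excess-risk inequality into one involving $\mathbf{R}_{\lambda}$.

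\textbf{Localization.} Under \eqref{asm.lower.bounded.loss}, comparing $\widehat{\mathbf{R}}_{\lambda; D}(\hat{f}_{\lambda; D})$ with $\widehat{\mathbf{R}}_{\lambda; D}(0)$ would give a crude norm bound, but it involves $\ell(Y_i,0)$. We instead use the first-order optimality condition, which is the cleaner route. Under \eqref{asm.conv.loss} and \eqref{asm.lip.loss}, subgradients of $u\mapsto \ell(y,u)$ have norm at most $\rho_p$, so $\hat f_{\lambda;D}=-\frac{1}{2\lambda n}\sum_i K(\bullet,X_i)g_i$ with $\norm{g_i}\le\rho_p$. By Lemma~\ref{lm.bounded.kernel},
\begin{align*}
\normh{\hat f_{\lambda;D}}\le \frac{\rho_p\norm{\Gamma}_{\mathrm{op}}^{1/2}\kappa^{\Gamma}}{2\lambda}=:b .
\end{align*}
The same argument applied to the population problem gives $\normh{f_\lambda}\le b$. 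Both predictors therefore lie in the ball $\{\normh{f}\le b\}$.

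\textbf{Uniform deviation.} We write
\begin{align*}
\mathbf{R}_{\lambda}(\hat f_{\lambda;D})-\mathbf{R}_{\lambda}(f_\lambda)
&=[\mathbf{R}_0-\widehat{\mathbf{R}}_{0;D}](\hat f_{\lambda;D})+[\widehat{\mathbf{R}}_{\lambda;D}(\hat f_{\lambda;D})-\widehat{\mathbf{R}}_{\lambda;D}(f_\lambda)]\\
&\quad+[\widehat{\mathbf{R}}_{0;D}-\mathbf{R}_0](f_\lambda).
\end{align*}
The middle bracket is non-positive by optimality of $\hat f_{\lambda;D}$. The total is therefore at most $2\sup_{\normh{f}\le b}\abss{(\mathbf{R}_0-\widehat{\mathbf{R}}_{0;D})(f)}$, where we center the functions by $\ell(y,0)$ so that they become bounded. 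For $\normh{f}\le b$, the Lipschitz property and Lemma~\ref{lm.bounded.kernel} give $\abss{\ell(y,f(x))-\ell(y,0)}\le\rho_p\norm{\Gamma}_{\mathrm{op}}^{1/2}\kappa^\Gamma b$, and the constant term $\ell(y,0)$ cancels inside the difference $\mathbf{R}_0-\widehat{\mathbf{R}}_{0;D}$. Symmetrization together with McDiarmid's inequality, applied to the centered class, then gives with probability $1-\delta$
\begin{align*}
\sup_{\normh{f}\le b}(\mathbf{R}_0-\widehat{\mathbf{R}}_{0;D})(f)\le 2\mathrm{R}_n(\mathcal{F}_{\le b})+\rho_p\norm{\Gamma}_{\mathrm{op}}^{1/2}\kappa^{\Gamma}b\sqrt{\tfrac{2\log(1/\delta)}{n}}.
\end{align*}
The one-sided form should suffice, because the term evaluated at the fixed $f_\lambda$ needs no uniformity. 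Lemma~\ref{lm.bound.rademacher.complexity} bounds $\mathrm{R}_n(\mathcal{F}_{\le b})\le\sqrt2 b\rho_p\mathrm{tr}(\Gamma)^{1/2}\kappa^\Gamma/\sqrt n$. Factoring out $\rho_p\norm{\Gamma}^{1/2}_{\mathrm{op}}\kappa^\Gamma b/\sqrt n$, this already has the shape $\frac{c}{\lambda\sqrt n}(2\sqrt2(\mathrm{tr}\Gamma/\norm{\Gamma}_{\mathrm{op}})^{1/2}+\sqrt{2\log(1/\delta)})$.

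\textbf{Main obstacle.} This route yields a $1/(\lambda\sqrt n)$ rate with a linear, not squared, bracket, whereas the target is $\frac{\cdot}{\lambda n}(\cdots)^2$. To reach the fast rate we will exploit $2\lambda$-strong convexity of $\mathbf{R}_\lambda$ via Lemma~\ref{lm.minimum.gap}: $\lambda\normh{\hat f_{\lambda;D}-f_\lambda}^2\le \mathbf{R}_\lambda(\hat f_{\lambda;D})-\mathbf{R}_\lambda(f_\lambda)=:\Delta$. We then localize the empirical process to the class $\{\ell(\cdot,f)-\ell(\cdot,f_\lambda):\normh{f-f_\lambda}\le r\}$. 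By the same Rademacher and McDiarmid computation, this class has deviation at most $\frac{\rho_p\norm{\Gamma}^{1/2}_{\mathrm{op}}\kappa^\Gamma r}{\sqrt n}B$, where $B:=2\sqrt2(\mathrm{tr}\Gamma/\norm{\Gamma}_{\mathrm{op}})^{1/2}+\sqrt{2\log(1/\delta)}$. Setting $r=\sqrt{\Delta/\lambda}$ gives $\Delta\le \frac{A\sqrt{\Delta}}{\sqrt{\lambda n}}B$ with $A=\rho_p\norm{\Gamma}^{1/2}_{\mathrm{op}}\kappa^\Gamma$. Solving this quadratic inequality yields $\Delta\le \frac{A^2B^2}{\lambda n}$, which is exactly the claim. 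The delicate point is that $r$ is random. We expect to handle this by a peeling argument, or more simply by using the deterministic radius $b$ from the localization step together with a ratio-type bound. The constant $2\sqrt2$ in $B$ should come out directly once the factor $2$ from symmetrization is combined with the $\sqrt2$ of the vector-contraction inequality.
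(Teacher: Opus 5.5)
Your outline has the right ingredients: the centered localized class $\{\ell(\cdot,f)-\ell(\cdot,f_\lambda):\|f-f_\lambda\|_{\mathcal{H}}\le r\}$, the bound $AB\,r/\sqrt{n}$ on its deviation, and the fixed-point step $\Delta\le AB\sqrt{\Delta/\lambda}/\sqrt{n}\Rightarrow\Delta\le A^2B^2/(\lambda n)$, which reproduces the stated constant. But the proof stops at the step the lemma depends on. The Rademacher--McDiarmid bound holds only for a radius fixed before seeing the data, whereas $r=\sqrt{\Delta/\lambda}$ depends on $D$. Neither of your suggested repairs closes this. The deterministic radius $b\asymp 1/\lambda$ from your first step just returns the slow $1/(\lambda\sqrt{n})$ rate of your second step. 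Peeling over dyadic shells needs a union bound over the $N$ shells and loses a factor from the shell width, so you get roughly $4A^2B_N^2/(\lambda n)$ with $\log(1/\delta)$ replaced by $\log(N/\delta)$, not the stated inequality. Your subgradient localization and slow-rate decomposition end up playing no role.

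The missing idea, which the paper uses, is a convexity argument that makes the localization radius deterministic, so no peeling is needed. Fix $\varepsilon_0:=A^2B^2/(\lambda n)$, the solution of $\varepsilon_0=AB\sqrt{\varepsilon_0/\lambda}/\sqrt{n}$. Take the deterministic ball $B_{\varepsilon_0}:=\{f\in\mathcal{H}:\|f-f_\lambda\|_{\mathcal{H}}\le\sqrt{\varepsilon_0/\lambda}\}$ and set
\[
A_{\varepsilon_0}:=\sup_{\xi\in B_{\varepsilon_0}}\Big\{\big(\mathbf{R}_\lambda(\xi)-\mathbf{R}_\lambda(f_\lambda)\big)-\big(\widehat{\mathbf{R}}_{\lambda;D}(\xi)-\widehat{\mathbf{R}}_{\lambda;D}(f_\lambda)\big)\Big\}.
\]
On this fixed ball your computation goes through: symmetrization with Lemma~\ref{lm.bound.rademacher.complexity}, then McDiarmid with bounded differences $2A\sqrt{\varepsilon_0/\lambda}/n$. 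This gives $\mathbb{P}[A_{\varepsilon_0}\ge\varepsilon_0]\le\delta$.

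Now suppose $\Delta=\mathbf{R}_\lambda(\hat f_{\lambda;D})-\mathbf{R}_\lambda(f_\lambda)>\varepsilon_0$. By continuity of $\mathbf{R}_\lambda$ there is a point $\eta$ on the segment $[f_\lambda,\hat f_{\lambda;D}]$ with $\mathbf{R}_\lambda(\eta)-\mathbf{R}_\lambda(f_\lambda)=\varepsilon_0$. By Lemma~\ref{lm.minimum.gap}, $\lambda\|\eta-f_\lambda\|_{\mathcal{H}}^2\le\varepsilon_0$, so $\eta\in B_{\varepsilon_0}$. By convexity of $\widehat{\mathbf{R}}_{\lambda;D}$ along the segment and optimality of $\hat f_{\lambda;D}$, $\widehat{\mathbf{R}}_{\lambda;D}(\eta)\le\max\{\widehat{\mathbf{R}}_{\lambda;D}(f_\lambda),\widehat{\mathbf{R}}_{\lambda;D}(\hat f_{\lambda;D})\}=\widehat{\mathbf{R}}_{\lambda;D}(f_\lambda)$. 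Evaluating the supremum at $\xi=\eta$ gives $A_{\varepsilon_0}\ge\varepsilon_0$. Hence $\{\Delta>\varepsilon_0\}\subseteq\{A_{\varepsilon_0}\ge\varepsilon_0\}$, an event of probability at most $\delta$, and the lemma follows with exactly the stated constant.
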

\begin{proof}
    The following proof is an adaptation of
    \citet[see Proposition 4.6]{bachLearningTheoryFirst}.
    Let $\delta \in \paren{0, 1}$
    stand for a risk level,
    and for every $i \in \brac{1, \ldots, n}$,
    $D^{i}$, the data set given by
    \begin{align}
        \label{eq.modified.data.set}
        D^{i} := \brac{
            \paren{X_1, Y_1}, \ldots,\paren{X_{i-1}, Y_{i-1}},\paren{X_i^{\prime}, Y_i^{\prime}}, \paren{X_{i+1}, Y_{i+1}}, \ldots \paren{X_n, Y_n}
        } 
    \end{align}

    Since \eqref{asm.lip.loss} implies \eqref{asm.lsc.loss}.
    Then, under \eqref{asm.conv.loss}, \eqref{asm.lip.loss},
    and \eqref{asm.lower.bounded.loss},
    Lemma~\ref{lm.predictor.well.defined}
    states that $\widehat{\mathbf{R}}_{\lambda; D}\paren{\bullet}$
    admits a unique minimizer $\hat{f}_{\lambda; D} \in \mathcal{H}$, and
    Lemma~\ref{lm.min.norm.risk.minimizer}
    states that $\mathbf{R}_{\lambda}\paren{\bullet}$
    admits a unique minimizer $f_{\lambda} \in \mathcal{H}$.

    Let $\varepsilon \in \paren{0, +\infty}$ and let us consider the set
    \begin{align*}
        C_{\varepsilon}
        := \brac{
            f \in \mathcal{H}:
            \mathbf{R}_{\lambda}\paren{f}
            - \mathbf{R}_{\lambda}\paren{f_{\lambda}}
            \leq \varepsilon
        } \subseteq \mathcal{H}.
    \end{align*}
    Since \eqref{asm.conv.loss} holds true,
    then, this set is convex.
    Moreover, since \eqref{asm.lip.loss} holds true,
    then, this set is closed.
    Since \eqref{asm.conv.loss} holds true, 
    $\mathbf{R}_{\lambda}\paren{\bullet}$
    is $2\lambda$-strongly convex, and
    by Lemma~\ref{lm.minimum.gap},
    for every $f \in \mathcal{H}$,
    $\mathbf{R}_{\lambda}\paren{f} - \mathbf{R}_{\lambda}\paren{f_{\lambda}} \geq \lambda \normh{f - f_{\lambda}}^2$.
    It follows that $C_{\varepsilon}\subseteq \mathcal{H}$ is included in
    the ball $B_{\varepsilon}\subseteq \mathcal{H}$, centred
    around $f_{\lambda}$ with radius
    $\sqrt{\frac{\varepsilon}{\lambda}}$, that is,
    \begin{align*}
        C_{\varepsilon}\subseteq B_{\varepsilon}
        := \brac{
            f \in \mathcal{H} : 
            \normh{f - f_{\lambda}} \leq \sqrt{\frac{\varepsilon}{\lambda}}
        }.
    \end{align*}
    Since \eqref{asm.lip.loss} and \eqref{asm.bounded.gamma.kernel} hold true,
    Lemma~\ref{lm.bound.rademacher.complexity}
    and \citet[see Proposition 4.2]{bachLearningTheoryFirst}
    imply that
    \begin{align*}
        \mathbb{E}\croch{
            \sup_{\xi \in B_{\epsilon}}
            \brac{
                \mathbf{R}_{0}\paren{\xi}
                - \widehat{\mathbf{R}}_{0; D}\paren{\xi}
            }
        }
        \leq 2 \mathrm{R}_n\paren{\mathcal{F}_{\leq \sqrt{\frac{\varepsilon}{\lambda}}}}
        \leq 2 \sqrt{2} \frac{\rho_p \mathrm{tr}\paren{\Gamma}^{\frac{1}{2}} \kappa^{\Gamma}}{\sqrt{n}} \sqrt{\frac{\varepsilon}{\lambda}}.
    \end{align*}
    Let $A_{\varepsilon; D}$ designate
    the scalar-valued random variable,
    given by
    \begin{align*}
        A_{\varepsilon; D} := \sup_{\xi \in B_{\varepsilon}}
        \brac{
            \paren{
                \mathbf{R}_{\lambda}\paren{\xi}
                - \mathbf{R}_{\lambda}\paren{f_{\lambda}}
            }
            - \paren{
                \widehat{\mathbf{R}}_{\lambda; D}\paren{\xi}
                - \widehat{\mathbf{R}}_{\lambda; D}\paren{f_{\lambda}}
            }
        }.
    \end{align*}
    The expectation of $A_{\varepsilon; D}$ is bounded from above, that is,
    \begin{align*}
        \mathbb{E}\croch{A_{\varepsilon; D}}
        &
        = \mathbb{E}\croch{
            \sup_{\xi \in B_{\varepsilon}}
            \brac{
                \paren{
                    \mathbf{R}_{\lambda}\paren{\xi}
                    - \mathbf{R}_{\lambda}\paren{f_{\lambda}}
                }
                - \paren{
                    \widehat{\mathbf{R}}_{\lambda; D}\paren{\xi}
                    - \widehat{\mathbf{R}}_{\lambda; D}\paren{f_{\lambda}}
                }
            }
        }
        \\
        &
        \leq \mathbb{E}\croch{
            \sup_{\xi \in B_{\varepsilon}}
            \brac{
                \mathbf{R}_{\lambda}\paren{\xi}
                - \widehat{\mathbf{R}}_{\lambda; D}\paren{\xi}
            }
        }
        - \paren{
            \mathbf{R}_{\lambda}\paren{f_{\lambda}}
            - \mathbb{E}\croch{\widehat{\mathbf{R}}_{\lambda; D}\paren{f_{\lambda}}}
        }
        \\
        &
        \leq
        \mathbb{E}\croch{
            \sup_{\xi \in B_{\varepsilon}}
            \brac{
                \mathbf{R}_{0}\paren{\xi}
                + \lambda \normh{\xi}^2
                - \widehat{\mathbf{R}}_{0; D}\paren{\xi}
                - \lambda \normh{\xi}^2
            }
        }
        - \paren{
            \mathbf{R}_{\lambda}\paren{f_{\lambda}}
            - \mathbb{E}\croch{\widehat{\mathbf{R}}_{\lambda; D}\paren{f_{\lambda}}}
        }
        \\
        &
        \leq
        \mathbb{E}\croch{
            \sup_{\xi \in B_{\varepsilon}}
            \brac{
                \mathbf{R}_{0}\paren{\xi}
                - \widehat{\mathbf{R}}_{0; D}\paren{\xi}
            }
        }
        - \paren{
            \mathbf{R}_{\lambda}\paren{f_{\lambda}}
            - \mathbb{E}\croch{\widehat{\mathbf{R}}_{\lambda; D}\paren{f_{\lambda}}}
        }
        \\
        &
        \leq
        2 \sqrt{2} \frac{\rho_p \mathrm{tr}\paren{\Gamma}^{\frac{1}{2}} \kappa^{\Gamma}}{\sqrt{n}} \sqrt{\frac{\varepsilon}{\lambda}} - 0.
    \end{align*}
    Moreover, for every $i \in \brac{1, \ldots, n}$,
    \begin{align*}
        \abss{
            A_{\varepsilon; D} - A_{\varepsilon; D^{i}}
        }
        &
        \leq
        \sup_{\xi \in B_{\varepsilon}}
        \abss{
            \croch{
                \widehat{\mathbf{R}}_{\lambda; D}\paren{\xi}
                - \widehat{\mathbf{R}}_{\lambda; D}\paren{f_{\lambda}}
            } - \croch{
                \widehat{\mathbf{R}}_{\lambda; D^{i}}\paren{\xi}
                - \widehat{\mathbf{R}}_{\lambda; D^{i}}\paren{f_{\lambda}}
            }
        }
        \\
        &
        \leq
        \sup_{\xi \in B_{\varepsilon}}
        \abss{
            \croch{
                \widehat{\mathbf{R}}_{\lambda; D}\paren{\xi}
                - \widehat{\mathbf{R}}_{\lambda; D^{i}}\paren{\xi}
            } - \croch{
                \widehat{\mathbf{R}}_{\lambda; D}\paren{f_{\lambda}}
                - \widehat{\mathbf{R}}_{\lambda; D^{i}}\paren{f_{\lambda}}
            }
        }
        \\
        &
        \leq
        \sup_{\xi \in B_{\varepsilon}}
        \abss{
            \frac{1}{n}
            \croch{
                \ell\paren{Y_i, \xi\paren{X_i}}
                - \ell\paren{Y_i^{\prime}, \xi\paren{X_i^{\prime}}}
            }
            - \frac{1}{n}
            \croch{
                \ell\paren{Y_i, f_{\lambda}\paren{X_i}}
                - \ell\paren{Y_i^{\prime}, f_{\lambda}\paren{X_i^{\prime}}}
            }
        }
        \\
        &
        \leq
        \frac{1}{n}
        \sup_{\xi \in B_{\varepsilon}}
        \abss{
            \croch{
                \ell\paren{Y_i, \xi\paren{X_i}}
                - \ell\paren{Y_i, f_{\lambda}\paren{X_i}}
            }
            - \croch{
                \ell\paren{Y_i^{\prime}, \xi\paren{X_i^{\prime}}}
                - \ell\paren{Y_i^{\prime}, f_{\lambda}\paren{X_i^{\prime}}}
            }
        }
        \\
        &
        \leq
        \frac{\rho_{p}}{n}
        \sup_{\xi \in B_{\varepsilon}}
        \paren{
            \norm{K\paren{\bullet, X_i}}_{\mathrm{op}}
            + \norm{K\paren{\bullet, X_i^{\prime}}}_{\mathrm{op}}
        }\normh{\xi - f_{\lambda}}
        \\
        &
        \leq
        2\frac{\rho_{p}\norm{\Gamma}_{\mathrm{op}}^{\frac{1}{2}} \kappa^{\Gamma}}{n}
        \sqrt{\frac{\varepsilon}{\lambda}},
    \end{align*}
    where the fourth inequality follows from \eqref{asm.lip.loss}
    the reproducing property of the hypothesis space $\mathcal{H}$, and
    the last inequality follows from Lemma~\ref{lm.bounded.kernel}
    which holds true under \eqref{asm.bounded.gamma.kernel}.
    By McDiarmid's inequality
    \citep[see Proposition 1.3]{bachLearningTheoryFirst},
    \begin{align*}
        1 - \delta
        &
        \leq \mathbb{P}\croch{
            A_{\varepsilon; D}
            \leq
            \mathbb{E}\croch{A_{\varepsilon; D}}
            + 2\frac{\rho_{p}\norm{\Gamma}_{\mathrm{op}}^{\frac{1}{2}} \kappa^{\Gamma}}{n}
            \sqrt{\frac{\varepsilon}{\lambda}}
            \sqrt{\frac{n}{2}}
            \sqrt{\log \frac{1}{\delta}}
        }
        \\
        &
        \leq
        \mathbb{P}\croch{
            A_{\varepsilon; D}
            \leq \frac{
                \rho_{p}\norm{\Gamma}_{\mathrm{op}}^{\frac{1}{2}} \kappa^{\Gamma} \sqrt{\varepsilon/\lambda}
            }{
                \sqrt{n}
            }
            \paren{
                2\sqrt{2} \paren{
                    \frac{
                        \mathrm{tr}\paren{\Gamma}
                    }{
                        \norm{\Gamma}_{\mathrm{op}}
                    }
                }^{\frac{1}{2}}
                + \sqrt{2 \log \frac{1}{\delta}}
            }
        }.
    \end{align*}

    Let us consider the case where $\hat{f}_{\lambda; D}\notin C_{\varepsilon}$.
    Since \eqref{asm.lip.loss} holds true, $\mathbf{R}_{\lambda}\paren{\bullet}$
    is continuous and since $C_{\varepsilon}$ is closed,
    one can choose $\eta \in \croch{f_{\lambda}, \hat{f}_{\lambda; D}}$
    on the boundary of $C_{\varepsilon} \subseteq B_{\varepsilon}$, that is,
    $\mathbf{R}_{\lambda}\paren{\eta} - \mathbf{R}_{\lambda}\paren{f_{\lambda}} = \varepsilon$.
    Moreover, since \eqref{asm.conv.loss} holds true,
    $\widehat{\mathbf{R}}_{\lambda; D}\paren{\bullet}$
    is convex, then,
    \begin{align*}
        \widehat{\mathbf{R}}_{\lambda; D}\paren{\eta}
        \leq \max\paren{
        \widehat{\mathbf{R}}_{\lambda; D}\paren{f_{\lambda}},
        \widehat{\mathbf{R}}_{\lambda; D}\paren{\hat{f}_{\lambda; D}}
        } \leq  \widehat{\mathbf{R}}_{\lambda; D}\paren{f_{\lambda}}.
    \end{align*}
    It follows that
    \begin{align*}
        \mathbf{R}_{\lambda}\paren{\eta}
        - \widehat{\mathbf{R}}_{\lambda; D}\paren{\eta}
        + \widehat{\mathbf{R}}_{\lambda; D}\paren{f_{\lambda}}
        - \mathbf{R}_{\lambda}\paren{f_{\lambda}}
        &
        \geq
        \paren{
            \mathbf{R}_{\lambda}\paren{\eta}
            - \mathbf{R}_{\lambda}\paren{f_{\lambda}}
        }
        - \paren{
            \widehat{\mathbf{R}}_{\lambda; D}\paren{\eta}
            - \widehat{\mathbf{R}}_{\lambda; D}\paren{f_{\lambda}}
        }
        \\
        &
        \geq \varepsilon.
    \end{align*}
    Thus, $\hat{f}_{\lambda; D} \notin C_{\varepsilon} \implies A_{\varepsilon, D} \geq \varepsilon$.
    As a consequence,
    for $\varepsilon_0>0$ such that,
    \begin{align*}
        \varepsilon_{0}
        = \frac{
            \rho_{p}\norm{\Gamma}_{\mathrm{op}}^{\frac{1}{2}} \kappa^{\Gamma} \sqrt{\varepsilon_{0}/\lambda}
        }{
            \sqrt{n}
        }
        \paren{
            2\sqrt{2} \paren{
                \frac{
                    \mathrm{tr}\paren{\Gamma}
                }{
                    \norm{\Gamma}_{\mathrm{op}}
                }
            }^{\frac{1}{2}}
            + \sqrt{2 \log \frac{1}{\delta}}
        },
    \end{align*}
    the next result holds true
    \begin{align*}
        \mathbb{P}\croch{
            \hat{f}_{\lambda; D} \notin C_{\varepsilon_{0}}
        }
        &
        =
        \mathbb{P}\croch{
            \hat{f}_{\lambda; D} \notin C_{\varepsilon_{0}},
            A_{\varepsilon_{0}, D} \geq \varepsilon_{0}
        }
        +
        \mathbb{P}\croch{
            \hat{f}_{\lambda; D} \notin C_{\varepsilon_{0}},
            A_{\varepsilon_{0}, D} < \varepsilon_{0}
        }
        \\
        &
        \leq 
        \mathbb{P}\croch{
            A_{\varepsilon_{0}, D} \geq \varepsilon_{0}
        }
        + 0
        \\
        &
        \leq \delta,
    \end{align*}
    where the last inequality follows from 
    the control on $A_{\varepsilon_{0}, D}$
    and on the definition of $\varepsilon_{0}$.
    One concludes by solving for $\varepsilon_{0}$,
    which results in the expression given by
    \begin{align*}
        \varepsilon_{0}
        =
        \croch{
            \frac{
                \rho_{p} \norm{\Gamma}_{\mathrm{op}}^{\frac{1}{2}} \kappa^{\Gamma} \sqrt{1/\lambda}
            }{
                \sqrt{n}
            }
            \paren{
                2\sqrt{2}\paren{
                    \frac{
                        \mathrm{tr}\paren{\Gamma}
                    }{
                        \norm{\Gamma}_{\mathrm{op}}
                    }
                }^{\frac{1}{2}}
                + \sqrt{2 \log \frac{1}{\delta}}
            }
        }^2.
    \end{align*}
\end{proof}
Again, this upper-bound is akin to one derived by \citet[see Proposition 4.6]{bachLearningTheoryFirst}.
The main difference is the effect of output dimension reflected
by the term $2\frac{\mathrm{tr}\paren{\Gamma}}{\norm{\Gamma}_{\mathrm{op}}}$
which is twice the effective rank \citep[see Definition 3]{bartlettBenignOverfittingLinear2020}
of the inter-task covariance-matrix $\Gamma$, and its operator norm $\norm{\Gamma}_{\mathrm{op}}$.
Yet again, this effect goes through the inter-task covariance-matrix.

\subsection{Upper-bounding the quantile value}

The next result provides a control on the deviation of $\hat{f}_{\lambda; D}$ around $f_{\lambda}$.
\begin{corollary}
    \label{cor.estimation.error.bound}
    Assume \eqref{asm.conv.loss},
    \eqref{asm.lip.loss}, \eqref{asm.lower.bounded.loss} and
    \eqref{asm.bounded.gamma.kernel} hold true.
    Then, for every $\delta \in \paren{0, 1}$, with probability greater than $1 - \delta$,
    \begin{align*}
        \normh{
            \hat{f}_{\lambda^{+}; D} - f_{\lambda}
        }
        \leq
        \frac{
            \rho_{p}\norm{\Gamma}_{\mathrm{op}}^{\frac{1}{2}} \kappa^{\Gamma}
        }{
            \lambda \sqrt{n}
        }
        \paren{
            \frac{\sqrt{n}}{2\paren{n+1}}
            +
            2\sqrt{2}\paren{
                \frac{
                    \mathrm{tr}\paren{\Gamma}
                }{
                    \norm{\Gamma}_{\mathrm{op}}
                }
            }^{\frac{1}{2}}
            + \sqrt{2 \log \frac{1}{\delta}}
        }.
    \end{align*}
\end{corollary}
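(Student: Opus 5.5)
The plan is to split the deviation with the triangle inequality:
\begin{align*}
    \normh{\hat{f}_{\lambda^{+}; D} - f_{\lambda}}
    \leq \normh{\hat{f}_{\lambda^{+}; D} - \hat{f}_{\lambda; D}}
    + \normh{\hat{f}_{\lambda; D} - f_{\lambda}}.
\end{align*}
The first term is deterministic. It is controlled by Lemma~\ref{lm.regularization.stability}, which gives at most $\rho_{p}\paren{\frac{1}{n}\sum_{i}\norm{K\paren{X_i, X_i}}_{\mathrm{op}}^{\frac{1}{2}}}/\paren{2\lambda(n+1)}$. Lemma~\ref{lm.bounded.kernel} under \eqref{asm.bounded.gamma.kernel} then bounds this by $\rho_{p}\norm{\Gamma}_{\mathrm{op}}^{\frac{1}{2}}\kappa^{\Gamma}/\paren{2\lambda(n+1)}$. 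Factoring out $\rho_{p}\norm{\Gamma}_{\mathrm{op}}^{\frac{1}{2}}\kappa^{\Gamma}/\paren{\lambda\sqrt{n}}$ produces exactly the term $\frac{\sqrt{n}}{2(n+1)}$ inside the bracket.

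The second term carries all the randomness. The regularized risk $\mathbf{R}_{\lambda}$ is $2\lambda$-strongly convex, being $\mathbf{R}_0$ (convex by Lemma~\ref{lm.risk.function.proper}) plus $\lambda\normh{\cdot}^2$. Its minimizer is $f_{\lambda}$ (Lemma~\ref{lm.reg.risk.minimizer}), so Lemma~\ref{lm.minimum.gap} yields
\begin{align*}
    \lambda\normh{\hat{f}_{\lambda; D} - f_{\lambda}}^2 \leq \mathbf{R}_{\lambda}\paren{\hat{f}_{\lambda; D}} - \mathbf{R}_{\lambda}\paren{f_{\lambda}}.
\end{align*}
On the event of probability at least $1-\delta$ from Lemma~\ref{lm.estimation.error.bound}, the right-hand side is at most
\begin{align*}
    \frac{\rho_{p}^2\norm{\Gamma}_{\mathrm{op}}\paren{\kappa^{\Gamma}}^2}{\lambda n}\paren{2\sqrt{2}\paren{\mathrm{tr}\paren{\Gamma}/\norm{\Gamma}_{\mathrm{op}}}^{\frac{1}{2}} + \sqrt{2\log\frac{1}{\delta}}}^2.
\end{align*}
Dividing by $\lambda$ and taking square roots gives
\begin{align*}
    \normh{\hat{f}_{\lambda; D} - f_{\lambda}} \leq \frac{\rho_{p}\norm{\Gamma}_{\mathrm{op}}^{\frac{1}{2}}\kappa^{\Gamma}}{\lambda\sqrt{n}}\paren{2\sqrt{2}\paren{\mathrm{tr}\paren{\Gamma}/\norm{\Gamma}_{\mathrm{op}}}^{\frac{1}{2}} + \sqrt{2\log\frac{1}{\delta}}}.
\end{align*}
Adding the two bounds on that same event gives the stated inequality.

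The step needing the most care is the second one. The variance bound must be applied to $\hat{f}_{\lambda; D}$ rather than $\hat{f}_{\lambda^{+}; D}$, because Lemma~\ref{lm.estimation.error.bound} compares against $f_{\lambda}$ with the same $\lambda$. This mismatch is the only reason the first, deterministic term appears at all. One also has to check that the strong-convexity step really produces the factor $1/\lambda$ inside the square root, so that the final rate is $1/\paren{\lambda\sqrt{n}}$ and not $1/\sqrt{\lambda n}$.
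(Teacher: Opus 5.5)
Your proposal is correct and follows the same route as the paper. The paper likewise combines the $2\lambda$-strong convexity of $\mathbf{R}_{\lambda}$ (via Lemma~\ref{lm.minimum.gap}) with Lemma~\ref{lm.estimation.error.bound} to bound $\normh{\hat{f}_{\lambda; D} - f_{\lambda}}$, then adds Lemma~\ref{lm.regularization.stability} through the triangle inequality; you additionally spell out the use of Lemma~\ref{lm.bounded.kernel} that produces the $\frac{\sqrt{n}}{2(n+1)}$ term, which the paper leaves implicit.
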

\begin{proof}
Let $\delta \in \paren{0, 1}$ designate a risk level.
Since \eqref{asm.conv.loss} holds true, 
$\mathbf{R}_{\lambda}\paren{\bullet}$
is $2\lambda$-strongly convex, and
by Lemma~\ref{lm.minimum.gap},
for every $f \in \mathcal{H}$,
$\mathbf{R}_{\lambda}\paren{f} - \mathbf{R}_{\lambda}\paren{f_{\lambda}} \geq \lambda \normh{f - f_{\lambda}}^2$.
Thus,  $\normh{f - f_{\lambda}} \leq \frac{1}{\sqrt{\lambda}}\sqrt{\mathbf{R}_{\lambda}\paren{f} - \mathbf{R}_{\lambda}\paren{f_{\lambda}}}$.
For $f = \hat{f}_{\lambda; D}$ and using the inequality provided in Lemma~\ref{lm.estimation.error.bound},
with probability greater than $1 - \delta$,
\begin{align*}
    \normh{
        \hat{f}_{\lambda; D} - f_{\lambda}
    }
    \leq
    \frac{
        \rho_{p}\norm{\Gamma}_{\mathrm{op}}^{\frac{1}{2}} \kappa^{\Gamma}
    }{
        \lambda \sqrt{n}
    }
    \paren{
        2\sqrt{2}\paren{
            \frac{
                \mathrm{tr}\paren{\Gamma}
            }{
                \norm{\Gamma}_{\mathrm{op}}
            }
        }^{\frac{1}{2}}
        + \sqrt{2 \log \frac{1}{\delta}}
    }.
\end{align*}
Conjoining this with Lemma~\ref{lm.regularization.stability}
yields the desired result.
\end{proof}
%
%
%
%
%
%

%
%
{%
The upper bound improves at a rate of $O\paren{\frac{1}{\lambda\sqrt{n}}}$ for any fixed $p$.
For $p=1$, \citet[see Theorem 1]{smaleLearningTheoryEstimates2007},
provided an upper-bound on a similar deviation
when the loss-function is the quadratic loss-function.
Just like the one above, their upper bound improves at a rate of $O\paren{\frac{1}{\lambda\sqrt{n}}}$.
}

The effect of output-space dimension $p$ is reflected by the term $\frac{\mathrm{tr}\paren{\Gamma}}{\norm{\Gamma}_{\mathrm{op}}}$,
which is the effective rank \citep[see Definition 3]{bartlettBenignOverfittingLinear2020}
of the inter-task covariance-matrix $\Gamma$, and its operator norm $\norm{\Gamma}_{\mathrm{op}}$.
Although $p$ can be very large, as long as the effective rank of $\Gamma$ and its operator norm are bounded, then the upper-bound is informative.
This additionally emphasizes the importance of the inter-task relatedness,
since a strong relatedness effectively means a lower effective rank and, therefore, a tighter bound.

\medskip
Now that the randomness due to the training data set is controlled,
the next step is to deal with $\lambda$.
One must then consider a minimizer of the risk (which does not depend on $\lambda$).
Assuming such minimizer exists,
the next result ensures that the minimum-norm minimizer of the risk exists and is unique.
\begin{lemma}
    \label{lm.min.norm.risk.minimizer}
    Assume \eqref{asm.conv.loss}, \eqref{asm.lip.loss},
    \eqref{asm.lower.bounded.loss},
    \eqref{asm.bounded.expected.loss},
    \eqref{asm.bounded.gamma.kernel}, and
    \eqref{asm.risk.minimizer.attained} hold true.
    Then, the following minimization problem admits a unique
    solution $f_{\mathcal{H}} \in \mathcal{H}$,
    \begin{align*}
        \min_{
            \substack{
                f \in \mathcal{H}\\
                \mathbf{R}_{0}\paren{f} = \inf_{g\in \mathcal{H}} \mathbf{R}_{0}\paren{g}
            }
        } \norm{f}_{\mathcal{H}}.
    \end{align*}
\end{lemma}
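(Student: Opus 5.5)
The plan is to show that the constraint set
\begin{align*}
    \mathcal{M} := \brac{f \in \mathcal{H} : \mathbf{R}_{0}\paren{f} = \inf_{g\in \mathcal{H}} \mathbf{R}_{0}\paren{g}}
\end{align*}
is a nonempty, closed, convex subset of the Hilbert space $\mathcal{H}$. The minimizer $f_{\mathcal{H}}$ is then the metric projection of $0$ onto $\mathcal{M}$, and its existence and uniqueness follow from the Hilbert projection theorem.

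First, nonemptiness is exactly \eqref{asm.risk.minimizer.attained}. Second, Lemma~\ref{lm.risk.function.proper} applies under \eqref{asm.conv.loss}, \eqref{asm.lip.loss}, \eqref{asm.bounded.expected.loss} and \eqref{asm.bounded.gamma.kernel}. It gives that $\mathbf{R}_{0}$ is convex, continuous and finite everywhere on $\mathcal{H}$. Moreover, \eqref{asm.lower.bounded.loss} yields $\mathbf{R}_{0} \geq c_{\ell}$, so $m := \inf_{g} \mathbf{R}_{0}\paren{g}$ is a finite real number.

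Next, we rewrite the constraint set as a sublevel set, $\mathcal{M} = \brac{f \in \mathcal{H} : \mathbf{R}_{0}\paren{f} \leq m}$; this holds because $\mathbf{R}_{0} \geq m$ everywhere. A sublevel set of a convex function is convex. By continuity of $\mathbf{R}_{0}$, it is also closed, being the preimage of $(-\infty, m]$.

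Finally, minimizing $\normh{f}$ over $\mathcal{M}$ is equivalent to minimizing $\normh{f}^{2}$, which is $2$-strongly convex, continuous and coercive. One route is to invoke the projection theorem onto a closed convex set, which gives a unique nearest point to $0$. Alternatively, one can argue directly, mirroring Lemma~\ref{lm.predictor.well.defined}. Take a minimizing sequence in $\mathcal{M}$; the parallelogram identity, combined with convexity of $\mathcal{M}$ (midpoints stay in $\mathcal{M}$), shows the sequence is Cauchy. Its limit lies in $\mathcal{M}$ by closedness, which gives existence. For uniqueness, if two minimizers $f_1 \neq f_2$ existed, their midpoint would lie in $\mathcal{M}$ and have strictly smaller norm by strict convexity of $\normh{\bullet}^{2}$, a contradiction.

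The only step needing care is closedness of $\mathcal{M}$, which relies on continuity of $\mathbf{R}_{0}$ on all of $\mathcal{H}$. This is provided by Lemma~\ref{lm.risk.function.proper}, via finiteness from \eqref{asm.bounded.expected.loss} together with the Lipschitz bound $\abss{\mathbf{R}_{0}\paren{f} - \mathbf{R}_{0}\paren{g}} \leq \rho_{p}\norm{\Gamma}_{\mathrm{op}}^{\frac{1}{2}}\kappa^{\Gamma}\normh{f-g}$. In fact this bound shows that $\mathbf{R}_{0}$ is Lipschitz, so closedness is immediate.
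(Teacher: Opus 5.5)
Your proof is correct and follows the paper's argument. The paper likewise shows that the set of risk minimizers is nonempty by \eqref{asm.risk.minimizer.attained}, and convex and closed because $\mathbf{R}_{0}$ is convex and continuous (Lemma~\ref{lm.risk.function.proper}) with $\inf \mathbf{R}_{0} \geq c_{\ell}$; it then takes the minimum-norm element. Your use of the projection theorem, or of strict convexity of the squared norm, makes the uniqueness step more precise than the paper's bare appeal to convexity, continuity and coercivity of the norm.
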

\begin{proof}
Under \eqref{asm.conv.loss}, \eqref{asm.lip.loss},
\eqref{asm.bounded.expected.loss} and \eqref{asm.bounded.gamma.kernel},
Lemma~\ref{lm.risk.function.proper} states
that the risk function $\mathbf{R}_{0}\paren{\bullet}$
is a proper convex continuous function.
Under \eqref{asm.lower.bounded.loss},
\begin{align*}
    \inf_{g \in \mathcal{H}} \mathbf{R}_{0}\paren{g}
    \geq c_{\ell} > -\infty.
\end{align*}
It follows that the set of solution
$\brac{
    f \in \mathcal{H} :
    \mathbf{R}_{0}\paren{f} = \inf_{g\in \mathcal{H}} \mathbf{R}_{0}\paren{g}
}$ is convex, and since $\mathbf{R}_{0}\paren{\bullet}$
is continuous, this set is closed.
Moreover, under \eqref{asm.risk.minimizer.attained},
this set is non-empty.
Since the objective function
$f \in \mathcal{H} \mapsto \norm{f}_{\mathcal{H}}$
is convex, continuous and coercive,
then it admits a unique minimizer
over the non-empty
closed, convex set $\brac{
    f \in \mathcal{H},
    \mathbf{R}_{0}\paren{f} = \inf_{g\in \mathcal{H}} \mathbf{R}_{0}\paren{g}
}$.
\end{proof}

%
{Conjoining all the above results,
the next proposition provides a uniform upper-bound on the non-conformity scores given by Eq.~\eqref{eq.score},
thus controling the randomness of the quantile value.
}
\begin{lemma}
    \label{lm.bounded.score}
    %
    %
    %
    Assume \eqref{asm.conv.loss}, \eqref{asm.lip.loss},
    \eqref{asm.lower.bounded.loss}, \eqref{asm.bounded.expected.loss},
    \eqref{asm.bounded.gamma.kernel}, \eqref{asm.risk.minimizer.attained} and
    \eqref{asm.bounded.gamma.y} hold true.
    For every risk level $\delta \in \paren{0, 1}$, with probability greater than
    $1 - \delta$,
    \begin{align*}
        &
        \sup_{\paren{x, u} \in \mathcal{X} \times \mathcal{Y}}
        \norm{\Gamma^{-\frac{1}{2}}\paren{u - \hat{f}_{\lambda^{+}; D}(x)}}
        \\
        &
        \leq C_{\mathcal{Y}}\paren{p}
        + \frac{
            \rho_{p} \norm{\Gamma}_{\mathrm{op}}^{\frac{1}{2}} \kappa^{\Gamma}
        }{
            \lambda \sqrt{n}
        }
        \paren{
            \frac{\sqrt{n}}{2\paren{n+1}}
            + 2\sqrt{2}\paren{
                \frac{
                    \mathrm{tr}\paren{\Gamma}
                }{
                    \norm{\Gamma}_{\mathrm{op}}
                }
            }^{\frac{1}{2}}
            + \sqrt{2 \log \frac{1}{\delta}}
        }
        + \kappa^{\Gamma} 
        \normh{f_{\mathcal{H}}}.
    \end{align*}
\end{lemma}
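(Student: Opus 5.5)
The plan is a triangle-inequality decomposition of the residual, followed by bounding each piece with a result already established. Fix $(x,u)\in\mathcal{X}\times\mathcal{Y}$ and write
\begin{align*}
    \norm{\Gamma^{-\frac{1}{2}}\paren{u - \hat{f}_{\lambda^{+}; D}(x)}}
    \leq
    \norm{\Gamma^{-\frac{1}{2}}u}
    + \norm{\Gamma^{-\frac{1}{2}}\paren{\hat{f}_{\lambda^{+}; D} - f_{\lambda}}(x)}
    + \norm{\Gamma^{-\frac{1}{2}}f_{\lambda}(x)}.
\end{align*}
The first term is at most $C_{\mathcal{Y}}\paren{p}$ by \eqref{asm.bounded.gamma.y}.

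For the second and third terms, apply Lemma~\ref{lm.gamma.prediction.bound} to the functions $\hat{f}_{\lambda^{+}; D} - f_{\lambda}$ and $f_{\lambda}$, both in $\mathcal{H}$. Then use \eqref{asm.bounded.gamma.kernel} to bound the factor $\norm{\Gamma^{-\frac{1}{2}}K(x,x)\Gamma^{-\frac{1}{2}}}_{\mathrm{op}}^{\frac{1}{2}}$ by $\kappa^{\Gamma}$, uniformly in $x$. This reduces the second term to $\kappa^{\Gamma}\normh{\hat{f}_{\lambda^{+}; D} - f_{\lambda}}$ and the third to $\kappa^{\Gamma}\normh{f_{\lambda}}$.

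For the third term, Lemma~\ref{lm.reg.predictor.norm.bound} gives $\normh{f_{\lambda}}\leq\normh{f_{\mathcal{H}}}$. This requires $f_{\mathcal{H}}$ to be well defined, which Lemma~\ref{lm.min.norm.risk.minimizer} provides under \eqref{asm.risk.minimizer.attained}, \eqref{asm.bounded.expected.loss} and the other listed assumptions. The resulting bound $\kappa^{\Gamma}\normh{f_{\mathcal{H}}}$ is deterministic.

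The second term is the only random one. By Corollary~\ref{cor.estimation.error.bound}, with probability at least $1-\delta$, $\normh{\hat{f}_{\lambda^{+}; D} - f_{\lambda}}$ is bounded by the displayed bracket times $\rho_p\norm{\Gamma}_{\mathrm{op}}^{1/2}\kappa^{\Gamma}/(\lambda\sqrt n)$. On this single event, whose definition does not involve $(x,u)$, all three bounds hold simultaneously for every $(x,u)$. Taking the supremum then gives the claim.

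I expect the main obstacle to be reconciling the constants. Multiplying the corollary's bound by the extra factor $\kappa^{\Gamma}$ from Lemma~\ref{lm.gamma.prediction.bound} yields $(\kappa^{\Gamma})^2$, whereas the statement displays a single $\kappa^{\Gamma}$. I would therefore keep the factor $\kappa^{\Gamma}$ explicitly, giving $\rho_p\norm{\Gamma}_{\mathrm{op}}^{1/2}(\kappa^{\Gamma})^2/(\lambda\sqrt n)$; this matches the $(\kappa^{\Gamma})^2$ that appears in Theorem~\ref{thm.thickness.upper.bound}. I would also double-check the corollary's first bracket term $\frac{\sqrt n}{2(n+1)}$, which comes from Lemma~\ref{lm.regularization.stability} combined with Lemma~\ref{lm.bounded.kernel}. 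The remaining steps are routine.
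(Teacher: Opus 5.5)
Your proposal is correct and is essentially the paper's proof. The only difference is ordering: the paper applies Lemma~\ref{lm.gamma.prediction.bound} to $\hat{f}_{\lambda^{+}; D}$ itself and then splits $\|\hat{f}_{\lambda^{+}; D}\|_{\mathcal{H}} \leq \|\hat{f}_{\lambda^{+}; D} - f_{\lambda}\|_{\mathcal{H}} + \|f_{\lambda}\|_{\mathcal{H}}$ in $\mathcal{H}$ rather than pointwise, before invoking Corollary~\ref{cor.estimation.error.bound} and Lemma~\ref{lm.reg.predictor.norm.bound} as you do.

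You are also right about the constant. The paper's own final line carries $\rho_{p}\|\Gamma\|_{\mathrm{op}}^{1/2}(\kappa^{\Gamma})^{2}/(\lambda\sqrt{n})$, and Theorem~\ref{thm.thickness.upper.bound} uses this form. So the single $\kappa^{\Gamma}$ in the displayed statement is a typo, and the $(\kappa^{\Gamma})^{2}$ version you would prove is the one the argument actually establishes.
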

\begin{proof}
Let $\delta \in \paren{0, 1}$ designate a risk level.
Since \eqref{asm.lip.loss} implies \eqref{asm.lsc.loss},
under \eqref{asm.conv.loss}, \eqref{asm.lip.loss},
and \eqref{asm.lower.bounded.loss},
Lemma~\ref{lm.predictor.well.defined}
states that $\widehat{\mathbf{R}}_{\lambda; D}\paren{\bullet}$
admits a unique minimizer $\hat{f}_{\lambda^{+}; D} \in \mathcal{H}$, and
Lemma~\ref{lm.min.norm.risk.minimizer}
states that $\mathbf{R}_{\lambda}\paren{\bullet}$
admits a unique minimizer $f_{\lambda} \in \mathcal{H}$.
Moreover, since \eqref{asm.conv.loss}, \eqref{asm.lip.loss},
\eqref{asm.lower.bounded.loss}, \eqref{asm.bounded.expected.loss},
\eqref{asm.bounded.gamma.kernel} and \eqref{asm.risk.minimizer.attained} hold true,
Lemma~\ref{lm.min.norm.risk.minimizer} states that $f_{\mathcal{H}}$
is well-defined.
\begin{align*}
    &
    \sup_{\paren{x, u} \in \mathcal{X} \times \mathcal{Y}}
    \norm{\Gamma^{-\frac{1}{2}}\paren{u - \hat{f}_{\lambda^{+}; D}(x)}}
    \\
    &
    \leq \sup_{u \in \mathcal{Y}}\norm{\Gamma^{-\frac{1}{2}}u}
    + \sup_{x \in \mathcal{X}}\norm{\Gamma^{-\frac{1}{2}}\hat{f}_{\lambda^{+}; D}(x)}
    \\
    &
    \leq C_{\mathcal{Y}}\paren{p}
    + \sup_{x \in \mathcal{X}}\norm{\Gamma^{-\frac{1}{2}} K\paren{x, x}\Gamma^{-\frac{1}{2}}}_{\mathrm{op}}^{\frac{1}{2}}
    \normh{\hat{f}_{\lambda^{+}; D}}
    \\
    &
    \leq C_{\mathcal{Y}}\paren{p}
    + \kappa^{\Gamma}
    \paren{
        \normh{\hat{f}_{\lambda^{+}; D} - f_{\lambda}}
        + \normh{f_{\lambda}}
    }
    \\
    &
    \leq C_{\mathcal{Y}}\paren{p}
    + \frac{
        \rho_{p} \norm{\Gamma}_{\mathrm{op}}^{\frac{1}{2}} \paren{\kappa^{\Gamma}}^{2}
    }{
        \lambda \sqrt{n}
    }
    \paren{
        \frac{\sqrt{n}}{2\paren{n+1}}
        + 2\sqrt{2}\paren{
            \frac{
                \mathrm{tr}\paren{\Gamma}
            }{
                \norm{\Gamma}_{\mathrm{op}}
            }
        }^{\frac{1}{2}}
        + \sqrt{2 \log \frac{1}{\delta}}
    }
    + \kappa^{\Gamma} 
    \normh{f_{\mathcal{H}}},
\end{align*}
where the first inequality follows from
the triangle inequality, and
the second inequality follows from
\eqref{asm.bounded.gamma.y} and Lemma~\ref{lm.gamma.prediction.bound}, and
the third inequality follows from \eqref{asm.bounded.gamma.kernel},
and the last inequality follows from Lemma~\ref{lm.reg.predictor.norm.bound}, and
Corollary~\ref{cor.estimation.error.bound}
which holds with probability greater than $1 - \delta$.
\end{proof}

\subsection{Proof of Theorem~\ref{thm.thickness.upper.bound}}
\label{proof.thickness.upper.bound}
\begin{proof}
Since \eqref{asm.eigenvalue.decay} holds true,
then $\norm{\Gamma}_{\mathrm{op}}^{\frac{1}{2}} \leq C_{\Gamma}^{\frac{1}{2}}$.
Moreover,
the determinant of $\Gamma^{\frac{1}{2}}$
is bounded from above, that is,
\begin{align*}
    \mathrm{det}\paren{\Gamma^{\frac{1}{2}}}
    = \mathrm{det}\paren{\Gamma}^{\frac{1}{2}}
    = \paren{
        \prod_{l=1}^{p} \mu_l\paren{\Gamma}
    }^{\frac{1}{2}}
    \leq
    C_{\Gamma}^{\frac{p}{2}}
    \paren{
        \prod_{l=1}^{p} \frac{1}{l^{\gamma}}
    }^{\frac{1}{2}}
    &
    \leq
    C_{\Gamma}^{\frac{p}{2}}
    \frac{1}{
        \paren{p!}^{\gamma/2}
    }
    \\
    &
    \leq
    \paren{2\pi}^{-\frac{\gamma}{4}}
    e^{-\frac{\gamma}{24p + 2}}
    p^{-\frac{\gamma}{4}}
    C_{\Gamma}^{\frac{p}{2}}
    e^{\frac{\gamma}{2}p}
    p^{-\frac{\gamma}{2}p},
\end{align*}
where the third line follows from Stirling's approximation of the factorial.
Using said approximation again,
\begin{align*}
    \frac{\pi^{\frac{p}{2}}}{\paren{\frac{p}{2}}!}
    \leq \pi^{-\frac{1}{2}}
    e^{-\frac{1}{6p + 1}}
    p^{-\frac{1}{2}}
    \paren{2 \pi}^{\frac{p}{2}}
    e^{\frac{p}{2}}
    p^{- \frac{p}{2}}.
\end{align*}
Since \eqref{asm.conv.loss}, \eqref{asm.lip.loss} and \eqref{asm.lower.bounded.loss}
hold true, Lemma~\ref{lm.thickness.empirical.bound.maha}
combined with the upper-bounds imply that
the \emph{thickness} is bounded from above, that is
\begin{align*}
    \mathrm{THK}_{\lambda; \alpha}^{\Gamma}\paren{X_{n+1}}
    &
    \leq
    \frac{
        \rho_{p} \paren{\kappa^{\Gamma}}^{2}
    }{\lambda \paren{n+1}}
    \paren{
        2C_{\Gamma}^{\frac{1}{2}}
        \paren{2\pi}^{-\frac{\gamma}{4}}
        e^{-\frac{\gamma}{24p + 2}}
        \pi^{-\frac{1}{2}}
        e^{-\frac{1}{6p + 1}}
    }\paren{
        p
        p^{-\frac{\gamma}{4}}
        p^{-\frac{1}{2}}
    }
    \\
    &
    \quad
    \times
    \paren{
        C_{\Gamma}^{\frac{p}{2}}
        e^{\frac{\gamma}{2}p}
        \paren{2 \pi}^{\frac{p}{2}}
        e^{\frac{p}{2}}
    }
    \paren{
        p^{-\frac{\gamma}{2}p}
        p^{- \frac{p}{2}}
    }
    \\
    &
    \quad
    \times
    \paren{
        \widehat{Q}_{\lambda; D}^{\Gamma}(\alpha)
        + \frac{
            \rho_{p} \paren{\kappa^{\Gamma}}^{2}
        }{\lambda \paren{n+1}}
        C_{\Gamma}^{\frac{1}{2}}
    }^{p-1}
    \\
    &
    \leq
    \frac{
        \rho_{p} \paren{\kappa^{\Gamma}}^{2}
    }{\lambda \paren{n+1}}
    \paren{
        2C_{\Gamma}^{\frac{1}{2}}
        2^{-\frac{\gamma}{4}}
        \pi^{-\frac{\gamma}{4} - \frac{1}{2}}
        e^{-\frac{\gamma}{24p + 2}
        - \frac{1}{6p + 1}}
    }p^{\frac{1}{2} - \frac{\gamma}{4}}
    \paren{
        \paren{2 \pi C_{\Gamma}}^{\frac{p}{2}}
        e^{\frac{\gamma+1}{2}p}
    }
    p^{-\frac{\gamma+1}{2}p}
    \\
    &
    \quad
    \times
    \paren{
        \widehat{Q}_{\lambda; D}^{\Gamma}(\alpha)
        + \frac{
            \rho_{p} \paren{\kappa^{\Gamma}}^{2}
        }{\lambda \paren{n+1}}
        C_{\Gamma}^{\frac{1}{2}}
    }^{p-1}.
\end{align*}

Let $\delta \in \paren{0, 1}$ designate a risk level.
Since \eqref{asm.eigenvalue.decay} holds true,
the trace of $\Gamma$
is bounded above, that is,
\begin{align*}
    \mathrm{tr}\paren{\Gamma}
    = \sum_{l=1}^{p}\mu_{l}\paren{\Gamma}
    &
    \leq \sum_{l=1}^{p} C_{\Gamma} \frac{1}{l^{\gamma}}
    \leq C_{\Gamma} \paren{1 + \sum_{l=2}^p \int_{l-1}^{l} \frac{1}{x^{\gamma}}dx}
    \leq C_{\Gamma} \paren{1 + \int_{1}^{l} \frac{1}{x^{\gamma}}dx}
    \\
    &
    \leq C_{\Gamma} \paren{1 + \croch{
        \frac{x^{-\gamma + 1}}{- \gamma + 1}
    }_{1}^{p}}
    \leq
    C_{\Gamma} \paren{1 + \croch{
        \frac{l^{-\gamma + 1}}{- \gamma + 1}
        - \frac{1}{- \gamma + 1}
    }
    }
    \\
    &
    \leq
    C_{\Gamma}
    \paren{\frac{\gamma}{\gamma - 1}},
\end{align*}
where the last inequality follows from the fact that $\gamma > 1$.    

Conjoined with \eqref{asm.conv.loss}, \eqref{asm.lip.loss},
\eqref{asm.lower.bounded.loss}, \eqref{asm.bounded.expected.loss},
\eqref{asm.risk.minimizer.attained}, \eqref{asm.bounded.gamma.y}
and \eqref{asm.not.source.condition},
Lemma~\ref{lm.bounded.score} implies that with probability greater than
$1 - \delta$,
the quantile $\widehat{Q}_{\lambda; D}^{\Gamma}(\alpha)$
(see Eq.~\ref{eq.score.quantile}) is bounded from above, that is,
\begin{align*}
    &
    \widehat{Q}_{\lambda; D}^{\Gamma}(\alpha)
    \\
    &
    \leq
    C_{\mathcal{Y}}\paren{p}
    + \frac{
        \rho_{p} \norm{\Gamma}_{\mathrm{op}}^{\frac{1}{2}} \paren{\kappa^{\Gamma}}^{2}
    }{
        \lambda \sqrt{n}
    }
    \paren{
        \frac{\sqrt{n}}{2\paren{n+1}}
        + 2\sqrt{2}\paren{
            \frac{
                \mathrm{tr}\paren{\Gamma}
            }{
                \norm{\Gamma}_{\mathrm{op}}
            }
        }^{\frac{1}{2}}
        + \sqrt{2 \log \frac{1}{\delta}}
    } + \kappa^{\Gamma} \normh{f_{\mathcal{H}}}
    \\
    &
    \leq
    C_{\mathcal{Y}}\paren{p}
    + \kappa^{\Gamma} C_{\mathcal{H}}
    + \frac{
        \rho_{p} \paren{\kappa^{\Gamma}}^{2}
    }{
        \lambda \sqrt{n}
    }
    \paren{
        C_{\Gamma}^{\frac{1}{2}}
        \frac{\sqrt{n}}{2\paren{n+1}}
        + 2^{\frac{3}{2}}
        C_{\Gamma}^{\frac{1}{2}}\paren{
            \frac{\gamma}{\gamma - 1}
        }^{\frac{1}{2}}
        + C_{\Gamma}^{\frac{1}{2}}\sqrt{2 \log \frac{1}{\delta}}
    }
    \\
    &
    \leq
    C_{\mathcal{Y}}\paren{p}
    + \kappa^{\Gamma} C_{\mathcal{H}}
    + \frac{
        \rho_{p} \paren{\kappa^{\Gamma}}^{2}
    }{
        \lambda \sqrt{n}
    }C_{\Gamma}^{\frac{1}{2}}
    \paren{
        \frac{\sqrt{n}}{2\paren{n+1}}
        + 2^{\frac{3}{2}}
        \paren{
            \frac{\gamma}{\gamma - 1}
        }^{\frac{1}{2}}
        + \sqrt{2 \log \frac{1}{\delta}}
    }.
\end{align*}
Combining all the above,
the \emph{thickness} is bounded from above
with probability greater than $1 - \delta$, that is
\begin{align*}
    &
    \mathrm{THK}_{\lambda; \alpha}^{\Gamma}\paren{X_{n+1}}
    \\
    &
    \leq
    \frac{
        \rho_{p} \paren{\kappa^{\Gamma}}^{2}
    }{\lambda \paren{n+1}}
    \paren{
        2C_{\Gamma}^{\frac{1}{2}}
        2^{-\frac{\gamma}{4}}
        \pi^{-\frac{\gamma}{4} - \frac{1}{2}}
        e^{-\frac{\gamma}{24p + 2}
        - \frac{1}{6p + 1}}
    }p^{\frac{1}{2} - \frac{\gamma}{4}}
    \paren{
        \paren{2 \pi C_{\Gamma}}^{\frac{p}{2}}
        e^{\frac{\gamma+1}{2}p}
    }
    p^{-\frac{\gamma+1}{2}p}
    \\
    &
    \quad
    \times
    \paren{
        \widehat{Q}_{\lambda; D}^{\Gamma}(\alpha)
        + \frac{
            \rho_{p} \paren{\kappa^{\Gamma}}^{2}
        }{\lambda \paren{n+1}}
        C_{\Gamma}^{\frac{1}{2}}
    }^{p-1}
    \\
    &
    \leq
    \frac{
        \rho_{p} \paren{\kappa^{\Gamma}}^{2}
    }{\lambda \paren{n+1}}
    \paren{
        2C_{\Gamma}^{\frac{1}{2}}
        2^{-\frac{\gamma}{4}}
        \pi^{-\frac{\gamma}{4} - \frac{1}{2}}
        e^{-\frac{\gamma}{24p + 2}
        - \frac{1}{6p + 1}}
    }p^{\frac{1}{2} - \frac{\gamma}{4}}
    \paren{
        \paren{2 \pi C_{\Gamma}}^{\frac{p}{2}}
        e^{\frac{\gamma+1}{2}p}
    }
    p^{-\frac{\gamma+1}{2}p}
    \\
    &
    \quad
    \times
    \paren{
        C_{\mathcal{Y}}\paren{p}
        + \kappa^{\Gamma} C_{\mathcal{H}}
        + \frac{
            \rho_{p} \paren{\kappa^{\Gamma}}^{2}
        }{
            \lambda \sqrt{n}
        }C_{\Gamma}^{\frac{1}{2}}
        \paren{
            \frac{\sqrt{n}}{2\paren{n+1}}
            + 2^{\frac{3}{2}}
            \paren{
                \frac{\gamma}{\gamma - 1}
            }^{\frac{1}{2}}
            + \sqrt{2 \log \frac{1}{\delta}}
        }
        + \frac{
            \rho_{p} \paren{\kappa^{\Gamma}}^{2}
        }{\lambda \paren{n+1}}
        C_{\Gamma}^{\frac{1}{2}}
    }^{p-1}
    \\
    &
    \leq
    \frac{
        \rho_{p} \paren{\kappa^{\Gamma}}^{2}
    }{\lambda \paren{n+1}}
    \paren{
        2C_{\Gamma}^{\frac{1}{2}}
        2^{-\frac{\gamma}{4}}
        \pi^{-\frac{\gamma}{4} - \frac{1}{2}}
        e^{-\frac{\gamma}{24p + 2}
        - \frac{1}{6p + 1}}
    }p^{\frac{1}{2} - \frac{\gamma}{4}}
    \paren{
        \paren{2 \pi C_{\Gamma}}^{\frac{p}{2}}
        e^{\frac{\gamma+1}{2}p}
    }
    p^{-\frac{\gamma+1}{2}p}
    \\
    &
    \quad
    \times
    \paren{
        C_{\mathcal{Y}}\paren{p}
        + \kappa^{\Gamma} C_{\mathcal{H}}
        + \frac{
            \rho_{p} \paren{\kappa^{\Gamma}}^{2}
        }{
            \lambda \sqrt{n}
        }C_{\Gamma}^{\frac{1}{2}}
        \paren{
            \frac{3}{2}
            \frac{\sqrt{n}}{n+1}
            +
            2^{\frac{3}{2}}
            \paren{
                \frac{\gamma}{\gamma - 1}
            }^{\frac{1}{2}}
            + \sqrt{2 \log \frac{1}{\delta}}
        }
    }^{p-1}
    \\
    &
    \leq
    \frac{
        \rho_{p} \paren{\kappa^{\Gamma}}^{2}
    }{\lambda \paren{n+1}}
    a^{\Gamma}\paren{p}
    C_{\lambda; \theta}^{\Gamma, \gamma} \paren{p}
    p^{-\frac{\gamma+1}{2}p},
\end{align*}
where $a^{\Gamma}\paren{p}$ and $C_{\lambda; \theta}^{\Gamma, \gamma} \paren{p}$ are given by Eq.~\eqref{eq.constant.term}.
\end{proof}

\subsection{Proof of Proposition~\ref{prop.generalization.upper.bound}}
\label{proof.generalization.upper.bound}

\begin{proof}
    Since \eqref{asm.lip.loss} implies \eqref{asm.lsc.loss},
    then, under \eqref{asm.conv.loss}, \eqref{asm.lip.loss}
    and \eqref{asm.lower.bounded.loss}, Lemma~\ref{lm.predictor.well.defined}
    states that $\hat{f}_{\lambda; D}$ exists and is unique,
    so does $\hat{f}_{\lambda; D^{i}}$,
    for every $i \in \brac{1, \ldots, n}$ (see Eq.~\ref{eq.modified.data.set} for $D^{i}$).
    Moreover, $\widehat{\mathbf{R}}_{\lambda; D}\paren{\cdot}$,
    and $\widehat{\mathbf{R}}_{\lambda; D^{i}}\paren{\cdot}$
    are $2\lambda$-strongly convex.
    Then, from Lemma~\eqref{lm.minimum.gap},
    \begin{align*}
        \lambda \normh{
            \hat{f}_{\lambda; D}
            - \hat{f}_{\lambda; D^{i}}
        }^2
        &
        \leq \widehat{\mathbf{R}}_{\lambda; D}\paren{\hat{f}_{\lambda; D^{i}}}
        - \widehat{\mathbf{R}}_{\lambda; D}\paren{\hat{f}_{\lambda; D}}
        \\
        &
        \leq \widehat{\mathbf{R}}_{\lambda; D^{i}}\paren{\hat{f}_{\lambda; D^{i}}}
        - \widehat{\mathbf{R}}_{\lambda; D^{i}}\paren{\hat{f}_{\lambda; D}}
        \\
        &
        \quad
        + \frac{1}{n}
        \paren{
            \ell\paren{Y_{i}^{\prime}, \hat{f}_{\lambda; D}\paren{X_i^{\prime}}}
            - \ell\paren{Y_{i}^{\prime}, \hat{f}_{\lambda; D^{i}}\paren{X_i^{\prime}}}
        }
        \\
        &
        \quad
        + \frac{1}{n}
        \paren{
            \ell\paren{Y_{i}, \hat{f}_{\lambda; D^{i}}\paren{X_i}}
            - \ell\paren{Y_{i}, \hat{f}_{\lambda; D}\paren{X_i}}
        }.
    \end{align*}
    On the other hand,
    \begin{align*}
        \lambda \normh{
            \hat{f}_{\lambda; D^{i}}
            - \hat{f}_{\lambda; D}
        }^2
        \leq \widehat{\mathbf{R}}_{\lambda; D^{i}}\paren{\hat{f}_{\lambda; D}}
        - \widehat{\mathbf{R}}_{\lambda; D^{i}}\paren{\hat{f}_{\lambda; D^{i}}},
    \end{align*}
    which entails that
    \begin{align*}
        \widehat{\mathbf{R}}_{\lambda; D^{i}}\paren{\hat{f}_{\lambda; D^{i}}}
        - \widehat{\mathbf{R}}_{\lambda; D^{i}}\paren{\hat{f}_{\lambda; D}} 
        \leq - \lambda \normh{
            \hat{f}_{\lambda; D^{i}}
            - \hat{f}_{\lambda; D}
        }^2.
    \end{align*}
    Summing this with the first sets of inequalities yields
    \begin{align*}
        &
        2
        \lambda \normh{
            \hat{f}_{\lambda; D}
            - \hat{f}_{\lambda; D^{i}}
        }^2
        \\ 
        &
        \leq \frac{1}{n}
        \paren{
            \ell\paren{Y_{i}^{\prime}, \hat{f}_{\lambda; D}\paren{X_i^{\prime}}}
            - \ell\paren{Y_{i}^{\prime}, \hat{f}_{\lambda; D^{i}}\paren{X_i^{\prime}}}
        }
        + \frac{1}{n}
        \paren{
            \ell\paren{Y_{i}, \hat{f}_{\lambda; D^{i}}\paren{X_i}}
            - \ell\paren{Y_{i}, \hat{f}_{\lambda; D}\paren{X_i}}
        }
        \\
        &
        \leq
        \frac{\rho_{p}}{n}
        \norm{K\paren{X_{i}^{\prime}, X_{i}^{\prime}}}_{\mathrm{op}}^{\frac{1}{2}}
        \normh{\hat{f}_{\lambda; D} - \hat{f}_{\lambda; D^{i}}}
        + \frac{\rho_{p}}{n}
        \norm{K\paren{X_{i}, X_{i}}}_{\mathrm{op}}^{\frac{1}{2}}
        \normh{\hat{f}_{\lambda; D} - \hat{f}_{\lambda; D^{i}}}
        \\
        &
        \leq
        \frac{\rho_{p}}{n}
        \croch{
            \norm{K\paren{X_{i}^{\prime}, X_{i}^{\prime}}}_{\mathrm{op}}^{\frac{1}{2}}
            + \norm{K\paren{X_{i}, X_{i}}}_{\mathrm{op}}^{\frac{1}{2}}
        }\normh{
            \hat{f}_{\lambda; D}
            - \hat{f}_{\lambda; D^{i}}
        },
    \end{align*}
    where the second inequality follows from \citet[see Lemma 3]{audiffrenStabilityMultitaskKernel2013}
    which holds true under \eqref{asm.lip.loss}.
    Dividing both sides by $2
    \lambda \normh{
        \hat{f}_{\lambda; D}
        - \hat{f}_{\lambda; D^{i}}
    }^2$ yields
    \begin{align*}
        \normh{
            \hat{f}_{\lambda; D}
            - \hat{f}_{\lambda; D^{i}}
        } \leq\frac{\rho_{p}}{2\lambda n}
        \croch{
            \norm{K\paren{X_{i}^{\prime}, X_{i}^{\prime}}}_{\mathrm{op}}^{\frac{1}{2}}
            + \norm{K\paren{X_{i}, X_{i}}}_{\mathrm{op}}^{\frac{1}{2}}
        } 
    \end{align*}
    Then, by \citet[see Lemma 7]{bousquetStabilityGeneralization2002},
    for every $i \in \brac{1, \ldots, n}$,
    \begin{align*}
        &
        \mathbb{E}_{D}\croch{
            \mathbf{R}_{0}\paren{\hat{f}_{\lambda; D}}
            - \widehat{\mathbf{R}}_{0; D}\paren{\hat{f}_{\lambda; D}}
        }
        \\
        &
        = \mathbb{E}_{D, \paren{X_i^{\prime}, Y_i^{\prime}}}\croch{
            \ell\paren{Y_{i}^{\prime}, \hat{f}_{\lambda; D}\paren{X_{i}^{\prime}}}
            - \ell\paren{Y_{i}^{\prime}, \hat{f}_{\lambda; D^{i}}\paren{X_{i}^{\prime}}}
        }
        \\
        &
        \leq
        \rho_{p}
        \mathbb{E}_{D, \paren{X_i^{\prime}, Y_i^{\prime}}}\croch{
            \normh{\hat{f}_{\lambda; D} - \hat{f}_{\lambda; D^{i}}}
        }
        \\
        &
        \leq
        \frac{\rho_{p}}{2\lambda n}
            \mathbb{E}_{D, \paren{X_i^{\prime}, Y_i^{\prime}}}\croch{
            \norm{K\paren{X_{i}^{\prime}, X_{i}^{\prime}}}_{\mathrm{op}}^{\frac{1}{2}}
            + \norm{K\paren{X_{i}, X_{i}}}_{\mathrm{op}}^{\frac{1}{2}}
        }
        \\
        &
        \leq
        \frac{\rho_{p}}{\lambda n}
            \mathbb{E}\croch{
            \norm{K\paren{X, X}}_{\mathrm{op}}^{\frac{1}{2}}
        }.
    \end{align*}
\end{proof}
\subsection{Additional numerical experiment on the evolution of the thickness}
\label{sec.evolution.thickness.fixed}
\begin{figure}[H]
    \centering
    \includegraphics[width=0.65\textwidth]{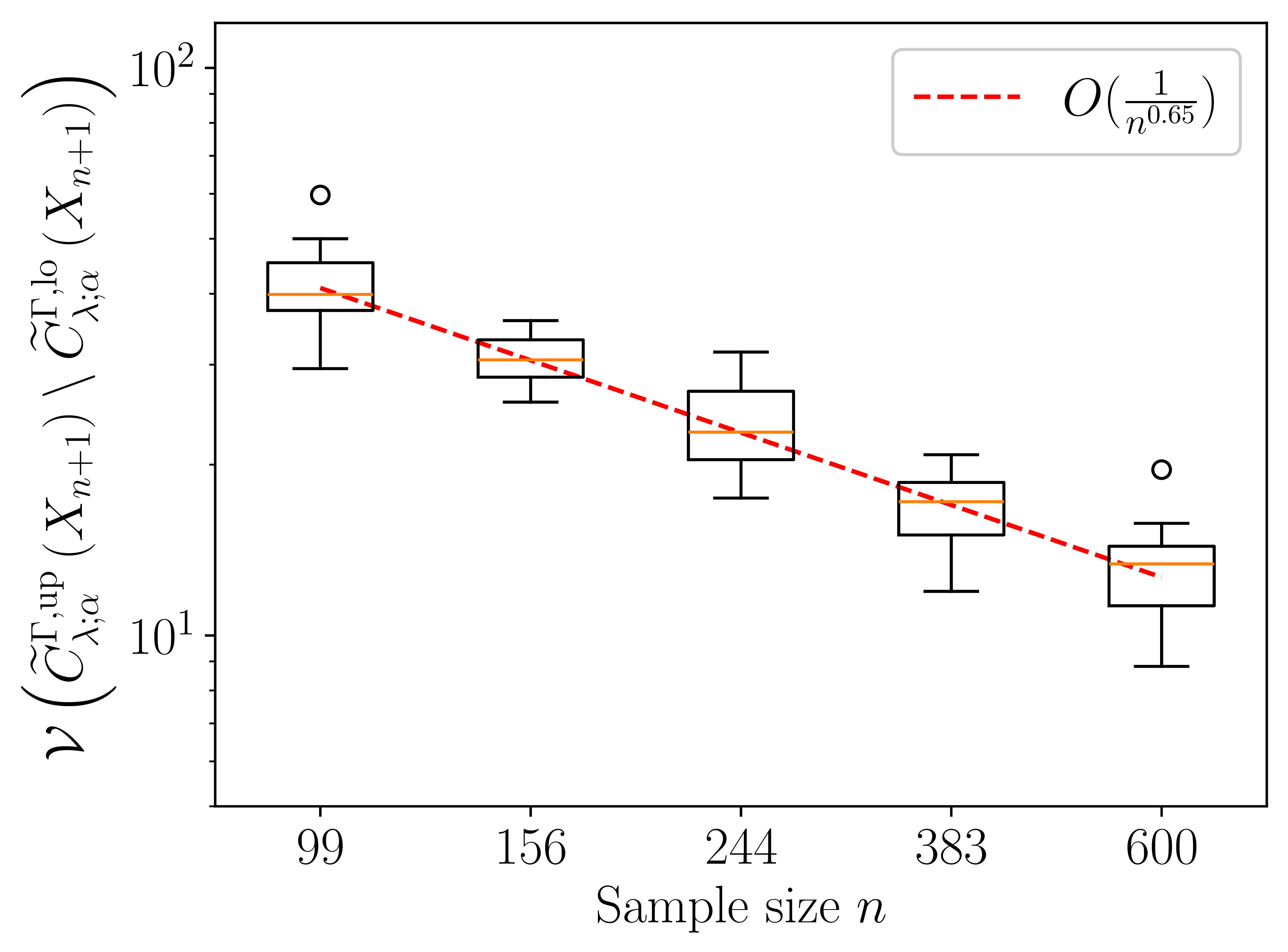}
    \caption{
        Evolution of the computable empirical upper-bound $\leb{\widetilde{C}_{\lambda; \alpha}^{\Gamma, \mathrm{up}}\paren{X_{n+1}} \setminus \widetilde{C}_{\lambda; \alpha}^{\Gamma, \lo}\paren{X_{n+1}}}$
        for the \emph{thickness} $\thicc{\Gamma}$ across 20 repetitions for $\alpha = 0.1$,
        and for $\lambda \propto \frac{1}{\sqrt{n}}$.
    }
    \label{fig.thickness.stableCP.1}
\end{figure}

Since $\lambda \propto \frac{1}{\sqrt{n}}$ then, $\frac{1}{\lambda n} = O\paren{\frac{1}{\sqrt{n}}}$,
that is to say, by Theorem~\ref{thm.thickness.upper.bound}, the theoretical rate should be $O\paren{\frac{1}{\sqrt{n}}}$.
On average, $\leb{\widetilde{C}_{\lambda; \alpha}^{\Gamma, \mathrm{up}}\paren{X_{n+1}} \setminus \widetilde{C}_{\lambda; \alpha}^{\Gamma, \lo}\paren{X_{n+1}}}$ gets smaller as the training-sample size grows larger.
The estimated rate of improvement (represented by the dashed red-line) is a bit faster than the theoretical one, that is, $O\paren{\frac{1}{\sqrt{n}}}$.

\section{Concerning the estimated inter-task covariance-matrix case}
This section gives the proofs of the results in Section~\ref{sec.estimated.inter.task.covariance}.
\subsection{Preliminary properties}
\begin{lemma}
    \label{lm.weighted.covariance.matrix.spd}
    Let $Y_{1}$, \dots, $Y_{n}$ stand for output-vectors,
    and $\widetilde{w} \in \mathbb{R}^{n}_{+}$, a weight vector
    with only non-negative coordinates.
    Then,
    \begin{align*}
        &
        \sum_{i=1}^{n}
        \widehat{w}_{i}
        \paren{
            Y_i - 
            \paren{
                \sum_{j=1}^{n}\widehat{w}_{j} Y_j
            }
        }\paren{
            Y_i - \paren{
                \sum_{j=1}^{n}\widehat{w}_{j} Y_j
            }
        }^{T}
        + a I_{p}
        \\
        &
        =
        \sum_{i=1}^{n}
        \widehat{w}_{i} Y_i Y_i^{T}
        + \paren{
            \sum_{j=1}^{n}\widehat{w}_{j} Y_j
        }\paren{
            \sum_{j=1}^{n}\widehat{w}_{j} Y_j
        }^{T}
        + a I_{p},
    \end{align*}
    where for every $i \in \brac{1, \ldots, n}$,
    $\widehat{w}_i := \frac{
        \widetilde{w}_i
    }{
        \sum_{j = 1}^{n} \widetilde{w}_{j}
    }$.
    Therefore, the above matrix is symmetric positive define.
\end{lemma}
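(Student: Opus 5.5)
The plan is a direct expansion of the left-hand side, followed by a positivity argument read off the left-hand (centered) form rather than the right-hand one. Throughout I assume $\sum_{j=1}^{n}\widetilde{w}_j>0$, which is needed for the normalized weights $\widehat{w}_i$ to be defined. Then $\widehat{w}_i\geq 0$ and $\sum_{i=1}^{n}\widehat{w}_i=1$. Write $\widehat{\mu}:=\sum_{j=1}^{n}\widehat{w}_jY_j$ for the weighted mean.

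\emph{Step 1 (expansion).} For each $i$, expand $\paren{Y_i-\widehat{\mu}}\paren{Y_i-\widehat{\mu}}^{T}=Y_iY_i^{T}-Y_i\widehat{\mu}^{T}-\widehat{\mu}Y_i^{T}+\widehat{\mu}\widehat{\mu}^{T}$. Multiply by $\widehat{w}_i$ and sum over $i$. Linearity gives $\sum_i\widehat{w}_iY_i\widehat{\mu}^{T}=\widehat{\mu}\widehat{\mu}^{T}$, and likewise for the transposed cross term. The normalization $\sum_i\widehat{w}_i=1$ gives $\sum_i\widehat{w}_i\widehat{\mu}\widehat{\mu}^{T}=\widehat{\mu}\widehat{\mu}^{T}$. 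Collecting terms,
\begin{align*}
    \sum_{i=1}^{n}\widehat{w}_i\paren{Y_i-\widehat{\mu}}\paren{Y_i-\widehat{\mu}}^{T}
    =\sum_{i=1}^{n}\widehat{w}_iY_iY_i^{T}-\widehat{\mu}\widehat{\mu}^{T}.
\end{align*}
Adding $aI_p$ to both sides yields the announced decomposition. The sign in front of the rank-one term $\widehat{\mu}\widehat{\mu}^{T}$ comes out negative, not positive as printed in the statement. I will prove the identity with the minus sign, since it is the only version consistent with the estimator of Eq.~\eqref{eq.estimated.output.covariance}: with uniform weights $\widehat{w}_i=\frac{1}{n+1}$ on $D^{y}$ it reproduces exactly that expression. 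The main point requiring care is exactly this bookkeeping of the cross terms, so I will double-check the sign there.

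\emph{Step 2 (symmetric positive definiteness).} Symmetry is immediate, because each summand $\widehat{w}_i\paren{Y_i-\widehat{\mu}}\paren{Y_i-\widehat{\mu}}^{T}$ and $aI_p$ are symmetric. For definiteness I use the left-hand (centered) form. For any $v\in\mathbb{R}^{p}\setminus\brac{0_{\mathbb{R}^{p}}}$,
\begin{align*}
    v^{T}\croch{\sum_{i=1}^{n}\widehat{w}_i\paren{Y_i-\widehat{\mu}}\paren{Y_i-\widehat{\mu}}^{T}+aI_p}v
    =\sum_{i=1}^{n}\widehat{w}_i\dotprod{v}{Y_i-\widehat{\mu}}{}^{2}+a\norm{v}^{2}
    \geq a\norm{v}^{2}>0.
\end{align*}
This uses $\widehat{w}_i\geq 0$ and $a>0$. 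It follows that the matrix lies in $\mathrm{Sym}_{p}^{++}\paren{\mathbb{R}}$, and that its smallest eigenvalue is at least $a$.

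This last bound, smallest eigenvalue at least $a$, is what will be used downstream. It guarantees that $\widehat{\Gamma}_{a;D^{y}}$ and $\widehat{\Gamma}_{a^{+}}$ are invertible, so that $\widehat{\Gamma}^{-\frac{1}{2}}$ is well defined in the proofs of Lemma~\ref{lm.stable.score.maha} and the subsequent Sherman--Morrison computations.
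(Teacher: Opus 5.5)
Your proof is correct and follows the paper's route. The paper also expands the centered form and uses $\sum_{i}\widehat{w}_i=1$ to collapse the cross terms, and its own computation ends with $-\bigl(\sum_{j}\widehat{w}_jY_j\bigr)\bigl(\sum_{j}\widehat{w}_jY_j\bigr)^{T}$, so you are right that the plus sign in the printed statement is a typo; your lower bound $a\|v\|^{2}$ on the quadratic form, read off the centered form using $\widehat{w}_i\geq 0$ and $a>0$, supplies the positive-definiteness argument that the paper's proof asserts but never gives.
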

\begin{proof}
    \begin{align*}
        &
        \sum_{i=1}^{n}
        \widehat{w}_{i}
        \paren{
            Y_i - \paren{
                \sum_{j=1}^{n}\widehat{w}_{j} Y_j
            }
        }\paren{
            Y_i - \paren{
                \sum_{j=1}^{n}\widehat{w}_{j} Y_j
            }
        }^{T}
        + a I_{p}
        \\
        &
        = 
        \sum_{i=1}^{n}
        \widehat{w}_{i}
        Y_i Y_i^{T}
        + \sum_{i=1}^{n}
        \widehat{w}_{i}
        \paren{
            \sum_{j=1}^{n}\widehat{w}_{j} Y_j
        }
        \paren{
            \sum_{j=1}^{n}\widehat{w}_{j} Y_j
        }^{T}
        \\
        &
        \quad
        - \sum_{i=1}^{n}
        \widehat{w}_{i}
        \paren{
            \sum_{j=1}^{n}\widehat{w}_{j} Y_j
        } Y_i^{T}
        - \sum_{i=1}^{n}
        \widehat{w}_{i}
        Y_i \paren{
            \sum_{j=1}^{n}\widehat{w}_{j} Y_j
        }^{T}
        + a I_{p}
        \\
        &
        = 
        \sum_{i=1}^{n}
        \widehat{w}_{i}
        Y_i Y_i^{T}
        + \paren{
            \sum_{j=1}^{n}\widehat{w}_{j} Y_j
        }
        \paren{
            \sum_{j=1}^{n}\widehat{w}_{j} Y_j
        }^{T}
        \\
        &
        \quad
        - \paren{
            \sum_{j=1}^{n}\widehat{w}_{j} Y_j
        }
        \paren{
            \sum_{i=1}^{n}
            \widehat{w}_{i}
            Y_i
        }^{T}
        - \paren{
            \sum_{i=1}^{n}
            \widehat{w}_{i}
            Y_i
        } \paren{
            \sum_{j=1}^{n}\widehat{w}_{j} Y_j
        }^{T}
        + a I_{p}
        \\
        &
        = 
        \sum_{i=1}^{n}
        \widehat{w}_{i}
        Y_i Y_i^{T}
        - \paren{
            \sum_{j=1}^{n}\widehat{w}_{j} Y_j
        }
        \paren{
            \sum_{j=1}^{n}\widehat{w}_{j} Y_j
        }^{T}
        + a I_{p},
    \end{align*}
    where the second equality follows from the fact that
    $\sum_{i=1}^{n} \widehat{w}_{j} = 1$.
\end{proof}
\begin{lemma}
    \label{lm.estimated.global.covariance}
    Let $a \in \paren{0, +\infty}$
    designate a regularization parameter,
    and $y \in \mathcal{Y}$, a test output-value.
    \begin{align*}
        \widehat{\Gamma}_{a; D^{y}}
        =
        \frac{n}{n+1}
        \paren{
            \widehat{\Gamma}_{a^{+}}
            + \paren{
                \frac{y - \widehat{\mu}_{D}}{\sqrt{n+1}}
            }\paren{
                \frac{y - \widehat{\mu}_{D}}{\sqrt{n+1}}
            }^{T}
        },
    \end{align*}
    where $\widehat{\Gamma}_{a^{+}}$ is given by Eq.~\eqref{eq.global.covariance}.
\end{lemma}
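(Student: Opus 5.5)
The identity is purely algebraic, so I will expand both sides in terms of the two basic sums $M := \sum_{i=1}^{n} Y_i Y_i^{T}$ and $s := \sum_{i=1}^{n} Y_i = n\widehat{\mu}_{D}$. First, I unpack the definition of $\widehat{\Gamma}_{a^{+}}$ from Eq.~\eqref{eq.global.covariance}. Here $D$ has $n$ points, so the empirical average is $\widehat{\mu}_{D} = s/n$ and the normalization is $1/n$. The regularization parameter is $a^{+} = a(n+1)/n$, which gives
\begin{align*}
    \widehat{\Gamma}_{a^{+}} = \frac{1}{n} M - \widehat{\mu}_{D}\widehat{\mu}_{D}^{T} + \frac{a(n+1)}{n} I_{p}.
\end{align*}
Multiplying by $\frac{n}{n+1}$ turns the last term into $a I_{p}$, which is exactly why $a^{+}$ carries that inflation factor. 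It also turns the first term into $\frac{1}{n+1}M$, matching the corresponding term of $\widehat{\Gamma}_{a; D^{y}}$ in Eq.~\eqref{eq.estimated.output.covariance}.

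Next, I expand the mean term of $\widehat{\Gamma}_{a; D^{y}}$ using
\begin{align*}
    \widehat{\mu}_{D^{y}} = \frac{y + n\widehat{\mu}_{D}}{n+1} = \widehat{\mu}_{D} + \frac{y - \widehat{\mu}_{D}}{n+1}.
\end{align*}
Squaring, the term $-\widehat{\mu}_{D^{y}}\widehat{\mu}_{D^{y}}^{T}$ becomes
\begin{align*}
    -\widehat{\mu}_{D}\widehat{\mu}_{D}^{T} - \frac{1}{n+1}\left[\widehat{\mu}_{D}(y-\widehat{\mu}_{D})^{T} + (y-\widehat{\mu}_{D})\widehat{\mu}_{D}^{T}\right] - \frac{(y-\widehat{\mu}_{D})(y-\widehat{\mu}_{D})^{T}}{(n+1)^{2}}.
\end{align*}

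The remaining, and most error-prone, step is to check that the leftover terms recombine correctly. After subtracting the already-matched pieces $\frac{1}{n+1}M + aI_{p}$ from both sides, the claim reduces to showing that
\begin{align*}
    \frac{1}{n+1}yy^{T} - \widehat{\mu}_{D^{y}}\widehat{\mu}_{D^{y}}^{T}
\end{align*}
equals
\begin{align*}
    -\frac{n}{n+1}\widehat{\mu}_{D}\widehat{\mu}_{D}^{T} + \frac{n}{(n+1)^{2}}(y-\widehat{\mu}_{D})(y-\widehat{\mu}_{D})^{T}.
\end{align*}
I will write $yy^{T}$ as $(\widehat{\mu}_{D} + (y-\widehat{\mu}_{D}))(\widehat{\mu}_{D} + (y-\widehat{\mu}_{D}))^{T}$ and collect coefficients in the basis $\widehat{\mu}_{D}\widehat{\mu}_{D}^{T}$, the cross terms, and $(y-\widehat{\mu}_{D})(y-\widehat{\mu}_{D})^{T}$:
\begin{itemize}
    \item \textbf{$\widehat{\mu}_{D}\widehat{\mu}_{D}^{T}$:} the coefficient is $\frac{1}{n+1} - 1 = -\frac{n}{n+1}$.
    \item \textbf{Cross terms:} the coefficient is $\frac{1}{n+1} - \frac{1}{n+1} = 0$.
    \item \textbf{$(y-\widehat{\mu}_{D})(y-\widehat{\mu}_{D})^{T}$:} the coefficient is $\frac{1}{n+1} - \frac{1}{(n+1)^{2}} = \frac{n}{(n+1)^{2}}$.
\end{itemize}
These are exactly the coefficients of the target expression, including the outer factor $\frac{n}{n+1}$ applied to the rank-one term $\frac{(y-\widehat{\mu}_{D})(y-\widehat{\mu}_{D})^{T}}{n+1}$, which concludes the argument. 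The only real obstacle is bookkeeping: tracking the $a$ versus $a^{+}$ normalization, and making sure the cross terms cancel exactly.
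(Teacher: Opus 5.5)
Your proposal is correct and follows essentially the same route as the paper. Both write $y = \widehat{\mu}_{D} + (y-\widehat{\mu}_{D})$ and $\widehat{\mu}_{D^{y}} = \widehat{\mu}_{D} + \frac{y-\widehat{\mu}_{D}}{n+1}$, expand the outer products so that the cross terms cancel and the coefficients $-\frac{n}{n+1}$ and $\frac{n}{(n+1)^{2}}$ appear, and then factor out $\frac{n}{n+1}$ using $a^{+} = \frac{a(n+1)}{n}$.
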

\begin{proof}
    Let us notice the following,
    \begin{align*}
        y y^{T}
        &
        = \paren{y - \widehat{\mu}_{D} + \widehat{\mu}_{D}}
        \paren{y - \widehat{\mu}_{D} + \widehat{\mu}_{D}}^{T}
        \\
        &
        = \paren{
            y - \widehat{\mu}_{D}
        }\paren{
            y - \widehat{\mu}_{D}
        }^{T}
        + \widehat{\mu}_{D}
        \widehat{\mu}_{D}^{T}
        + \paren{
            y - \widehat{\mu}_{D}
        }\widehat{\mu}_{D}^{T}
        + \widehat{\mu}_{D}^{T}
        \paren{
            y - \widehat{\mu}_{D}
        }.
    \end{align*}
    It follows that
    \begin{align*}
        \frac{1}{n+1}
        \paren{
            yy^{T}
            + \sum_{i = 1}^{n}
            Y_i Y_i^{T}    
        }
        &
        =
        \frac{1}{n+1}
        \paren{
            \widehat{\mu}_{D}
            \widehat{\mu}_{D}^{T}
            + \sum_{i = 1}^{n}
            Y_i Y_i^{T}
        }
        + \frac{1}{n+1}
        \paren{
            y - \widehat{\mu}_{D}
        }\paren{
            y - \widehat{\mu}_{D}
        }^{T}
        \\
        &
        \quad
        + \frac{1}{n+1}
        \paren{
            y - \widehat{\mu}_{D}
        }\widehat{\mu}_{D}^{T}
        + \frac{1}{n+1}
        \widehat{\mu}_{D}
        \paren{
            y - \widehat{\mu}_{D}
        }^{T}.
    \end{align*}
    Moreover,
    \begin{align*}
        \frac{1}{n+1}
        \paren{
            y 
            + \sum_{i=1}^{n}
            Y_i
        }
        &
        =
        \frac{1}{n+1}
        y
        + \frac{1}{n+1} \sum_{i=1}^{n} Y_i
        =
        \frac{1}{n+1}
        \paren{
            y - \widehat{\mu}_{D}
        }
        + \frac{\widehat{\mu}_{D}}{n+1}
        + \frac{n}{n+1} \widehat{\mu}_{D}
        \\
        &
        =
        \frac{1}{n+1}
        \paren{
            y - \widehat{\mu}_{D}
        }
        + \widehat{\mu}_{D}. 
    \end{align*}
    It follows that
    \begin{align*}
        &
        \paren{
            \frac{1}{n+1}
            \paren{
                y 
                + \sum_{i=1}^{n}
                Y_i
            }
        }\paren{
            \frac{1}{n+1}
            \paren{
                y 
                + \sum_{i=1}^{n}
                Y_i
            }
        }^{T}
        \\
        &
        =
        \paren{
            \frac{1}{n+1}
            \paren{
                y - \widehat{\mu}_{D}
            }
            + \widehat{\mu}_{D}
        }\paren{
            \frac{1}{n+1}
            \paren{
                y - \widehat{\mu}_{D}
            }
            + \widehat{\mu}_{D}
        }^{T}
        \\
        &
        =
        \frac{1}{\paren{n+1}^2}
        \paren{
            y - \widehat{\mu}_{D}
        }\paren{
            y - \widehat{\mu}_{D}
        }^{T}
        + \widehat{\mu}_{D} \widehat{\mu}_{D}^{T}
        + \frac{1}{n+1}
        \paren{
            y - \widehat{\mu}_{D}
        }
        \widehat{\mu}_{D}^{T}
        + \frac{1}{n+1}
        \widehat{\mu}_{D}
        \paren{
            y - \widehat{\mu}_{D}
        }^{T}.
    \end{align*}
    As a result,
    \begin{align*}
        \widehat{\Gamma}_{a; D^{y}}
        &
        =
        \frac{1}{n+1}
        \paren{
            \widehat{\mu}_{D}
            \widehat{\mu}_{D}^{T}
            + \sum_{i = 1}^{n}
            Y_i Y_i^{T}
        }
        + \frac{1}{n+1}
        \paren{
            y - \widehat{\mu}_{D}
        }\paren{
            y - \widehat{\mu}_{D}
        }^{T}
        + a I_{p}
        \\
        &
        \quad
        + \frac{1}{n+1}
        \paren{
            y - \widehat{\mu}_{D}
        }\widehat{\mu}_{D}^{T}
        + \frac{1}{n+1}
        \widehat{\mu}_{D}
        \paren{
            y - \widehat{\mu}_{D}
        }^{T}
        \\
        &
        \quad
        -
        \frac{1}{\paren{n+1}^2}
        \paren{
            y - \widehat{\mu}_{D}
        }\paren{
            y - \widehat{\mu}_{D}
        }^{T}
        - \widehat{\mu}_{D} \widehat{\mu}_{D}^{T}
        \\
        &
        \quad
        - \frac{1}{n+1}
        \paren{
            y - \widehat{\mu}_{D}
        }
        \widehat{\mu}_{D}^{T}
        - \frac{1}{n+1}
        \widehat{\mu}_{D}
        \paren{
            y - \widehat{\mu}_{D}
        }^{T}
        + a I_{p}
        \\
        &
        =
        \frac{1}{n+1}
        \sum_{i = 1}^{n}
        Y_i Y_i^{T}
        +
        \paren{
            \frac{1}{n+1} - 1
        } \widehat{\mu}_{D} \widehat{\mu}_{D}^{T}
        + \frac{n}{\paren{n+1}^2}\paren{
            y - \widehat{\mu}_{D}
        }\paren{
            y - \widehat{\mu}_{D}
        }^{T}
        + a I_{p}
        \\
        &
        =
        \frac{n}{n+1}
        \paren{
            \frac{1}{n}
            \sum_{i = 1}^{n}
            Y_i Y_i^{T}
            - \widehat{\mu}_{D} \widehat{\mu}_{D}^{T}
            + a \paren{\frac{n+1}{n}} I_{p}
        } + \frac{n}{\paren{n+1}^2}\paren{
            y - \widehat{\mu}_{D}
        }\paren{
            y - \widehat{\mu}_{D}
        }^{T}
        \\
        &
        =
        \frac{n}{n+1}
        \paren{
            \widehat{\Gamma}_{a^{+}}
            + \paren{
                \frac{y - \widehat{\mu}_{D}}{\sqrt{n+1}}
            }\paren{
                \frac{y - \widehat{\mu}_{D}}{\sqrt{n+1}}
            }^{T}
        }.
    \end{align*}
\end{proof}

\begin{lemma}
    \label{lm.global.norm.bound}
    Let $a \in \paren{0, +\infty}$
    designate a regularization parameter,
    and $y \in \mathcal{Y}$, a test output-value,
    and $v \in \mathbb{R}^{p}$ a vector of residual.
    Then, 
    \begin{align*}
        \frac{
            \paren{
                \frac{n+1}{n}
            }^{\frac{1}{2}}
            \norm{
                \widehat{\Gamma}_{a^{+}}
                v
            }
        }{
            \paren{
                1 + \frac{1}{n+1}
                \paren{
                    \norm{
                        \widehat{\Gamma}_{a^{+}}
                        \paren{y - \hat{f}_{\lambda^{+}; D}\paren{X_{n+1}}}
                    } + \widehat{t}_{\lambda; a}
                }^2
            }^{\frac{1}{2}}
        }
        \leq
        \norm{
            \widehat{\Gamma}_{a; D^{y}}^{-\frac{1}{2}} v
        }
        \leq
        \paren{
            \frac{n+1}{n}
        }^{\frac{1}{2}}
        \norm{
            \widehat{\Gamma}_{a^{+}}
            v
        },
    \end{align*}
    where $\widehat{t}_{\lambda; a}$ is given by Eq.~\eqref{eq.correction}.
\end{lemma}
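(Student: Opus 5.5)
The plan is to reduce everything to the rank-one update identity of Lemma~\ref{lm.estimated.global.covariance} and then compare quadratic forms in the Loewner order. I read the norms $\norm{\widehat{\Gamma}_{a^{+}} v}$ and $\norm{\widehat{\Gamma}_{a^{+}}(y - \hat{f}_{\lambda^{+}; D}(X_{n+1}))}$ in the statement as the Mahalanobis quantities $\norm{\widehat{\Gamma}_{a^{+}}^{-\frac{1}{2}} v}$ and $\norm{\widehat{\Gamma}_{a^{+}}^{-\frac{1}{2}}(y - \hat{f}_{\lambda^{+}; D}(X_{n+1}))}$. This is consistent with Lemma~\ref{lm.stable.score.maha}, and I will prove the statement in that form.

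\textbf{Rank-one decomposition.} Set $w := (y - \widehat{\mu}_{D})/\sqrt{n+1}$. Lemma~\ref{lm.estimated.global.covariance} gives
\begin{align*}
    \widehat{\Gamma}_{a; D^{y}} = \frac{n}{n+1}\paren{\widehat{\Gamma}_{a^{+}} + w w^{T}}.
\end{align*}
Since $\frac{n+1}{n}a > 0$, Lemma~\ref{lm.weighted.covariance.matrix.spd} shows that $\widehat{\Gamma}_{a^{+}} \in \mathrm{Sym}_{p}^{++}\paren{\mathbb{R}}$, so all inverse square roots are well defined. For any $v$ we have $\norm{\widehat{\Gamma}_{a; D^{y}}^{-\frac{1}{2}} v}^2 = v^{T}\widehat{\Gamma}_{a; D^{y}}^{-1} v$, so it suffices to sandwich the matrix $\widehat{\Gamma}_{a; D^{y}}^{-1}$ in the Loewner order.

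\textbf{Upper bound.} I would apply the Sherman--Morrison formula:
\begin{align*}
    \widehat{\Gamma}_{a; D^{y}}^{-1} = \frac{n+1}{n}\paren{\widehat{\Gamma}_{a^{+}}^{-1} - \frac{\widehat{\Gamma}_{a^{+}}^{-1} w w^{T} \widehat{\Gamma}_{a^{+}}^{-1}}{1 + w^{T}\widehat{\Gamma}_{a^{+}}^{-1} w}}.
\end{align*}
The subtracted term is positive semi-definite. Hence $v^{T}\widehat{\Gamma}_{a; D^{y}}^{-1}v \le \frac{n+1}{n}\norm{\widehat{\Gamma}_{a^{+}}^{-\frac{1}{2}}v}^2$, and taking square roots gives the right-hand inequality.

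\textbf{Lower bound.} Write $s := \norm{\widehat{\Gamma}_{a^{+}}^{-\frac{1}{2}} w}$. For any $z$, Cauchy--Schwarz gives
\begin{align*}
    (w^{T} z)^2 = \paren{(\widehat{\Gamma}_{a^{+}}^{-\frac{1}{2}}w)^{T}\widehat{\Gamma}_{a^{+}}^{\frac{1}{2}}z}^2 \le s^2\, z^{T}\widehat{\Gamma}_{a^{+}} z.
\end{align*}
So $w w^{T} \preceq s^2 \widehat{\Gamma}_{a^{+}}$, and therefore $\widehat{\Gamma}_{a^{+}} + w w^{T} \preceq (1+s^2)\widehat{\Gamma}_{a^{+}}$. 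Matrix inversion is order-reversing on positive definite matrices, which yields
\begin{align*}
    \widehat{\Gamma}_{a; D^{y}}^{-1} \succeq \frac{n+1}{n}\,\frac{1}{1+s^2}\,\widehat{\Gamma}_{a^{+}}^{-1}.
\end{align*}
Taking quadratic forms and square roots gives the left-hand inequality with $1 + s^2$ in the denominator. (The same conclusion also follows directly from the Sherman--Morrison formula via Cauchy--Schwarz on $(w^{T}\widehat{\Gamma}_{a^{+}}^{-1}v)^2$.)

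\textbf{Making the bound computable.} It remains to bound $s$ above by a quantity computable without knowing $\widehat{\mu}_{D^{y}}$. Since $\sqrt{n+1}\, s = \norm{\widehat{\Gamma}_{a^{+}}^{-\frac{1}{2}}(y - \widehat{\mu}_{D})}$, I insert $\hat{f}_{\lambda^{+}; D}(X_{n+1})$ and use the triangle inequality:
\begin{align*}
    \sqrt{n+1}\, s \le \norm{\widehat{\Gamma}_{a^{+}}^{-\frac{1}{2}}(y - \hat{f}_{\lambda^{+}; D}(X_{n+1}))} + \widehat{t}_{\lambda; a},
\end{align*}
with $\widehat{t}_{\lambda; a}$ as in Eq.~\eqref{eq.correction}. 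Squaring this (both sides are nonnegative) and substituting into the denominator, which makes the lower bound smaller, concludes the proof.

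\textbf{Main obstacle.} The only delicate point is the Loewner-order argument for the lower bound, specifically the inequality $w w^{T} \preceq s^2\widehat{\Gamma}_{a^{+}}$ together with the order-reversal of inversion. Everything else is a direct algebraic consequence of Lemma~\ref{lm.estimated.global.covariance}.
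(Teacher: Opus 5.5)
Your proposal is correct and follows the paper's proof: the rank-one identity of Lemma~\ref{lm.estimated.global.covariance} combined with Sherman--Morrison, dropping the positive semi-definite correction for the upper bound, Cauchy--Schwarz for the lower bound, and the triangle inequality through $\hat{f}_{\lambda^{+}; D}(X_{n+1})$ to produce $\widehat{t}_{\lambda; a}$. Your Loewner-order step $w w^{T} \preceq s^{2}\widehat{\Gamma}_{a^{+}}$ is the same Cauchy--Schwarz inequality written differently, and your reading of $\norm{\widehat{\Gamma}_{a^{+}} v}$ as $\norm{\widehat{\Gamma}_{a^{+}}^{-\frac{1}{2}} v}$ matches what the paper's proof actually uses.
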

\begin{proof}
    Since $\widehat{\Gamma}_{a; D^{y}}$ and 
    $\widehat{\Gamma}_{a^{+}}$ are non-singular by Lemma~\ref{lm.weighted.covariance.matrix.spd},
    then, by the Sherman-Morisson-Woodbury formula
    along with Lemma~\ref{lm.estimated.global.covariance},
    \begin{align*}
        \widehat{\Gamma}_{a; D^{y}}^{-1}
        &
        =
        \paren{
            \frac{n+1}{n}
        }
        \paren{
            \widehat{\Gamma}_{a^{+}}^{-1}
            - \frac{
                \widehat{\Gamma}_{a^{+}}^{-1}
                \paren{
                    \frac{y - \widehat{\mu}_{D}}{\sqrt{n+1}}
                }
                \paren{
                    \frac{y - \widehat{\mu}_{D}}{\sqrt{n+1}}
                }^{T}
                \widehat{\Gamma}_{a^{+}}^{-1}
            }{
                1 + \paren{
                    \frac{y - \widehat{\mu}_{D}}{\sqrt{n+1}}
                }^{T} \widehat{\Gamma}_{a^{+}}^{-1} \paren{
                    \frac{y - \widehat{\mu}_{D}}{\sqrt{n+1}}
                }
            }
        }
        \\
        &
        =
        \paren{
            \frac{n+1}{n}
        }
        \paren{
            \widehat{\Gamma}_{a^{+}}^{-1}
            - \frac{
                \frac{1}{n+1}
                \widehat{\Gamma}_{a^{+}}^{-1}
                \paren{y - \widehat{\mu}_{D}}
                \paren{y - \widehat{\mu}_{D}}^{T}
                \widehat{\Gamma}_{a^{+}}^{-1}
            }{
                1 + \frac{1}{n+1}
                \norm{
                    \widehat{\Gamma}_{a^{+}}^{-\frac{1}{2}}
                    \paren{y - \widehat{\mu}_{D}}
                }^{2}
            }
        }.
    \end{align*}
    Let $v \in \mathbb{R}^{p \times 1}$
    designate a vector.
    \begin{align*}
        \norm{
            \widehat{\Gamma}_{a; D^{y}}^{-\frac{1}{2}} v
        }^2 
        &
        =
        \paren{
            \frac{n+1}{n}
        }
        v^{T} \widehat{\Gamma}_{a^{+}}^{-1} v
        - \paren{
            \frac{n+1}{n}
        }
        \frac{
            \frac{1}{n+1}
            \paren{
                v^T\widehat{\Gamma}_{a^{+}}^{-1}
                \paren{y - \widehat{\mu}_{D}}
            }^2
        }{
            1 + \frac{1}{n+1}
            \norm{
                \widehat{\Gamma}_{a^{+}}^{-\frac{1}{2}}
                \paren{y - \widehat{\mu}_{D}}
            }^{2}
        }.
    \end{align*}
    On the one hand,
    \begin{align*}
        \norm{
            \widehat{\Gamma}_{a; D^{y}}^{-\frac{1}{2}} v
        }^2 \leq \paren{
            \frac{n+1}{n}
        }\norm{
            \widehat{\Gamma}_{a^{+}}^{-\frac{1}{2}} v
        }^2,
    \end{align*}
    and by taking the square root on both sides,
    \begin{align*}
        \norm{
            \widehat{\Gamma}_{a; D^{y}}^{-\frac{1}{2}} v
        } \leq 
        \paren{
            \frac{n+1}{n}
        }^{\frac{1}{2}}
        \norm{
            \widehat{\Gamma}_{a^{+}}^{-\frac{1}{2}} v
        }.
    \end{align*}
    On the other hand,
    by the Cauchy-Schwarz inequality,
    and the triangle inequality,
    \begin{align*}
        \norm{
            \widehat{\Gamma}_{a; D^{y}}^{-\frac{1}{2}} v
        }^2
        &
        \geq
        \paren{
            \frac{n+1}{n}
        }
        v^{T} \widehat{\Gamma}_{a^{+}}^{-1} v
        - \paren{
            \frac{n+1}{n}
        }
        \frac{
            \frac{1}{n+1}
            \norm{
                \widehat{\Gamma}_{a^{+}}^{-\frac{1}{2}}
                \paren{y - \widehat{\mu}_{D}}
            }^{2}
            \norm{
                \widehat{\Gamma}_{a^{+}}^{-\frac{1}{2}}
                v
            }^{2}
        }{
            1 + \frac{1}{n+1}
            \norm{
                \widehat{\Gamma}_{a^{+}}^{-\frac{1}{2}}
                \paren{y - \widehat{\mu}_{D}}
            }^{2}
        }
        \\
        &
        = \frac{
            \paren{
                \frac{n+1}{n}
            }
            \norm{
                \widehat{\Gamma}_{a^{+}}^{-\frac{1}{2}}
                v
            }^{2}
        }{
            1 + \frac{1}{n+1}
            \norm{
                \widehat{\Gamma}_{a^{+}}^{-\frac{1}{2}}
                \paren{y - \widehat{\mu}_{D}}
            }^{2}
        }
        \\
        &
        = \frac{
            \paren{
                \frac{n+1}{n}
            }
            \norm{
                \widehat{\Gamma}_{a^{+}}^{-\frac{1}{2}}
                v
            }^{2}
        }{
            1 + \frac{1}{n+1}
            \paren{
                \norm{
                    \widehat{\Gamma}_{a^{+}}^{-\frac{1}{2}}
                    \paren{y - \hat{f}_{\lambda^{+}; D}\paren{X_{n+1}}}
                } + \widehat{t}_{\lambda; a}
            }^{2}
        },
    \end{align*}
    where $\widehat{t}_{\lambda; a}$ given by Eq.~\eqref{eq.correction},
    and by taking the square root on both sides
    \begin{align*}
        \norm{
            \widehat{\Gamma}_{a; D^{y}}^{-\frac{1}{2}} v
        }
        \geq \frac{
            \paren{
                \frac{n+1}{n}
            }^{\frac{1}{2}}
            \norm{
                \widehat{\Gamma}_{a^{+}}^{-\frac{1}{2}}
                v
            }
        }{
            \paren{
                1 + \frac{1}{n+1}
                \paren{
                    \norm{
                        \widehat{\Gamma}_{a^{+}}^{-\frac{1}{2}}
                        \paren{y - \hat{f}_{\lambda^{+}; D}\paren{X_{n+1}}}
                    } + \widehat{t}_{\lambda; a}
                }^{2}
            }^{\frac{1}{2}}
        }.
    \end{align*}
\end{proof}
\begin{lemma}
    \label{lm.variation.lower.transformation.function}
    Let $t, \tau \in \mathbb{R}_{+}$ and $w \in \paren{0, +\infty}$ name parameters,
    and $L_{t}^{\tau} : \mathbb{R}_{+} \to \mathbb{R}$,
    the function given by, for every $x \in \mathbb{R}_{+}$,
    \begin{align}
        \label{eq.lower.transformation.function}
        L_{w}^{t}\paren{x; \tau}
        := \frac{
            x - \tau
        }{
            \paren{
                1 + w\paren{
                    x + t
                }^2
            }^{\frac{1}{2}}
        }.
    \end{align}
    Then, $L_{w}^{t}\paren{\bullet; \tau} : \mathbb{R}_{+} \to \left[- \frac{\tau}{\paren{1 + w t^2}^{\frac{1}{2}}}, w^{-\frac{1}{2}}\right)$
    is an increasing bijection, with an inverse designated by
    $\paren{L_{w}^{t}}^{-1}\paren{\bullet; \tau} : \left[- \frac{\tau}{\paren{1 + w t^2}^{\frac{1}{2}}}, w^{-\frac{1}{2}}\right) \to \mathbb{R}_{+}$.
\end{lemma}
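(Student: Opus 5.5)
The plan is a direct one-variable calculus argument: show that $L_{w}^{t}\paren{\bullet; \tau}$ is continuous and strictly increasing on $\mathbb{R}_{+}$, then compute its value at $0$ and its limit at $+\infty$. The intermediate value theorem then identifies the image, and strict monotonicity gives invertibility.

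\emph{Monotonicity.} Since $w>0$, the denominator $g(x):=\paren{1+w(x+t)^2}^{\frac{1}{2}}$ is smooth and positive, so $L_{w}^{t}\paren{\bullet;\tau}$ is differentiable on $\mathbb{R}_{+}$. Differentiating the quotient and multiplying through by $g(x)^{3}>0$, the sign of the derivative is that of
\begin{align*}
    1+w(x+t)^2 - w(x-\tau)(x+t) = 1 + w(x+t)(t+\tau).
\end{align*}
This is at least $1$, because $x,t,\tau\geq 0$ and $w>0$. Hence the derivative is strictly positive and the map is strictly increasing. I expect this algebraic simplification to be the only real step; the cancellation of the $x^2$ terms is what makes the sign transparent.

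\emph{Endpoints.} At $x=0$ we get $L_{w}^{t}\paren{0;\tau} = -\tau\paren{1+wt^2}^{-\frac{1}{2}}$. As $x\to+\infty$, write
\begin{align*}
    L_{w}^{t}\paren{x;\tau} = \frac{1-\tau/x}{\paren{x^{-2}+w(1+t/x)^2}^{\frac{1}{2}}},
\end{align*}
which tends to $w^{-\frac{1}{2}}$. Because the function is strictly increasing, the limit is never attained, so $L_{w}^{t}\paren{x;\tau}<w^{-\frac{1}{2}}$ for every $x\in\mathbb{R}_{+}$.

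\emph{Conclusion.} By continuity and the intermediate value theorem, every value in $\left[-\tau\paren{1+wt^2}^{-\frac{1}{2}}, w^{-\frac{1}{2}}\right)$ is attained. Combined with the two bounds above, the image is exactly this interval. Strict monotonicity gives injectivity, so $L_{w}^{t}\paren{\bullet;\tau}$ is an increasing bijection onto this interval. Its inverse $\paren{L_{w}^{t}}^{-1}\paren{\bullet;\tau}$ is therefore well-defined and, as the inverse of a continuous strictly increasing map on an interval, is itself continuous and increasing.
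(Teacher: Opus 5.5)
Your proposal is correct and follows the paper's proof: the same quotient-rule computation giving the numerator $1 + w(t+\tau)(x+t)$, the same evaluation $-\tau(1+wt^2)^{-1/2}$ at $x=0$, and the same limit $w^{-1/2}$ at infinity. Your explicit use of continuity with the intermediate value theorem, and your remark that strict monotonicity keeps the limit from being attained, supply the surjectivity and half-open-interval details that the paper leaves implicit.
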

\begin{proof}
    Let $x \in \mathbb{R}_+$ stand for a scalar input.
    \begin{align*}
        \paren{L_{w}^{t}}^{\prime}\paren{x; \tau}
        &
        = \frac{\partial}{\partial x}
        \paren{
            \frac{
                x - \tau
            }{
                \paren{
                    1 + w\paren{x + t}^2
                }^{\frac{1}{2}}
            }
        }
        \\
        &
        = \frac{
            \croch{
                \frac{\partial}{\partial x}\paren{x - \tau}
            }\paren{
                1 + w\paren{x + t}^2
            }^{\frac{1}{2}}
            - \paren{x - \tau}
            \croch{
                \frac{\partial}{\partial x}\paren{
                    1 + w\paren{x + t}^2
                }^{\frac{1}{2}}
            }
        }{
            1 + w\paren{x + t}^2
        }
        \\
        &
        = \frac{
            \paren{
                1 + w\paren{x + t}^2
            }^{\frac{1}{2}}
            - \paren{x - \tau}
            \frac{
                2w \paren{x + t}
            }{
                2 \paren{
                    1 + w\paren{x + t}^2
                }^{\frac{1}{2}}
            }
        }{
            1 + w\paren{x + t}^2
        }
        \\
        &
        =
        \frac{
            1 + w\paren{x + t}^2
            - w\paren{x - \tau}\paren{x + t}
        }{
            \paren{
                1 + w\paren{x + t}^2
            }^{\frac{3}{2}}            
        }
        =
        \frac{
            1 + w\paren{t + \tau}\paren{x + t}
        }{
            \paren{
                1 + w\paren{x + t}^2
            }^{\frac{3}{2}}            
        }
        .
    \end{align*}
    It follows that for every $x \in \mathbb{R}_{+}$,
    $\paren{L_{w}^{t}}^{\prime}\paren{x; \tau} > 0$, and thus,
    $L_{w}^{t}\paren{\bullet; \tau}$ is increasing over $\mathbb{R}_{+}$.
    Moreover, for every $x > 0$,
    \begin{align*}
        L_{w}^{t}\paren{x; \tau}
        = 
        \frac{
            x - \tau
        }{
            \paren{
                1 + w\paren{
                    x + t
                }^2
            }^{\frac{1}{2}}
        }
        = \frac{
            1 - \frac{\tau}{x}
        }{
            \paren{
                \frac{1}{x^2} + w\paren{
                    1 + \frac{t}{x}
                }^2
            }^{\frac{1}{2}}
        }
        \xrightarrow[x \rightarrow +\infty]{}
        \frac{1}{\paren{w}^{\frac{1}{2}}}
        = w^{-\frac{1}{2}},
    \end{align*}
    and $L_{w}^{t}\paren{0; \tau} = - \frac{\tau}{\paren{1 + w t^2}^{\frac{1}{2}}}$.
    This concludes the proof.
\end{proof}
\begin{lemma}
    \label{lm.variation.upper.transformation.function}
    Let $t, \tau, c \in \mathbb{R}_{+}$
    and $w \in \paren{0, +\infty}$ designate constants,
    $U_{w; c}^{t}\paren{\bullet; \tau} : \mathbb{R}_{+} \to \mathbb{R}$,
    the function given by, for every $x \in \mathbb{R}_{+}$,
    \begin{align}
        \label{eq.upper.transformation.function}
        U_{w; c}^{t}\paren{x; \tau}
        := \paren{x + \tau}\paren{1 + w\paren{cx + t}^2}^{\frac{1}{2}}.
    \end{align}
    Therefore, $U_{w; c}^{t}\paren{\bullet; \tau} : \mathbb{R}_+ \to \left[\tau \paren{1 + w t^{2}}^{\frac{1}{2}}, +\infty\right)$
    is an increasing bijection, with an inverse designated by
    $\paren{U_{w; c}^{t}}^{-1}\paren{\bullet; \tau} : \left[\tau \paren{1 + w t^{2}}^{\frac{1}{2}}, +\infty\right) \to \mathbb{R}_{+}$.
\end{lemma}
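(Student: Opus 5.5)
The plan mirrors the proof of Lemma~\ref{lm.variation.lower.transformation.function}. We show that $U_{w;c}^{t}\paren{\bullet;\tau}$ is continuous and strictly increasing on $\mathbb{R}_+$. We then identify its value at $0$ and its limit at $+\infty$, and conclude by the intermediate value theorem.

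\emph{Monotonicity.} Write $U_{w;c}^{t}\paren{x;\tau} = g(x)\,h(x)$ with $g(x) := x+\tau$ and $h(x) := \paren{1+w\paren{cx+t}^2}^{\frac12}$. On $\mathbb{R}_+$, the factor $g$ is positive as soon as $x>0$ (and nonnegative at $x=0$), and it is strictly increasing. The factor $h$ is positive, at least $1$, and nondecreasing. Indeed, since $c,t,w\geq 0$, the map $x\mapsto cx+t$ is nonnegative and nondecreasing, so its square is nondecreasing, and so is the square root of one plus $w$ times it.

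To make this explicit, differentiate as in the lower case:
\begin{align*}
    \paren{U_{w;c}^{t}}^{\prime}\paren{x;\tau}
    = \paren{1+w\paren{cx+t}^2}^{\frac12}
    + \paren{x+\tau}\frac{wc\paren{cx+t}}{\paren{1+w\paren{cx+t}^2}^{\frac12}}.
\end{align*}
The first term is at least $1$ and the second is nonnegative. Hence $\paren{U_{w;c}^{t}}^{\prime}\paren{x;\tau} \geq 1 > 0$ on $\mathbb{R}_+$, and $U_{w;c}^{t}\paren{\bullet;\tau}$ is strictly increasing and continuous there.

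\emph{Range.} At the origin, $U_{w;c}^{t}\paren{0;\tau} = \tau\paren{1+wt^2}^{\frac12}$. As $x\to+\infty$, we have $U_{w;c}^{t}\paren{x;\tau}\geq x+\tau\to+\infty$. By continuity and the intermediate value theorem, the image of $\mathbb{R}_+$ is therefore exactly $\left[\tau\paren{1+wt^2}^{\frac12},+\infty\right)$. Strict monotonicity gives injectivity, so the map is a bijection onto this interval, and the inverse $\paren{U_{w;c}^{t}}^{-1}\paren{\bullet;\tau}$ is well defined and increasing.

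\emph{Main obstacle.} None of these steps is difficult. The only care needed is the sign bookkeeping in the derivative: the argument requires $c,t,\tau\geq0$ and $x\geq0$, which the hypotheses provide. Unlike the lower transformation, no bounded limit arises here, because the factor $x+\tau$ already diverges.
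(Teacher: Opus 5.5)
Your proof is correct and follows the same route as the paper: differentiate, show the derivative is positive on $\mathbb{R}_+$, then read off $U_{w;c}^{t}(0;\tau)=\tau(1+wt^2)^{1/2}$ and the divergence at $+\infty$. The differences are cosmetic. You bound the derivative below by $1$ term by term and make the continuity and intermediate-value step explicit. The paper instead combines the two terms into the single fraction $\frac{1+w(cx+t)(2cx+t+c\tau)}{(1+w(cx+t)^2)^{1/2}}$ and notes that its numerator is positive.
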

\begin{proof}
    Let $x \in \mathbb{R}_{+}$.
    \begin{align*}
        \paren{
            U_{w; c}^{t}
        }^{\prime}\paren{x; \tau}
        &
        = \frac{\partial}{\partial x}
        \croch{
            \paren{x + \tau}\paren{1 + w\paren{cx + t}^2}^{\frac{1}{2}}
        }
        \\
        &
        = \croch{
            \frac{\partial}{\partial x}
            \paren{x + \tau}
        }\paren{1 + w\paren{cx + t}^2}^{\frac{1}{2}}
        + \paren{x + \tau}
        \croch{
            \frac{\partial}{\partial x}\paren{1 + w\paren{cx + t}^2}^{\frac{1}{2}}
        }
        \\
        &
        =
        \paren{1 + w\paren{cx + t}^2}^{\frac{1}{2}}
        + \paren{x + \tau}
        \frac{1}{2}
        \frac{2wc \paren{cx + t}}{
            \paren{1 + w\paren{cx + t}^2}^{\frac{1}{2}}
        }
        \\
        &
        = 
        \frac{
            1 + w\paren{cx + t}^2
            + wc \paren{x + \tau}\paren{cx + t}
        }{
            \paren{1 + w\paren{cx + t}^2}^{\frac{1}{2}}
        }
        \\
        &
        =
        \frac{
            1 + w\paren{cx + t}
            \paren{
                cx + t + c\paren{x + \tau}
            }
        }{
            \paren{1 + w\paren{cx + t}^2}^{\frac{1}{2}}
        }
        \\
        &
        =
        \frac{
            1 + w\paren{cx + t}
            \paren{
                2cx + t + c\tau
            }
        }{
            \paren{1 + w\paren{cx + t}^2}^{\frac{1}{2}}
        }
        .
    \end{align*}
    It follows that for every $x \in \mathbb{R}_{+}$,
    $\paren{
        U_{w; c}^{t}
    }^{\prime}\paren{x; \tau} > 0$, and thus,
    $U_{w; c}^{t}\paren{\bullet; \tau}$ is increasing over $\mathbb{R}_{+}$.
    Moreover,
    \begin{align*}
        U_{w; c}^{t}\paren{x; \tau} = \paren{x + \tau}\paren{1 + w\paren{cx + t}^2}^{\frac{1}{2}}
        \xrightarrow[x \rightarrow + \infty]{} +\infty,
    \end{align*}
    and $U_{w; c}^{t}\paren{0; \tau} = \tau \paren{1 + w t^{2}}^{\frac{1}{2}}$.
    This concludes the proof.
\end{proof}

\subsection{Proof of Lemma~\ref{lm.stable.score.maha}}
\label{proof.stable.score.maha}
\begin{proof}
Let $\paren{x, u}$ designate a data point.
Since \eqref{asm.lip.loss} implies Assumption~\eqref{asm.lsc.loss},
then, under \eqref{asm.conv.loss}, \eqref{asm.lip.loss}
and \eqref{asm.lower.bounded.loss},
Lemma~\ref{lm.predictor.well.defined},
$\hat{f}_{\lambda; D^{y}}$ and $\hat{f}_{\lambda^{+}; D}$ are well-defined.
By Lemma~\ref{lm.global.norm.bound}, on the one hand,
\begin{align*}
    S_{\lambda; D^{y}}^{\widehat{\Gamma}_{a}}\paren{x, u}
    &
    \geq \frac{
        \paren{
            \frac{n+1}{n}
        }^{\frac{1}{2}}
        \norm{
            \widehat{\Gamma}_{a^{+}}^{-\frac{1}{2}}
            \paren{u - \hat{f}_{\lambda; D^{y}}(x)}
        }
    }{
        \paren{
            1 + \frac{1}{n+1}
            \norm{
                \widehat{\Gamma}_{a^{+}}^{-\frac{1}{2}}
                \paren{y - \widehat{\mu}_{D}}
            }^{2}
        }^{\frac{1}{2}}
    }
    \\
    &
    \geq \frac{
        \paren{
            \frac{n+1}{n}
        }^{\frac{1}{2}}
        \norm{
            \widehat{\Gamma}_{a^{+}}^{-\frac{1}{2}}
            \paren{u - \hat{f}_{\lambda; D^{y}}(x)}
        }
    }{
        \paren{
            1 + \frac{1}{n+1}
            \paren{
                \norm{
                    \widehat{\Gamma}_{a^{+}}^{-\frac{1}{2}}
                    \paren{y - \hat{f}_{\lambda^{+}; D}\paren{X_{n+1}}}
                } + \widehat{t}_{\lambda; a}
            }^{2}
        }^{\frac{1}{2}}
    },
\end{align*}
and on the other hand,
\begin{align*}
    S_{\lambda; D^{y}}^{\widehat{\Gamma}_{a}}\paren{x, u}
    \leq \paren{
        \frac{n+1}{n}
    }^{\frac{1}{2}}
    \norm{
        \widehat{\Gamma}_{a^{+}}^{-\frac{1}{2}}
        \paren{u - \hat{f}_{\lambda; D^{y}}(x)}
    }.
\end{align*}
Under \eqref{asm.conv.loss} and \eqref{asm.lip.loss},
\begin{align*}
    &
    \abss{
        \norm{
            \widehat{\Gamma}_{a^{+}}^{-\frac{1}{2}}
            \paren{u - \hat{f}_{\lambda; D^{y}}(x)}
        } - \norm{
            \widehat{\Gamma}_{a^{+}}^{-\frac{1}{2}}
            \paren{u - \hat{f}_{\lambda^{+}; D}(x)}
        }
    }
    \\
    &
    \leq
    \norm{
        \widehat{\Gamma}_{a^{+}}^{-\frac{1}{2}}
        \paren{
            \hat{f}_{\lambda; D^{y}}(x)
            - \hat{f}_{\lambda^{+}; D}(x)
        }
    }
    \\
    &
    \leq
    \norm{
        \widehat{\Gamma}_{a^{+}}^{-\frac{1}{2}}
        \paren{
            \hat{f}_{\lambda; D^{y}} - \hat{f}_{\lambda^{+}; D}
        }(x)
    }
    \\
    &
    \leq
    \norm{
        \widehat{\Gamma}_{a^{+}}^{-\frac{1}{2}}
        K\paren{x, x}
        \widehat{\Gamma}_{a^{+}}^{-\frac{1}{2}}
    }_{\mathrm{op}}^{\frac{1}{2}}
    \normh{\hat{f}_{\lambda; D^{y}} - \hat{f}_{\lambda^{+}; D}}
    \\
    &
    \leq
    \widehat{\tau}_{\lambda}^{\widehat{\Gamma}_{a^{+}}}(x)
\end{align*}
where the first inequality follows from the triangle inequality, and
the third inequality follows from Lemma~\ref{lm.gamma.prediction.bound}, and
the last inequality follows from Lemma~\ref{lm.stability.bounds}
and $\widehat{\tau}_{\lambda}^{\widehat{\Gamma}_{a^{+}}}(x)$
designates the stability-bound given by Eq.~\eqref{eq.global.score.stability.bound}.

One concludes by applying triangle inequality and the fact that the scores
are non-negative.
\end{proof}

\subsection{Proof of Proposition~\ref{prop.expression.upper.GlobalEllipsoidCP.region}}
\label{proof.expression.upper.GlobalEllipsoidCP.region}
\begin{proof}
Let $\alpha \in \left[\frac{1}{n+1}, 1\right)$ designate a control-level, and
$y \in \mathcal{Y}$, a test output-value.
\begin{align*}
    &
    y \in \widetilde{C}_{\lambda; \alpha}^{\widehat{\Gamma}_{a}, \up}\paren{X_{n+1}}
    \\
    &
    \Longleftrightarrow
    \frac{
        1 + \sum_{i=1}^{n}
        \mathbbm{1}\brac{
            \widetilde{S}_{\lambda; D^{y}}^{\widehat{\Gamma}_{a}, \up}\paren{X_i, Y_i}
            \geq \widetilde{S}_{\lambda; D^{y}}^{\widehat{\Gamma}_{a}, \lo}\paren{X_{n+1}, y}
        }
    }{
        n+1
    } > \alpha
    \\
    &
    \Longleftrightarrow
    \widetilde{S}_{\lambda; D^{y}}^{\widehat{\Gamma}_{a}, \lo}\paren{X_{n+1}, y}
    \leq \widetilde{S}_{\lambda; D^{y}}^{\widehat{\Gamma}_{a}, \up}\paren{X_{\paren{i_{n, \alpha}^{n}}}, Y_{\paren{i_{n, \alpha}^{n}}}}
    \\
    &
    \Longleftrightarrow
    \frac{
        \norm{
            \widehat{\Gamma}_{a^{+}}^{-\frac{1}{2}}
            \paren{y - \hat{f}_{\lambda^{+}; D}\paren{X_{n+1}}}
        }
        - \widehat{\tau}_{\lambda}^{\widehat{\Gamma}_{a^{+}}}\paren{X_{n+1}}
    }{
        \paren{
            1 +
            \frac{1}{n+1}
            \paren{
                \norm{
                    \widehat{\Gamma}_{a^{+}}^{-\frac{1}{2}}
                    \paren{
                        y - \hat{f}_{\lambda^{+}; D}\paren{X_{n+1}}
                    }
                }
                + \widehat{t}_{\lambda; a}
            }^2 
        }^{\frac{1}{2}}
    }
    \leq \sqrt{\frac{n}{n+1}}\widetilde{S}_{\lambda; D^{y}}^{\widehat{\Gamma}_{a}, \up}\paren{X_{\paren{i_{n, \alpha}^{n}}}, Y_{\paren{i_{n, \alpha}^{n}}}}
    \\
    &
    \Longleftrightarrow
    L_{\frac{1}{n+1}}^{\widehat{t}_{\lambda; a}}
    \paren{
        \norm{
            \widehat{\Gamma}_{a^{+}}^{-\frac{1}{2}}
            \paren{y - \hat{f}_{\lambda^{+}; D}\paren{X_{n+1}}}
        }; \widehat{\tau}_{\lambda}^{\widehat{\Gamma}_{a^{+}}}\paren{X_{n+1}}
    } \leq \widehat{Q}_{\lambda; D^{+}}^{\widehat{\Gamma}_{a}, \up}(\alpha)
\end{align*}
where the second equivalence follows from Lemma~\ref{lm.quantile} with $m = n$,
and the third equivalence, from the definition of the upper and lower
approximate non-conformity scores (see Lemma~\ref{lm.stable.score.maha}),
and the last equivalence follows from the definition of the function
\begin{align*}
    L_{\frac{1}{n+1}}^{\widehat{t}_{\lambda; a}}
    \paren{\bullet; \widehat{\tau}_{\lambda}^{\widehat{\Gamma}_{a^{+}}}\paren{X_{n+1}}} :
    \mathbb{R}_{+} \to
    \left[
        -\frac{\widehat{\tau}_{\lambda}^{\widehat{\Gamma}_{a^{+}}}\paren{X_{n+1}}}{\paren{1 + \frac{\widehat{t}_{\lambda; a}^{2}}{n+1}}^{\frac{1}{2}}},
        \sqrt{n+1}
    \right),
\end{align*}
by Eq.~\eqref{eq.lower.transformation.function},
and the quantile value $\widehat{Q}_{\lambda; D^{+}}^{\widehat{\Gamma}_{a}, \up}(\alpha)$ given by Eq.~\eqref{eq.upper.quantile.global.covariance}.
Since $\widehat{Q}_{\lambda; D^{+}}^{\widehat{\Gamma}_{a}, \up}(\alpha) \geq 0$, then
$\widehat{Q}_{\lambda; D^{+}}^{\widehat{\Gamma}_{a}, \up}(\alpha) \geq -\frac{\widehat{\tau}_{\lambda}^{\widehat{\Gamma}_{a^{+}}}\paren{X_{n+1}}}{\paren{1 + \frac{\widehat{t}_{\lambda; a}^{2}}{n+1}}^{\frac{1}{2}}}$.
Thus,  Lemma~\ref{lm.variation.lower.transformation.function} implies
that if $\widehat{Q}_{\lambda; D^{+}}^{\widehat{\Gamma}_{a}, \up}(\alpha) \geq \sqrt{n+1}$, then,
\begin{align*}
    y \in \widetilde{C}_{\lambda; \alpha}^{\widehat{\Gamma}_{a}, \up}\paren{X_{n+1}}
    \Longleftrightarrow y \in \mathcal{Y}.
\end{align*}
Otherwise, that is,
if $\widehat{Q}_{\lambda; D^{+}}^{\widehat{\Gamma}_{a}, \up}(\alpha) < \sqrt{n+1}$,
then,
\begin{align*}
    &
    y \in \widetilde{C}_{\lambda; \alpha}^{\widehat{\Gamma}_{a}, \up}\paren{X_{n+1}}
    \\
    &
    \Longleftrightarrow \norm{
        \widehat{\Gamma}_{a^{+}}^{-\frac{1}{2}}
        \paren{y - \hat{f}_{\lambda^{+}; D}\paren{X_{n+1}}}
    } \leq \paren{
        L_{\frac{1}{n+1}}^{\widehat{t}_{\lambda; a}}
    }^{-1}
    \paren{
        \widehat{Q}_{\lambda; D^{+}}^{\widehat{\Gamma}_{a}, \up}(\alpha);
        \widehat{\tau}_{\lambda}^{\widehat{\Gamma}_{a^{+}}}\paren{X_{n+1}}
    }.
\end{align*} 
\end{proof}

\subsection{Proof of Proposition~\ref{prop.expresion.lower.GlobalEllipsoidCP.region}}
\label{proof.expresion.lower.GlobalEllipsoidCP.region}
\begin{proof}
Let $\alpha \in \left[\frac{1}{n+1}, 1\right)$ stand for a control-level, and
$y \in \mathcal{Y}$, a test output-value.
\begin{align*}
    &
    y \in \widetilde{C}_{\lambda; \alpha}^{\widehat{\Gamma}_{a}, \lo}\paren{X_{n+1}}
    \\
    &
    \Longleftrightarrow
    \frac{
        1 + \sum_{i=1}^{n}
        \mathbbm{1}\brac{
            \widetilde{S}_{\lambda; D^{y}}^{\widehat{\Gamma}_{a}, \lo}\paren{X_i, Y_i}
            \geq \widetilde{S}_{\lambda; D^{y}}^{\widehat{\Gamma}_{a}, \up}\paren{X_{n+1}, y}
        }
    }{
        n+1
    } > \alpha
    \\
    &
    \Longleftrightarrow
    \widetilde{S}_{\lambda; D^{y}}^{\widehat{\Gamma}_{a}, \up}\paren{X_{n+1}, y}
    \leq \widetilde{S}_{\lambda; D^{y}}^{\widehat{\Gamma}_{a}, \lo}\paren{X_{\paren{i_{n, \alpha}^{n}}}, Y_{\paren{i_{n, \alpha}^{n}}}}
    \\
    &
    \Longleftrightarrow
    \paren{
        \frac{n+1}{n}
    }^{\frac{1}{2}}
    \croch{
        \norm{
            \widehat{\Gamma}_{a^{+}}^{-\frac{1}{2}}
            \paren{y - \hat{f}_{\lambda^{+}; D}\paren{X_{n+1}}}
        }
        + \widehat{\tau}_{\lambda}^{\widehat{\Gamma}_{a^{+}}}\paren{X_{n+1}}
    }
    \\
    &
    \qquad
    \leq \frac{
        \paren{
            \frac{n+1}{n}
        }^{\frac{1}{2}}
        \widehat{Q}_{\lambda; D^{+}}^{\widehat{\Gamma}_{a}, \lo}(\alpha)
    }{
        \paren{
            1 +
            \frac{1}{n+1}
            \paren{
                \norm{
                    \widehat{\Gamma}_{a^{+}}^{-\frac{1}{2}}
                    \paren{
                        y - \hat{f}_{\lambda^{+}; D}\paren{X_{n+1}}
                    }
                } +  \widehat{t}_{\lambda; a}
            }^2
        }^{\frac{1}{2}}
    },
    \\
    &
    \Longleftrightarrow
    U_{\frac{1}{n+1}; 1}^{\widehat{t}_{\lambda; a}}
    \paren{
        \norm{
            \widehat{\Gamma}_{a^{+}}^{-\frac{1}{2}}
            \paren{y - \hat{f}_{\lambda^{+}; D}\paren{X_{n+1}}}
        }; \widehat{\tau}_{\lambda}^{\widehat{\Gamma}_{a^{+}}}\paren{X_{n+1}}
    }
    \leq
    \widehat{Q}_{\lambda; D^{+}}^{\widehat{\Gamma}_{a}, \lo}(\alpha),
\end{align*}
where the second equivalence follows from Lemma~\ref{lm.quantile} with $m = n$,
and the third equivalence, from the definition of the upper and lower
approximate non-conformity scores (see Lemma~\ref{lm.stable.score.maha}),
and the last equivalence, from the definition of the function
\begin{align*}
    U_{\frac{1}{n+1}; 1}^{\widehat{t}_{\lambda; a}}\paren{\bullet; \widehat{\tau}_{\lambda}^{\widehat{\Gamma}_{a^{+}}}\paren{X_{n+1}}} : \mathbb{R}_{+} \to \left[\widehat{\tau}_{\lambda}^{\widehat{\Gamma}_{a^{+}}}\paren{X_{n+1}}\paren{1 + \frac{\widehat{t}_{\lambda; a}^2}{n+1}}^{\frac{1}{2}}, +\infty\right),
\end{align*}
given by Eq.~\eqref{eq.upper.transformation.function}, and
the quantile value $\widehat{Q}_{\lambda; D^{+}}^{\widehat{\Gamma}_{a}, \lo}(\alpha)$ given by Eq.~\eqref{eq.lower.quantile.global.covariance}.
Thus, it follows from Lemma~\ref{lm.variation.upper.transformation.function}
that if $\widehat{Q}_{\lambda; D^{+}}^{\widehat{\Gamma}_{a}, \lo}(\alpha)
< \widehat{\tau}_{\lambda}^{\widehat{\Gamma}_{a^{+}}}\paren{X_{n+1}}\paren{1 + \frac{\widehat{t}_{\lambda; a}^2}{n+1}}^{\frac{1}{2}}$, then,
\begin{align*}
    y \in \widetilde{C}_{\lambda; \alpha}^{\widehat{\Gamma}_{a}, \lo}\paren{X_{n+1}}
    \Longleftrightarrow 
    y \in \emptyset.
\end{align*}
Otherwise, that is, if $\widehat{Q}_{\lambda; D^{+}}^{\widehat{\Gamma}_{a}, \lo}(\alpha)
\geq \widehat{\tau}_{\lambda}^{\widehat{\Gamma}_{a^{+}}}\paren{X_{n+1}}\paren{1 + \frac{\widehat{t}_{\lambda; a}^2}{n+1}}^{\frac{1}{2}}$, then,
\begin{align*}
    &
    y \in \widetilde{C}_{\lambda; \alpha}^{\widehat{\Gamma}_{a}, \lo}\paren{X_{n+1}}
    \\
    &
    \Longleftrightarrow \norm{
        \widehat{\Gamma}_{a^{+}}^{-\frac{1}{2}}
        \paren{y - \hat{f}_{\lambda^{+}; D}\paren{X_{n+1}}}
    } \leq \paren{
        U_{\frac{1}{n+1}; 1}^{\widehat{t}_{\lambda; a}}
    }^{-1}\paren{
        \widehat{Q}_{\lambda; D^{+}}^{\widehat{\Gamma}_{a}, \lo}(\alpha);
        \widehat{\tau}_{\lambda}^{\widehat{\Gamma}_{a^{+}}}\paren{X_{n+1}}
    }.
\end{align*}
\end{proof}

\subsection{Additional numerical experiment on the evolution of the thickness}
\label{sec.evolution.thickness.global}
\begin{figure}[H]
    \centering
    \includegraphics[width=0.60\textwidth]{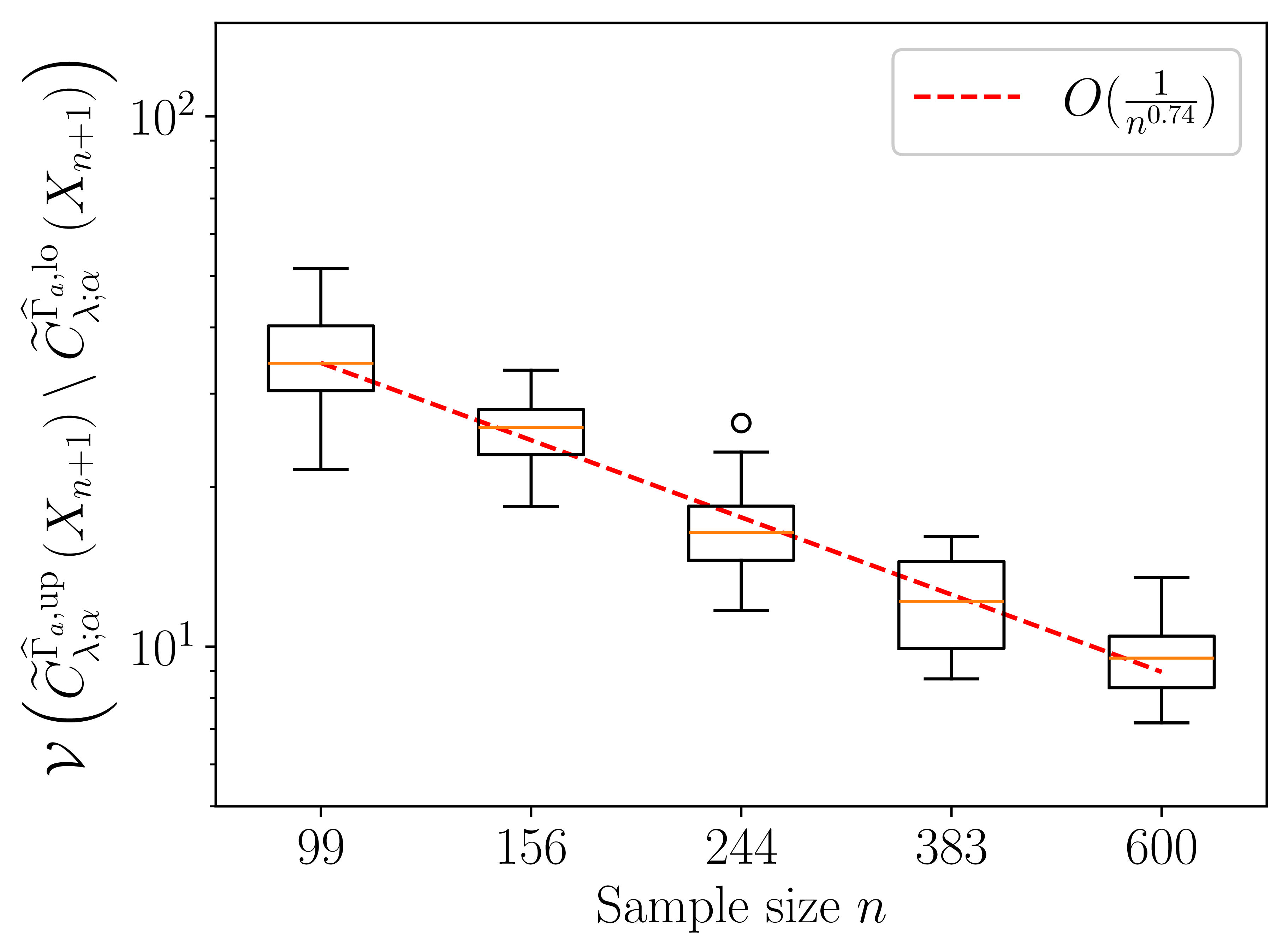}
    \caption{
        Evolution of the computable empirical upper-bound $\leb{\widetilde{C}_{\lambda; \alpha}^{\widehat{\Gamma}_{a}, \mathrm{up}}\paren{X_{n+1}} \setminus \widetilde{C}_{\lambda; \alpha}^{\widehat{\Gamma}_{a}, \lo}\paren{X_{n+1}}}$
        for the \emph{thickness} $\thicc{\widehat{\Gamma}_{a}}$ across 20 repetitions for $\alpha = 0.1$
        and for $\lambda \propto \frac{1}{\sqrt{n}}$.
    }
    \label{fig.thickness.GlobalEllipsoidCP.1}
\end{figure}

On average, $\leb{\widetilde{C}_{\lambda; \alpha}^{\widehat{\Gamma}_{a}, \mathrm{up}}\paren{X_{n+1}} \setminus \widetilde{C}_{\lambda; \alpha}^{\widehat{\Gamma}_{a}, \lo}\paren{X_{n+1}}}$
gets smaller as the training-sample size $n$ gets larger.
Additionally, the estimated rate of improvement (represented by the dashed red-line) is
about the same as the theoretical one, that is, $O\paren{\frac{1}{\sqrt{n}}}$
since $\lambda \propto \frac{1}{\sqrt{n}}$.
That is to say, even when $\lambda \propto \frac{1}{\sqrt{n}}$,
integrating a global inter-task covariance estimator
does not seem to affect the rate of improvement of the \emph{thickness}.

\section{Other conformal strategies}
The present section presents the first alternative to \textbf{FullCP},
that is, \textbf{SplitCP}, and a prediction-region of reference
used in place of the \textbf{FullCP}-region, that is, the \textbf{OracleCP}-region.

\subsection{Split conformal prediction (\textbf{SplitCP})}
\label{sec.split.cp}
Given a random partition of the initial data set $D$ into
two disjoint data set, $D_{\mathrm{train}}$ and $D_{\mathrm{calib}}$,
the split conformal prediction (\textbf{SplitCP}) region \citep{papadopoulosInductiveConformalPrediction2008} is given by
\begin{align*}
    \widehat{C}_{\lambda; \alpha}^{\mathrm{split}}\paren{X_{n+1}}
    := \brac{
        y \in \mathcal{Y}:
        \widehat{\pi}_{\lambda; D}^{\mathrm{split}}\paren{X_{n+1}, y} > \alpha
    },
\end{align*}
where $\widehat{\pi}_{\lambda; D}^{\mathrm{split}}\paren{X_{n+1}, \bullet} : \mathcal{Y} \to \left[\frac{1}{n+1}, 1\right]$
denote the split conformal p-value function, given by,
for every $y \in \mathcal{Y}$,
\begin{align*}
    \widehat{\pi}_{\lambda; D}^{\mathrm{split}}\paren{X_{n+1}, y}
    := \frac{
        1 + \sum_{\paren{x, u} \in D_{\mathrm{cal}}}
        \mathbbm{1}\brac{
            \widehat{s}_{D_{\mathrm{train}}}
            \paren{u, \hat{f}_{\lambda; D_{\mathrm{train}}}\paren{x}}
            \geq \widehat{s}_{D_{\mathrm{train}}}
            \paren{y, \hat{f}_{\lambda; D_{\mathrm{train}}}\paren{X_{n+1}}}
        }
    }{\abss{D_{\mathrm{cal}}}+1}.
\end{align*}
As a result, by Lemma~\ref{lm.quantile},
\begin{align*}
    \widehat{C}_{\lambda; \alpha}^{\mathrm{split}}\paren{X_{n+1}}
    = \brac{
        y \in \mathcal{Y}:
        \widehat{s}_{D_{\mathrm{train}}}
        \paren{y, \hat{f}_{\lambda; D_{\mathrm{train}}}\paren{X_{n+1}}}
        \leq \widehat{Q}_{\lambda}^{\mathrm{split}}\paren{\alpha},
    },
\end{align*}
where the quantile value $\widehat{Q}_{\lambda}^{\mathrm{split}}\paren{\alpha}$
is given by,
\begin{align*}
    \widehat{Q}_{\lambda}^{\mathrm{split}}\paren{\alpha}
    := \widehat{s}_{D_{\mathrm{train}}}
    \paren{Y_{\paren{i_{n_{\mathrm{cal}}, \alpha}^{n_{\mathrm{cal}}}}}, \hat{f}_{\lambda; D_{\mathrm{train}}}\paren{X_{\paren{i_{n_{\mathrm{cal}}, \alpha}^{n_{\mathrm{cal}}}}}}},
\end{align*}
where $n_{\mathrm{cal}} := \abss{D_{\mathrm{cal}}}$,
and index $i_{n_{\mathrm{cal}}, \alpha}^{n_{\mathrm{cal}}} \in \brac{1, \ldots, n_{\mathrm{cal}}}$
is given by, $i_{n_{\mathrm{cal}}, \alpha}^{n_{\mathrm{cal}}} := \ceil{\paren{n_{\mathrm{cal}+1}}\paren{1 - \alpha}}$.
\begin{corollary}
    For any confidence control-level $\alpha \in \left[\frac{1}{n_{\mathrm{cal}}+ 1}, 1\right)$,
    the \textbf{SplitCP}-region $\widehat{C}_{\lambda; \alpha}^{\mathrm{split}}\paren{X_{n+1}}$ enjoys
    the following guarantee,
    \begin{align*}
        \mathbb{P}\croch{Y_{n+1} \in \widehat{C}_{\lambda; \alpha}^{\mathrm{split}}\paren{X_{n+1}}} \geq 1 - \alpha,
    \end{align*}
    as such it is a confidence prediction-region. Furthermore, if the non-conformity scores
    $\widehat{s}_{D_{\mathrm{train}}}\paren{Y_1, \hat{f}_{\lambda; D_{\mathrm{train}}}\paren{X_1}}$,
    \ldots, $\widehat{s}_{D_{\mathrm{train}}}\paren{Y_{n_{\mathrm{cal}}}, \hat{f}_{\lambda; D_{\mathrm{train}}}\paren{X_{n_{\mathrm{cal}}}}}$
    and\\
    $\widehat{s}_{D_{\mathrm{train}}}\paren{Y_{n+1}, \hat{f}_{\lambda; D_{\mathrm{train}}}\paren{X_{n+1}}}$,
    are almost distinct, then,
    \begin{align*}
        \mathbb{P}\croch{Y_{n+1} \in \widehat{C}_{\lambda; \alpha}^{\mathrm{split}}\paren{X_{n+1}}} \leq 1 - \alpha + \frac{1}{n_{\mathrm{cal}} + 1}.
    \end{align*}
\end{corollary}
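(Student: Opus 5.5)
This corollary is the split-conformal instance of Theorem~\ref{thm.coverage}. The plan is to reduce it to the exchangeability argument behind that theorem, applied to the calibration scores only. We condition on $D_{\mathrm{train}}$ throughout. Given $D_{\mathrm{train}}$, both the predictor $\hat{f}_{\lambda; D_{\mathrm{train}}}$ and the non-conformity function $\widehat{s}_{D_{\mathrm{train}}}$ are fixed, deterministic maps. Moreover, the $n_{\mathrm{cal}}+1$ points of $D_{\mathrm{cal}}\cup\{(X_{n+1},Y_{n+1})\}$ stay exchangeable conditionally on $D_{\mathrm{train}}$, because the partition is random and independent of the data, and the full sequence is exchangeable. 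It follows that the scores $S_j := \widehat{s}_{D_{\mathrm{train}}}(U_j, \hat{f}_{\lambda; D_{\mathrm{train}}}(Z_j))$, $j=1,\ldots,n_{\mathrm{cal}}+1$, obtained by applying one fixed measurable function to exchangeable points, are themselves exchangeable. No retraining depending on $y$ is involved, which is exactly why the split case is simpler than the \textbf{FullCP} case.

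The core step is the rank argument. We have $Y_{n+1}\in \widehat{C}_{\lambda; \alpha}^{\mathrm{split}}(X_{n+1})$ if and only if $\widehat{\pi}^{\mathrm{split}}(X_{n+1},Y_{n+1})>\alpha$. This holds if and only if the number of $j\le n_{\mathrm{cal}}+1$ with $S_j\ge S_{n_{\mathrm{cal}}+1}$ exceeds $\alpha(n_{\mathrm{cal}}+1)$. Equivalently, the number of $j$ with $S_j< S_{n_{\mathrm{cal}}+1}$ is strictly less than $(1-\alpha)(n_{\mathrm{cal}}+1)$, which in turn is equivalent to $S_{n_{\mathrm{cal}}+1}\le S_{(\lceil (1-\alpha)(n_{\mathrm{cal}}+1)\rceil)}$, the order statistic among all $n_{\mathrm{cal}}+1$ scores. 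Lemma~\ref{lm.quantile} gives the same threshold. Exchangeability implies that $\mathbb{P}[S_{n_{\mathrm{cal}}+1}\le S_{(k)}]\ge k/(n_{\mathrm{cal}}+1)$ for every $k$. With $k=\lceil(1-\alpha)(n_{\mathrm{cal}}+1)\rceil$, this yields the lower bound $1-\alpha$ conditionally on $D_{\mathrm{train}}$, and the tower property then removes the conditioning. The restriction $\alpha\ge 1/(n_{\mathrm{cal}}+1)$ guarantees that $k\le n_{\mathrm{cal}}$, so the region is nontrivial, consistent with the index set in Lemma~\ref{lm.quantile}.

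For the upper bound, the scores are assumed almost surely distinct. In that case the rank of $S_{n_{\mathrm{cal}}+1}$ is exactly uniform on $\{1,\ldots,n_{\mathrm{cal}}+1\}$, so the probability equals $k/(n_{\mathrm{cal}}+1)\le 1-\alpha+1/(n_{\mathrm{cal}}+1)$. Alternatively, one can apply Theorem~\ref{thm.coverage} verbatim with the data set of size $n_{\mathrm{cal}}$ and a ``training'' algorithm that ignores its input and always returns $\hat{f}_{\lambda; D_{\mathrm{train}}}$.

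The main obstacle is the handling of ties and the strict versus non-strict inequalities in the p-value definition, that is, checking that $>\alpha$ translates exactly into the rank condition with $k=\lceil(1-\alpha)(n_{\mathrm{cal}}+1)\rceil$. I would settle this carefully using the counting in the proof of Lemma~\ref{lm.quantile}. A second delicate point is justifying conditional exchangeability given $D_{\mathrm{train}}$ when the partition is random. For this I would assume the split is drawn independently of the data, which is implicit in the statement.
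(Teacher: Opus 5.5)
Your proposal is correct and follows the paper's route. The paper's proof conditions on $D_{\mathrm{train}}$, treats the calibration points as the training points of a \textbf{FullCP} procedure with a deterministic predictor and score, and invokes Theorem~\ref{thm.coverage}; this is exactly your ``alternative'' reduction. Your explicit rank argument fills in what the paper leaves implicit: the order-statistic equivalence, the tie-robust bound $\mathbb{P}[S_{n_{\mathrm{cal}}+1}\le S_{(k)}]\ge k/(n_{\mathrm{cal}}+1)$, and the requirement that the split be independent of the data so that conditional exchangeability holds.
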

\begin{proof} \textbf{SplitCP} can be seen as special case of \textbf{FullCP},
conditional on the proper training-data points,
the calibration data points corresponds to the training data points in \textbf{FullCP},
and the non-conformity score function and the predictor are deterministic.
\end{proof}

\subsection{Oracle conformal prediction (\textbf{OracleCP})}
\label{sec.oracle.cp}
Given the unknown output-vector $Y_{n+1}$,
the oracle conformal prediction (\textbf{OracleCP}) region is given by
\begin{align*}
    \widehat{C}_{\lambda; \alpha}^{\mathrm{oracle}}\paren{X_{n+1}}
    := \brac{
        y \in \mathcal{Y}:
        \widehat{\pi}_{\lambda; D}^{\mathrm{oracle}}\paren{X_{n+1}, y} > \alpha
    },
\end{align*}
where $\widehat{\pi}_{\lambda; D}^{\mathrm{oracle}}\paren{X_{n+1}, \bullet} : \mathcal{Y} \to \left[\frac{1}{n+1}, 1\right]$
denote the split conformal p-value function, given by,
for every $y \in \mathcal{Y}$,
\begin{align*}
    \widehat{\pi}_{\lambda; D}^{\mathrm{oracle}}\paren{X_{n+1}, y}
    := \frac{
        1 + \sum_{i=1}^{n}
        \mathbbm{1}\brac{
            \widehat{s}_{D^{Y_{n+1}}}
            \paren{Y_i, \hat{f}_{\lambda; D^{Y_{n+1}}}\paren{X_i}}
            \geq \widehat{s}_{D^{Y_{n+1}}}
            \paren{y, \hat{f}_{\lambda; D^{Y_{n+1}}}\paren{X_{n+1}}}
        }
    }{n+1}.
\end{align*}
As a result, by Lemma~\ref{lm.quantile},
\begin{align*}
    \widehat{C}_{\lambda; \alpha}^{\mathrm{oracle}}\paren{X_{n+1}}
    = \brac{
        y \in \mathcal{Y}:
        \widehat{s}_{D^{Y_{n+1}}}
        \paren{y, \hat{f}_{\lambda; D^{Y_{n+1}}}\paren{X_{n+1}}}
        \leq \widehat{Q}_{\lambda}^{\mathrm{oracle}}\paren{\alpha},
    },
\end{align*}
where the quantile value $\widehat{Q}_{\lambda}^{\mathrm{oracle}}\paren{\alpha}$
is given by,
\begin{align*}
    \widehat{Q}_{\lambda}^{\mathrm{oracle}}\paren{\alpha}
    := \widehat{s}_{D^{Y_{n+1}}}
    \paren{Y_{\paren{i_{n, \alpha}^{n}}},
    \hat{f}_{\lambda; D^{Y_{n+1}}}\paren{X_{\paren{i_{n, \alpha}^{n}}}}},
\end{align*}
where the index $i_{n, \alpha}^{n} \in \brac{1, \ldots, n}$
is given by, $i_{n, \alpha}^{n} := \ceil{\paren{n+1}\paren{1 - \alpha}}$.
\begin{corollary}
    The \textbf{OracleCP}-region enjoys all guarantees enjoyed by the \textbf{FullCP}-region,
    stated in Theorem~\ref{thm.coverage}.
\end{corollary}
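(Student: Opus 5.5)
The plan is to show that, although the \textbf{OracleCP}-region differs from the \textbf{FullCP}-region as a set, both regions define \emph{the same coverage event}. Theorem~\ref{thm.coverage} only concerns $\mathbb{P}\croch{Y_{n+1} \in \widehat{C}_{\lambda; \alpha}^{\full}\paren{X_{n+1}}}$, so both the lower and the upper bounds then transfer verbatim.

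First, I would fix $\alpha \in \left[\frac{1}{n+1}, 1\right)$ and evaluate both p-value functions at the random point $y = Y_{n+1}$.

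\textbf{FullCP side.} By Definition~\ref{def.conformal.pvalues}, $\widehat{\pi}_{\lambda; D}^{\full}\paren{X_{n+1}, Y_{n+1}}$ compares two kinds of scores. The calibration scores are $S_{\lambda; D^{Y_{n+1}}}\paren{X_i, Y_i} = \widehat{s}_{D^{Y_{n+1}}}\paren{Y_i, \hat{f}_{\lambda; D^{Y_{n+1}}}\paren{X_i}}$ for $1\le i\le n$. The test score is $\widehat{s}_{D^{Y_{n+1}}}\paren{Y_{n+1}, \hat{f}_{\lambda; D^{Y_{n+1}}}\paren{X_{n+1}}}$. Here I read the second line of \eqref{def.scores} with $y$ as first argument, which is clearly the intended meaning.

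\textbf{OracleCP side.} By its definition, $\widehat{\pi}_{\lambda; D}^{\mathrm{oracle}}\paren{X_{n+1}, y}$ uses the same predictor $\hat{f}_{\lambda; D^{Y_{n+1}}}$ and the same non-conformity function $\widehat{s}_{D^{Y_{n+1}}}$ for every $y$. At $y = Y_{n+1}$, each indicator in its sum therefore coincides term by term with the corresponding indicator in \eqref{def.conformal.pvalue.function}.

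Hence $\widehat{\pi}_{\lambda; D}^{\mathrm{oracle}}\paren{X_{n+1}, Y_{n+1}} = \widehat{\pi}_{\lambda; D}^{\full}\paren{X_{n+1}, Y_{n+1}}$ almost surely. It follows that
\[
\brac{Y_{n+1} \in \widehat{C}_{\lambda; \alpha}^{\mathrm{oracle}}\paren{X_{n+1}}} = \brac{Y_{n+1} \in \widehat{C}_{\lambda; \alpha}^{\full}\paren{X_{n+1}}}.
\]
So the two coverage probabilities are equal.

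Second, I would invoke Theorem~\ref{thm.coverage}. The lower bound $1-\alpha$ holds unconditionally. The hypothesis of the upper bound concerns exactly the scores $S_{\lambda; D^{Y_{n+1}}}\paren{X_1, Y_1},\ldots,S_{\lambda; D^{Y_{n+1}}}\paren{X_{n+1}, Y_{n+1}}$, which are the oracle scores. So whenever they are almost surely distinct, the bound $1-\alpha+\frac{1}{n+1}$ also holds for \textbf{OracleCP}.

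There is no real obstacle here. The only point needing care is that equality holds only at $y = Y_{n+1}$, not for every $y$. The argument must therefore be phrased through the coverage event, not through set inclusion between the two regions.
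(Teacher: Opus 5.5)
Your proposal is correct: at $y=Y_{n+1}$ the oracle and full-conformal p-values coincide indicator by indicator. This is an exact identity rather than merely an almost-sure one, and it holds whichever way one reads the typo in \eqref{def.scores}; hence the two coverage events are equal and both bounds of Theorem~\ref{thm.coverage} (with the same distinctness hypothesis for the upper one) transfer verbatim. The paper states this corollary without proof, and your argument is exactly the reasoning it implicitly relies on.
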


\newpage
\bibliography{sample}

\end{document}